\newcommand{\defnemph}[1]{\emph{#1}}
\newcommand{\field}{\Bbbk}
\newcommand{\N}{\mathbb{N}}
\newcommand{\Z}{\mathbb{Z}}
\newcommand{\iso}{\cong}
\newcommand{\W}{W}
\newcommand{\expr}[1]{\underline{#1}}
\newcommand{\len}{\ell}
\newcommand{\weyl}{\Delta}
\newcommand{\dweyl}{\nabla}
\newcommand{\wt}[1]{{\boldsymbol{#1}}}
\newcommand{\res}[1]{\boldsymbol{#1}}
\newcommand{\domres}[1]{\res{i}^{#1}}
\newcommand{\tableau}[1]{\mathfrak{#1}}
\newcommand{\domtab}[1]{\tableau{t}^{#1}}
\newcommand{\domleq}{\trianglelefteq}
\newcommand{\domless}{\vartriangleleft}
\newcommand{\domgreater}{\vartriangleright}
\newcommand{\modcat}[1]{{#1}\mathrm{-mod}}
\newcommand{\JW}{\mathrm{JW}}
\newcommand{\subqtab}[2]{\tableau{t}^{#1}_{#2}}
\DeclareMathOperator{\Std}{Std}
\DeclareMathOperator{\rad}{rad}
\DeclareMathOperator{\soc}{soc}
\DeclareMathOperator{\ind}{ind}
\DeclareMathOperator{\restr}{res}
\DeclareMathOperator{\pr}{pr}
\DeclareMathOperator{\Hom}{Hom}
\DeclareMathOperator{\End}{End}
\DeclareMathOperator{\Ext}{Ext}
\theoremstyle{plain}
\newtheorem{thm}{Theorem}[section]
\newtheorem*{thm*}{Theorem}
\newtheorem{prop}[thm]{Proposition}
\newtheorem{cor}[thm]{Corollary}
\newtheorem{lem}[thm]{Lemma}
\newtheorem*{conj*}{Conjecture}
\theoremstyle{definition}
\newtheorem{defn}[thm]{Definition}
\newtheorem{rem}[thm]{Remark}
\newtheorem{eg}[thm]{Example}
\newtheorem*{rem*}{Remark}
\begin{document}


\title{Indecomposable tilting modules for the blob algebra}
\author{A.~Hazi \and P.~P.~Martin \and A.~E.~Parker}
\address{Department of Mathematics \\ University of Leeds \\ Leeds, LS2 9JT \\ UK}
\email{amit.hazi@cantab.net}
\email{P.P.Martin@maths.leeds.ac.uk}
\email{A.E.Parker@leeds.ac.uk}
\subjclass[2010]{20C08}

\begin{abstract}
The blob algebra is a finite-dimensional quotient of
the Hecke algebra of type $B$ which is almost always
quasi-hereditary. We construct the indecomposable tilting
modules for the blob algebra over a field of characteristic $0$ in the
doubly critical case. Every indecomposable tilting module of maximal
highest weight is either a projective module or an extension of a
simple module by a projective module. Moreover, every indecomposable
tilting module is a submodule of an indecomposable tilting module of
maximal highest weight. We conclude that the graded Weyl
multiplicities of the indecomposable tilting modules in this case are
given by inverse Kazhdan--Lusztig polynomials of type $\tilde{A}_1$. 

\smallskip
\noindent \textbf{\keywordsname :} blob algebra, tilting modules, KLR algebra, Soergel bimodules. 

\end{abstract}

\maketitle


\section*{Introduction}


%
%
%
%
%
%

The blob algebra is an extension of the ordinary Temperley--Lieb algebra 
introduced by the second author and Saleur in \cite{martin-saleur}.
It can be 
thought of as the Temperley--Lieb algebra of type $B$,  as it is a 
quotient of the type $B$ Hecke algebra in much the same way
as the ordinary Temperley--Lieb algebra is a quotient of the Hecke algebra of type $A$.
Originally motivated by the need to control
lattice boundary conditions in 
lattice models in 
statistical mechanics,
the blob algebra and its generalizations remain an active 
topic of research in both physics (e.g.~\cite{gainutdinov3,gainutdinov,gainutdinov2}) 
and representation theory 
(e.g.~\cite{plaza-grdecompblob,plaza-ryom-hansen,bowmancoxspeyer}).

\medskip

Like the ordinary Temperley--Lieb algebra, the representation theory of 
the blob 
algebra is controlled by the values of its parameters. Generically the 
blob algebra is semisimple, with certain integral
representations $\weyl(\wt{\lambda})$ called \defnemph{Weyl modules} 
giving a complete set of simple modules.
Yet for some critical parameter values, the blob algebra 
is only quasi-hereditary, and the Weyl modules are no longer simple.
In this paper we focus on the \defnemph{doubly critical} case, when the
representation theory is the most interesting (e.g.~with blocks of 
arbitrary size, 
no known quiver-and-relations presentation, etc.).
In this case, the block structure is controlled by a linkage principle 
in terms of an affine Weyl group $\W$ of type $\tilde{A}_1$.

Recall that a \defnemph{tilting module} for a quasi-hereditary algebra
is a representation with a filtration by Weyl modules as well as a
filtration by dual Weyl modules. For each weight $\wt{\lambda}$, there
is an indecomposable tilting module $T(\wt{\lambda})$ of highest weight
$\wt{\lambda}$, and all indecomposable tilting modules are of this form.
Our main result in this paper is a construction of $T(\wt{\lambda})$ 
for the doubly critical blob algebra $B_n^{\kappa}$ over 
a field of characteristic $0$. 
The construction closely depends on the 
quasi-hereditary partial order $\domleq$ on weights, defined in 
\S \ref{sec:wtsbiparts}. 
The $\W$-orbit of $\wt{\lambda}$ 
has one maximal weight $\wt{\lambda}_{\rm max}$ 
and at most two minimal weights with respect to $\domleq$.
We write $L(\wt{\lambda})$
for the simple head of $\weyl(\wt{\lambda})$,
$P(\wt{\lambda})$ for
the projective indecomposable cover of $L(\wt{\lambda})$, and 
$O_{\domleq \wt{\lambda}}(M)$ for the maximal submodule of a module $M$ 
whose composition factors lie in 
$\{L(\wt{\mu}) : \wt{\mu \domleq \wt{\lambda}}\}$.
Using this notation, our construction is as follows (see also 
Theorems~\ref{thm:fundtilt} and \ref{thm:resttilt}). 

\begin{thm*}
Suppose $\wt{\lambda}$ is a weight for $B_n^{\kappa}$.
Let $\wt{\lambda}_{\rm min}$
be a minimal weight in the $\W$-orbit of $\wt{\lambda}$.
Then 
$T(\wt{\lambda})=O_{\domleq \wt{\lambda}}(T(\wt{\lambda}_{\rm max}))$.
The maximal highest weight tilting module
$T(\wt{\lambda}_{\rm max})$ is constructed from $P(\wt{\lambda}_{\rm min})$
as follows.

\begin{enumerate}[label={\rm (\roman*)}]
\item If $\wt{\lambda}_{\rm min}$ is the only minimal weight in the
$\W$-orbit of $\wt{\lambda}$, then 
$T(\wt{\lambda}_{\rm max})=P(\wt{\lambda}_{\rm min})$.

\item If there is another minimal weight $\wt{\lambda}_{\rm min}'$ in the 
$\W$-orbit of $\wt{\lambda}$, then 
$T(\wt{\lambda}_{\rm max})$ is the unique extension of the form
\begin{equation*}
0 \to P(\wt{\lambda}_{\rm min}) \to  T(\wt{\lambda}_{\rm max}) \to
\weyl(\wt{\lambda}_{\rm min}') \to  0 \text{.}
\end{equation*}
\end{enumerate}
\end{thm*}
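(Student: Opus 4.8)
The plan is to construct $T(\wt{\lambda}_{\rm max})$ first --- establishing parts (i) and (ii) --- and then to deduce $T(\wt{\lambda})=O_{\domleq\wt{\lambda}}(T(\wt{\lambda}_{\rm max}))$ for arbitrary $\wt{\lambda}$. The tools are the standard ones for a quasi-hereditary cellular algebra: BGG reciprocity $(P(\wt{\mu}):\weyl(\wt{\nu}))=[\dweyl(\wt{\nu}):L(\wt{\mu})]$; the cell duality $\circ$, which fixes every $L(\wt{\mu})$ and interchanges $\weyl(\wt{\mu})\leftrightarrow\dweyl(\wt{\mu})$, so that every indecomposable tilting module is $\circ$-self-dual and, conversely, a $\circ$-self-dual module with a $\weyl$-filtration is tilting; the fact that $X$ admits a $\dweyl$-filtration precisely when $\Ext^1(\weyl(\wt{\mu}),X)=0$ for all $\wt{\mu}$, together with the vanishing $\Ext^{\geq 1}(\weyl(\wt{\mu}),\dweyl(\wt{\nu}))=0$; and uniqueness of the indecomposable tilting module of a given highest weight. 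The input I would take from the earlier sections is the structure of the doubly critical blocks under the $\W$-linkage: all decomposition numbers lie in $\{0,1\}$ and are controlled by (parabolic) Kazhdan--Lusztig data of $\tilde{A}_1$; one knows exactly which $L(\wt{\nu})$ appear in $\weyl(\wt{\mu})$ and in $\dweyl(\wt{\mu})$, and the radical and socle layers of the $\weyl(\wt{\mu})$ and $P(\wt{\mu})$ are correspondingly rigid. Note that a minimal weight $\wt{\mu}$ has $\weyl(\wt{\mu})=L(\wt{\mu})=\dweyl(\wt{\mu})$, and in both cases of the theorem $L(\wt{\lambda}_{\rm min})$ occurs in $\dweyl(\wt{\lambda}_{\rm max})$.

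For part (i): if $\wt{\lambda}_{\rm min}$ is the only minimal weight of the orbit then $L(\wt{\lambda}_{\rm min})$ occurs in $\dweyl(\wt{\mu})$ for every $\wt{\mu}$ in the block, so by BGG reciprocity $(P(\wt{\lambda}_{\rm min}):\weyl(\wt{\mu}))=1$ for all such $\wt{\mu}$; thus $P(\wt{\lambda}_{\rm min})$ has a $\weyl$-filtration with top section $\weyl(\wt{\lambda}_{\rm max})$. A comparison of head, socle and composition multiplicities then identifies $P(\wt{\lambda}_{\rm min})$ with its $\circ$-dual $I(\wt{\lambda}_{\rm min})$, so $P(\wt{\lambda}_{\rm min})$ is tilting; being indecomposable of highest weight $\wt{\lambda}_{\rm max}$, it is $T(\wt{\lambda}_{\rm max})$. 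For part (ii): BGG reciprocity now detects $\wt{\lambda}_{\rm min}'$ as the fact that $P(\wt{\lambda}_{\rm min})$ lacks exactly the $\weyl$-section $\weyl(\wt{\lambda}_{\rm min}')$ that a tilting module of highest weight $\wt{\lambda}_{\rm max}$ carries; equivalently $P(\wt{\lambda}_{\rm min})$ has no $\dweyl$-filtration, and from the layer structure one computes $\dim_{\field}\Ext^1(\weyl(\wt{\lambda}_{\rm min}'),P(\wt{\lambda}_{\rm min}))=1$ while $\Ext^1(\weyl(\wt{\mu}),P(\wt{\lambda}_{\rm min}))=0$ for every other $\wt{\mu}$. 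Hence there is a unique non-split extension $0\to P(\wt{\lambda}_{\rm min})\to T\to\weyl(\wt{\lambda}_{\rm min}')\to 0$. This $T$ has a $\weyl$-filtration (splice $\weyl(\wt{\lambda}_{\rm min}')$ onto the filtration of $P(\wt{\lambda}_{\rm min})$), and it has a $\dweyl$-filtration: since $\wt{\lambda}_{\rm min}'$ is minimal, $\weyl(\wt{\lambda}_{\rm min}')=\dweyl(\wt{\lambda}_{\rm min}')$ and so $\Ext^1(\weyl(\wt{\mu}),\weyl(\wt{\lambda}_{\rm min}'))=0$, whence applying $\Hom(\weyl(\wt{\mu}),-)$ to the extension shows $\Ext^1(\weyl(\wt{\mu}),T)$ is a quotient of $\Ext^1(\weyl(\wt{\mu}),P(\wt{\lambda}_{\rm min}))$, which vanishes for $\wt{\mu}\neq\wt{\lambda}_{\rm min}'$; for $\wt{\mu}=\wt{\lambda}_{\rm min}'$ the connecting map $\field=\End(\weyl(\wt{\lambda}_{\rm min}'))\to\Ext^1(\weyl(\wt{\lambda}_{\rm min}'),P(\wt{\lambda}_{\rm min}))=\field$ sends the identity to the nonzero class of the extension, hence is an isomorphism, forcing $\Ext^1(\weyl(\wt{\lambda}_{\rm min}'),T)=0$ as well. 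So $T$ is tilting; it is indecomposable because $P(\wt{\lambda}_{\rm min})$ is, the extension is non-split, and $\weyl(\wt{\lambda}_{\rm min}')$ is simple; and its highest weight is $\wt{\lambda}_{\rm max}$, so $T=T(\wt{\lambda}_{\rm max})$.

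For the reduction to general $\wt{\lambda}$: here I would rely on the explicit structure of $T(\wt{\lambda}_{\rm max})$ produced above --- in type $\tilde{A}_1$ it is a rigid module, essentially a chain of Weyl layers branching at most once near the bottom --- together with the fact that $\{\wt{\mu}:\wt{\mu}\domleq\wt{\lambda}\}$ is an order ideal, to see directly that $O_{\domleq\wt{\lambda}}(T(\wt{\lambda}_{\rm max}))$ is $\weyl$-filtered with sections exactly the $\weyl(\wt{\mu})$, $\wt{\mu}\domleq\wt{\lambda}$, and $\weyl(\wt{\lambda})$ on top, and is $\dweyl$-filtered as well: since $T(\wt{\lambda}_{\rm max})$ is tilting, $(T(\wt{\lambda}_{\rm max}):\weyl(\wt{\mu}))=(T(\wt{\lambda}_{\rm max}):\dweyl(\wt{\mu}))$, so $O_{\domleq\wt{\lambda}}(T(\wt{\lambda}_{\rm max}))$ and the largest quotient of $T(\wt{\lambda}_{\rm max})$ with composition factors in the ideal share the same composition factors, and the rigidity of the structure identifies them. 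Hence this submodule is tilting; it is indecomposable since $T(\wt{\lambda}_{\rm max})$ is, and it has highest weight $\wt{\lambda}$, so by uniqueness it equals $T(\wt{\lambda})$. Finally, tracking the internal grading through the $\weyl$-sections of $T(\wt{\lambda})$, which are graded shifts of the $\weyl(\wt{\mu})$ with $\wt{\mu}\domleq\wt{\lambda}$, yields the graded Weyl multiplicities as inverse Kazhdan--Lusztig polynomials of type $\tilde{A}_1$.

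The main obstacle is obtaining the precise extension and layer information used throughout: that in case (ii) a single extension of $\weyl(\wt{\lambda}_{\rm min}')$ upgrades $P(\wt{\lambda}_{\rm min})$ to a module with a $\dweyl$-filtration (so $\Ext^1(\weyl(\wt{\mu}),P(\wt{\lambda}_{\rm min}))$ is one-dimensional for $\wt{\mu}=\wt{\lambda}_{\rm min}'$ and zero otherwise), and that the truncation $O_{\domleq\wt{\lambda}}$ creates no new $\Ext^1$-obstruction; both require control of radical and socle series rather than mere composition multiplicities. This is exactly where the graded KLR / Soergel-bimodule picture from the earlier sections does the work: it makes the module structure rigid enough to pin down these extension groups, and it is simultaneously what exhibits the tilting Weyl multiplicities as inverse Kazhdan--Lusztig polynomials of $\tilde{A}_1$.
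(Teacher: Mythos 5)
Your formal skeleton is reasonable, but the proposal assumes exactly the facts that constitute the paper's actual content. In case (i) you pass from equality of characters of $P(\wt{\lambda}_{\rm min})$ and $I(\wt{\lambda}_{\rm min})$ to an isomorphism via "a comparison of head, socle and composition multiplicities", and in case (ii) you assert that "from the layer structure one computes" $\dim \Ext^1_{B_n^{\kappa}}(\weyl(\wt{\lambda}_{\rm min}'),P(\wt{\lambda}_{\rm min}))=1$. Neither of these follows from the decomposition numbers, BGG reciprocity, or any "rigidity" established prior to this theorem: composition multiplicities do not determine socles or $\Ext$-groups. Concretely, in a singular block one has $[P(\wt{\eta}):L(\wt{\eta})]=m+1$, so a priori the socle of $P(\wt{\eta})$ could contain up to $m+1$ copies of $L(\wt{\eta})$, and showing it contains exactly one is the paper's central technical theorem (Theorem~\ref{thm:endwallprojsimplesocle} and its extensions, Corollary~\ref{cor:singprojsimplesocle} and Theorem~\ref{thm:allprojsocle}). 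That result is proved by the explicit Jones--Wenzl/KLR computations of \S\ref{sec:singprojmod} (Proposition~\ref{prop:psicrosses} through Theorem~\ref{thm:othermonomials}) together with a tower-of-recollement induction; it is new work of this paper, not an input you can cite from "the earlier sections". Likewise the one-dimensionality of the relevant $\Ext^1$ in the paired case is obtained in the paper by an induction on $n$ using localisation $F^r$, Donkin's results \cite[Lemma~A4.1, Propositions~A2.2, A3.13]{donkbk}, and the socle theorem --- none of which your outline replaces. So the proposal, as written, begs the question at its two critical points.

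There are also two smaller logical slips. In the reduction to general $\wt{\lambda}$ you say $O_{\domleq\wt{\lambda}}(T(\wt{\lambda}_{\rm max}))$ "is indecomposable since $T(\wt{\lambda}_{\rm max})$ is"; a submodule of an indecomposable module need not be indecomposable, and the paper again needs Theorem~\ref{thm:allprojsocle} here (the socle of the truncation is as small as possible, forcing indecomposability), together with \cite[Lemma~A4.5, Proposition~A3.3]{donkbk} to see the truncation is tilting. Similarly, in case (ii) your indecomposability argument ("$P(\wt{\lambda}_{\rm min})$ is indecomposable, the extension is non-split, and $\weyl(\wt{\lambda}_{\rm min}')$ is simple") is not a valid implication in general: a non-split extension of a simple module by an indecomposable module can decompose. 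The paper sidesteps this by working with $T(\wt{\lambda}_{\rm max})$ itself (known to be indecomposable) and computing its Weyl multiplicities via \cite[Lemma~A4.1]{donkbk}, rather than building a candidate extension and then proving indecomposability. Your splicing argument showing that the extension has a $\dweyl$-filtration is fine, but without the socle and $\Ext^1$ computations the proof does not go through.
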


For $x,y \in \W$, write
\begin{equation*}
h^{x,y}(v)=\begin{cases}
v^{\len(x)-\len(y)} & \text{if $y \leq x$,} \\
0 & \text{otherwise,}
\end{cases}
\end{equation*}
which is the inverse Kazhdan--Lusztig polynomial of type $\tilde{A}_1$.
Using the decomposition numbers for $B_n^{\kappa}$ (first
calculated in \cite{martin-woodcock}), our construction implies 
the following Weyl multiplicities 
for the regular indecomposable tilting modules 
(see also Corollary~\ref{cor:grtiltchar}). 
Here for each regular weight $\wt{\lambda}$,
let $w_{\wt{\lambda}} \in \W$ 
such that 
$w_{\wt{\lambda}}(\wt{\lambda}_{\rm max})=\wt{\lambda}$.

\begin{thm*}
Let $\wt{\lambda},\wt{\mu}$ be regular weights for $B_n^{\kappa}$. Then
\begin{equation*}
(T(\wt{\mu}) : \weyl(\wt{\lambda}))=h^{w_{\wt{\lambda}},w_{\wt{\mu}}}(1) \text{.}
\end{equation*}
\end{thm*}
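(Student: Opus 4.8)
The plan is to combine the structural description of the indecomposable tilting modules given by the first theorem with the known decomposition numbers for $B_n^{\kappa}$. The key point is that Weyl multiplicities of tilting modules can be computed by a ``reciprocity''-type bookkeeping: once we know $T(\wt{\mu})$ in terms of projectives and Weyl modules, the multiplicity $(T(\wt{\mu}):\weyl(\wt{\lambda}))$ is forced by the Weyl filtration multiplicities of the projective modules, which in turn are given (via BGG reciprocity for quasi-hereditary algebras) by the decomposition numbers $[\weyl(\wt{\nu}):L(\wt{\lambda})]$ already computed in \cite{martin-woodcock}.

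First I would reduce to the maximal highest weight case. For a regular weight $\wt{\mu}$ with $w=w_{\wt{\mu}}$, the first theorem gives $T(\wt{\mu})=O_{\domleq\wt{\mu}}(T(\wt{\mu}_{\rm max}))$, so $(T(\wt{\mu}):\weyl(\wt{\lambda}))=(T(\wt{\mu}_{\rm max}):\weyl(\wt{\lambda}))$ whenever $\wt{\lambda}\domleq\wt{\mu}$ and is $0$ otherwise; since in type $\tilde A_1$ one has $\wt{\lambda}\domleq\wt{\mu}$ iff $w_{\wt{\mu}}\le w_{\wt{\lambda}}$ in the Bruhat order (which is a total order refined by length on each side of the orbit), this matches the support condition $y\le x$ built into $h^{x,y}$. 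So it suffices to compute $(T(\wt{\mu}_{\rm max}):\weyl(\wt{\lambda}))$ and check it equals $v^{\len(w_{\wt{\lambda}})-\len(w_{\wt{\mu}})}\big|_{v=1}=1$ for all $\wt{\lambda}$ in the orbit with $w_{\wt{\mu}}\le w_{\wt{\lambda}}$, i.e.\ for all regular $\wt{\lambda}$ in the orbit.

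Next I would compute $(T(\wt{\mu}_{\rm max}):\weyl(\wt{\lambda}))$ directly from cases (i) and (ii) of the structure theorem. Here $\wt{\mu}_{\rm max}=\wt{\lambda}_{\rm max}$ is the unique maximal weight in the orbit, so $w_{\wt{\mu}}=e$ and we must show $(T(\wt{\lambda}_{\rm max}):\weyl(\wt{\lambda}))=1$ for every regular $\wt{\lambda}$ in the orbit. In case (i), $T(\wt{\lambda}_{\rm max})=P(\wt{\lambda}_{\rm min})$, and by BGG reciprocity $(P(\wt{\lambda}_{\rm min}):\weyl(\wt{\lambda}))=[\weyl(\wt{\lambda}):L(\wt{\lambda}_{\rm min})]$; the Martin--Woodcock decomposition numbers in the doubly critical case say $[\weyl(\wt{\lambda}):L(\wt{\nu})]$ is $1$ when $\wt{\nu}=\wt{\lambda}$ or $\wt{\nu}$ is the weight ``one step down'' and $0$ otherwise, and when the orbit has a unique minimal weight every regular $\wt{\lambda}$ has $\wt{\lambda}_{\rm min}$ appearing (with multiplicity $1$) in its Weyl module, giving the desired $1$. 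In case (ii), the short exact sequence gives $(T(\wt{\lambda}_{\rm max}):\weyl(\wt{\lambda}))=(P(\wt{\lambda}_{\rm min}):\weyl(\wt{\lambda}))+(\weyl(\wt{\lambda}_{\rm min}'):\weyl(\wt{\lambda}))$; the second term is $\delta_{\wt{\lambda},\wt{\lambda}_{\rm min}'}$, and the first is again $[\weyl(\wt{\lambda}):L(\wt{\lambda}_{\rm min})]$, which is $1$ for every regular $\wt{\lambda}$ except precisely $\wt{\lambda}=\wt{\lambda}_{\rm min}'$ (where the Weyl module $\weyl(\wt{\lambda}_{\rm min}')$ does not contain $L(\wt{\lambda}_{\rm min})$), so the two contributions always sum to $1$. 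Finally I would match this against $h^{e,w_{\wt{\lambda}}}(1)$: since the orbit is a single $\tilde A_1$-string, $e\le w_{\wt{\lambda}}$ for all regular $\wt{\lambda}$, so $h^{e,w_{\wt{\lambda}}}(1)=1^{\len(w_{\wt{\lambda}})}=1$, completing the identification.

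The main obstacle I anticipate is the careful verification of the decomposition-number input and the orbit combinatorics in the two boundary/critical situations: precisely identifying which regular weights $\wt{\lambda}$ have $L(\wt{\lambda}_{\rm min})$ (resp.\ $L(\wt{\lambda}_{\rm min}')$) as a composition factor of $\weyl(\wt{\lambda})$, keeping track of the at-most-two minimal weights, and confirming that the Bruhat order on $\W$ of type $\tilde A_1$ (with the chosen $w_{\wt{\lambda}}$) is the total order along the orbit that makes the support condition $w_{\wt{\mu}}\le w_{\wt{\lambda}}$ agree with $\wt{\lambda}\domleq\wt{\mu}$ under $O_{\domleq\wt{\mu}}$. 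Once these combinatorial dictionaries are pinned down, everything reduces to the elementary observation that in all cases the relevant multiplicity is exactly $1$, matching $h^{x,y}(1)$ which, being a monomial with coefficient $1$, always evaluates to $1$ on its support.
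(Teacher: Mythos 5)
Your overall route --- reduce to the maximal highest weight case via Theorem~\ref{thm:resttilt}, then compute $(T(\wt{\lambda}_{\rm max}):\weyl(\wt{\lambda}))$ from the two cases of Theorem~\ref{thm:fundtilt} using BGG reciprocity and the decomposition numbers --- is exactly how the paper deduces Corollary~\ref{cor:grtiltchar}, so the architecture is the right one. However, your stated input is wrong at the crucial point. You claim that in the doubly critical case $[\weyl(\wt{\lambda}):L(\wt{\nu})]$ equals $1$ only for $\wt{\nu}=\wt{\lambda}$ or for the single weight ``one step down''. That bidiagonal pattern is the generic/singly-critical (Temperley--Lieb-like) picture; in the doubly critical case the decomposition numbers (Theorem~\ref{thm:grdecompblob}, originating in Martin--Woodcock) are $[\weyl(\wt{\mu}):L(\wt{\lambda})]=1$ whenever $w_{\wt{\mu}}\leq w_{\wt{\lambda}}$, so $L(\wt{\lambda}_{\rm min})$ occurs once in \emph{every} Weyl module of the linkage class except, in the paired case, $\weyl(\wt{\lambda}_{\rm min}')$. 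Your later sentences silently use this correct statement (``every regular $\wt{\lambda}$ has $\wt{\lambda}_{\rm min}$ appearing in its Weyl module'' contradicts your own bidiagonal claim), and the argument only works with it: if the numbers really were bidiagonal, BGG reciprocity would give $P(\wt{\lambda}_{\rm min})$ just two Weyl factors, $T(\wt{\lambda}_{\rm max})$ would have at most three, and the asserted formula $(T(\wt{\mu}):\weyl(\wt{\lambda}))=1$ for all further-out $\wt{\lambda}$ would fail. So the misstated input is not harmless; replace it by Theorem~\ref{thm:grdecompblob} and your case analysis (i)/(ii) then goes through.

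A second, smaller point: the equivalence ``$\wt{\lambda}\domleq\wt{\mu}$ iff $w_{\wt{\mu}}\leq w_{\wt{\lambda}}$'' is not literally true for the order of \S\ref{sec:wtsbiparts}, which compares $|\lambda_1-\lambda_2|$: the two weights of a given length $\len(w)\geq 1$ sit on opposite sides of the origin and are generally comparable in that order although they are Bruhat-incomparable. The delicate part of the support condition is precisely the vanishing $(T(\wt{\mu}):\weyl(\wt{\lambda}'))=0$ for the equal-length partner $\wt{\lambda}'$ of $\wt{\mu}$, and this is carried by Theorem~\ref{thm:resttilt} together with the structure of $T(\wt{\lambda}_{\rm max})$ (equivalently by the graded refinement in Corollary~\ref{cor:grtiltchar}); it cannot be waved through as a dominance-versus-Bruhat dictionary, since with the $|d|$-order as literally defined the truncation $O_{\domleq\wt{\mu}}$ would not exclude $\wt{\lambda}'$. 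You do flag this at the end as something to pin down, but in the body you assert it; the clean fix is to take the partial order in the sense the paper actually uses in \S\ref{sec:mainresults} (where same-length weights are the incomparable ``paired'' ones) and check that the Weyl support of $O_{\domleq\wt{\mu}}(T(\wt{\mu}_{\rm max}))$ is $\{\wt{\nu}: w_{\wt{\mu}}\leq w_{\wt{\nu}}\}$.
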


See Figure~\ref{fig:intro} for an example depicting the weight and alcove labels 
used in these theorems.

\begin{figure}
    \centering
    \includegraphics{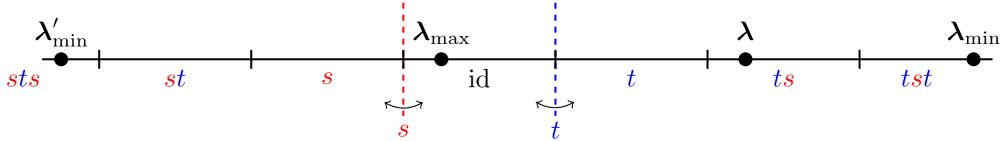}
    \caption{The (classical) weights $\wt{\lambda}_{\rm max}$, $\wt{\lambda}_{\rm min}$, and $\wt{\lambda}_{\rm min}'$, with alcoves labelled by $w_{\wt{\lambda}}$.}
    \label{fig:intro}
\end{figure}

\medskip

Our proofs depends in a crucial way not only on the decomposition
numbers and structure of the Weyl modules from \cite{martin-woodcock},
but also on the \emph{graded} representation
theory of the blob algebra. The existence of a non-trivial 
`hidden' grading on the blob algebra
is a consequence of the Brundan--Kleshchev
isomorphism \cite{brundan-kleshchev} between cyclotomic Hecke algebras
and KLR algebras, which are graded. (This explains why previous work such as 
\cite{martin-ryom-hansen,reeves} on full tilting modules did not get
very close to determining the indecomposable tilting modules.) As a bonus
we obtain the graded Weyl multiplicities of the graded 
indecomposable tilting modules with no extra work. Our
result is perhaps the first example of how the hidden grading on 
the blob algebra can be used to solve problems which a priori are
not graded at all.

We also make extensive use of KLR diagrammatics for the 
KLR presentation of the blob algebra, 
as described in \cite{libedinskyplaza}. The classical diagrammatic
calculus for the blob algebra in terms of `Temperley--Lieb diagrams 
with blobs' gives a cellular basis which is integral and multiplicative.
However, it is difficult in general to describe the simple modules
in terms of this basis. By contrast, KLR algebras have a
complicated diagram calculus reflecting the KLR presentation, in which
certain fixed parameter values are `built-in' and cannot be changed.
On the other hand, KLR diagrams give more information about the
structure of projective modules, in particular whether certain 
composition factors 
(or extensions between composition factors) are present. 
Fortunately for us, we will only need a
simplified (but still complicated) version of the KLR diagram calculus.


Much of this machinery applies, at least in principle,
to the generalised blob algebras
(cf.~e.g.~\cite{bowmancoxspeyer}, \cite{martin-woodcock2}, 
\cite{libedinskyplaza}).
For example, the level $l$ generalised blob algebras are controlled by 
an affine Weyl group $\W_l$ of type $\tilde{A}_{l-1}$, and there is a 
corresponding KLR presentation. For $\wt{\lambda}$ a regular weight
for the level $l$ generalised blob algebra
and $\wt{\lambda}_{\rm max}$ maximal in the $\W_l$-orbit of $\wt{\lambda}$,
let $w_{\wt{\lambda}} \in \W_l$ to be the unique element in the
affine Weyl group such that 
$w_{\wt{\lambda}}(\wt{\lambda}_{\rm max})=\wt{\lambda}$. For 
$x,y \in \W_l$, write
$h^{x,y}$ for the inverse Kazhdan--Lusztig polynomial of 
type $\tilde{A}_{l-1}$.
The following conjecture is the natural extension of our Weyl
multiplicities result.


\begin{conj*}
Let $\wt{\lambda},\wt{\mu}$ be weights for the level $l$ generalised 
blob algebra over a field of characteristic $0$. Then
\begin{equation*}
(T(\wt{\mu}) : \weyl(\wt{\lambda}))=h^{w_{\wt{\lambda}},w_{\wt{\mu}}}(1) \text{.}
\end{equation*}
\end{conj*}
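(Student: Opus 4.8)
The plan is to deduce the conjecture from a Ringel-duality statement, exactly as the $l=2$ case is deduced here, but with the explicit module-by-module constructions of this paper replaced by character-theoretic input once $l\geq 3$. Two external facts drive the argument. First, the graded decomposition numbers of the level $l$ generalised blob algebra are known to be (parabolic) Kazhdan--Lusztig polynomials of type $\tilde{A}_{l-1}$, by Bowman--Cox--Speyer \cite{bowmancoxspeyer}, generalising \cite{martin-woodcock}. Second, for any graded quasi-hereditary algebra $A$ with Ringel dual $R$ one has $(T(\wt{\mu}):\weyl(\wt{\lambda}))=[\weyl_R(\wt{\lambda}):L_R(\wt{\mu})]$, with the quasi-hereditary order reversed in $R$; so the tilting multiplicities for $A$ are decomposition numbers for $R$. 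Note that the construction theorem of this paper is itself essentially a Ringel self-duality statement for $B_n^{\kappa}$: case~(i) reads $T(\wt{\lambda}_{\rm max})=P(\wt{\lambda}_{\rm min})$, i.e.\ the indecomposable tilting modules are the indecomposable projectives reindexed by the order-reversing map $\wt{\lambda}_{\rm max}\leftrightarrow\wt{\lambda}_{\rm min}$ on $\W$-orbits. The conjecture is then the level $l$ analogue of this self-duality, together with the Coxeter-combinatorial identity matching the decomposition numbers of the ``flipped'' blocks with the inverse Kazhdan--Lusztig polynomials $h^{x,y}$.

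Concretely I would carry this out in three steps. First, generalise the construction theorem to level $l$: using the KLR presentation of the generalised blob algebra \cite{libedinskyplaza} and its hidden grading, describe $T(\wt{\lambda}_{\rm max})$ for a $\W_l$-orbit as an iterated extension built from the projective covers $P(\wt{\lambda}_{\rm min})$ of the (now up to $l$) minimal weights, and deduce $T(\wt{\lambda})=O_{\domleq\wt{\lambda}}(T(\wt{\lambda}_{\rm max}))$ by the kind of submodule argument used in Theorems~\ref{thm:fundtilt} and~\ref{thm:resttilt}. Second, read off the graded Weyl multiplicities of $T(\wt{\lambda}_{\rm max})$ from the structure of $P(\wt{\lambda}_{\rm min})$ via graded reciprocity and the decomposition matrix above. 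Third, restrict: since $O_{\domleq\wt{\lambda}}$ deletes precisely the Weyl factors $\weyl(\wt{\mu})$ with $\wt{\mu}\not\domleq\wt{\lambda}$, the multiplicities for $T(\wt{\lambda})$ follow, and a direct computation in the Coxeter group of type $\tilde{A}_{l-1}$ identifies the answer with $h^{w_{\wt{\lambda}},w_{\wt{\mu}}}(1)$ (and, tracking degrees, with $h^{w_{\wt{\lambda}},w_{\wt{\mu}}}(v)$). For singular $\wt{\lambda}$ one replaces $w_{\wt{\lambda}}$ by a minimal coset representative and $h$ by the corresponding parabolic inverse Kazhdan--Lusztig polynomial, matching the singular blocks.

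A more conceptual --- and perhaps unavoidable --- alternative is to bypass these steps altogether by identifying each block of the level $l$ generalised blob algebra, via the Brundan--Kleshchev isomorphism \cite{brundan-kleshchev} and the diagrammatic realisation of \cite{libedinskyplaza}, with the corresponding (anti)spherical singular block of the Hecke category of type $\tilde{A}_{l-1}$, matching $\weyl,\dweyl,L,T$ with standard, costandard, simple and tilting objects and $\domleq$ with the alcove order. In that setting the Soergel character formula (Elias--Williamson) gives the decomposition numbers, and the Koszul--Ringel duality built into the Hecke category gives the tilting characters as inverse Kazhdan--Lusztig polynomials, graded, with no further work.

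The crux, either way, is the passage from $l=2$ to $l\geq 3$. For $\tilde{A}_1$ the inverse Kazhdan--Lusztig polynomials are the monomials $v^{\len(x)-\len(y)}$, every Weyl module has Loewy length at most $2$, and one can genuinely build each $T(\wt{\lambda})$ by hand (as this paper does); for $l\geq 3$ the Weyl modules become arbitrarily complicated, the $h^{x,y}$ are honest polynomials, and a hands-on construction is hopeless. One therefore needs (i) a precise dictionary between the generalised blob algebra and the parabolic affine Hecke category, including compatibility of the KLR grading with the geometric (Koszul) grading --- without which the graded statement is not even meaningful --- and (ii) the level $l$ Ringel self-duality, equivalently the level $l$ analogue of the construction theorem, in exactly the form the argument needs. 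Neither seems to be available off the shelf, and this is where essentially all of the difficulty lies.
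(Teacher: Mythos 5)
This statement is stated in the paper as an open conjecture: the paper offers no proof of it, and explicitly identifies the obstruction --- the lack of knowledge of the (graded) structure of the Weyl modules and projective modules for the generalised blob algebras in levels $l>2$. Your text is accordingly not a proof but a research programme, and you concede as much in your final paragraph: neither the level-$l$ analogue of the construction theorem (the Ringel self-duality statement in the precise form your reduction needs) nor a graded dictionary between the generalised blob algebra and the parabolic affine Hecke category is available, and these are exactly the steps carrying all the content. Until one of them is supplied, the argument does not get off the ground.

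Two more specific gaps are worth naming. First, your step two assumes that knowing the graded decomposition numbers (even granting that \cite{bowmancoxspeyer} supplies them in the required form) suffices to determine the Weyl multiplicities of $T(\wt{\lambda}_{\rm max})$ via reciprocity; but reciprocity only converts decomposition numbers into Weyl multiplicities of \emph{projectives}, and the whole difficulty is showing that $T(\wt{\lambda}_{\rm max})$ is built from the $P(\wt{\lambda}_{\rm min})$ in the stated way. In the $l=2$ case this required the socle computations of Theorem~\ref{thm:endwallprojsimplesocle} and Theorem~\ref{thm:allprojsocle}, which rest on KLR-diagrammatic calculations with Jones--Wenzl projectors and on Lemma~\ref{lem:subqtab}, i.e.\ on the fact that in type $\tilde{A}_1$ every relevant Kazhdan--Lusztig polynomial is a monomial, so the key weight spaces (as in Lemma~\ref{lem:weylsubquot} and Lemma~\ref{lem:possible-socle}) are one-dimensional. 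For $l\geq 3$ the polynomials $h^{x,y}$ are no longer monomials and those one-dimensionality arguments fail, which is precisely why the paper leaves the statement as a conjecture. Second, reading case (i) of the main theorem as ``Ringel self-duality'' elides case (ii), where $T(\wt{\lambda}_{\rm max})$ is a non-split extension of $\weyl(\wt{\lambda}_{\rm min}')$ by $P(\wt{\lambda}_{\rm min})$ rather than a projective; any level-$l$ self-duality statement would have to account for up to $l$ minimal weights per orbit and would itself essentially encode the conjectured tilting characters, so invoking it is circular unless it is proved independently.
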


The biggest obstacle to proving this conjecture is the lack of knowledge
about the (graded) structure of the Weyl modules and the projective modules in
higher levels. In the modular setting, it is not immediately 
obvious what should replace the inverse Kazhdan--Lusztig polynomials
$h^{x,y}$ above, although we have some ideas (see 
Remark~\ref{rem:conclusion}) based on the `Blob vs Soergel'
conjecture of Libedinsky--Plaza \cite{libedinskyplaza}.



\medskip

The layout of the paper is as follows.
In \S\ref{sec:prelim} we define the doubly critical blob algebra $B_n^{\kappa}$
using the KLR presentation and describe the corresponding weight combinatorics.
In \S\ref{sec:cellularity} we summarise the quasi-hereditary representation 
theory of $B_n^{\kappa}$.
In \S\ref{sec:bases} we exploit the KLR presentation to obtain bases for the
indecomposable projective modules and their composition factors.
In \S\ref{sec:singprojmod} we get to work with KLR diagrammatic calculations
which give the main result in the case of singular weights.
Finally in \S\ref{sec:mainresults} we use the singular version to prove
the main result for all weights.

\section*{Acknowledgements}

We thank EPSRC for  financial support  under grant EP/L001152/1.

\section{Preliminaries: the blob algebra \texorpdfstring{$B_n^\kappa$}{B_n^k}}
\label{sec:prelim}


%
%


Suppose $e>1$ is an integer and let $I=\Z/e\Z$. An \defnemph{adjacency-free bicharge} is an ordered pair $\kappa=(\kappa_1,\kappa_2) \in I^2$ such that $\kappa_1 \neq \kappa_2,\kappa_2 \pm 1$ (this implicitly requires $e \geq 4$). For $i \in I$ define
\begin{equation*}
\langle i|\kappa \rangle=\begin{cases}
1 & \text{if $i=\kappa_1$ or $i=\kappa_2$,} \\
0 & \text{otherwise.}
\end{cases}
\end{equation*}
For any $n \in \N$, the symmetric group $S_n$ acts on the set of tuples $I^n$ by permutation. We write $s_r$ for the simple transposition $(r \; r\!+\!1)$ in the symmetric group $S_n$.

\begin{defn} \label{de:b1}
  Let $\field$ be a field, $n,e \in \N$,
  and $\kappa$ be an adjacency-free bicharge.
  The (doubly critical) \defnemph{blob algebra} $B_n^{\kappa}$ over $\field$ is the $\Z$-graded
  $\field$-algebra generated by
\begin{align}
\psi_r & & & \text{for $1 \leq r \leq n-1$,} \\
y_r & & & \text{for $1 \leq r \leq n$,} \\
e(\res{i}) & &  & \text{for $\res{i} \in I^n$,}
\end{align}
subject to relations
\begin{align}
e(\res{i})e(\res{j}) & =\delta_{\res{i},\res{j}}e(\res{i}) & & \text{for all $\res{i},\res{j} \in I^n$} \\
\sum_{\res{i} \in I^n} e(\res{i}) & =1 & & \\
y_r e(\res{i}) & =e(\res{i}) y_r & & \\
\psi_r e(\res{i}) & =e(s_r \res{i}) \psi_r & & \\
y_r y_s & =y_s y_r & & \\
\psi_r y_s & =y_s \psi_r & & \text{when $s \neq r,r+1$} \\
\psi_r \psi_s & =\psi_s \psi_r & & \text{when $|r-s|>1$} \\
\label{eq:yslide1}
\psi_r y_{r+1} e(\res{i}) & =(y_r \psi_r-\delta_{i_r,i_{r+1}})e(\res{i}) & & \\
\label{eq:yslide2}
y_{r+1} \psi_r e(\res{i}) & =(\psi_r y_r-\delta_{i_r,i_{r+1}})e(\res{i}) & & \\
\label{eq:psisq}
\psi_r^2 e(\res{i}) & =\begin{cases}
e(\res{i}) & \text{if $i_{r+1} \neq i_r,i_r \pm 1$} \\
0 & \text{if $i_{r+1}=i_r$} \\
(y_{r+1}-y_r)e(\res{i}) & \text{if $i_{r+1}=i_r+1$} \\
(y_r-y_{r+1})e(\res{i}) & \text{if $i_{r+1}=i_r-1$}
\end{cases} & & \\
\psi_r \psi_{r+1} \psi_r e(\res{i}) & =\begin{cases}
(\psi_{r+1} \psi_r \psi_{r+1} - 1)e(\res{i}) & \text{if $i_{r+2}=i_r=i_{r+1}-1$} \\
(\psi_{r+1} \psi_r \psi_{r+1} + 1)e(\res{i}) & \text{if $i_{r+2}=i_r=i_{r+1}+1$} \\
\psi_{r+1} \psi_r \psi_{r+1} e(\res{i}) & \text{otherwise}
\end{cases} & & \\
\label{eq:kinpres}
y_1^{\langle i_1 | \kappa \rangle} e(\res{i}) & =0 & & \\
\label{eq:blobdefiningreln}
e(\res{i}) & =0 & & \text{when $i_2=i_1+1$}
\end{align}
and a grading defined by
\begin{align*}
\deg e(\res{i}) & =0 \text{,} & \deg y_r e(\res{i}) & =2 \text{,} & \deg \psi_r e(\res{i}) & =\begin{cases}
1 & \text{if $i_{r+1}=i_r \pm 1$,} \\
-2 & \text{if $i_{r+1}=i_r$,} \\
0 & \text{otherwise.}
\end{cases}
\end{align*}
\end{defn}


In the presentation in Definition~\ref{de:b1},
each $e(\res{i})$ is a (non-central) idempotent,
each $\psi_r$ is analogous to the simple transposition
$s_r$
in the symmetric group $S_n$, 
and each $y_r$ is akin to the nilpotent part of the corresponding Jucys--Murphy element in the symmetric group algebra $\field S_n$.

There is also a presentation of this algebra in terms of \defnemph{KLR diagrams} \cite[\S~3.2]{libedinskyplaza}. A KLR diagram with $n$ strings consists of $n$ paths of the form $p:[0,1] \rightarrow \mathbb{R} \times [0,1]$ satisfying the following properties:
\begin{itemize}
\item for each path $p$ we have $p(0)=(x,0)$ and $p_r(1)=(x',1)$ for some $x,x' \in \mathbb{R}$;

\item all intersections are transversal;

\item there are no triple intersections;

\item each path may be decorated with a finite number of dots at non-intersection points.
\end{itemize}
Each path $p$ is also labelled with a residue $i \in I$. 

We consider KLR diagrams up to isotopy; in other words, we are allowed to move these paths continuously as long as the properties above still hold and no intersections are added or removed. The \defnemph{bottom} (resp.~\defnemph{top}) of a KLR diagram is the sequence of residues labelling the paths, ordered by the relation $p \prec p'$ if $p(0)<p'(0)$ (resp.~$p \prec p'$ if $p(1)<p'(1)$). The product of two diagrams $D$ and $D'$ is defined to be their vertical concatenation (with $D$ on top of $D'$) whenever the bottom of $D$ equals the top of $D'$. Otherwise the product is defined to be $0$. The diagrammatic blob algebra $B_n^{\kappa}$ is then the set of all $\field$-linear combinations of KLR diagrams with $n$ strings, with a diagrammatic product defined by $\field$-linear extension, subject to the following relations:
\begin{align*}
\raisebox{-0.6cm}{%
\includegraphics[scale=0.75]{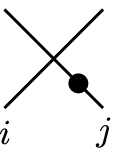}} & =
\raisebox{-0.6cm}{%
\includegraphics[scale=0.75]{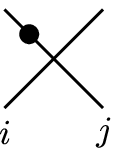}}-\delta_{ij}\;
\raisebox{-0.6cm}{%
\includegraphics[scale=0.75]{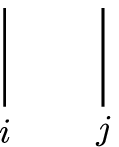}} \\
\raisebox{-0.6cm}{%
\includegraphics[scale=0.75]{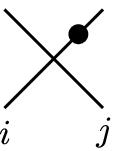}} & =
\raisebox{-0.6cm}{%
\includegraphics[scale=0.75]{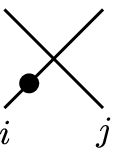}}-\delta_{ij}\;
\raisebox{-0.6cm}{%
\includegraphics[scale=0.75]{fig/twostraight.eps}} \\
\raisebox{-1.0cm}{%
\includegraphics[scale=0.75]{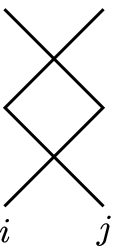}} & =
\begin{cases}
\raisebox{-1.0cm}{%
\includegraphics[scale=0.75]{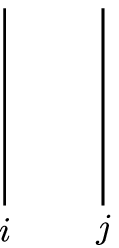}} & \text{if $|i-j|>1$,} \\
\raisebox{-1.0cm}{%
\includegraphics[scale=0.75]{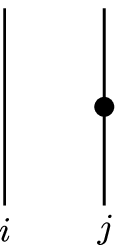}}\;-\;
\raisebox{-1.0cm}{%
\includegraphics[scale=0.75]{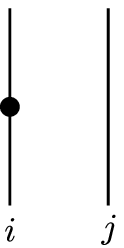}} & \text{if $j=i+1$,} \\
\raisebox{-1.0cm}{%
\includegraphics[scale=0.75]{fig/twostraightleftdot.eps}}\;-\;
\raisebox{-1.0cm}{%
\includegraphics[scale=0.75]{fig/twostraightrightdot.eps}} & \text{if $j=i-1$,} \\
0 & \text{if $i=j$.}
\end{cases} \\
\raisebox{-1.25cm}{%
\includegraphics[scale=0.75]{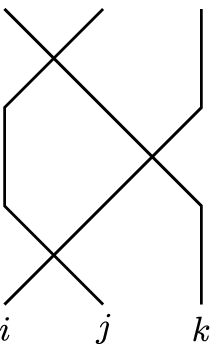}} & =
\raisebox{-1.25cm}{%
\includegraphics[scale=0.75]{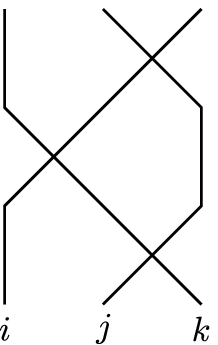}}+\alpha\;
\raisebox{-1.25cm}{%
\includegraphics[scale=0.75]{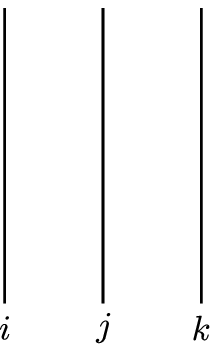}}
\end{align*}
in all regions of a KLR diagram, where $\alpha=1$ when $i=k=j-1$, $\alpha=-1$ when $i=k=j+1$, and $\alpha=0$ otherwise, as well as the relations
\begin{align*}
\raisebox{-1.0cm}{%
\includegraphics[scale=0.75]{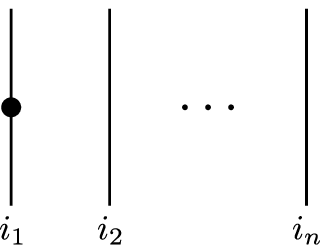}} & =0\text{,} & & \text{if $i_1=\kappa_j$ for some $j$,} \\
\raisebox{-1.0cm}{%
\includegraphics[scale=0.75]{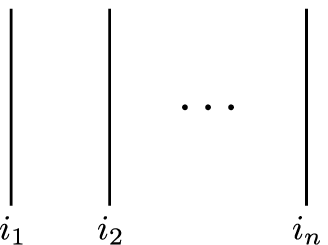}} & =0\text{,} & & \text{if $i_1 \neq \kappa_j$ for all $j$,} \\
\raisebox{-1.0cm}{%
\includegraphics[scale=0.75]{fig/cyclonodot.eps}} & =0\text{,} & & \text{if $i_2=i_1+1$.}
\end{align*}

If $\expr{w}=\expr{s_{r_1} s_{r_2} \dotsm s_{r_k}}$ is a reduced expression in $S_n$, we write $\psi_{\expr{w}}=\psi_{r_1} \psi_{r_2} \dotsm \psi_{r_k}$ for the product of the corresponding $\psi$-generators. Diagrammatically $\psi_{\expr{w}}$ (or more precisely, $\psi_{\expr{w}}e(\res{i})$ for some $\res{i} \in I^n$) looks like the wiring diagram for $\expr{w}$. We also write $(\overline{\phantom{\psi}})$ for the unique anti-involution which fixes each of the generators $\psi_r$, $y_r$, and $e(\res{i})$.

\subsection{Locality}
\label{sec:locality}

We call a relation in the generators of $B_n^{\kappa}$ \defnemph{local} if the relation still holds when the indices of the generators are shifted by some amount. All the relations in Definition \ref{de:b1} above are local except for \eqref{eq:kinpres} and \eqref{eq:blobdefiningreln}. The relation \eqref{eq:kinpres} is also the only one in which $\kappa$ appears. Incidentally it is immediately clear that all other relations do not depend on precise values of sequences $\res{i} \in I^n$ indexing the idempotents, but only on relative differences $i_{r+1}-i_r$ for some integer $1 \leq r \leq n$. In fact for any $i \in I$, if $\kappa'=(\kappa_1+i,\kappa_2+i)$ then we have $B_n^{\kappa} \iso B_n^{\kappa'}$, and this isomorphism maps $e(\res{i}) \mapsto e(\res{i}+(i,\dotsc,i))$. Thus $B_n^{\kappa}$ only depends on the difference $\kappa_1-\kappa_2 \in I$ up to isomorphism.

When simplifying KLR diagrams we adopt the convention of circling regions in some colour wherever we apply a local relation only involving $\psi$-generators. These circles are only a helpful annotation and should \emph{not} be considered an intrinsic part of the diagram. Similarly whenever we apply relations \eqref{eq:yslide1} or \eqref{eq:yslide2} in the distinct residue case, we will draw a coloured arrow parallel to the string to indicate how the $y$-generator `slides' along the string. The most important non-local relation which we will use takes the following form.

\begin{lem} \label{lem:yjump}
Let $\res{i} \in I^n$ and $1 \leq r \leq n-1$ be an integer such that $|i_r-i_{r+1}|=1$ but $e(s_r \res{i})=0$ in $B_n^{\kappa}$. Then $y_{r+1} e(\res{i})=y_r e(\res{i})$. 
\end{lem}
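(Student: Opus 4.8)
The plan is to compute $\psi_r^2 e(\res{i})$ in two different ways and compare. On the one hand, since $|i_r - i_{r+1}| = 1$ we are in one of the last two cases of relation~\eqref{eq:psisq}: if $i_{r+1} = i_r + 1$ then $\psi_r^2 e(\res{i}) = (y_{r+1}-y_r)e(\res{i})$, and if $i_{r+1} = i_r - 1$ then $\psi_r^2 e(\res{i}) = (y_r - y_{r+1})e(\res{i})$. In either case, $\psi_r^2 e(\res{i}) = \pm (y_{r+1}-y_r)e(\res{i})$.

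On the other hand, I would use the idempotent-commutation relation $\psi_r e(\res{i}) = e(s_r \res{i})\psi_r$ to rewrite
\begin{equation*}
\psi_r^2 e(\res{i}) = \psi_r \bigl(\psi_r e(\res{i})\bigr) = \psi_r e(s_r \res{i}) \psi_r \text{.}
\end{equation*}
By hypothesis $e(s_r \res{i}) = 0$ in $B_n^{\kappa}$, so the right-hand side is $0$. Comparing with the previous paragraph gives $\pm(y_{r+1}-y_r)e(\res{i}) = 0$, hence $y_{r+1}e(\res{i}) = y_r e(\res{i})$, as required.

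I do not expect any genuine obstacle here: the argument is a two-line manipulation of the defining relations. The only points worth flagging are (i) that the sign discrepancy between the $i_{r+1} = i_r \pm 1$ cases of \eqref{eq:psisq} is irrelevant, since the whole expression is being set equal to zero; and (ii) that this is precisely the place where non-locality enters the calculus — the hypothesis $e(s_r \res{i}) = 0$ (despite $i_r, i_{r+1}$ being adjacent) can only arise from the non-local relation \eqref{eq:blobdefiningreln}, which is why the conclusion $y_{r+1}e(\res{i}) = y_r e(\res{i})$ is itself a non-local identity.
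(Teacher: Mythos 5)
Your proposal is correct and is essentially the paper's own argument: both use relation~\eqref{eq:psisq} to write $\psi_r^2 e(\res{i})=\pm(y_{r+1}-y_r)e(\res{i})$ and then kill $\psi_r^2 e(\res{i})=\psi_r e(s_r\res{i})\psi_r$ via the hypothesis $e(s_r\res{i})=0$. Your remarks on the sign and on non-locality are accurate but add nothing beyond the paper's two-line computation.
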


\begin{proof}
Apply \eqref{eq:psisq} to obtain
\begin{equation*}
y_{r+1} e(\res{i})=y_r e(\res{i}) \pm \psi_r^2 e(\res{i})=y_r e(\res{i}) \pm \psi_r e(s_r \res{i}) \psi_r=y_r e(\res{i}) \text{.}
\end{equation*}
\end{proof}

When applying Lemma \ref{lem:yjump} to a KLR diagram, we will draw a dashed coloured line transverse to the strings to indicate which idempotent $e(\res{i})$ we are using, and a coloured arrow to show where the $y$-generator `jumps' to a different string.

\subsection{The classical blob algebra}

Definition \ref{de:b1} presents the blob algebra as a quotient of a cyclotomic KLR algebra as in \cite{plaza-ryom-hansen}, with the same generators and all the same relations plus the extra relation \eqref{eq:blobdefiningreln}. This does not correspond to the original definition of the blob algebra in \cite{martin-saleur} as an extension of the Temperley--Lieb algebra. However, our definition is equivalent in many cases due to the Brundan--Kleshchev isomorphism \cite[Theorem 1.1]{brundan-kleshchev} between cyclotomic KLR algebras and cyclotomic Hecke algebras. 

\begin{thm}[{\cite[Corollary 3.6]{plaza-ryom-hansen}}] \label{thm:KLR-classicblob-equiv}
Suppose $e>1$ is an integer which is not a multiple of the characteristic of $\field$. Let $m$ be an integer with $1<m<e-1$. 
Set
  $\kappa=(0,m)$, an adjacency-free bicharge.
  Then $B_n^{\kappa}$ has a presentation as an ungraded algebra over $\field$, with generators $U_r$ for $0 \leq r \leq n-1$ subject to the following relations:
\begin{align*}
U_r^2& = -[2]U_r & & \text{if $1 \leq r \leq n-1$,} \\
U_r U_s U_r& =U_r & & \text{if $|r-s|=1$ and $1 \leq r,s \leq n-1$,} \\
U_r U_s& = U_s U_r & & \text{if $|r-s|>1$ and $0 \leq r,s \leq n-1$,} \\
U_1 U_0 U_1 & =[m+1] U_1 \text{,} & & \\
U_0^2& =-[m] U_0 \text{,}
\end{align*} 
where $[k]=[k]_q=q^{-k+1}+q^{-k+3}+\dotsb+q^{k-1}$, $q$ is an $e'$th primitive root of unity in $\field$, and 
\begin{equation*}
e'=\begin{cases}
2e & \text{if $e$ is even,} \\
e & \text{otherwise.}
\end{cases}
\end{equation*}
\end{thm}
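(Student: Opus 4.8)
The plan is to deduce this from the Brundan--Kleshchev isomorphism \cite{brundan-kleshchev}. By Definition~\ref{de:b1}, $B_n^{\kappa}$ is presented as a cyclotomic KLR algebra over the cyclic quiver with vertex set $I=\Z/e\Z$ — concretely the cyclotomic quotient $R_n^{\Lambda}$ for $\Lambda=\Lambda_{\kappa_1}+\Lambda_{\kappa_2}$, the relation \eqref{eq:kinpres} being exactly the cyclotomic relation for this $\Lambda$ — modulo the single extra relation \eqref{eq:blobdefiningreln}, which kills the idempotents $e(\res{i})$ with $i_2=i_1+1$. So it suffices to (i) transport $R_n^{\Lambda}$ to a cyclotomic Hecke algebra $\mathcal{H}$ of type $B_n$ via Brundan--Kleshchev, (ii) identify the image of \eqref{eq:blobdefiningreln} inside $\mathcal{H}$, and (iii) recognise the resulting quotient as the algebra with the stated $U_r$-presentation.

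For step (i) I would take $\mathcal{H}=\mathcal{H}_n(q,Q_1,Q_2)$, the cyclotomic Hecke algebra of level $2$ (equivalently, the Hecke algebra of type $B_n$), with $q$ a primitive $e'$-th root of unity and cyclotomic parameters $Q_j=q^{2\kappa_j}$ up to the standard normalisations; the two cases for $e'$ are precisely what makes $q^2$ a primitive $e$-th root of unity, matching the quiver. Brundan--Kleshchev then supply an explicit isomorphism $\mathcal{H}\xrightarrow{\sim}R_n^{\Lambda}$ sending the Hecke generators $T_0,T_1,\dots,T_{n-1}$ to prescribed $\field$-linear combinations of the $\psi_r$, $y_r$ and $e(\res{i})$, with coefficients that are rational in the $y_r$ but invertible on each idempotent summand. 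Composing with the projection $R_n^{\Lambda}\twoheadrightarrow B_n^{\kappa}$ realises $B_n^{\kappa}$ as a quotient of $\mathcal{H}$.

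For steps (ii) and (iii) the key computation is to chase the `extra' type-$B$ generator $T_0$ and the idempotents $e(\res{i})$ with $i_2=i_1+1$ through the Brundan--Kleshchev formulas, and check that imposing \eqref{eq:blobdefiningreln} forces exactly the quadratic relations $U_0^2=-[m]U_0$ and $U_1U_0U_1=[m+1]U_1$, once one puts $U_0$ equal to a suitable rescaling of the image of $T_0$ and $U_r$ equal to a suitable rescaling of the image of $T_r$ (shifted by a scalar) for $r\geq1$; the Temperley--Lieb relation $U_r^2=-[2]U_r$ and the braid/commutation relations for $r\geq1$ are the standard translation of the type-$A$ part and are insensitive to \eqref{eq:blobdefiningreln}. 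Well-definedness of the resulting map $U_r\mapsto(\text{KLR element})$ is then relation-checking, surjectivity is clear since the $U_r$ generate, and injectivity follows from a dimension count: both $B_n^{\kappa}$ and the abstract $U_r$-algebra carry cellular bases indexed by the one-column bipartitions of \S\ref{sec:wtsbiparts} (the same data indexing the Weyl modules $\weyl(\wt{\lambda})$), so the two algebras have equal dimension.

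The main obstacle is the bookkeeping in step (ii): one must pin down all normalisations — the power of $q$ relating $T_r$ to $U_r$, the signs, and the shift by $m$ coming from $\kappa=(0,m)$ — so that the quantum integers $[k]_q$ appear with precisely the coefficients $-[2]$, $[m+1]$, $-[m]$, rather than some Laurent-monomial twist. The hypotheses enter exactly here: $e$ not a multiple of the characteristic of $\field$ guarantees a primitive $e'$-th root of unity exists, while $1<m<e-1$ guarantees that $\kappa=(0,m)$ is adjacency-free — so that \eqref{eq:blobdefiningreln} is the only non-generic relation and $B_n^{\kappa}$ is genuinely the doubly critical blob algebra — and also that $[2]$, $[m]$, $[m+1]$ are all nonzero. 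This is the content of \cite[Corollary 3.6]{plaza-ryom-hansen}, which I would simply cite; the sketch above is how that result is obtained.
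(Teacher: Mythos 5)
Your route is the same as the paper's: this theorem is not proved in the paper at all, but simply imported as \cite[Corollary 3.6]{plaza-ryom-hansen}, and your sketch of how that corollary is obtained (Brundan--Kleshchev transport of the cyclotomic KLR presentation to the type $B$ cyclotomic Hecke algebra, identification of the ideal generated by the idempotents killed by \eqref{eq:blobdefiningreln}, then a cellular/dimension count for injectivity) is consistent with how the cited authors argue.

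One concrete point is missing, and it is the only substantive content the paper adds beyond the citation: Corollary 3.6 of \cite{plaza-ryom-hansen} is \emph{not} stated for the bicharge $\kappa=(0,m)$, but for $\kappa=(k,-k)$ (with $2k\equiv m$), and with a ``negative variant'' of the relation \eqref{eq:blobdefiningreln} (idempotents $e(\res{i})$ with $i_2=i_1-1$ killed rather than $i_2=i_1+1$). So ``simply citing'' does not literally yield the statement as given; one must first shift all residues by $-k$ (using the locality observation of \S\ref{sec:locality} that $B_n^{\kappa}$ only depends on $\kappa_1-\kappa_2$) and then apply the isomorphism $R_n^{(0,-m)}\to R_n^{(0,m)}$, $\psi_r\mapsto-\psi_r$, $y_r\mapsto-y_r$, $e(\res{i})\mapsto e(-\res{i})$, which exchanges the two variants of \eqref{eq:blobdefiningreln}. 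This is exactly what the remark following the theorem records. Alternatively, if you insist on running your Brundan--Kleshchev bookkeeping directly for $(0,m)$, you are no longer citing but reproving the corollary, and the normalisation chase you yourself identify as the main obstacle would have to be carried out in full; either way the convention-matching step needs to be stated explicitly.
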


\begin{rem} \hfill
\begin{enumerate}[label=(\arabic*)]
\item The statement of \cite[Corollary 3.6]{plaza-ryom-hansen} uses the bicharge $\kappa=(k,-k)$ (where $k \in I$ such that $2k \equiv m \pmod{n}$) and a `negative variant' form of \eqref{eq:blobdefiningreln}. To transform this into Theorem~\ref{thm:KLR-classicblob-equiv} it is necessary to shift the residues by $-k$ (as mentioned in \S\ref{sec:locality}) and apply the isomorphism
\begin{align*}
R_n^{(0,-m)} & \longrightarrow R_n^{(0,m)} \\
\psi_r & \longmapsto -\psi_r \\
y_r & \longmapsto -y_r \\
e(\res{i}) & \longmapsto e(-\res{i})
\end{align*}
of cyclotomic KLR algebras with bicharges $(0,-m)$ and $(0,m)$.

\item Theorem \ref{thm:KLR-classicblob-equiv} is the most general version of what is commonly stated in the literature, but it can probably be extended to other cases as well. For example, when $e$ equals the characteristic of $\field$, $B_n^{\kappa}$ behaves like the classical blob algebra over $\field$ with $q=1$. In addition, adjacency-freeness of $\kappa$ and the condition that $1<m<e-1$ can potentially be relaxed, at the cost of modifying relation \eqref{eq:blobdefiningreln} (this is similar to what happens for the Temperley--Lieb algebra \cite[Remark~3.7]{plaza-ryom-hansen}).
\end{enumerate}
\end{rem}

\subsection{Weights and multipartitions}
\label{sec:wtsbiparts}


In general the representation theory of KLR algebras
is governed by the combinatorics of multipartitions, 
while that of the blob algebra is naturally governed by the geometry of a
suitable weight lattice
\cite{martin-woodcock2}. 
To understand the blob algebra in KLR terms 
it is enough to focus on one-column bipartitions. 

A \defnemph{one-column bipartition} of $n$ is an ordered pair $\wt{\lambda}=(1^{\lambda_1},1^{\lambda_2})$ with $\lambda_1,\lambda_2 \in \Z_{\geq 0}$ and $\lambda_1+\lambda_2=n$. We write $\Lambda(n)$ for the set of all one-column bipartitions of $n$. The mapping
\begin{align*}
\Lambda(n) & \longrightarrow \{-n,-n+2,\dotsc,n-2,n\} \\
\wt{\lambda} & \longmapsto \lambda_1-\lambda_2
\end{align*}
is a bijection between one-column bipartitions and the classical
weight set for the blob algebra. For this reason we will usually call
one-column bipartitions \defnemph{weights} when working in a
representation-theoretic context.
For two weights $\wt{\lambda},\wt{\mu} \in \Lambda(n)$ we write $\wt{\lambda} \domless \wt{\mu}$ (and say $\wt{\mu}$ {dominates} $\wt{\lambda}$) if $|\lambda_1-\lambda_2|>|\mu_1-\mu_2|$ (following \cite{martin-woodcock}). 


The \defnemph{Young diagram} for $\wt{\lambda} \in \Lambda(n)$ is defined to be the set
\begin{equation*}
[\wt{\lambda}]= \{(r,1) : 1 \leq r \leq \lambda_1\} \cup \{(r,2) : 1 \leq r \leq \lambda_2\}
\end{equation*}
Elements of this set are usually called \defnemph{boxes}, because the traditional way to depict Young diagrams is as a collection of boxes, e.g.
\[
[( 1^4 , 1^5 )]  \; = \; 
\left( \;
\raisebox{-.1in}{\includegraphics[width=.242cm]{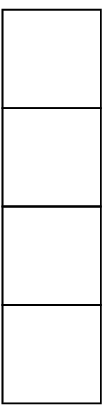}} \; , \;\;
\raisebox{-.183in}{\includegraphics[width=.242cm]{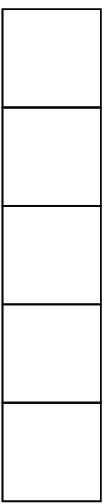}}
\; \right)
\]
A \defnemph{tableau} of shape $\wt{\lambda}$ is a bijection $[\wt{\lambda}] \rightarrow \{1,2,\dotsc,n\}$, which is usually depicted by writing each assignment inside the corresponding box, e.g.
\[
\left( \;
\raisebox{-.1in}{\includegraphics[width=.242cm]{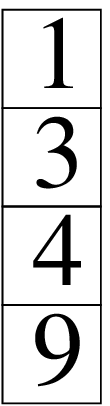}} \; , \;\;
\raisebox{-.183in}{\includegraphics[width=.242cm]{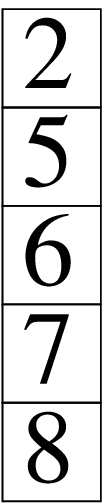}}
\; \right)
\]
A tableau is called \defnemph{standard} if the entries in the boxes increase going down each column. A standard tableau $\tableau{t}$ corresponds in a natural way to a sequence $\tableau{t}|_k \in \Lambda(k)$ of Young diagrams obtained by adding exactly one box at each stage. Such sequences are in bijection with paths of length $n$ on the global lattice of weights $\Z$, where a path is just a function $p:\{0,1,2,\dotsc,n\} \rightarrow \Z$ with $p(0)=0$ and $p(k+1)-p(k)=\pm 1$ for all integers $0 \leq k \leq n-1$. Adding a box in the first column corresponds to a rightward ($+1$) step and vice versa.

We write $\domtab{\wt{\lambda}}$ for the standard tableau of shape $\wt{\lambda}$ obtained by labelling the boxes of $[\wt{\lambda}]$ with increasing entries ordered from left to right and from top to bottom like a book, e.g.
\[
\domtab{(1^4 , 1^5 )} = 
\left( \;
\raisebox{-.1in}{\includegraphics[width=.242cm]{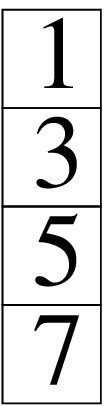}} \; , \;\;
\raisebox{-.183in}{\includegraphics[width=.242cm]{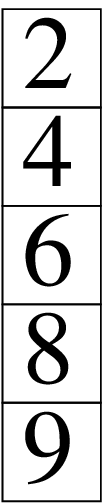}}
\; \right)
\]

The ($\kappa$-)\defnemph{residue} of a box with coordinates $(r,m)$ is defined to be $\kappa_m+1-r \in I$. The \defnemph{residue sequence} $\res{i}^{\tableau{t}}$ of a tableau $\tableau{t}$ is the sequence of residues of the boxes $(\tableau{t}^{-1}(1),\tableau{t}^{-1}(2),\dotsc,\tableau{t}^{-1}(n))$. We write $\domres{\wt{\lambda}}$ instead of $\res{i}^{\domtab{\wt{\lambda}}}$ for the residue sequence of the dominant tableau $\domtab{\wt{\lambda}}$.


\section{Cellularity of $B_n^{\kappa}$}
\label{sec:cellularity}

Suppose $\tableau{t}$ is a standard tableau of shape $\wt{\lambda}$. Let $d_{\tableau{t}} \in S_n$ be the permutation such that $d_{\tableau{t}} \domtab{\wt{\lambda}}=\tableau{t}$. 

\begin{thm}[{\cite[Theorem 6.8]{plaza-ryom-hansen}}] \label{thm:cellbasis}
Fix a reduced expression $\expr{d}_{\tableau{t}}$ for each $d_{\tableau{t}}$ over all $\wt{\lambda} \in \Lambda(n)$ and $\tableau{t} \in \Std(\wt{\lambda})$. The elements
\begin{equation*}
\psi_{\tableau{s} \tableau{t}}=\psi_{\expr{d}_{\tableau{s}}} e(\domres{\wt{\lambda}})\psi_{\expr{d}_{\tableau{t}}^{-1}} \in B_n^{\kappa}
\end{equation*}
over all $\wt{\lambda} \in \Lambda(n)$ and all $\tableau{s},\tableau{t} \in \Std(\wt{\lambda})$ form a graded cellular basis for $B_n^{\kappa}$ with respect to the partial order $\domleq$ on weights and the anti-involution $\psi \mapsto \overline{\psi}$.
In other words
\end{thm}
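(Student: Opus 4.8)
The plan is to obtain the theorem by transporting the known graded cellular structure of the ambient cyclotomic Khovanov--Lauda--Rouquier algebra across the quotient map. Let $R_n^{\kappa}$ be the $\Z$-graded algebra presented exactly as in Definition~\ref{de:b1} but \emph{without} the blob relation \eqref{eq:blobdefiningreln}, so that $B_n^{\kappa}=R_n^{\kappa}/J$, where $J$ is the two-sided ideal generated by the idempotents $e(\res{i})$ for which $i_2=i_1+1$. Via the Brundan--Kleshchev isomorphism $R_n^{\kappa}$ is a graded cyclotomic Hecke algebra of rank $n$, and by the work of Hu--Mathas its elements $\psi^{\wt{\mu}}_{\tableau{s}\tableau{t}}=\psi_{\expr{d}_{\tableau{s}}}\,e(\domres{\wt{\mu}})\,\psi_{\expr{d}_{\tableau{t}}^{-1}}$, with $\wt{\mu}$ ranging over all bipartitions of $n$ and $\tableau{s},\tableau{t}$ over $\Std(\wt{\mu})$, form a graded cellular basis: each is homogeneous of a computable degree, the anti-involution $\overline{\phantom{\psi}}$ interchanges $\tableau{s}$ and $\tableau{t}$, and the straightening rule holds for a suitable dominance order on bipartitions.

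The substance of the proof is to match this structure with the blob relation. First I would show that $J$ is a \emph{cell ideal} of $R_n^{\kappa}$: concretely, that for a suitable choice of dominance order the set of bipartitions $\wt{\mu}$ with $\psi^{\wt{\mu}}_{\tableau{s}\tableau{t}}\in J$ for all $\tableau{s},\tableau{t}$ is an up-set, and that $J$ is spanned by the corresponding basis elements. The combinatorial heart here is to detect membership in $J$: since $\psi_{\expr{d}_{\tableau{s}}}e(\domres{\wt{\mu}})=e(\res{i}^{\tableau{s}})\psi_{\expr{d}_{\tableau{s}}}$ and symmetrically on the right, one examines, over all standard tableaux of shape $\wt{\mu}$, whether a forbidden residue pair $i_1=i_2-1$ can be produced at the first two positions, possibly after applying the diagrammatic moves of \S\ref{sec:locality} to slide residues toward the left end of the sequence. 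Adjacency-freeness of $\kappa$ is exactly what guarantees that this happens for non-one-column shapes but not for one-column ones, so that the surviving basis elements are precisely the $\psi^{\wt{\lambda}}_{\tableau{s}\tableau{t}}$ with $\wt{\lambda}\in\Lambda(n)$ and $\tableau{s},\tableau{t}\in\Std(\wt{\lambda})$. Once $J$ is identified as a cell ideal, $B_n^{\kappa}=R_n^{\kappa}/J$ inherits a graded cellular basis consisting of the images of the complementary basis elements, indexed by the triples $(\wt{\lambda},\tableau{s},\tableau{t})$, with homogeneity, the anti-involution property, and the straightening rule all descending from $R_n^{\kappa}$.

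The main obstacle, as usual for such `cellular quotient' arguments, is twofold. The first and more delicate half is to arrange that a single dominance order on $R_n^{\kappa}$ simultaneously (i) turns the non-one-column shapes into an up-set, so that $J$ is a cell ideal, and (ii) restricts on one-column bipartitions to the weight order $\domleq$ of \S\ref{sec:wtsbiparts} (which is genuinely different from the naive dominance order on bipartitions). This forces a careful choice of multicharge and of adapted order, and is where adjacency-freeness and the hypothesis $1<m<e-1$ of Theorem~\ref{thm:KLR-classicblob-equiv} are used. The second half is the KLR-diagrammatic bookkeeping that identifies exactly which $\psi^{\wt{\mu}}_{\tableau{s}\tableau{t}}$ lie in $J$. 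Both are carried out in \cite[Theorem 6.8]{plaza-ryom-hansen} (building on \cite{brundan-kleshchev}), which is the result we invoke.
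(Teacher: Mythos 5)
The paper offers no proof of this statement at all: it is imported verbatim from \cite[Theorem 6.8]{plaza-ryom-hansen}, which is exactly the citation your argument ultimately rests on as well. Your sketch of the route --- descend the Hu--Mathas graded cellular basis of the ambient cyclotomic KLR algebra to the quotient by the relation \eqref{eq:blobdefiningreln}, identifying combinatorially which basis elements survive and checking compatibility with the order $\domleq$ --- is the strategy of the cited source, and since you (like the paper) defer the two substantive steps to that reference, there is nothing further to compare.
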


For the precise definition of a graded cellular basis see \cite[Definition 2.1]{hu-mathas}.
An important corollary, especially in conjunction with Lemma \ref{lem:yjump}, is the following.

\begin{cor}
Let $\res{i} \in I^n$. If there is no standard tableau $\tableau{t}$ with ($\kappa$-)residue $\res{i}$, then $e(\res{i})=0$ in $B_n^{\kappa}$.
\end{cor}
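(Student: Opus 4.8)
The plan is to use the graded cellular basis from Theorem~\ref{thm:cellbasis} together with the idempotent relations. The key observation is that $\{\psi_{\tableau{s}\tableau{t}}\}$, ranging over all $\wt{\lambda} \in \Lambda(n)$ and all standard tableaux $\tableau{s},\tableau{t} \in \Std(\wt{\lambda})$, is a basis for $B_n^{\kappa}$, and each basis element has the form $\psi_{\expr{d}_{\tableau{s}}} e(\domres{\wt{\lambda}}) \psi_{\expr{d}_{\tableau{t}}^{-1}}$.

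First I would fix $\res{i} \in I^n$ and multiply an arbitrary element of $B_n^{\kappa}$ on the left by $e(\res{i})$; using $\sum_{\res{j}} e(\res{j}) = 1$ it suffices to understand $e(\res{i}) \psi_{\tableau{s}\tableau{t}}$ for each cellular basis element. The commutation relation $\psi_r e(\res{j}) = e(s_r \res{j}) \psi_r$ shows that $e(\res{i}) \psi_{\expr{d}_{\tableau{s}}} e(\domres{\wt{\lambda}}) = \psi_{\expr{d}_{\tableau{s}}} e(d_{\tableau{s}}^{-1} \res{i}) e(\domres{\wt{\lambda}})$, which by the orthogonality relation $e(\res{j})e(\res{k}) = \delta_{\res{j},\res{k}} e(\res{j})$ is zero unless $d_{\tableau{s}}^{-1} \res{i} = \domres{\wt{\lambda}}$, i.e.\ unless $\res{i} = d_{\tableau{s}} \domres{\wt{\lambda}} = \res{i}^{\tableau{s}}$, the residue sequence of $\tableau{s}$. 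Hence $e(\res{i}) \psi_{\tableau{s}\tableau{t}} = 0$ for every cellular basis element unless there exists some standard tableau $\tableau{s}$ with residue sequence $\res{i}$.

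Therefore, if there is no standard tableau with $\kappa$-residue $\res{i}$, then $e(\res{i}) b = 0$ for every element $b$ of the cellular basis, hence $e(\res{i}) B_n^{\kappa} = 0$ by linearity; in particular $e(\res{i}) = e(\res{i}) \cdot 1 = 0$. There is essentially no obstacle here: the only point requiring a little care is the bookkeeping that $d_{\tableau{s}} \domtab{\wt{\lambda}} = \tableau{s}$ implies $d_{\tableau{s}} \domres{\wt{\lambda}} = \res{i}^{\tableau{s}}$, which is immediate from the definition of the residue sequence of a tableau (the residue sequence is determined by the shape together with the permutation $d_{\tableau{s}}$), and the fact that we may freely pass the $\psi$-generators past the idempotents at the cost of permuting the residue label. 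The corollary then follows in one line from the cellular basis theorem.
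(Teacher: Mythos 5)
Your proof is correct and is exactly the intended argument: the paper states this corollary without proof as an immediate consequence of Theorem~\ref{thm:cellbasis}, and your expansion of $e(\res{i})$ in the cellular basis, commuting the idempotent past $\psi_{\expr{d}_{\tableau{s}}}$ and using orthogonality of the $e(\res{j})$, is the standard one-line justification. The bookkeeping point $d_{\tableau{s}}\domres{\wt{\lambda}}=\res{i}^{\tableau{s}}$ is handled correctly, so there is nothing to add.
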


\begin{rem} \hfill
\begin{enumerate}[label=(\arabic*)]
\item The degree of $\psi_{\tableau{s} \tableau{t}}$ does not depend on the choices of $\expr{d}_{\tableau{s}}$ or $\expr{d}_{\tableau{t}}$, and has a combinatorial definition based on $\tableau{s}$ and $\tableau{t}$ (see Theorem~\ref{thm:degtab} below).


\item The graded cellular structure on $B_n^{\kappa}$ is in fact graded quasi-hereditary, which we will use frequently from now on. The idempotent-truncated algebras $e(\domres{\lambda})B_n^{\kappa}e(\domres{\wt{\lambda}})$, studied extensively in \cite{plaza-grdecompblob,libedinskyplaza} are also graded cellular but are not quasi-hereditary.
\end{enumerate}
\end{rem}

\subsection{Graded cellular and quasi-hereditary algebras} 


We fix some notation for graded modules. If $M=\bigoplus_j M^j$ is a graded vector space, we define the grade shift $M\langle k\rangle$ for $k \in \Z$ by $M\langle k\rangle^j=M^{j-k}$. For $M,N$ graded $B_n^{\kappa}$-modules, we call a degree-preserving homomorphism $M \rightarrow N$ \defnemph{homogeneous} of degree $0$. When we write $\Hom_{B_n^{\kappa}}(M,N)$ we always mean the space of \emph{ungraded} homomorphisms. By convention any homomorphism we write with a grade shifted object is homogeneous of degree $0$, but homomorphisms without grade shifts may be ungraded.

We recall some facts about graded cellular and quasi-hereditary algebras \cite{hu-mathas}. Let $\wt{\lambda} \in \Lambda(n)$, and write $B_n^{\kappa,\domgreater \wt{\lambda}}$ for the subspace spanned by all basis elements indexed by standard tableaux for weights $\wt{\mu} \domgreater \wt{\lambda}$. Cellularity essentially means that for any standard tableaux $\tableau{s},\tableau{t} \in \Std(\wt{\lambda})$, we can write the action of $B_n^{\kappa}$ on the basis vector $\psi_{\tableau{s},\tableau{t}}$ modulo the subspace $B_n^{\kappa,\domgreater \wt{\lambda}}$ as
\begin{equation*}
a \psi_{\tableau{s} \tableau{t}}=\sum_{\tableau{v} \in \Std(\wt{\lambda})} r_{\tableau{s}\tableau{v}}(a) \psi_{\tableau{v} \tableau{t}} \pmod{B_n^{\kappa, \domgreater \wt{\lambda}}}
\end{equation*}
where the scalars $r_{\tableau{s}\tableau{v}}(a)$ don't depend on $\tableau{t}$. We can use these scalars to define a module $\weyl(\wt{\lambda})$ with basis $\psi_{\tableau{s}}$ indexed by $\Std(\wt{\lambda})$, namely
\begin{equation*}
a \psi_{\tableau{s}} =\sum_{\tableau{v} \in \Std(\wt{\lambda})} r_{\tableau{s}\tableau{v}}(a) \psi_{\tableau{v}}
\end{equation*}
We call such modules \defnemph{cell modules} or \defnemph{Weyl modules}. Graded cellularity means that there is a degree function on tableaux (see Theorem~\ref{thm:degtab}) which makes the basis $\{\psi_{\tableau{s}}\}$ a homogeneous basis.

For any fixed standard tableaux $\tableau{a},\tableau{b} \in \Std(\wt{\lambda})$, we can define a contravariant bilinear form on $\weyl(\wt{\lambda})$ by 
\begin{equation*}
\psi_{\tableau{a} \tableau{s}} \psi_{\tableau{t} \tableau{b}}=\langle \psi_{\tableau{s}},\psi_{\tableau{t}} \rangle \psi_{\tableau{a} \tableau{b}} \pmod{B_n^{\kappa,\domgreater \wt{\lambda}}}
\end{equation*}
In fact this bilinear form does not depend on $\tableau{a}$ or $\tableau{b}$. For a general cellular algebra the quotient $\weyl(\wt{\lambda})/\rad \langle -,-\rangle$ is either a simple module, which we call $L(\wt{\lambda}),$ or $0$. The non-zero quotients give a complete list of non-isomorphic simple modules up to grade shift. In our case, none of the quotients are zero because $B_n^{\kappa}$ is quasi-hereditary. We write $P(\wt{\lambda})$ for the graded projective cover of $L(\wt{\lambda})$. For $M$ a graded $B_n^{\kappa}$, we define the graded composition factor multiplicities
\begin{equation*}
[M : L(\wt{\lambda})]_v=\sum_k [M : L(\wt{\lambda})\langle k \rangle] v^k \in \Z_{\geq 0}[v^{\pm 1}] \text{,}
\end{equation*}
where $[M : L(\wt{\lambda})\langle k \rangle]$ denotes the number of composition factors in a graded composition series isomorphic to $L(\wt{\lambda})\langle k \rangle$. Similarly if $M$ has a graded Weyl filtration, we define 
\begin{equation*}
(M : \weyl(\wt{\lambda}))_v=\sum_k (M : \weyl(\wt{\lambda})\langle k \rangle) v^k \in \Z_{\geq 0}[v^{\pm 1}] \text{,}
\end{equation*}
where $(M : \weyl(\wt{\lambda})\langle k\rangle)$ denotes the number of subquotients in a graded Weyl filtration isomorphic to $\weyl(\wt{\lambda})\langle k\rangle$. For the ungraded counterparts of these multiplicities we use the same notation but without the subscript $v$.


As $B_n^{\kappa}$ is quasi-hereditary, we also have the notion of a \defnemph{tilting module}. A {tilting module} for $B_n^{\kappa}$ is a module with a filtration by Weyl modules as well as a filtration by dual Weyl modules. For each weight $\wt{\lambda}$, there is an indecomposable tilting module $T(\wt{\lambda})$ of highest weight $\wt{\lambda}$, and all indecomposable tilting modules are of this form \cite{ringel}. In the graded setting this classification only gives a grading on $T(\wt{\lambda})$ up to grade shift. We will fix the grading so that $(T(\wt{\lambda}) : \weyl(\wt{\lambda}))_v=1$.


The anti-involution gives rise to a duality functor on $B_n^{\kappa}$-modules which reverses grade shift. The unshifted simple module $L(\wt{\lambda})$ is self-dual, so the dual Weyl module $\dweyl(\wt{\lambda})$ has socle isomorphic to $L(\wt{\lambda})$. Similarly the unshifted injective envelope $I(\wt{\lambda})$ is isomorphic to the dual of $P(\wt{\lambda})$. By highest weight considerations $T(\wt{\lambda})$ is self-dual. For $h \in \Z_{\geq 0}[v^{\pm 1}]$, we write $\overline{h}=h(v^{-1})$. 


\subsection{Tower of recollement}

For fixed $m,e$ and varying $n$, the family of classical blob algebras (with presentation as in Theorem~\ref{thm:KLR-classicblob-equiv}) has the structure of a \defnemph{tower of recollement} \cite[Example 1.2(ii)]{coxmartinparkerxi}. A tower of recollement consists of a collection of algebras and idempotents in these algebras which satisfy certain axioms, giving rise to several functors between module categories which pass representation-theoretic information between the algebras. Constructing the functors and verifying the axioms are both more easily accomplished in the classical presentation of the blob algebra. For this reason we will assume for the moment that Theorem~\ref{thm:KLR-classicblob-equiv} holds so that the tower of recollement structure transfers to $\{B_n^{\kappa}\}_{n \in \N}$.
For the basic definitions and some examples see \cite[Section~1]{coxmartinparkerxi}, and \cite[Section~3]{martin-ryom-hansen} for applications.

For each $n \in \N$ we have a pair of adjoint functors
\begin{align*}
\ind : \modcat{B_n^{\kappa}} & \longrightarrow \modcat{B_{n+1}^{\kappa}} \text{,} & 
\restr : \modcat{B_{n+1}^{\kappa}} & \longrightarrow \modcat{B_n^{\kappa}}
\end{align*}
called \defnemph{induction} and \defnemph{restriction} respectively. As a right adjoint functor, restriction is left exact, and similarly induction is right exact. However, restriction also happens to be right exact as well. For $\wt{\lambda} \in \Lambda(n+1)$ write $\wt{\lambda}=(1^{\lambda_1},1^{\lambda_2})$. If $\lambda_1 \geq \lambda_2>0$ we have a short exact sequence
\begin{equation*}
\xymatrix{
0 \ar[r] & \weyl(1^{\lambda_1-1},1^{\lambda_2}) \ar[r] & \restr \weyl(1^{\lambda_1},1^{\lambda_2}) \ar[r] & \weyl(1^{\lambda_1},1^{\lambda_2-1}) \ar[r] & 0}
\end{equation*}
while $\restr \weyl(1^{n+1},\emptyset)=\weyl(1^n,\emptyset)$. When $0<\lambda_1 \leq \lambda_2$ there are similar exact sequences with the two outer terms switched. Induction on Weyl modules also produces exact sequences in this way, but without a boundary exception.

We also have another pair of adjoint functors
\begin{align*}
G : \modcat{B_n^{\kappa}} & \longrightarrow \modcat{B_{n+2}^{\kappa}} \text{,} & 
F : \modcat{B_{n+2}^{\kappa}} & \longrightarrow \modcat{B_n^{\kappa}}
\end{align*}
called \defnemph{globalisation} and \defnemph{localisation} respectively. Again localisation is right exact as well as being left exact. For $\wt{\lambda}=(1^{\lambda_1},1^{\lambda_2}) \in \Lambda(n+2)$ we have
\begin{equation*}
F\weyl(1^{\lambda_1},1^{\lambda_2})=\begin{cases}
\weyl(1^{\lambda_1-1},1^{\lambda_2-1}) & \text{if $\lambda_1,\lambda_2 \geq 1$,} \\
0 & \text{otherwise.}
\end{cases}
\end{equation*}
and similarly for $\dweyl(\wt{\lambda})$ and $L(\wt{\lambda})$ \cite[Proposition~3]{martin-ryom-hansen}. Moreover, as long as $\lambda_1,\lambda_2 \geq 1$ we also have $P(\wt{\lambda})$, $\dweyl(\wt{\lambda})$, $I(\wt{\lambda})$, and $T(\wt{\lambda})$ by \cite[A1(4)]{donkbk}, \cite[Proposition~A3.11]{donkbk}, and \cite[Lemma~A4.5]{donkbk}. This implies the \defnemph{stability} of decomposition numbers and tilting multiplicities across all $n$. In other words, for all $n \in \N$ and $\wt{\lambda},\wt{\mu} \in \Lambda(n)$ with $\wt{\lambda}=(1^{\lambda_1},1^{\lambda_2})$ and $\wt{\mu}=(1^{\mu_1},1^{\mu_2})$, the decomposition number $[\weyl(\wt{\mu}):L(\wt{\lambda})]$ only depends on $\lambda_1-\lambda_2$ and $\mu_1-\mu_2$ but not on $n$.

For $\wt{\lambda}=(1^{\lambda_1},1^{\lambda_2}) \in \Lambda(n)$ globalisation behaves similarly for Weyl modules and projective modules, with 
\begin{align*}
G\weyl(1^{\lambda_1},1^{\lambda_2})& =\weyl(1^{\lambda_1+1},1^{\lambda_2+1}) \text{,} & GP(1^{\lambda_1},1^{\lambda_2})& =P(1^{\lambda_1+1},1^{\lambda_2+1})
\end{align*}
but \emph{not} for simple modules, dual Weyl modules, injective modules, or tilting modules. Globalisation is exact on the full subcategory of $\weyl$-filtered modules \cite[Proposition~4]{martin-ryom-hansen}. It also acts as a right inverse for localisation, i.e.~$F \circ G$ is naturally isomorphic to the identity.

Finally we have the key relationship between induction/restriction and localisation/globalisation, which is the natural isomorphism
\begin{equation*}
\ind \iso {\restr} \circ G \text{.}
\end{equation*}
In the case of $B_n^{\kappa}$, the tower of recollement structure behaves well with the anti-involution so the dual statement
\begin{equation*}
\restr \iso F \circ {\ind}
\end{equation*}
also holds.

\subsection{Linkage principle}

There is a linkage principle for the blob algebra, in terms of the following alcove geometry. Let $\W$ be the infinite dihedral group acting on $\Z$ generated by reflections $s_{k}$ about the integers $(\kappa_1-\kappa_2)+ke$ for any $k \in \Z$. Each alcove consists of the integers $(\kappa_1-\kappa_2)+ke<j<(\kappa_1-\kappa_2)+(k+1)e$ lying between two adjacent reflection points. Weights lying inside an alcove are called regular, while those on a reflection point are singular. The fundamental alcove is the unique alcove containing the integer $0$. Two integers are called linked if they are in the same $\W$-orbit. The group $\W$ also acts partially on paths in $\Z$. For a path $p$, if $p(k)$ is the reflection point $(\kappa_1-\kappa_2)+je$, then we write 
\begin{equation*}
s_j^k p(r)=\begin{cases}
p(r) & \text{if $r \leq k$,} \\
s_j p(r) & \text{if $r>k$.}
\end{cases}
\end{equation*}
In other words, $s_j^k p$ is the path obtained by reflecting $p$ after the $k$th point. We say that two paths are linked if one can be obtained by a sequence of reflections of the other.

Write $\Std_{\wt{\lambda}}(\wt{\mu})$ for the set of standard tableaux of shape $\wt{\mu}$ with residue sequence $\domres{\wt{\lambda}}$. It turns out that this set can be described entirely in terms of the alcove geometry above, using the fact that weights and tableaux correspond to points in $\Z$ and paths in $\Z$ respectively.
\begin{prop}[{\cite[Lemma 4.7]{plaza-grdecompblob}}] \label{prop:tablink}
Let $\wt{\lambda},\wt{\mu} \in \Lambda(n)$. Under the tableau-path bijection, the set $\Std_{\wt{\lambda}}(\wt{\mu})$ corresponds to paths which end at $\wt{\mu}$ in the same linkage class as $\domtab{\wt{\lambda}}$.
\end{prop}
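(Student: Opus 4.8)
\textbf{Proof proposal for Proposition~\ref{prop:tablink}.}

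The plan is to translate both sides of the claimed correspondence into the language of paths and then show they describe literally the same set of paths. First I would fix the shape $\wt{\mu} \in \Lambda(n)$ and the residue sequence $\domres{\wt{\lambda}}$, and recall that under the tableau-path bijection of \S\ref{sec:wtsbiparts} a standard tableau $\tableau{t}$ of shape $\wt{\mu}$ corresponds to a path $p_{\tableau{t}}$ of length $n$ on $\Z$ ending at $\mu_1-\mu_2$, where the $k$th step is $+1$ if box $\tableau{t}^{-1}(k)$ is added to the first column and $-1$ otherwise. The residue of the box added at step $k$ is $\kappa_m + 1 - r$, where $(r,m)$ are its coordinates; since a box added at step $k$ in column $m$ sits at row $r = (\text{number of boxes so far in column }m)$, the residue is completely determined by the partial path $p_{\tableau{t}}(0),\dots,p_{\tableau{t}}(k)$ together with the direction of the $k$th step. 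The key computation (surely already implicit in \cite{plaza-grdecompblob}) is that the residue of the step from $p(k-1)$ to $p(k)$ depends, up to the global shift by $\kappa_1 - \kappa_2$, only on the value $p(k)$ and the sign of the step — concretely, an upward step landing at position $j$ and a downward step landing at position $j$ have residues differing in a controlled way, and crucially the condition ``$\tableau{t}$ has residue sequence $\domres{\wt{\lambda}}$'' becomes a condition comparing $p_{\tableau{t}}$ step-by-step with the path $p_{\domtab{\wt{\lambda}}}$ for the dominant tableau of shape $\wt{\lambda}$.

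Next I would unwind what it means for two paths $p, p'$ (of the same length, possibly different endpoints) to force equal residue sequences. The claim to establish is: $\res{i}^{\tableau{t}} = \domres{\wt{\lambda}}$ if and only if $p_{\tableau{t}}$ is obtained from $p_{\domtab{\wt{\lambda}}}$ by a sequence of reflections $s_j^k$ as defined in \S\ref{sec:cellularity} (i.e.~$p_{\tableau{t}}$ is linked to $p_{\domtab{\wt{\lambda}}}$). For the ``if'' direction, it suffices to check that a single reflection $s_j^k$ applied at a reflection point does not change the residue sequence: before step $k$ the two paths agree, so the residues agree up to step $k$; at the reflection point $p(k) = (\kappa_1-\kappa_2)+je$ the row-count in each column is arranged so that reflecting the remainder of the path swaps the roles of the two columns in a residue-preserving way — this is exactly the arithmetic statement that reflection in $s_j$ corresponds, on residues, to the identity because $e = |I|$ and the reflection point is congruent to $\kappa_1 - \kappa_2$ modulo $e$. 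For the ``only if'' direction, one argues by induction on $n$ (or on the first place where $p_{\tableau{t}}$ deviates from $p_{\domtab{\wt{\lambda}}}$): if the two paths first differ at step $k$, then equality of the $k$th residues forces $p_{\tableau{t}}(k-1)$ to be a reflection point, and one applies the reflection $s_{j}^{k-1}$ to reduce to a path agreeing with $p_{\domtab{\wt{\lambda}}}$ for longer, then iterates.

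The main obstacle I anticipate is the bookkeeping in the residue arithmetic: making precise exactly how the residue of the $k$th box is read off from $(p(k), \text{sign of step }k)$, and verifying that this is invariant under the partial reflection action while a forced equality of residues at a branch point can \emph{only} occur at a reflection point. This is where the adjacency-free hypothesis on $\kappa$ and the fact that the reflection points are the integers congruent to $\kappa_1 - \kappa_2$ modulo $e$ must be used carefully — one needs that two paths with the same endpoint history but opposite last step have equal residue precisely when that endpoint is a wall, and never otherwise. Since this is quoted from \cite[Lemma 4.7]{plaza-grdecompblob}, I would lean on that reference for the residue computation and present the argument above mainly as the translation showing why that lemma is exactly the statement of Proposition~\ref{prop:tablink}; alternatively, if a self-contained proof is wanted, the induction on the first deviation point, combined with the single-reflection invariance check, gives it cleanly.
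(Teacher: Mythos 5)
The paper offers no proof of this proposition at all: it is imported verbatim from \cite[Lemma 4.7]{plaza-grdecompblob}, so there is no in-paper argument to compare against, and citing that reference (as you suggest) would match the paper exactly. Your sketch is nevertheless a correct self-contained reconstruction, and the residue bookkeeping you defer is easy to pin down: if the first $k-1$ steps of a path $p$ consist of $a$ up-steps and $b$ down-steps, then $a-b=p(k-1)$ and $a+b=k-1$, and the $k$th residue is $\kappa_1-a$ for an up-step and $\kappa_2-b$ for a down-step; hence two paths agreeing through step $k-1$ and branching at step $k$ have equal $k$th residues precisely when $p(k-1)\equiv \kappa_1-\kappa_2 \pmod{e}$, i.e.\ exactly at a wall. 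Moreover, reflecting in a wall of value $w$ after step $k$ sends, for $r>k$, the counts to $a_r'=b_r+w$, $b_r'=a_r-w$, so an up-step of residue $\kappa_1-a_r$ becomes a down-step of residue $\kappa_2-b_r'=\kappa_2-a_r+w\equiv\kappa_1-a_r\pmod{e}$, which gives your single-reflection invariance; your induction on the first deviation point then closes both directions, since each forced reflection strictly increases the length of agreement with the path of $\domtab{\wt{\lambda}}$. One small correction: the adjacency-free hypothesis on $\kappa$ plays no role in this combinatorial statement (it is only needed for the algebra relations such as \eqref{eq:blobdefiningreln}), so you need not worry about it here; the only inputs are that the walls are the integers congruent to $\kappa_1-\kappa_2$ modulo $e$ and that every $\pm1$ path of length $n$ ending at the classical weight of $\wt{\mu}$ corresponds to a standard tableau of shape $\wt{\mu}$.
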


\begin{eg}
Suppose $e=4$, $\kappa=(0,2)$, and $n=9$. Let $\wt{\lambda}=(1^8,1)$. The tableau $\domtab{\wt{\lambda}}$ corresponds to the path in red. This path crosses $2$ alcove walls, so there are $2^2=4$ different paths in the linkage class of $\domtab{\wt{\lambda}}$. The other $3$ paths in this linkage class are illustrated in black from the point where they diverge from $\domtab{\wt{\lambda}}$.
\[
\raisebox{-.183in}{\includegraphics[width=9.2cm]{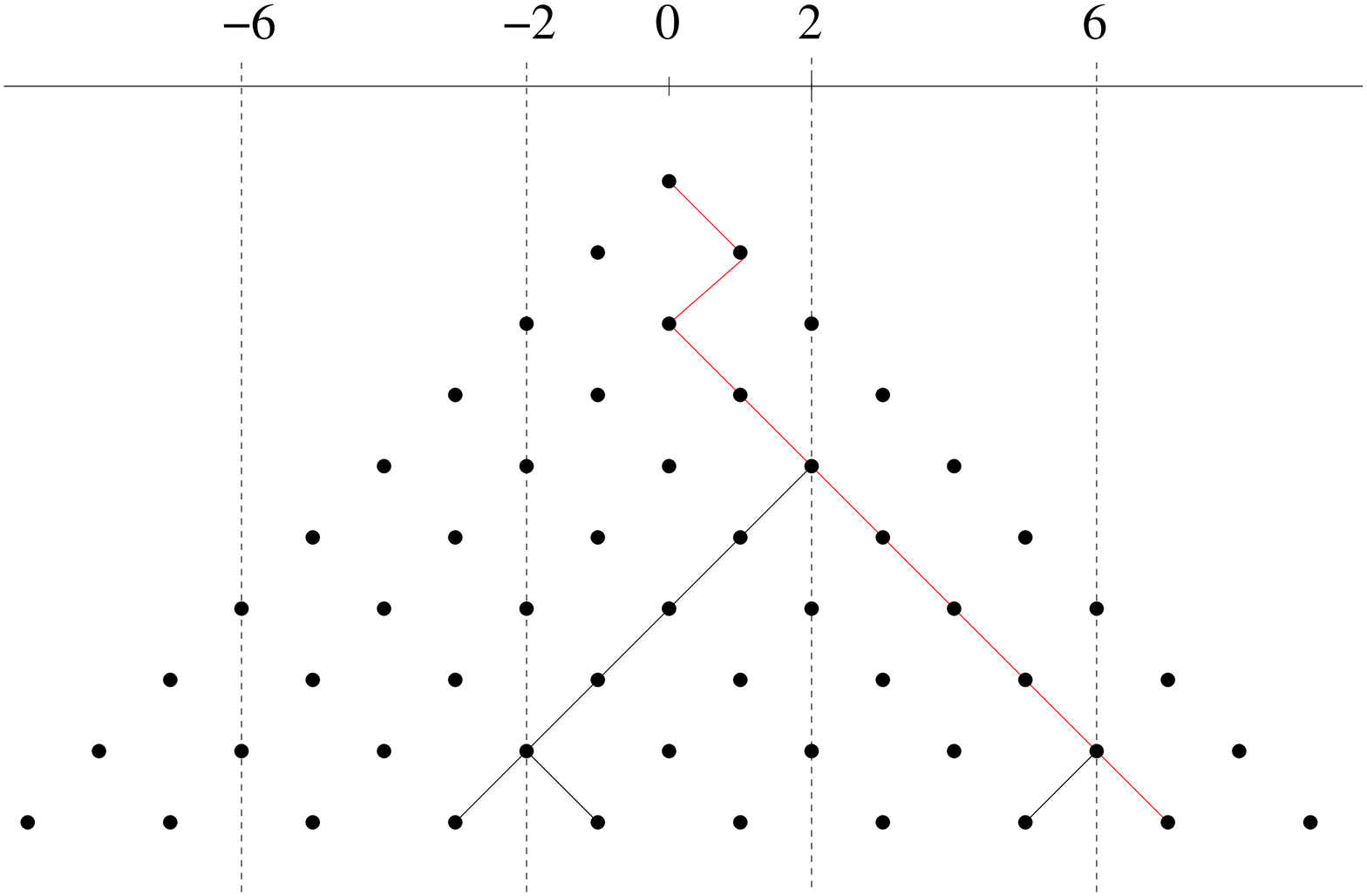}}
\]
These paths correspond to the tableaux
$$
\includegraphics{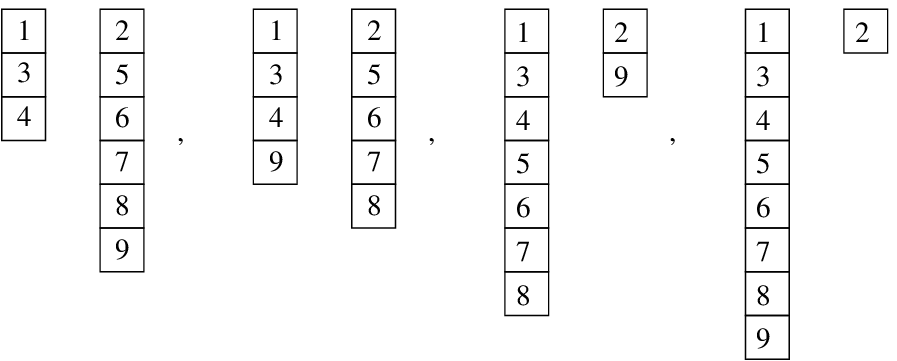}
$$
\end{eg}

\begin{cor}
If $[\weyl(\wt{\mu}):L(\wt{\lambda})] \neq 0$ then $\wt{\mu}$ and $\wt{\lambda}$ are in the same linkage class.
\end{cor}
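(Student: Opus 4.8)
\emph{Proof idea.} The plan is to detect the composition factor $L(\wt{\lambda})$ of $\weyl(\wt{\mu})$ by truncating with the idempotent $e(\domres{\wt{\lambda}})$, and then to translate the resulting non-vanishing into the path combinatorics of Proposition~\ref{prop:tablink}.

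The first thing I would establish is how the idempotents act on the cellular basis $\{\psi_{\tableau{s}} : \tableau{s} \in \Std(\wt{\mu})\}$ of $\weyl(\wt{\mu})$. Sliding idempotents past $\psi$-generators using $\psi_r e(\res{i}) = e(s_r \res{i})\psi_r$ in the formula $\psi_{\tableau{s}\tableau{t}} = \psi_{\expr{d}_{\tableau{s}}} e(\domres{\wt{\mu}}) \psi_{\expr{d}_{\tableau{t}}^{-1}}$ yields $e(\res{j})\psi_{\tableau{s}\tableau{t}} = \delta_{\res{j},\, d_{\tableau{s}}\domres{\wt{\mu}}}\, \psi_{\tableau{s}\tableau{t}}$, and a routine check identifies $d_{\tableau{s}}\domres{\wt{\mu}}$ with the residue sequence $\res{i}^{\tableau{s}}$ of $\tableau{s}$. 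Since this identity holds exactly, with no corrections modulo $B_n^{\kappa,\domgreater \wt{\mu}}$, it descends to the diagonal formula $e(\res{j})\psi_{\tableau{s}} = \delta_{\res{j},\res{i}^{\tableau{s}}}\,\psi_{\tableau{s}}$ in $\weyl(\wt{\mu})$. In particular $e(\domres{\wt{\lambda}})\weyl(\wt{\mu})$ is the span of those $\psi_{\tableau{s}}$ with $\res{i}^{\tableau{s}} = \domres{\wt{\lambda}}$, i.e.\ those with $\tableau{s} \in \Std_{\wt{\lambda}}(\wt{\mu})$, so it is nonzero precisely when $\Std_{\wt{\lambda}}(\wt{\mu}) \neq \emptyset$.

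Next I would observe that $e(\domres{\wt{\lambda}})L(\wt{\lambda}) \neq 0$: applying the diagonal formula with $\wt{\mu} = \wt{\lambda}$ and $\tableau{s} = \domtab{\wt{\lambda}}$ shows $e(\domres{\wt{\lambda}})\psi_{\domtab{\wt{\lambda}}} = \psi_{\domtab{\wt{\lambda}}}$, while (choosing the empty reduced word for $d_{\domtab{\wt{\lambda}}}=1$) one has $\psi_{\domtab{\wt{\lambda}}\domtab{\wt{\lambda}}} = e(\domres{\wt{\lambda}})$, hence $\langle \psi_{\domtab{\wt{\lambda}}},\psi_{\domtab{\wt{\lambda}}}\rangle = 1$, so the image of $\psi_{\domtab{\wt{\lambda}}}$ in $L(\wt{\lambda}) = \weyl(\wt{\lambda})/\rad\langle-,-\rangle$ is a nonzero vector fixed by $e(\domres{\wt{\lambda}})$. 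Since truncation by an idempotent is exact, $[\weyl(\wt{\mu}):L(\wt{\lambda})] \neq 0$ then forces $e(\domres{\wt{\lambda}})\weyl(\wt{\mu}) \neq 0$, and therefore $\Std_{\wt{\lambda}}(\wt{\mu}) \neq \emptyset$ by the first step.

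Finally I would invoke Proposition~\ref{prop:tablink}: a nonempty $\Std_{\wt{\lambda}}(\wt{\mu})$ means there is a path ending at $\wt{\mu}$ in the same linkage class as $\domtab{\wt{\lambda}}$. Each elementary reflection $s_j^k$ either fixes a path's endpoint or sends it to its image under $s_j \in \W$, so linked paths always have endpoints in a common $\W$-orbit; since the path of $\domtab{\wt{\lambda}}$ ends at $\lambda_1 - \lambda_2$ and every path of shape $\wt{\mu}$ ends at $\mu_1 - \mu_2$, this forces $\lambda_1 - \lambda_2$ and $\mu_1 - \mu_2$ to be $\W$-conjugate, i.e.\ $\wt{\lambda}$ and $\wt{\mu}$ to lie in the same linkage class. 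The argument is essentially formal once Proposition~\ref{prop:tablink} is in hand; the only point needing a little care is the residue-sequence bookkeeping in the first step, together with the verification that idempotents act diagonally on the cell module basis without higher-order corrections, but I do not anticipate a genuine obstacle there.
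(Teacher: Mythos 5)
Your argument is correct and is exactly the route the paper intends: the corollary is stated as an immediate consequence of Proposition~\ref{prop:tablink}, with the implicit steps being precisely your idempotent-truncation argument (that $e(\domres{\wt{\lambda}})$ acts diagonally on the cellular basis of $\weyl(\wt{\mu})$, that $e(\domres{\wt{\lambda}})L(\wt{\lambda}) \neq 0$, and hence $\Std_{\wt{\lambda}}(\wt{\mu}) \neq \emptyset$), facts the paper also uses later, e.g.\ in Lemma~\ref{lem:weylsubquot}. No gaps; your bookkeeping of residue sequences and the passage from linked paths to linked endpoints are both sound.
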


A consequence of the above result is that if $\wt{\lambda},\wt{\mu} \in \Lambda(n)$ are in different linkage classes, then they are also in different blocks. We write $\pr_{\wt{\lambda}}$ for the functor which projects modules and homomorphisms onto the block(s) of simple modules parametrised by weights in the linkage class of $\wt{\mu}$.

The degrees of tableaux in $\Std_{\wt{\lambda}}(\wt{\mu})$ can also be calculated from their corresponding path. We call a subsequence of $e$ consecutive steps in a path a \defnemph{wall-to-wall step} if the steps start from a wall (i.e.~a reflection point) and continue in a single direction until they reach another wall. For $\tableau{t} \in \Std_{\wt{\lambda}}(\wt{\mu})$ a standard tableau write $w(\tableau{t})$ for the number of wall-to-wall steps across the fundamental alcove.

\begin{thm}[{\cite[Theorem 4.9]{plaza-grdecompblob}}] \label{thm:degtab}
Let $\tableau{t} \in \Std_{\wt{\lambda}}(\wt{\mu})$. Let $\delta(\tableau{t})$ be $1$ if the first step after all wall-to-wall steps points toward the origin, and $0$ otherwise. Then $\deg \tableau{t}=w(\tableau{t})+\delta(\tableau{t})$. 
\end{thm}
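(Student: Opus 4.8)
The plan is to compute $\deg \tableau{t}$ directly from the combinatorial degree formula for the cellular basis, as given in \cite{hu-mathas}, translated through the tableau-path bijection. Recall that in general the degree of a standard tableau is a sum of local contributions, one for each step of the path (equivalently, one for each box added): the contribution at step $k$ depends only on the residue $i_k$ of the box added at that stage and on which boxes of the other possible shapes $\tableau{t}|_{k-1}$-extensions are addable or removable. Concretely, writing $d_k(\tableau{t})$ for the $k$th contribution, one has $\deg \tableau{t}=\sum_{k=1}^n d_k(\tableau{t})$, where $d_k$ counts (with signs) the addable minus removable boxes of residue $i_k$ strictly `above' the box just added. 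So first I would set up this local degree formula precisely in the one-column bipartition setting, using the residue convention $\kappa_m+1-r$ from \S\ref{sec:wtsbiparts}.

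Next I would translate the local data into the alcove geometry of \S\ref{sec:wtsbiparts} (the linkage principle subsection). For a one-column bipartition the only addable/removable boxes live at the bottoms of the two columns, so the residue pattern of a path is completely controlled by the position $p(k) \in \Z$ relative to the reflection points $(\kappa_1-\kappa_2)+ke$. I would show that $d_k(\tableau{t})$ is $0$ unless step $k$ is the last step of a wall-to-wall traversal of the fundamental alcove, or unless step $k$ is the first step immediately after such a traversal and points back toward the origin. This is a finite residue bookkeeping computation: when a path runs from one wall of the fundamental alcove straight to the other, the box being added at the final step has the same residue as the box that would be added by the `reflected' addable box on the other column, producing exactly one unit of degree; a step into any other alcove, or a step that merely crosses a wall without having traversed the fundamental alcove wall-to-wall, contributes nothing; and the extra $\delta(\tableau{t})$ term comes from the single asymmetric step following all the wall-to-wall steps.

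Summing these contributions then gives $\deg \tableau{t}=w(\tableau{t})+\delta(\tableau{t})$: the $w(\tableau{t})$ wall-to-wall steps across the fundamental alcove each contribute $1$, and there is a final $+1$ exactly when $\delta(\tableau{t})=1$. I expect the main obstacle to be the residue arithmetic modulo $e$ in the adjacency-free regime: one must check carefully that the only boxes of residue $i_k$ that are addable or removable for shapes linked to $\tableau{t}|_{k-1}$ are precisely the ones at the two column-bottoms (so that no `spurious' contributions appear from residues coinciding elsewhere mod $e$), and this is where the hypothesis $e \geq 4$ together with adjacency-freeness of $\kappa$ is essential. Once that local claim is nailed down, the global count is immediate, and in any case this is essentially the computation carried out in \cite[\S 4]{plaza-grdecompblob}, so I would lean on Proposition~\ref{prop:tablink} to identify $\Std_{\wt{\lambda}}(\wt{\mu})$ with the relevant linkage class of paths and then cite or reproduce the local degree bookkeeping.
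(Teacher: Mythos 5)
You should first note that the paper contains no proof of this statement at all: Theorem~\ref{thm:degtab} is imported verbatim from \cite[Theorem 4.9]{plaza-grdecompblob}, so the only benchmark is the proof in that reference, and your outline does follow its general strategy (the Hu--Mathas/Brundan--Kleshchev--Wang degree statistic for standard tableaux, transported through the tableau-path bijection to the wall positions $(\kappa_1-\kappa_2)+e\Z$). Since the paper itself simply cites the result, ending your argument by citing or reproducing \cite[\S 4]{plaza-grdecompblob} is acceptable for the purposes of this paper.

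As a self-contained argument, however, there is a genuine gap exactly at the step you describe as ``a finite residue bookkeeping computation''. For one-column bipartitions the local contribution $d_k(\tableau{t})$ is computed by comparing the residue of the box just added with the addable \emph{and removable} boxes at the bottom of the other column, so it takes values in $\{-1,0,+1\}$, and it depends only on the classical position modulo $e$, the direction of the step, whether the relevant column is empty, and the fixed ordering of the two components. In particular it cannot detect whether the wall being left or approached bounds the fundamental alcove: a wall-to-wall traversal of a non-fundamental alcove with both columns nonempty produces exactly the same sequence of local contributions as a fundamental one. Hence your claim that each individual step contributes $0$ unless it is the last step of a fundamental-alcove traversal (or the subsequent $\delta$-step) cannot hold term by term; the formula $\deg\tableau{t}=w(\tableau{t})+\delta(\tableau{t})$ only emerges after showing that the $+1$'s and $-1$'s attached to walls away from the origin cancel globally, which requires an induction or reflection/cancellation argument (as in Plaza's proof) rather than a pointwise vanishing statement. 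You also suppress the negative contributions coming from removable boxes entirely, and the boundary cases (an empty second column, paths with no wall-to-wall steps, and the precise definition of the $\delta$-step) are not addressed. So the plan is the right one, but what is written is an outline with the essential cancellation analysis deferred and, as stated, the key local claim is false.
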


Finally we describe the decomposition numbers in characteristic $0$ in terms of the alcove geometry. For any regular weight $\wt{\lambda}$, there exists a unique weight $\wt{\lambda}_{\rm fund}$ in the fundamental alcove and $w_{\wt{\lambda}} \in \W$ such that $w_{\wt{\lambda}}(\wt{\lambda}_{\rm fund})=\wt{\lambda}$. For $x,y \in \W$, define $h_{y,x}$ by
\begin{equation*}
h_{y,x}(v)=\begin{cases}
v^{\len(x)-\len(y)} & \text{if $y \leq x$,} \\
0 & \text{otherwise.}
\end{cases}
\end{equation*}
This is the Kazhdan--Lusztig polynomial associated to $\W$ (in the notation of \cite{soergel-KL}).

\begin{thm}[{\cite[Theorem~5.11]{plaza-grdecompblob}}] \label{thm:grdecompblob}
Suppose $\field$ is a field of characteristic $0$. Let $\wt{\lambda},\wt{\mu}$ be two regular weights lying in the same linkage class. Then we have
\begin{equation*}
[\weyl(\wt{\mu}) : L(\wt{\lambda})]_v=h_{w_{\wt{\mu}},w_{\wt{\lambda}}}(v) \text{.}
\end{equation*}
\end{thm}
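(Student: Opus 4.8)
The plan is to pin down the entire graded decomposition matrix $D(v)=\big([\weyl(\wt{\mu}):L(\wt{\lambda})]_v\big)$ of a fixed regular linkage class from three pieces of data: the graded dimensions of the idempotent truncations $e(\domres{\wt{\lambda}})\weyl(\wt{\mu})$ of the Weyl modules, which are computable by lattice-path combinatorics; the fact that $D(v)$ is $\domleq$-unitriangular with off-diagonal entries in $v\Z_{\geq 0}[v]$, since $\weyl(\wt{\mu})$ has simple head $L(\wt{\mu})$ in degree $0$ and every other composition factor $L(\wt{\lambda})$ has $\wt{\lambda}\domless\wt{\mu}$; and the bar-invariance of the graded dimensions $\dim_v e(\domres{\wt{\lambda}})L(\wt{\nu})$, coming from self-duality of the unshifted simples. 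A triangular bootstrap in the style of Lusztig's lemma shows that these constraints determine $D(v)$ uniquely, so it remains only to check that the claimed answer $h_{w_{\wt{\mu}},w_{\wt{\lambda}}}(v)$ satisfies them.

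First I would record the graded Weyl dimensions. From the graded cellular basis of Theorem~\ref{thm:cellbasis},
\[
\dim_v e(\domres{\wt{\lambda}})\,\weyl(\wt{\mu})=\sum_{\tableau{t}\in\Std_{\wt{\lambda}}(\wt{\mu})}v^{\deg\tableau{t}},
\]
and by Proposition~\ref{prop:tablink} the index set consists of the length-$n$ paths ending at $\wt{\mu}$ in the linkage class of $\domtab{\wt{\lambda}}$, while Theorem~\ref{thm:degtab} gives $\deg\tableau{t}$ as the number of wall-to-wall steps across the fundamental alcove plus a $0/1$ correction. For the infinite dihedral group $\W$ of type $\tilde{A}_1$ these paths are exactly the iterated wall reflections of $\domtab{\wt{\lambda}}$, so the generating function is an explicit finite sum of monomials; I would assemble these into a matrix $G(v)$.

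Next, the graded composition series of $\weyl(\wt{\mu})$ gives $G(v)=S(v)D(v)^{T}$ with $S(v)_{\wt{\lambda},\wt{\nu}}=\dim_v e(\domres{\wt{\lambda}})L(\wt{\nu})$, and I would prove the uniqueness of a factorization of this shape with $S(v)$ bar-invariant and $D(v)$ graded unitriangular. Then I would verify that $D(v)_{\wt{\mu},\wt{\lambda}}:=h_{w_{\wt{\mu}},w_{\wt{\lambda}}}(v)$ qualifies: it is $\domleq$-unitriangular with the right positivity -- because $w_{\wt{\mu}}\le w_{\wt{\lambda}}$ in the Bruhat order of $\W$ forces $\wt{\lambda}\domleq\wt{\mu}$ with $\len(w_{\wt{\lambda}})\geq\len(w_{\wt{\mu}})$ and equality only on the diagonal -- it recovers at $v=1$ the ungraded decomposition numbers of \cite{martin-woodcock}, and the matrix $S(v)=G(v)\big(D(v)^{-1}\big)^{T}$ that it forces is bar-invariant. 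This last point is the combinatorial heart: since every nonzero value of $h$ is a single monomial, $D(v)$ and its inverse have a completely explicit ``two-sided staircase'' shape across the linkage class, and the required bar-invariance becomes a lattice-path restatement of the standard inversion identity for $\tilde{A}_1$ Kazhdan--Lusztig polynomials. One could instead short-circuit this step by invoking the graded categorification of Fock space \cite{brundan-kleshchev} together with a graded Ariki-type theorem and identifying the relevant parabolic coset with the $\W$-action of the linkage principle, or by computing the graded Gram matrices of the cellular forms on the $e(\domres{\wt{\lambda}})\weyl(\wt{\mu})$ in the path basis directly.

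The step I expect to be the main obstacle is precisely this compatibility check -- showing that self-duality of the simples is consistent with the Kazhdan--Lusztig answer, i.e.\ establishing the path/KL inversion identity, together with making the degree normalization of Theorem~\ref{thm:degtab} agree with the normalization of $h$ in \cite{soergel-KL}. All the content beyond the known ungraded result is the assertion that $L(\wt{\lambda})$ occurs in $\weyl(\wt{\mu})$ in degree exactly $\len(w_{\wt{\lambda}})-\len(w_{\wt{\mu}})$, and it is the triviality of all Kazhdan--Lusztig polynomials in type $\tilde{A}_1$ that makes this identity tractable rather than hopeless.
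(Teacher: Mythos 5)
You should know at the outset that the paper does not prove this statement: Theorem \ref{thm:grdecompblob} is imported verbatim from \cite[Theorem~5.11]{plaza-grdecompblob} and is used downstream (e.g.\ in Lemma \ref{lem:weylsubquot} and in fixing grade shifts), so your proposal is an attempt at an independent proof rather than a reconstruction of anything in this paper. As a plan it contains one genuine gap, and it sits exactly where your uniqueness argument gets its traction. You assert that $D(v)$ is $\domleq$-unitriangular with off-diagonal entries in $v\Z_{\geq 0}[v]$ ``since $\weyl(\wt{\mu})$ has simple head $L(\wt{\mu})$ in degree $0$ and every other composition factor is strictly smaller in dominance.'' That reasoning gives unitriangularity, and combining the non-negativity of tableau degrees (Theorem \ref{thm:degtab}) with self-duality of the unshifted simples gives entries in $\Z_{\geq 0}[v]$; but nothing you cite excludes a composition factor $L(\wt{\nu})\langle 0\rangle$ with $\wt{\nu}\domless\wt{\mu}$, i.e.\ a nonzero constant term. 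Strict positivity is not a formal consequence of graded cellularity: a quasi-hereditary cellular algebra equipped with the trivial grading (say the path algebra of the quiver $1 \to 2$) satisfies every property you invoke and violates the conclusion. So this hypothesis requires a genuine input about the blob grading, for instance semisimplicity in characteristic $0$ of the full degree-zero subalgebra of $B_n^{\kappa}$ (which would give $\rad B_n^{\kappa}=(B_n^{\kappa})_{>0}$, simples concentrated in degree $0$, and $\weyl(\wt{\mu})_0\iso L(\wt{\mu})_0$); the paper only records the Temperley--Lieb description of the degree-zero part of the truncations $e(\domres{\wt{\lambda}})B_n^{\kappa}e(\domres{\wt{\lambda}})$, which is not enough as stated.

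This is not a cosmetic omission, because without the $v\Z_{\geq 0}[v]$ condition the Lusztig-lemma bootstrap fails: two factorizations $G=SD^{T}$ with $S$ bar-invariant and $D$ unitriangular can differ by bar-invariant (constant) corrections, so verifying that the Kazhdan--Lusztig matrix satisfies your constraints would no longer identify it with the true decomposition matrix. Beyond that, the step you yourself flag as the main obstacle --- proving bar-invariance of the forced $S(v)$, i.e.\ the explicit path-enumeration inversion identity for $\tilde{A}_1$ --- is the bulk of the remaining work and is left entirely open (as are small checks such as $\Std_{\wt{\nu}}(\wt{\nu})=\{\domtab{\wt{\nu}}\}$, needed for unitriangularity of $G$). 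A more economical route in this multiplicity-free situation is to use the ungraded decomposition numbers of \cite{martin-woodcock}, so that each nonzero graded multiplicity is a single monomial $v^{a}$ with $a\geq 0$, and pin down $a$ degree by degree using the unique maximal-degree tableaux and one-dimensional top-degree weight spaces of Lemma \ref{lem:subqtab} --- which is close in spirit to what \cite{plaza-grdecompblob} actually does and to how this paper uses the theorem --- or simply to cite Plaza as the paper does.
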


There is also a singular version of this result. If $\wt{\lambda}$ is a singular weight, we label the weights in the linkage class of $\wt{\lambda}$ following \cite[Example 5.5]{plaza-grdecompblob}. First set $\wt{\lambda}_0=\wt{\lambda}$. Suppose that $\wt{\lambda}$ corresponds to a positive classical weight (i.e.~a weight on the right side of the origin in our pictures). Working inductively, for $k$ even (resp.~odd) we define $\wt{\lambda}_{k+1}$ to be the rightmost (resp.~leftmost) weight in the linkage class distinct from $\wt{\lambda}_0,\wt{\lambda}_1,\dotsc,\wt{\lambda}_k$. Similarly, when $\wt{\lambda}$ corresponds to a negative classical weight, for $k$ even (resp.~odd) we define $\wt{\lambda}_{k+1}$ to be the leftmost (resp.~rightmost) weight in the linkage class distinct from $\wt{\lambda}_0,\wt{\lambda}_1,\dotsc, \wt{\lambda}_k$.

\begin{thm}[{\cite[Theorem~5.14]{plaza-grdecompblob}}] \label{thm:grdecompblob-singular}
Suppose $\field$ is a field of characteristic $0$. Let $\wt{\lambda}$ be a singular weight. Then if $\wt{\lambda}_k$ is defined we have
\begin{equation*}
[\weyl(\wt{\lambda}_k):L(\wt{\lambda})]_v=v^k \text{.}
\end{equation*}
\end{thm}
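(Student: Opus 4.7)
My plan is to reduce the graded decomposition multiplicity to a weighted count of standard tableaux, combining Proposition~\ref{prop:tablink}, Theorem~\ref{thm:degtab}, and the graded cellular structure of $B_n^{\kappa}$. First, the graded cellular basis gives
\begin{equation*}
\dim_v e(\domres{\wt{\lambda}}) \weyl(\wt{\lambda}_k) = \sum_{\tableau{t} \in \Std_{\wt{\lambda}}(\wt{\lambda}_k)} v^{\deg \tableau{t}} = \sum_{\wt{\nu}} [\weyl(\wt{\lambda}_k):L(\wt{\nu})]_v \cdot \dim_v e(\domres{\wt{\lambda}}) L(\wt{\nu}),
\end{equation*}
the last sum running over $\wt{\nu}$ in the linkage class with $\wt{\nu} \domleq \wt{\lambda}_k$. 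Since $\psi_{\domtab{\wt{\lambda}}}$ survives in the head of $\weyl(\wt{\lambda})$, we have $\dim_v e(\domres{\wt{\lambda}}) L(\wt{\lambda}) = 1$ concentrated in degree $0$. An induction on the chain position $j$ then isolates $[\weyl(\wt{\lambda}_k):L(\wt{\lambda})]_v$ from the contributions of the other $L(\wt{\lambda}_j)$.

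Next, Proposition~\ref{prop:tablink} identifies $\Std_{\wt{\lambda}}(\wt{\lambda}_k)$ with paths ending at $\wt{\lambda}_k$ in the linkage class of $\domtab{\wt{\lambda}}$. Because $\wt{\lambda}$ is singular, the $\W$-orbit has the chain structure $\{\wt{\lambda}_j\}$ and the wall-hitting pattern of $\domtab{\wt{\lambda}}$ is rigid. I would argue by induction on $k$ that there is a unique path $\tableau{t}_k$ in the class ending at $\wt{\lambda}_k$, obtained from $\domtab{\wt{\lambda}}$ by $k$ successive reflections walking along the chain $\wt{\lambda}_0 \to \wt{\lambda}_1 \to \dotsb \to \wt{\lambda}_k$. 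Then Theorem~\ref{thm:degtab} should yield $\deg \tableau{t}_k = k$: each reflection either introduces one additional wall-to-wall step across the fundamental alcove (incrementing $w(\tableau{t})$) or toggles the orientation of the tail (flipping $\delta(\tableau{t})$), so that the degree increments by exactly one per reflection.

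The hardest part will be this degree bookkeeping. The chain zigzags across the origin depending on whether $\wt{\lambda}$ is positive or negative and on the parity of $k$, so a careful case analysis is needed to show that the interaction between $w(\tableau{t})$ and $\delta(\tableau{t})$ always contributes exactly $+1$ at each step of the chain. A potentially cleaner alternative would be to derive the singular case from the regular Theorem~\ref{thm:grdecompblob} via a translation-to-the-wall argument: in type $\tilde{A}_1$, singular Kazhdan--Lusztig polynomials collapse to the monomials $v^k$ indexed by minimal coset representatives for the stabilizer of $\wt{\lambda}$ in $\W$, which matches the formula directly without requiring the full path analysis.
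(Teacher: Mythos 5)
This statement is not proved in the paper at all: it is imported verbatim from Plaza \cite[Theorem~5.14]{plaza-grdecompblob}, so there is no internal argument to compare yours with; judging your sketch on its own terms, it contains a genuine gap. The problem is your central combinatorial claim that there is a \emph{unique} path in the linkage class of $\domtab{\wt{\lambda}}$ ending at $\wt{\lambda}_k$, of degree $k$, so that $\dim_v e(\domres{\wt{\lambda}})\weyl(\wt{\lambda}_k)=v^k$. This is false. Take $e=4$, $\kappa=(0,2)$, $n=10$ and $\wt{\lambda}=(1^{10},\emptyset)$, a singular weight of classical weight $10$; the linked weights are $(1^2,1^8)$ and $(1^6,1^4)$ (classical $-6$ and $2$), and these are the $\wt{\lambda}_1,\wt{\lambda}_2$ of the statement. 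For $\wt{\mu}=(1^6,1^4)$ the set $\Std_{\wt{\lambda}}(\wt{\mu})$ has two elements: the path that goes straight to $6$ and returns to $2$ (degree $0$ by Theorem~\ref{thm:degtab}, since neither wall-to-wall step crosses the fundamental alcove) and the path that crosses to $-2$ and returns to $2$ (degree $2$). So $\dim_v e(\domres{\wt{\lambda}})\weyl(\wt{\mu})=1+v^2$, not a single monomial. The correct uniqueness statement concerns only the tableau of \emph{maximal} degree (this is exactly Lemma~\ref{lem:subqtab}), and the extra low-degree tableau is not noise: it records that $e(\domres{\wt{\lambda}})L(\wt{\mu})\neq 0$, i.e.\ other simple modules in the block have nonzero $\domres{\wt{\lambda}}$-weight spaces.

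Because of this, the ``isolation by induction'' in your first paragraph is where all the work lies, and it cannot be waved through: to extract $[\weyl(\wt{\lambda}_k):L(\wt{\lambda})]_v$ from $\sum_{\tableau{t}}v^{\deg\tableau{t}}$ you must control $\dim_v e(\domres{\wt{\lambda}})L(\wt{\nu})$ for every more dominant $\wt{\nu}$ in the block, which is essentially equivalent to the graded decomposition matrix you are trying to compute; and if one took your uniqueness claim at face value, the bookkeeping would force all these weight spaces to vanish, which the example above shows is wrong. Two further points. First, $\dim_v e(\domres{\wt{\lambda}})L(\wt{\lambda})=1$ does not follow merely from the generator surviving in the head (that gives only a lower bound); you need $\Std_{\wt{\lambda}}(\wt{\lambda})=\{\domtab{\wt{\lambda}}\}$, which holds because any reflection of the tail of the dominant path strictly lowers the endpoint. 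Second, the multiplicity can only be nonzero when $\wt{\lambda}\domleq\wt{\lambda}_k$, so the chain $\wt{\lambda}_0,\wt{\lambda}_1,\dotsc$ necessarily runs towards the origin, and your picture of $\tableau{t}_k$ as ``$k$ successive reflections walking along the chain'' must be set up accordingly. Finally, the proposed shortcut by ``translation to the wall'' is not available machinery in this setting: no translation functors are constructed for $B_n^{\kappa}$ (the recollement functors change $n$, not the singularity type), so that route would be a separate construction rather than a quick consequence of Theorem~\ref{thm:grdecompblob}.
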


\begin{rem}
In general, it is easier to use tableaux when working with permutations of the form $d_{\tableau{t}}$ for some tableau $\tableau{t}$ of shape $\wt{\lambda}$, as one can read off $d_{\tableau{t}}$ directly from the two tableaux $\tableau{t}$ and $\domtab{\wt{\lambda}}$. By contrast, it is easier to use paths in order to apply Proposition~\ref{prop:tablink}. We will mostly use tableaux in the arguments below, but the careful reader may use the tableau-path bijection in order to translate our arguments into the language of paths if necessary.
\end{rem}

\section{Bases for projective indecomposable modules}
\label{sec:bases}



For the rest of this paper, we will assume that $\field$ is a field of characteristic $0$. Most of the previous results are known to hold in some form for the classical blob algebra. To proceed further we must make use of the KLR-style presentation of $B_n^{\kappa}$, and in particular the grading.

\subsection{A Temperley--Lieb subalgebra}

As $B_n^{\kappa}$ is graded, it has a subalgebra of degree $0$ elements. This subalgebra was classified in \cite[\S~5.4--5.5]{libedinskyplaza}. We summarise their results below.

\begin{defn} \label{de:diamond} 
Let $\wt{\lambda}=(1^{\lambda_1},1^{\lambda_2}) \in \Lambda(n)$. Suppose the weight $\wt{\lambda}$ does not lie in the interior of the fundamental alcove. We define $f_{\wt{\lambda}}$ to be the minimal positive integer such that the $f_{\wt{\lambda}}$th point of the path corresponding to $\domtab{\wt{\lambda}}$ lies on a wall of the fundamental alcove. In other words,
\begin{equation}
f_{\wt{\lambda}}=\begin{cases} 
\min(\{2\lambda_2+(\kappa_1-\kappa_2)+je : j \in \Z\} \cap \N) & \text{if $\lambda_1 \geq \lambda_2$,} \\
\min(\{2\lambda_1-(\kappa_1-\kappa_2)+je : j \in \Z\} \cap \N) & \text{if $\lambda_1<\lambda_2$.}
\end{cases} \label{eq:flambda}
\end{equation}

For $j \in \N$ write $f(j)=f_{\wt{\lambda}}+je$. For all $j \in \N$ such that $f(j) \leq n-e$ we define the \defnemph{diamond} of $\wt{\lambda}$ at position $f(j)$ to be
\begin{multline}
U^{\wt{\lambda}}_j=(\psi_{f(j)})(\psi_{f(j)-1}\psi_{f(j)+1})(\psi_{f(j)-2}\psi_{f(j)}\psi_{f(j)+2}) \dotsm \\
\dotsm (\psi_{f(j)-e+1}\psi_{f(j)-e+3} \dotsm \psi_{f(j)+e-3}\psi_{f(j)+e-1}) \dotsm \\
\dotsm (\psi_{f(j)-2}\psi_{f(j)}\psi_{f(j)+2})(\psi_{f(j)-1}\psi_{f(j)+1})(\psi_{f(j)})e(\domres{\wt{\lambda}}) \text{.}
\end{multline}
\end{defn}

The name `diamond' comes from the corresponding KLR diagram for this element, e.g.~
\begin{equation*}
\includegraphics[scale=0.4]{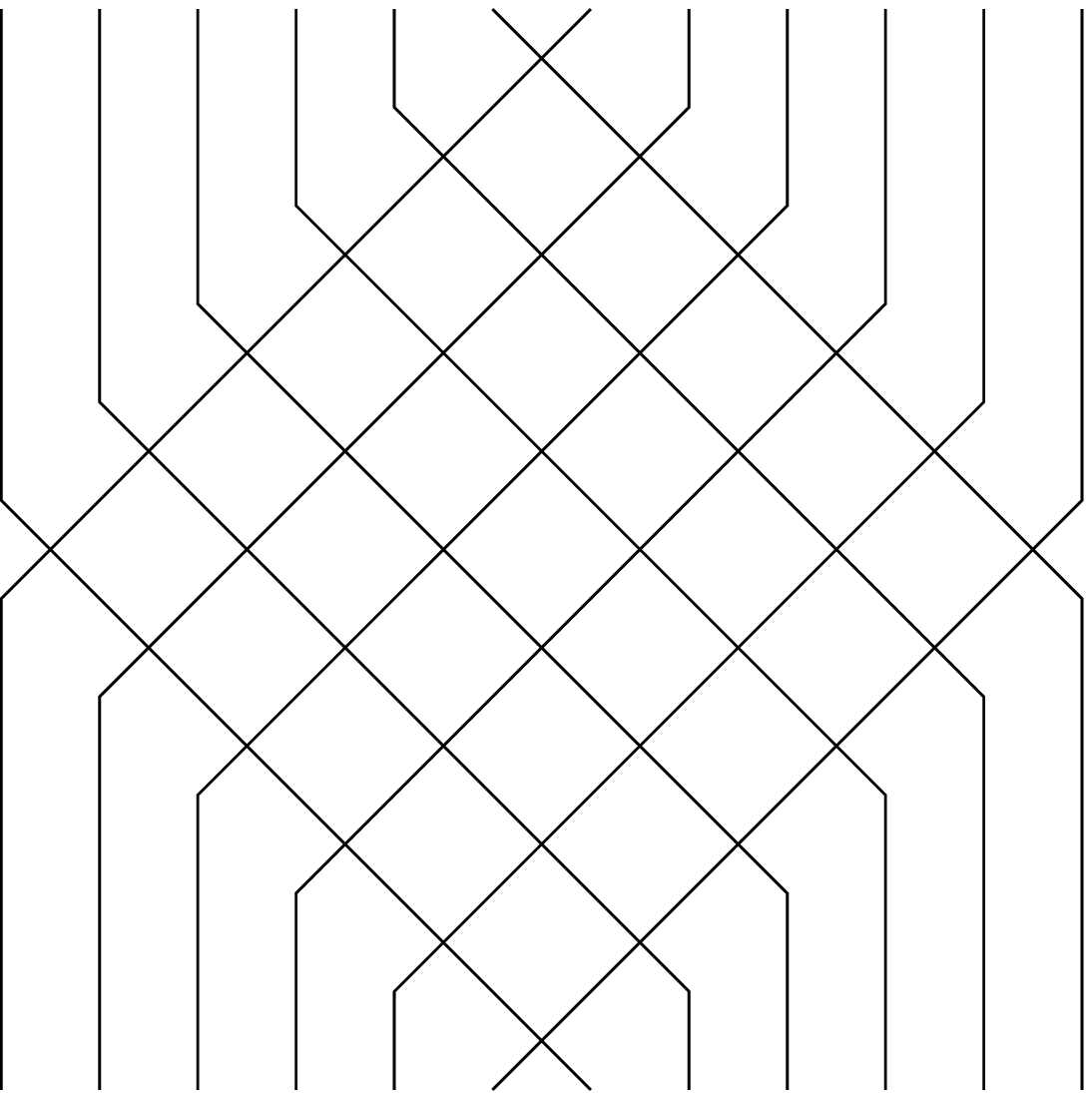}
\end{equation*}
for $e=6$.
The cyclotomic KLR algebra versions of these elements previously appeared in \cite[(4.2)]{kleshchev-mathas-ram}, while the effect of similar permutations on paths was seen even earlier, e.g.~\cite[Figure 4]{martin90}.

\begin{thm}[{\cite[Theorem 5.24]{libedinskyplaza}}]
Let $\wt{\lambda} \in \Lambda(n)$. The diamonds of weight $\wt{\lambda}$ generate the degree $0$ subalgebra of $e(\domres{\wt{\lambda}}) B_n^{\kappa} e(\domres{\wt{\lambda}})$. This subalgebra is isomorphic to a Temperley--Lieb algebra with loop parameter $2(-1)^{e-1}$, with the diamond at position $f_{\wt{\lambda}}+je$ corresponding to the standard Temperley--Lieb diagrammatic generator at index $j$. In other words, the diamonds of weight $\wt{\lambda}$ satisfy the relations
\begin{align*}
U^{\wt{\lambda}}_i U^{\wt{\lambda}}_j & =U^{\wt{\lambda}}_j U^{\wt{\lambda}}_i & & \text{when $|i-j|>1$,} \\
U^{\wt{\lambda}}_i U^{\wt{\lambda}}_j U^{\wt{\lambda}}_i & =U^{\wt{\lambda}}_i & & \text{when $|i-j|=1$,} \\
(U^{\wt{\lambda}}_i)^2& =2(-1)^{e-1} U^{\wt{\lambda}}_i & & \text{for all $i$,}
\end{align*}
and this gives a complete presentation of the subalgebra generated by them.
\end{thm}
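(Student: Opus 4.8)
The plan is to identify the degree $0$ subalgebra explicitly using the graded cellular basis of Theorem~\ref{thm:cellbasis}, and then to recognise it as a Temperley--Lieb algebra by combining a dimension count with a direct diagrammatic verification of the relations. Since $\deg y_r=2$ and the degree of a cellular basis element $\psi_{\tableau{s}\tableau{t}}$ is $\deg\tableau{s}+\deg\tableau{t}$ with every tableau degree non-negative by Theorem~\ref{thm:degtab}, the degree $0$ part of $e(\domres{\wt{\lambda}})B_n^{\kappa}e(\domres{\wt{\lambda}})$ is spanned by those elements $\psi_{\tableau{s}\tableau{t}}$ with $\tableau{s},\tableau{t}\in\Std_{\wt{\lambda}}(\wt{\mu})$ for some $\wt{\mu}$ in the linkage class of $\wt{\lambda}$ and $\deg\tableau{s}=\deg\tableau{t}=0$. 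By Theorem~\ref{thm:degtab} these are precisely the tableaux whose paths make no wall-to-wall crossing of the fundamental alcove and have $\delta(\tableau{t})=0$. Describing the linkage class via Proposition~\ref{prop:tablink}, one checks that such paths biject with the planar half-diagrams (systems of non-crossing cups) on the set of diamond positions $f_{\wt{\lambda}},f_{\wt{\lambda}}+e,f_{\wt{\lambda}}+2e,\dots$ below $n$, so that the degree $0$ subalgebra has dimension equal to that of the corresponding Temperley--Lieb algebra.

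The next step is to show the diamonds generate this subalgebra. One first checks from the degree conventions that each $U^{\wt{\lambda}}_j$ is homogeneous of degree $0$. On the level of paths, $U^{\wt{\lambda}}_j$ reflects a single segment of a path across the $j$th pair of adjacent walls while fixing the rest, which is visible from the KLR diagram of $U^{\wt{\lambda}}_j$ pictured after Definition~\ref{de:diamond}. Hence the leading term of a monomial in the diamonds, computed modulo higher terms in the cellular filtration of Theorem~\ref{thm:cellbasis}, is a degree $0$ cellular basis element; a standard set of such monomials, indexed by Temperley--Lieb diagrams, is found to have distinct leading terms covering every degree $0$ basis element, using the dimension count of the first step. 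Thus the diamonds generate the degree $0$ subalgebra and these monomials form a basis of it.

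One then verifies the three families of relations by manipulating KLR diagrams with the simplified calculus of \S\ref{sec:locality}. The commutation relation $U^{\wt{\lambda}}_iU^{\wt{\lambda}}_j=U^{\wt{\lambda}}_jU^{\wt{\lambda}}_i$ for $|i-j|>1$ is immediate, since the two diamonds involve disjoint blocks of strings and every $\psi$-only relation is local. The relations $U^{\wt{\lambda}}_iU^{\wt{\lambda}}_jU^{\wt{\lambda}}_i=U^{\wt{\lambda}}_i$ for $|i-j|=1$ and $(U^{\wt{\lambda}}_i)^2=2(-1)^{e-1}U^{\wt{\lambda}}_i$ are proved by pulling strings straight: one repeatedly applies the quadratic relation~\eqref{eq:psisq}, the braid relation, and the $y$-slides~\eqref{eq:yslide1}--\eqref{eq:yslide2}, invoking Lemma~\ref{lem:yjump} to dispose of stray $y$-generators arising from adjacent-residue crossings inside a diamond where the swapped idempotent vanishes. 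For $(U^{\wt{\lambda}}_i)^2$ all the resulting $y$-terms cancel, leaving a scalar multiple of $U^{\wt{\lambda}}_i$, and the scalar works out to $2(-1)^{e-1}$, with the sign $(-1)^{e-1}$ accumulated from the $e-1$ adjacent-residue double crossings resolved through the last two cases of~\eqref{eq:psisq}. I expect this computation to be the main obstacle, as it requires careful bookkeeping of signs and of which idempotents remain non-zero as the diagram is simplified.

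Finally, that these relations give a complete presentation follows from the dimension count: the abstract Temperley--Lieb algebra with loop parameter $2(-1)^{e-1}$ on the relevant number of generators has a basis of Temperley--Lieb diagrams of the same cardinality as that found above, and it surjects onto the degree $0$ subalgebra by the preceding two paragraphs, so the surjection is an isomorphism.
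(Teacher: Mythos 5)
You should know at the outset that the paper does not prove this statement at all: it is imported verbatim from \cite[Theorem 5.24]{libedinskyplaza}, so there is no internal argument to compare you with. Your overall plan --- use the positivity of the grading from Theorem~\ref{thm:degtab} to identify the degree-$0$ part of $e(\domres{\wt{\lambda}})B_n^{\kappa}e(\domres{\wt{\lambda}})$ with the span of the $\psi_{\tableau{s}\tableau{t}}$ having $\deg\tableau{s}=\deg\tableau{t}=0$, count these via the path picture, verify the Temperley--Lieb relations for the diamonds diagrammatically, and close with a surjection-plus-dimension argument --- is the natural route and is essentially that of the cited source. However, as written the sketch has two genuine gaps.

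First, the dimension bookkeeping, on which your entire completeness-of-presentation step rests, is off. The cup half-diagrams must be taken on \emph{one more} point than there are diamonds: there is no generator at position $f_{\wt{\lambda}}$ itself (indeed when $f_{\wt{\lambda}}<e$ the diamond there is not even defined, since a diamond at position $p$ uses strands $p-e+1,\dotsc,p+e$), and when $n$ does not lie on a wall the $\delta$-term of Theorem~\ref{thm:degtab} rules out bouncing at the last wall contact, so the degree-$0$ tableaux are counted by half-diagrams on $K=\lfloor (n-f_{\wt{\lambda}})/e\rfloor$ points while the diamonds are the $K-1$ elements at positions $f_{\wt{\lambda}}+je$ with $1\le j\le K-1$, $f_{\wt{\lambda}}+je\le n-e$. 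Your literal set of ``diamond positions below $n$'' gets this wrong: for $e=4$, $\kappa=(0,2)$, $n=9$, $\wt{\lambda}=(1^8,1)$ the degree-$0$ subalgebra is one-dimensional and there are no diamonds, whereas your set has two elements and would predict dimension two. Since the final isomorphism is deduced purely from equality of dimensions, this is not cosmetic.

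Second, and more seriously, the heart of the theorem --- $U^{\wt{\lambda}}_iU^{\wt{\lambda}}_jU^{\wt{\lambda}}_i=U^{\wt{\lambda}}_i$ for $|i-j|=1$ and $(U^{\wt{\lambda}}_i)^2=2(-1)^{e-1}U^{\wt{\lambda}}_i$ --- is only asserted, and the mechanism you propose for the scalar does not work as stated. A sign count over the $e-1$ adjacent-residue double crossings cannot produce the factor $2$, and the equal-residue crossing at the centre of the squared diamond is killed outright by \eqref{eq:psisq}; the nonzero answer arises solely from the dotted error terms of the quadratic and braid relations, which must then be transported and evaluated using the cyclotomic relation \eqref{eq:kinpres}, the vanishing idempotents from \eqref{eq:blobdefiningreln}, and dot jumps as in Lemma~\ref{lem:yjump}. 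This is a genuinely nonlocal computation of exactly the flavour carried out later in the paper (Lemmas~\ref{lem:deg1vanishing}, \ref{lem:crossU1cross} and \ref{lem:Ucenterdoublecross}), and it is where the real content of the proof lies; until it is done, the map from the abstract Temperley--Lieb algebra is not even defined, so the concluding surjection-and-dimension argument cannot start.
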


Recall that in quantum characteristic $0$ the Temperley--Lieb algebra is semisimple, with a unique $1$-dimensional irreducible module.
The central idempotent corresponding to this irreducible module is
sometimes
called the \defnemph{Jones--Wenzl projector}.
We write $\JW^{\wt{\lambda}}$ for the corresponding idempotent in $e(\domres{\wt{\lambda}}) B_n^{\kappa} e(\domres{\wt{\lambda}})$. In our notation, one of the defining properties of $\JW^{\wt{\lambda}}$ is that $U^{\wt{\lambda}}_j \JW^{\wt{\lambda}}=0$ for all $j$.

\begin{lem}
Let $\wt{\lambda} \in \Lambda(n)$. Then $P(\wt{\lambda}) \iso B_n^{\kappa} \JW^{\wt{\lambda}}$.
\end{lem}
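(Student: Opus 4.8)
The plan is to identify the projective indecomposable $P(\wt{\lambda})$ as the direct summand of the regular module $B_n^{\kappa} e(\domres{\wt{\lambda}})$ cut out by the idempotent $\JW^{\wt{\lambda}}$, and then check that this summand has the right head. First I would observe that $e(\domres{\wt{\lambda}})$ is a homogeneous idempotent of degree $0$, so $B_n^{\kappa}e(\domres{\wt{\lambda}})$ is a graded projective left module. The key point is that $\JW^{\wt{\lambda}}$ lies in the degree $0$ part of $e(\domres{\wt{\lambda}})B_n^{\kappa}e(\domres{\wt{\lambda}})$, hence is an idempotent of $B_n^{\kappa}$ itself; thus $B_n^{\kappa}\JW^{\wt{\lambda}}$ is a graded direct summand of $B_n^{\kappa}e(\domres{\wt{\lambda}})$ and so is graded projective. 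It remains to show $B_n^{\kappa}\JW^{\wt{\lambda}}$ is indecomposable with simple head $L(\wt{\lambda})$.

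To pin down the head, I would argue that $B_n^{\kappa}\JW^{\wt{\lambda}}/\rad$ is a single copy of $L(\wt{\lambda})$. The cleanest route uses the cellular/quasi-hereditary structure from Theorem~\ref{thm:cellbasis}: by graded cellularity, $\weyl(\wt{\lambda})$ is a quotient of $B_n^{\kappa}e(\domres{\wt{\lambda}})$ (the quotient by the span of basis elements $\psi_{\tableau{s}\tableau{t}}$ for $\tableau{t}\notin\Std_{\wt{\lambda}}(\wt{\lambda})$ together with the higher-weight ideal), and $\domtab{\wt{\lambda}}$ is the unique tableau in $\Std_{\wt{\lambda}}(\wt{\lambda})$ of minimal degree $0$ — indeed, by Theorem~\ref{thm:degtab} the degree of $\domtab{\wt{\lambda}}$ as an element of $\Std_{\wt{\lambda}}(\wt{\lambda})$ is $0$. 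So the degree $0$ part of $e(\domres{\wt{\lambda}})B_n^{\kappa}e(\domres{\wt{\lambda}})$, which is the Temperley--Lieb subalgebra, acts on the one-dimensional degree $0$ component of $\weyl(\wt{\lambda})$ (spanned by the image of $e(\domres{\wt{\lambda}})$), and the diamonds $U^{\wt{\lambda}}_j$ act as $0$ there because raising and lowering the path across a wall can only increase the degree. Hence right-multiplication by $\JW^{\wt{\lambda}}$ acts as the identity on this line and as $0$ on the higher-degree components of the head $L(\wt{\lambda})$.

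From this I would deduce both that $L(\wt{\lambda})$ is a quotient of $B_n^{\kappa}\JW^{\wt{\lambda}}$ (so $L(\wt{\lambda})$ is a composition factor of its head) and, conversely, that every simple quotient $L(\wt{\mu})$ of $B_n^{\kappa}\JW^{\wt{\lambda}}$ must satisfy $e(\domres{\wt{\lambda}})L(\wt{\mu})^0\neq 0$ with the diamonds acting as zero — a condition which, via the cellular structure and the classification of simples through the contravariant form, forces $\wt{\mu}=\wt{\lambda}$ and moreover forces multiplicity one. Combined with projectivity this gives $B_n^{\kappa}\JW^{\wt{\lambda}}\iso P(\wt{\lambda})$ as graded modules (up to a grade shift, which vanishes since both have their head in degree $0$).

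The main obstacle I expect is the second half of the previous paragraph: showing that $\JW^{\wt{\lambda}}$ kills \emph{all} simple quotients other than $L(\wt{\lambda})$, equivalently that the degree $0$ part of $\End_{B_n^{\kappa}}(B_n^{\kappa}\JW^{\wt{\lambda}})\iso \JW^{\wt{\lambda}}B_n^{\kappa}\JW^{\wt{\lambda}}$ is one-dimensional (so the module is indecomposable and has local endomorphism ring). This requires understanding $\JW^{\wt{\lambda}}B_n^{\kappa}\JW^{\wt{\lambda}}$ in degree $0$, which amounts to showing that conjugating the whole of $e(\domres{\wt{\lambda}})B_n^{\kappa}e(\domres{\wt{\lambda}})$ — not just its degree $0$ Temperley--Lieb part — by the Jones--Wenzl projector leaves only scalars in degree $0$; here one uses that the degree $0$ subalgebra $e(\domres{\wt{\lambda}})B_n^{\kappa}e(\domres{\wt{\lambda}})$ is exactly the Temperley--Lieb algebra generated by the diamonds (the cited theorem of Libedinsky--Plaza), which is semisimple in quantum characteristic $0$ with the Jones--Wenzl projector spanning the unique block not killed by the $U^{\wt{\lambda}}_j$, so $\JW^{\wt{\lambda}}(e(\domres{\wt{\lambda}})B_n^{\kappa}e(\domres{\wt{\lambda}}))^0\JW^{\wt{\lambda}}=\field\JW^{\wt{\lambda}}$.
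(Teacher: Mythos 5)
Your overall framework is sound and close to the paper's: $\JW^{\wt{\lambda}}$ is a degree-$0$ idempotent, $B_n^{\kappa}\JW^{\wt{\lambda}}$ is graded projective, and indecomposability follows because the degree-$0$ part of $\JW^{\wt{\lambda}}e(\domres{\wt{\lambda}})B_n^{\kappa}e(\domres{\wt{\lambda}})\JW^{\wt{\lambda}}$ is $\field\JW^{\wt{\lambda}}$ (the degree-$0$ subalgebra being the Temperley--Lieb algebra and the grading non-negative); this is exactly the paper's assertion that $\JW^{\wt{\lambda}}$ remains a primitive idempotent in $e(\domres{\wt{\lambda}})B_n^{\kappa}e(\domres{\wt{\lambda}})$ and in $B_n^{\kappa}$.

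The gap is in the identification of the highest weight. You claim the diamonds $U^{\wt{\lambda}}_j$ annihilate the degree-$0$ line of $\weyl(\wt{\lambda})$ ``because raising and lowering the path across a wall can only increase the degree,'' but the diamonds are degree-$0$ elements --- that is precisely why they generate the degree-$0$ subalgebra --- so a degree argument only shows that they \emph{preserve} that line, not that they kill it. A priori $U^{\wt{\lambda}}_j$ could act there by the loop parameter $2(-1)^{e-1}$, exactly as a Temperley--Lieb generator acts on the \emph{other} one-dimensional cell module of the Temperley--Lieb algebra; in that case $\JW^{\wt{\lambda}}$ would annihilate the line and your conclusion $\Hom_{B_n^{\kappa}}(B_n^{\kappa}\JW^{\wt{\lambda}},L(\wt{\lambda}))\neq 0$ would not follow. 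To repair this you must invoke the compatibility of the Temperley--Lieb cellular structure with that of $e(\domres{\wt{\lambda}})B_n^{\kappa}e(\domres{\wt{\lambda}})$ (the Libedinsky--Plaza input), which identifies the degree-$0$ part of $e(\domres{\wt{\lambda}})\weyl(\wt{\mu})$ with the Temperley--Lieb cell module of weight $\wt{\mu}$; the paper then sidesteps the question of how the diamonds act on $\weyl(\wt{\lambda})$ altogether by a dimension count: writing $B_n^{\kappa}\JW^{\wt{\lambda}}=P(\wt{\mu})$ and truncating the surjection $P(\wt{\mu})\twoheadrightarrow\weyl(\wt{\mu})$ by $e(\domres{\wt{\lambda}})$, the degree-$0$ part of the source is one-dimensional by the defining property of $\JW^{\wt{\lambda}}$, while the degree-$0$ part of the target has dimension strictly greater than $1$ unless $\wt{\mu}=\wt{\lambda}$. (Relatedly, you use but do not justify that $\domtab{\wt{\lambda}}$ is the \emph{unique} degree-$0$ tableau in $\Std_{\wt{\lambda}}(\wt{\lambda})$; you only verify that its degree is $0$.) Either supply the compatibility statement and deduce that the relevant one-dimensional module is the one killed by all $U^{\wt{\lambda}}_j$, or replace this step by the dimension-count argument.
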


\begin{proof}
Clearly $B_n^{\kappa} \JW^{\wt{\lambda}}$ is an indecomposable projective module, as $\JW^{\wt{\lambda}}$ is a primitive idempotent for the Temperley--Lieb subalgebra (and thus also for $e(\domres{\wt{\lambda}})B_n^{\kappa}e(\domres{\wt{\lambda}})$ and $B_n^{\kappa}$).
Suppose $B_n^{\kappa} \JW^{\wt{\lambda}}=P(\wt{\mu})$.
Then $B_n^{\kappa}$ maps onto $\weyl(\wt{\mu})$, which induces a surjective homomorphism
\begin{equation*}
e(\domres{\wt{\lambda}})B_n^{\kappa} \JW^{\wt{\lambda}} \longrightarrow e(\domres{\wt{\lambda}})\weyl(\wt{\mu})
\end{equation*}
of $e(\domres{\wt{\lambda}})B_n^{\kappa}e(\domres{\wt{\lambda}})$-modules.
By the defining property of $\JW^{\wt{\lambda}}$, the degree $0$ part of the domain is $1$-dimensional.
On the other hand, the degree $0$ part of the codomain is the cellular module of weight $\wt{\mu}$ for the Temperley--Lieb subalgebra.
(Here we use the fact that the cellular structure of the Temperley--Lieb subalgebra is compatible with that of $e(\domres{\wt{\lambda}})B_n^{\kappa}e(\domres{\wt{\lambda}})$, because the latter is positively graded by Theorem~\ref{thm:degtab}.)
This has dimension strictly larger than $1$ unless $\wt{\mu}=\wt{\lambda}$.
%
\end{proof}

\subsection{Maximal degree tableaux}

The following key combinatorial lemma constructs maximal degree tableaux, which are of fundamental importance in the characteristic $0$ representation theory of $B_n^{\kappa}$.

\begin{lem} \label{lem:subqtab}
Let $\wt{\lambda} \in \Lambda(n)$ be a weight. For each $\wt{\mu} \in \W \wt{\lambda}$ with $\wt{\lambda} \domleq \wt{\mu}$, there is a unique tableau $\subqtab{\wt{\mu}}{\wt{\lambda}} \in \Std_{\wt{\lambda}}(\wt{\mu})$ of maximal degree 
\begin{equation*}
\deg \subqtab{\wt{\mu}}{\wt{\lambda}}=
\begin{cases}
\len(w_{\wt{\lambda}})-\len(w_{\wt{\mu}}) & \text{if $\wt{\lambda}$ is regular,} \\
k & \text{if $\wt{\lambda}$ is singular and $\wt{\mu}=\wt{\lambda}_k$.}
\end{cases}
\end{equation*}
\end{lem}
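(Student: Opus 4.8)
The plan is to reduce the statement to a combinatorial analysis of paths via Proposition~\ref{prop:tablink} and the degree formula of Theorem~\ref{thm:degtab}. By Proposition~\ref{prop:tablink}, a tableau $\tableau{t} \in \Std_{\wt{\lambda}}(\wt{\mu})$ corresponds to a path ending at $\wt{\mu}$ in the linkage class of $\domtab{\wt{\lambda}}$, i.e.\ a path obtained from (the path of) $\domtab{\wt{\lambda}}$ by a sequence of alcove reflections. By Theorem~\ref{thm:degtab}, $\deg\tableau{t}=w(\tableau{t})+\delta(\tableau{t})$, where $w(\tableau{t})$ counts wall-to-wall steps across the fundamental alcove and $\delta(\tableau{t})\in\{0,1\}$ records whether the first post-wall-to-wall step heads toward the origin. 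So the problem is: among all paths linked to $\domtab{\wt{\lambda}}$ and ending at $\wt{\mu}$, maximise $w(\tableau{t})+\delta(\tableau{t})$, show the maximum is uniquely attained, and identify its value with the claimed formula.

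First I would treat the \emph{regular} case. Here every path linked to $\domtab{\wt{\lambda}}$ ends at a regular weight $\wt{\mu}$, and there is a bijection between such paths and elements of $\W$ (via which alcove the endpoint sits in, together with how the path zig-zags), refining the correspondence $\wt{\mu}\leftrightarrow w_{\wt{\mu}}$. The key observation is that to reach $\wt{\mu}$ with $\wt{\lambda}\domleq\wt{\mu}$ (so $\wt{\mu}$ is at least as close to the origin as $\wt{\lambda}$), the path must cross at least $\len(w_{\wt{\lambda}})-\len(w_{\wt{\mu}})$ walls, and the most efficient way to accrue degree is to make every one of those crossings a wall-to-wall step across the fundamental alcove; this is possible precisely because $\wt{\mu}$ lies "on the way" from $\wt{\lambda}$ toward (and possibly past) the fundamental alcove along the reflection pattern. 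I would argue that a path maximising $w(\tableau{t})$ forces a unique shape: it must travel directly, alternating wall-to-wall steps through the fundamental alcove, and then make the minimal adjustment to land at $\wt{\mu}$; uniqueness of the endpoint-fixing "tail" together with the fact that $\delta$ is determined once the wall-to-wall portion is maximal pins down $\subqtab{\wt{\mu}}{\wt{\lambda}}$. Counting the wall-to-wall steps then gives exactly $\len(w_{\wt{\lambda}})-\len(w_{\wt{\mu}})$ (with the $\delta$ contribution already absorbed, or handled by a parity/direction check depending on whether $w_{\wt{\mu}}$ and $w_{\wt{\lambda}}$ have the same "side").

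For the \emph{singular} case, $\wt{\lambda}$ sits on a wall, so the path of $\domtab{\wt{\lambda}}$ ends on a reflection point, and the linkage class is the set $\{\wt{\lambda}_0,\wt{\lambda}_1,\dots\}$ ordered as in the setup before Theorem~\ref{thm:grdecompblob-singular}. Here I would compare directly with Theorem~\ref{thm:grdecompblob-singular}: that theorem gives $[\weyl(\wt{\lambda}_k):L(\wt{\lambda})]_v=v^k$, and since the graded decomposition number is $\sum_{\tableau{t}\in\Std_{\wt{\lambda}}(\wt{\lambda}_k)} v^{\deg\tableau{t}}$ (up to the standard normalisation), the fact that this equals the monomial $v^k$ forces $\Std_{\wt{\lambda}}(\wt{\lambda}_k)$ to be a single tableau of degree exactly $k$. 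That immediately yields both existence, uniqueness, and the degree value $\deg\subqtab{\wt{\lambda}_k}{\wt{\lambda}}=k$ for free. Likewise, in the regular case one can cross-check the claimed maximal degree against Theorem~\ref{thm:grdecompblob}: the top-degree term of $h_{w_{\wt{\mu}},w_{\wt{\lambda}}}(v)$ for $w_{\wt{\mu}}\leq w_{\wt{\lambda}}$ is $v^{\len(w_{\wt{\lambda}})-\len(w_{\wt{\mu}})}$ with coefficient $1$, which gives the degree and the uniqueness (a single maximal-degree tableau) with no path juggling at all.

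The main obstacle I expect is the regular case \emph{if} one wants a self-contained path-combinatorial proof rather than quoting Theorem~\ref{thm:grdecompblob}: one must carefully verify that a degree-maximising path is forced to realise every wall-crossing as a full wall-to-wall traversal of the fundamental alcove, and correctly track the $\delta$-term (direction of the first step after the wall-to-wall block) so that the total matches $\len(w_{\wt{\lambda}})-\len(w_{\wt{\mu}})$ on the nose and not off by one. The cleanest route is almost certainly to \emph{deduce} the lemma from Theorems~\ref{thm:grdecompblob} and~\ref{thm:grdecompblob-singular}: the graded decomposition number is the degree-generating function of $\Std_{\wt{\lambda}}(\wt{\mu})$, its leading coefficient is $1$, so the maximal-degree tableau exists and is unique, and its degree is the top exponent of the relevant (inverse) Kazhdan--Lusztig polynomial, which is exactly the stated quantity.
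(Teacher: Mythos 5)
The route you call ``cleanest'' --- deducing the lemma from Theorems~\ref{thm:grdecompblob} and \ref{thm:grdecompblob-singular} --- rests on a false identification: the graded decomposition number $[\weyl(\wt{\mu}):L(\wt{\lambda})]_v$ is \emph{not} the degree generating function of $\Std_{\wt{\lambda}}(\wt{\mu})$. That generating function computes $\dim_v e(\domres{\wt{\lambda}})\weyl(\wt{\mu})$, which equals $\sum_{\wt{\nu}} [\weyl(\wt{\mu}):L(\wt{\nu})]_v \dim_v e(\domres{\wt{\lambda}})L(\wt{\nu})$, and the composition factors with $\wt{\nu}\neq\wt{\lambda}$ generally contribute. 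Concretely, take $e=4$, $\kappa=(0,2)$, $n=10$, $\wt{\lambda}=(1^{10},\emptyset)$ and $\wt{\mu}=\wt{\lambda}_2=(1^6,1^4)$: the linkage class of the path of $\domtab{\wt{\lambda}}$ contains \emph{two} paths ending at $\wt{\mu}$ (one of degree $2$, one of degree at most $1$), so $\Std_{\wt{\lambda}}(\wt{\mu})$ is not a singleton even though $[\weyl(\wt{\mu}):L(\wt{\lambda})]_v=v^2$. Hence your singular-case conclusion (``$\Std_{\wt{\lambda}}(\wt{\lambda}_k)$ is a single tableau of degree $k$'') is simply wrong, and the regular-case ``cross-check'' via the leading term of $h_{w_{\wt{\mu}},w_{\wt{\lambda}}}$ fails for the same reason. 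Note also the logical order in the paper: the statement $\dim_v e(\domres{\wt{\lambda}})\weyl(\wt{\mu})=v^d+\text{l.o.t.}$ is \emph{deduced from} this lemma in Lemma~\ref{lem:weylsubquot}, not the other way around; to reverse the direction you would need the graded dimensions of all the weight spaces $e(\domres{\wt{\lambda}})L(\wt{\nu})$, which are not available at this stage.

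What remains is your path-combinatorial sketch, which is indeed the paper's approach, but as written it is missing the two steps that make it a proof. First, the upper bound: every path linked to $\domtab{\wt{\lambda}}$ has exactly $\len(w_{\wt{\lambda}})-1$ wall-to-wall steps, and to end at $\wt{\mu}$ at least $\len(w_{\wt{\mu}})-1$ of them must lie outside the fundamental alcove, whence $w(\tableau{t})\leq \len(w_{\wt{\lambda}})-\len(w_{\wt{\mu}})=d$; your sketch asserts a crossing count but never produces this inequality. Second, the $\delta$-term: you must exclude the possibility that a tableau with $w(\tableau{t})<d$ attains degree $d$ via $\delta(\tableau{t})=1$. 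The paper handles this by a parity argument --- $w(\tableau{t})$ has constant parity on $\Std_{\wt{\lambda}}(\wt{\mu})$, so any non-maximising tableau has $w(\tableau{t})\leq d-2$ and degree at most $d-1$ --- combined with the observation that a path achieving $w(\tableau{t})=d$ must take its fundamental-alcove wall-to-wall steps as early as possible, which gives both existence and uniqueness. Without these ingredients your regular case is a plan rather than a proof, and the shortcut you propose instead does not close the gap.
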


\begin{proof}
Let $\tableau{t} \in \Std_{\wt{\lambda}}(\wt{\mu})$, and write $d$ for $\len(w_{\wt{\lambda}})-\len(w_{\wt{\mu}})$. From Theorem \ref{thm:degtab} recall that $\deg \tableau{t}$ is either $w(\tableau{t})$ or $w(\tableau{t})+1$, where $w(\tableau{t})$ is the number of wall-to-wall steps inside the fundamental alcove for the path corresponding to $\tableau{t}$. By Proposition \ref{prop:tablink} $\tableau{t}$ lies in the linkage class of $\domtab{\wt{\lambda}}$. The path corresponding to $\domtab{\wt{\lambda}}$ contains $\len(w_{\wt{\lambda}})-1$ wall-to-wall steps, whereas any path with endpoint $\wt{\mu}$ must have at least $\len(w_{\wt{\mu}})-1$ wall-to-wall steps outside the fundamental alcove to get there. Thus $w(\tableau{t})$ is bounded above by $d$. 

There are four cases, according to the parity of $d$ and whether
$\wt{\lambda}$ and $\wt{\mu}$ lie on the same side of the origin or
not. We will focus on one of these cases; the other three are
similar. Suppose $d$ is even and that $\wt{\lambda}$ and $\wt{\mu}$
both lie on the same side of the origin. First we note that since
paths to $\wt{\lambda}$ and $\wt{\mu}$ must eventually pass through
the same wall of the fundamental alcove, $w(\tableau{t})$ is even for
all $\tableau{t} \in \Std_{\wt{\lambda}}(\wt{\mu})$.
There exists a tableau
$\subqtab{\wt{\mu}}{\wt{\lambda}} \in \Std_{\wt{\lambda}}(\wt{\mu})$
with $w(\subqtab{\wt{\mu}}{\wt{\lambda}})=d$ maximal, e.g.
\[
\raisebox{-.183in}{\includegraphics[width=9.2cm]{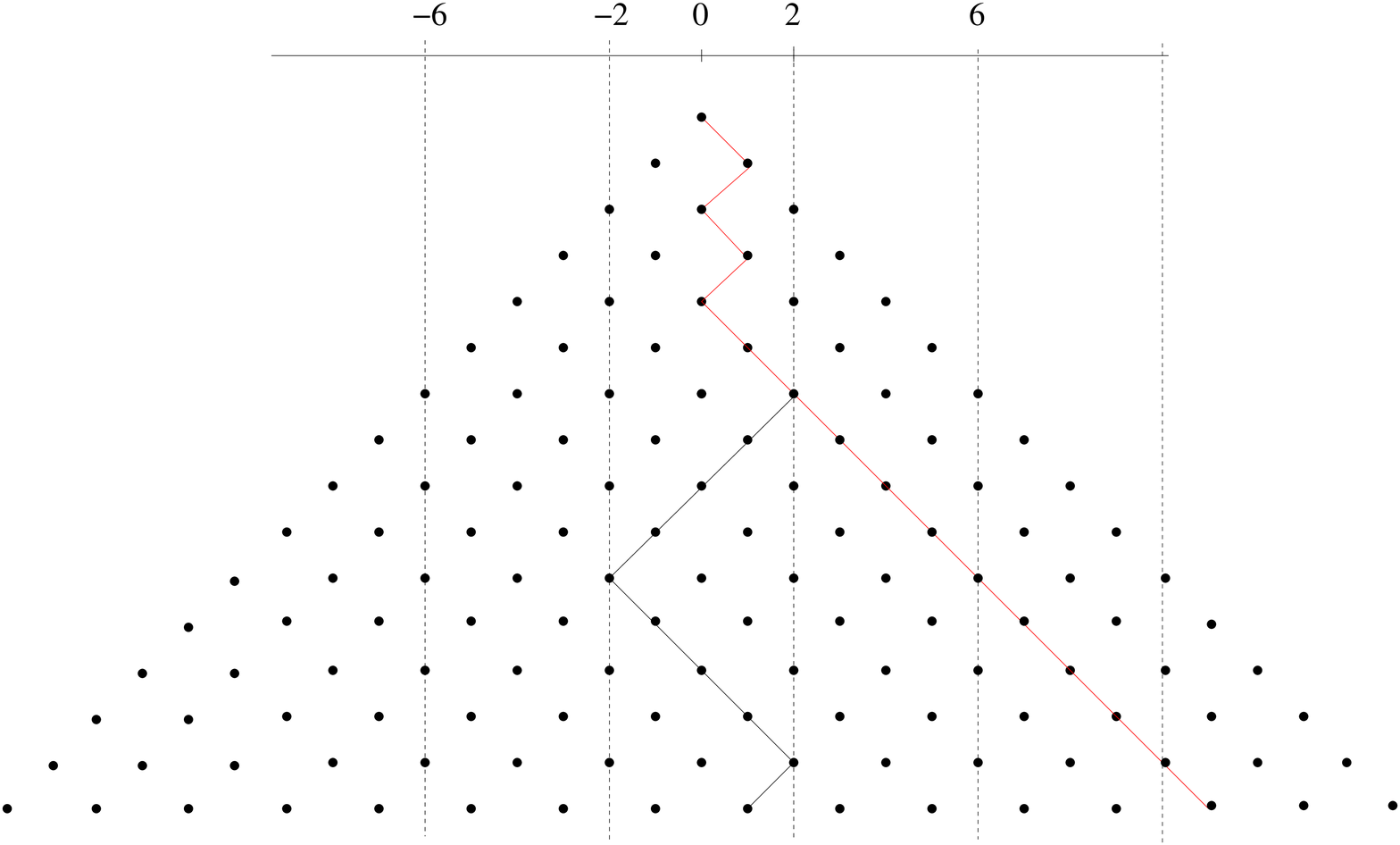}}
\]
Moreover, this tableau is unique: for any such path, the wall-to-wall steps inside the fundamental alcove must occur as early as possible. If not, the path would have to leave and then return to the fundamental alcove, wasting wall-to-wall steps in the process. Finally, $\subqtab{\wt{\mu}}{\wt{\lambda}}$ has maximal degree too. From the picture above $\deg \subqtab{\wt{\mu}}{\wt{\lambda}}=w(\subqtab{\wt{\mu}}{\wt{\lambda}})$, and for all other tableaux $\tableau{t}$ we have 
\begin{equation*}
\deg \tableau{t} \leq w(\tableau{t})+1 \leq (w(\subqtab{\wt{\mu}}{\wt{\lambda}})-2)+1<\deg \subqtab{\wt{\mu}}{\wt{\lambda}}
\end{equation*}
\end{proof}

\begin{rem} 
An alternative proof of this result uses \cite[Theorem 4.9]{libedinskyplaza} to reduce the problem of determining graded dimensions of Weyl modules to a calculation in the Iwahori--Hecke algebra corresponding to $\W$. The result follows from the observation that the `Bott--Samelson' elements (i.e.~products of simple Kazhdan--Lusztig generators) in this algebra are just sums of Kazhdan--Lusztig basis elements.
%

\end{rem}

An interesting application of these maximal degree tableaux is the following lemma, which identifies composition factors of Weyl modules in terms of the cellular basis.

\begin{lem} \label{lem:weylsubquot}
Let $\wt{\lambda} \in \Lambda(n)$, and suppose $\wt{\mu} \in \W \wt{\lambda}$ with $\wt{\lambda} \domleq \wt{\mu}$. Consider the submodule $B_n^{\kappa}\psi_{\subqtab{\wt{\mu}}{\wt{\lambda}}}$ of the Weyl module $\weyl(\wt{\mu})$.
Then there is a homomorphism
\begin{align*}
B_n^{\kappa}\psi_{\subqtab{\wt{\mu}}{\wt{\lambda}}} & \longrightarrow L(\wt{\lambda})\langle \deg \subqtab{\wt{\mu}}{\wt{\lambda}} \rangle \\
\psi_{\subqtab{\wt{\mu}}{\wt{\lambda}}} & \longmapsto \psi_{\domtab{\wt{\lambda}}} \text{.}
\end{align*}
\end{lem}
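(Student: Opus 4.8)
The plan is to realise $B_n^{\kappa}\psi_{\subqtab{\wt{\mu}}{\wt{\lambda}}}$ as a quotient of the indecomposable projective $P(\wt{\lambda})$ (up to grade shift) and then project onto its head. Set $d=\deg\subqtab{\wt{\mu}}{\wt{\lambda}}$; this is $\geq 0$ because $\wt{\lambda}\domleq\wt{\mu}$. The first step is to compute $\Hom_{B_n^{\kappa}}(P(\wt{\lambda})\langle d\rangle,\weyl(\wt{\mu}))$. Combining Lemma~\ref{lem:subqtab} with the graded decomposition numbers --- Theorem~\ref{thm:grdecompblob} when $\wt{\lambda}$ is regular (using that the Bruhat order on the infinite dihedral group $\W$ coincides with the length order, so that $h_{w_{\wt{\mu}},w_{\wt{\lambda}}}(v)=v^{\len(w_{\wt{\lambda}})-\len(w_{\wt{\mu}})}=v^{d}$) and Theorem~\ref{thm:grdecompblob-singular} when $\wt{\lambda}$ is singular --- one gets $[\weyl(\wt{\mu}):L(\wt{\lambda})]_v=v^{d}$. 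Since $P(\wt{\lambda})$ is graded projective, $\sum_{k}\dim\Hom_{B_n^{\kappa}}(P(\wt{\lambda})\langle k\rangle,\weyl(\wt{\mu}))\,v^{k}=[\weyl(\wt{\mu}):L(\wt{\lambda})]_v=v^{d}$, so there is, up to scalar, a unique nonzero homomorphism $\theta\colon P(\wt{\lambda})\langle d\rangle\to\weyl(\wt{\mu})$.

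Next I would identify $\theta$ on a generator. Let $v_{\wt{\lambda}}$ be a degree $0$ generator of $P(\wt{\lambda})$, fixed by $e(\domres{\wt{\lambda}})$ and mapping to $\psi_{\domtab{\wt{\lambda}}}$ under the canonical surjection $\pi\colon P(\wt{\lambda})\to L(\wt{\lambda})$; inside $P(\wt{\lambda})\langle d\rangle$ it lies in degree $d$. Then $\theta(v_{\wt{\lambda}})$ is a nonzero degree $d$ element of $e(\domres{\wt{\lambda}})\weyl(\wt{\mu})$ (nonzero since $v_{\wt{\lambda}}$ generates $P(\wt{\lambda})$ and $\theta\neq 0$). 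But $e(\domres{\wt{\lambda}})\weyl(\wt{\mu})$ has homogeneous basis $\{\psi_{\tableau{t}}:\tableau{t}\in\Std_{\wt{\lambda}}(\wt{\mu})\}$, and by Lemma~\ref{lem:subqtab} the tableau $\subqtab{\wt{\mu}}{\wt{\lambda}}$ is the only element of $\Std_{\wt{\lambda}}(\wt{\mu})$ of degree $d$ (it is the unique one of maximal degree). Hence $\theta(v_{\wt{\lambda}})\in\field\,\psi_{\subqtab{\wt{\mu}}{\wt{\lambda}}}$, and after rescaling $\theta$ we may assume $\theta(v_{\wt{\lambda}})=\psi_{\subqtab{\wt{\mu}}{\wt{\lambda}}}$; then $\theta$ surjects onto $B_n^{\kappa}\psi_{\subqtab{\wt{\mu}}{\wt{\lambda}}}$.

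Finally I would project to the head. Being a nonzero quotient of the indecomposable projective $P(\wt{\lambda})\langle d\rangle$, the module $B_n^{\kappa}\psi_{\subqtab{\wt{\mu}}{\wt{\lambda}}}$ has head $L(\wt{\lambda})\langle d\rangle$; let $q\colon B_n^{\kappa}\psi_{\subqtab{\wt{\mu}}{\wt{\lambda}}}\to L(\wt{\lambda})\langle d\rangle$ be the projection. Both $q\circ\theta$ and the canonical surjection $\pi'\colon P(\wt{\lambda})\langle d\rangle\to L(\wt{\lambda})\langle d\rangle$ (the grade shift of $\pi$) are nonzero degree $0$ maps, and they lie in $\Hom_{B_n^{\kappa}}(P(\wt{\lambda})\langle d\rangle,L(\wt{\lambda})\langle d\rangle)\cong\End_{B_n^{\kappa}}(L(\wt{\lambda}))=\field$; so, after rescaling $q$, we have $q\circ\theta=\pi'$. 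Therefore $q(\psi_{\subqtab{\wt{\mu}}{\wt{\lambda}}})=q(\theta(v_{\wt{\lambda}}))=\pi'(v_{\wt{\lambda}})=\psi_{\domtab{\wt{\lambda}}}$, so $q$ is the desired homomorphism.

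The main obstacle is the first step: one needs the decomposition numbers in the sharp graded form $[\weyl(\wt{\mu}):L(\wt{\lambda})]_v=v^{d}$ --- in particular that $L(\wt{\lambda})$ occurs in $\weyl(\wt{\mu})$ exactly once and precisely in the grade shift $d=\deg\subqtab{\wt{\mu}}{\wt{\lambda}}$ matching Lemma~\ref{lem:subqtab}. This is what makes the Hom space one-dimensional and forces $\theta(v_{\wt{\lambda}})$ onto the top-degree line $\field\,\psi_{\subqtab{\wt{\mu}}{\wt{\lambda}}}$; everything after is grade-shift bookkeeping together with the elementary fact that a nonzero quotient of an indecomposable projective module has the same simple head.
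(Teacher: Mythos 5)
Your proposal is correct and follows essentially the same route as the paper: both arguments rest on the graded decomposition number $[\weyl(\wt{\mu}):L(\wt{\lambda})]_v=v^{\deg \subqtab{\wt{\mu}}{\wt{\lambda}}}$ together with Lemma~\ref{lem:subqtab} and Proposition~\ref{prop:tablink}, which make the degree-$d$ part of $e(\domres{\wt{\lambda}})\weyl(\wt{\mu})$ one-dimensional, spanned by $\psi_{\subqtab{\wt{\mu}}{\wt{\lambda}}}$. Your routing through $P(\wt{\lambda})\langle d\rangle$ and its simple head merely spells out the paper's terse ``so the result follows'' (and you also handle the singular case explicitly via Theorem~\ref{thm:grdecompblob-singular}), so there is no substantive difference.
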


\begin{proof}
Let $d=\deg \subqtab{\wt{\mu}}{\wt{\lambda}}=\len(w_{\wt{\lambda}})-\len(w_{\wt{\mu}})$. From Theorem \ref{thm:grdecompblob} we have
\begin{equation*}
[\weyl(\wt{\mu}):L(\wt{\lambda})]_v=h_{w_{\wt{\mu}},w_{\wt{\lambda}}}(v)=v^d \text{,}
\end{equation*}
so the Weyl module $\weyl(\wt{\mu})$ contains exactly one subquotient isomorphic to $L(\wt{\lambda})\langle d\rangle$. Recall that $L(\wt{\lambda})$ is generated by a vector of residue $\domres{\wt{\lambda}}$ in degree $0$. This means that the unique subquotient of $\weyl(\wt{\mu})$ isomorphic to $L(\wt{\lambda})\langle d\rangle$ is generated by some vector of residue $\domres{\wt{\lambda}}$ in degree $d$. But from Proposition \ref{prop:tablink} and Lemma \ref{lem:subqtab} we have
\begin{equation*}
\dim_{v} e(\domres{\wt{\lambda}}) \weyl(\wt{\mu})=\sum_{\tableau{t} \in \Std_{\wt{\lambda}}(\wt{\mu})} v^{\deg \tableau{t}}=v^d+\text{l.o.t.}
\end{equation*}
In other words, the subspace of vectors with the correct residue and degree is one-dimensional, spanned by $\psi_{\subqtab{\wt{\mu}}{\wt{\lambda}}}$, so the result follows.
\end{proof}

Applying Brauer--Humphreys reciprocity, we can also identify the Weyl
subquotient isomorphic to $\weyl(\wt{\mu})$ inside $P(\wt{\lambda})$. 

\begin{cor} \label{cor:projsubquot}
Let $\wt{\lambda} \in \Lambda(n)$, and suppose $\wt{\mu} \in \W \wt{\lambda}$ with $\wt{\lambda} \domleq \wt{\mu}$. There is a surjective homomorphism 
\begin{align*}
B_n^{\kappa}\psi_{\domtab{\wt{\mu}}\subqtab{\wt{\mu}}{\wt{\lambda}}} \JW^{\wt{\lambda}} & \longrightarrow \weyl(\wt{\mu})\langle \deg \subqtab{\wt{\mu}}{\wt{\lambda}}\rangle \\
\psi_{\domtab{\wt{\mu}} \subqtab{\wt{\mu}}{\wt{\lambda}}} \JW^{\wt{\lambda}} & \longmapsto \psi_{\domtab{\wt{\mu}}} \text{,}
\end{align*}
where the domain is a submodule of $B_n^{\kappa} \JW^{\wt{\lambda}} \iso P(\wt{\lambda})$. 
\end{cor}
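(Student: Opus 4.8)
The plan is to deduce Corollary~\ref{cor:projsubquot} from Lemma~\ref{lem:weylsubquot} by dualising and then applying Brauer--Humphreys reciprocity, exactly as the lead-in sentence suggests. First I would record what reciprocity gives us: since $B_n^{\kappa}$ is graded quasi-hereditary, we have $(P(\wt{\lambda}) : \weyl(\wt{\mu}))_v = [\weyl(\wt{\mu}) : L(\wt{\lambda})]_v$, and by Lemma~\ref{lem:weylsubquot} (via Theorem~\ref{thm:grdecompblob}) this equals $v^d$ where $d = \deg \subqtab{\wt{\mu}}{\wt{\lambda}} = \len(w_{\wt{\lambda}}) - \len(w_{\wt{\mu}})$ in the regular case (and $v^k$ in the singular case). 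So $P(\wt{\lambda})$ has a Weyl filtration containing exactly one copy of $\weyl(\wt{\mu})\langle d\rangle$ and no other grade-shifted copy of $\weyl(\wt{\mu})$. This unique Weyl subquotient is a subquotient near the top of a Weyl filtration (since $\wt{\mu} \domgeq \wt{\lambda}$, and $P(\wt{\lambda}) = B_n^{\kappa}\JW^{\wt{\lambda}}$ has head $L(\wt{\lambda})$); concretely, applying $e(\domres{\wt{\mu}})$ to $P(\wt{\lambda})$ and taking the appropriate degree piece will pick it out.

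The key move is to identify the generating vector. The element $\psi_{\domtab{\wt{\mu}}\subqtab{\wt{\mu}}{\wt{\lambda}}} = \psi_{\expr{d}_{\domtab{\wt{\mu}}}} e(\domres{\wt{\mu}}) \psi_{\expr{d}_{\subqtab{\wt{\mu}}{\wt{\lambda}}}^{-1}}$ — wait, more precisely the relevant cellular-type element $\psi_{\domtab{\wt{\mu}}\subqtab{\wt{\mu}}{\wt{\lambda}}}$ as defined via $\subqtab{\wt{\mu}}{\wt{\lambda}} \in \Std_{\wt{\lambda}}(\wt{\mu})$ — lies in $e(\domres{\wt{\mu}}) B_n^{\kappa} e(\domres{\wt{\lambda}})$, has degree $d$, and so $\psi_{\domtab{\wt{\mu}}\subqtab{\wt{\mu}}{\wt{\lambda}}}\JW^{\wt{\lambda}} \in e(\domres{\wt{\mu}}) B_n^{\kappa}\JW^{\wt{\lambda}} = e(\domres{\wt{\mu}}) P(\wt{\lambda})$ sits in degree $d$. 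I would argue this vector is nonzero and generates (up to grade shift $\langle d\rangle$) the unique $\weyl(\wt{\mu})$-subquotient: it maps to $\psi_{\domtab{\wt{\mu}}}$, the canonical highest-weight generator of $\weyl(\wt{\mu})$. To get the homomorphism one takes the submodule $B_n^{\kappa}\psi_{\domtab{\wt{\mu}}\subqtab{\wt{\mu}}{\wt{\lambda}}}\JW^{\wt{\lambda}}$ of $P(\wt{\lambda})$, observes it surjects onto the top Weyl layer by cellularity (the quotient by the span of basis elements indexed by tableaux for strictly larger weights), and then checks that this top layer is precisely $\weyl(\wt{\mu})\langle d\rangle$ using the multiplicity computation above together with the fact that any vector of residue $\domres{\wt{\mu}}$ in degree $d$ inside $P(\wt{\lambda})$ spans a one-dimensional space — the graded-dimension count $\dim_v e(\domres{\wt{\mu}}) P(\wt{\lambda})$ has leading term $v^d$ by the Weyl filtration and Lemma~\ref{lem:subqtab}, mirroring the argument in the proof of Lemma~\ref{lem:weylsubquot}.

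Alternatively, and perhaps more cleanly, I would dualise: Lemma~\ref{lem:weylsubquot} gives a surjection $B_n^{\kappa}\psi_{\subqtab{\wt{\mu}}{\wt{\lambda}}} \twoheadrightarrow L(\wt{\lambda})\langle d\rangle$ inside $\weyl(\wt{\mu})$, i.e.\ $L(\wt{\lambda})\langle d\rangle$ is a quotient of a submodule of $\weyl(\wt{\mu})$. Applying the duality functor (which fixes $L(\wt{\lambda})$, reverses grade shifts, and sends $\weyl(\wt{\mu})$ to $\dweyl(\wt{\mu})$) produces an embedding $L(\wt{\lambda})\langle -d\rangle \hookrightarrow \dweyl(\wt{\mu})$. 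Then, using that $P(\wt{\lambda})$ has a $\dweyl$-costandard-type description — more relevantly, that $\Hom(P(\wt{\lambda}), \dweyl(\wt{\mu})) $ and Brauer--Humphreys reciprocity pin down the $\weyl(\wt{\mu})$-layer of $P(\wt{\lambda})$ — one transfers the statement. The main obstacle I anticipate is the bookkeeping to confirm that the explicit element $\psi_{\domtab{\wt{\mu}}\subqtab{\wt{\mu}}{\wt{\lambda}}}\JW^{\wt{\lambda}}$ is genuinely nonzero in $P(\wt{\lambda}) = B_n^{\kappa}\JW^{\wt{\lambda}}$ and is not killed by passing modulo the higher-weight part of the cell filtration — i.e.\ matching the abstract reciprocity/duality statement with the concrete cellular-basis element requires knowing that $\JW^{\wt{\lambda}}$ does not annihilate it, which should follow from the degree-$0$ analysis of the Temperley--Lieb subalgebra (the defining property $U_j^{\wt{\lambda}}\JW^{\wt{\lambda}} = 0$ and positivity of the grading from Theorem~\ref{thm:degtab}) combined with the graded-dimension count. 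Everything else is a routine application of graded cellular algebra formalism and the reciprocity already available in the excerpt.
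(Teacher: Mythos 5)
The paper offers no proof of this corollary beyond the phrase ``applying Brauer--Humphreys reciprocity,'' and your argument --- reciprocity pinning down the unique layer $\weyl(\wt{\mu})\langle \deg \subqtab{\wt{\mu}}{\wt{\lambda}}\rangle$ in a Weyl filtration of $P(\wt{\lambda})$, then identifying its generator by residue and degree via the graded-dimension count --- is exactly the intended route, so your proposal matches the paper's approach. The one point you flag as an obstacle, that $\psi_{\domtab{\wt{\mu}}\subqtab{\wt{\mu}}{\wt{\lambda}}}\JW^{\wt{\lambda}}$ is neither zero nor absorbed into $B_n^{\kappa,\domgreater\wt{\mu}}$, closes most cleanly by applying the homomorphism of Lemma~\ref{lem:weylsubquot} to $\JW^{\wt{\lambda}}\psi_{\subqtab{\wt{\mu}}{\wt{\lambda}}}$: since the diamonds annihilate the head of $B_n^{\kappa}\JW^{\wt{\lambda}}$, the projector $\JW^{\wt{\lambda}}$ fixes $\psi_{\domtab{\wt{\lambda}}}$ in $L(\wt{\lambda})$, so the image is nonzero --- i.e.\ precisely the ingredients you name ($U^{\wt{\lambda}}_j\JW^{\wt{\lambda}}=0$ together with the positive grading) do suffice.
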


\section{Singular projective modules}
\label{sec:singprojmod}

The aim of this section is to determine the socles of the
indecomposable projective modules associated to singular weights
--- Theorem~\ref{thm:endwallprojsimplesocle} and Corollary~\ref{cor:singprojsimplesocle}.
This turns out to be enough to completely determine the structure of these
modules.
The result will then be used in \S\ref{ss:regprojmod} to address the
corresponding (harder) non-singular cases.

Our general strategy is to identify possible generators for the socle 
in Lemma~\ref{lem:possible-socle} and then to rule out all but one of them
via direct computation. The computation involves the Jones--Wenzl
projector, which is difficult to work with directly because in the standard
basis it is a sum with many terms. Luckily nearly all of these terms combine or
vanish in the computation when multiplied by certain cellular basis elements.

In this section we will assume that $n \equiv \kappa_1-\kappa_2 \pmod{e}$, or in other words that there is a wall at $n$. Fix $\wt{\eta}=(1^n,\emptyset) \in \Lambda(n)$ and let $m \in \N$ such that $n=f_{\wt{\eta}}+me$ (see \eqref{eq:flambda} for a definition of $f_{\wt{\eta}}$). Recall how the linkage class of $\wt{\eta}$ consists of the weights $\wt{\eta}_j$ for some non-negative integers $j$. The maximal weight in this linkage class is $\wt{\eta}_m$, which is on a wall of the fundamental alcove. Note that $f_{\wt{\eta}_j}=f_{\wt{\eta}}+je$, because the distance from $\wt{\eta}_j$ to the nearest fundamental alcove wall is $(m-j)e$ steps.

%

\subsection{Cellular basis factorization}

We begin with a factorization of some of the distinguished cellular basis elements from the previous section.

\begin{prop} \label{prop:psicrosses}
For all integers $0 \leq j \leq k \leq m$ we have
\begin{equation*}
\psi_{\domtab{\wt{\eta}_k} \subqtab{\wt{\eta}_k}{\wt{\eta}_j}}=x_j x_{j+1} \dotsm x_{k-1} \psi_{f_{\wt{\eta}}+je} \psi_{f_{\wt{\eta}}+(j+1)e} \dotsm \psi_{f_{\wt{\eta}}+(k-1)e} e(\domres{\wt{\eta}_j})
\end{equation*}
for some elements $x_r \in B_n^{\kappa}$ (with $j \leq r<k$) which satisfy the following properties:
\begin{enumerate}[label={\rm (\roman*)}]
\item \label{item:xrpropsfirst} for fixed $r$ the element $x_r$ does not depend on $j$ or $k$;

\item for $r \neq s$, $x_r x_s = x_s x_r$ and $x_r \psi_{f_{\wt{\eta}}+se}=\psi_{f_{\wt{\eta}}+se} x_r$;

\item \label{item:xrpropslast} for each $r$ we have
\begin{align*}
x_r\overline{x_r}& =e(\domres{\wt{\eta}_k}) \text{,} \\
\overline{x_r} x_r& =e(s_{f_{\wt{\eta}}+re} \domres{\wt{\eta}_j}) \text{.}
\end{align*}
\end{enumerate}
\end{prop}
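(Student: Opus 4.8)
The plan is to prove the factorization by induction on $k-j$, with the diagrammatic calculus doing most of the work. First I would recall (from Lemma~\ref{lem:subqtab}, Proposition~\ref{prop:tablink}, and the description of the linkage class of $\wt{\eta}$) exactly what the path of $\subqtab{\wt{\eta}_k}{\wt{\eta}_j}$ looks like: starting from $\domtab{\wt{\eta}_j}$, the extra wall-to-wall detours needed to reach $\wt{\eta}_k$ are each realized by one ``bounce'' at a wall of the fundamental alcove, and these happen as early as possible. Translating this into the permutation $d_{\subqtab{\wt{\eta}_k}{\wt{\eta}_j}}$, the picture is a stack of $k-j$ independent diamond-like gadgets sitting at positions $f_{\wt\eta}+re$ for $j\le r<k$, each followed by a single crossing $\psi_{f_{\wt\eta}+re}$ that pushes the detour across to the next wall. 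The key point is that because the bicharge is adjacency-free and $e\ge 4$, these gadgets occupy disjoint bands of strings (separated by at least one strand with a ``safe'' residue), so the corresponding $\psi$-words commute past each other: this is what will give properties (ii).

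Concretely, I would \emph{define} $x_r$ to be the sub-word of a fixed reduced expression $\expr{d}_{\subqtab{\wt{\eta}_{r+1}}{\wt{\eta}_r}}$ for the permutation taking $\domtab{\wt{\eta}_r}$ to $\subqtab{\wt{\eta}_{r+1}}{\wt{\eta}_r}$, with the final crossing $\psi_{f_{\wt\eta}+re}$ stripped off — i.e.\ the ``half-diamond'' part — and then observe that the cellular basis element $\psi_{\domtab{\wt{\eta}_k}\subqtab{\wt{\eta}_k}{\wt{\eta}_j}} = \psi_{\expr{d}_{\subqtab{\wt{\eta}_k}{\wt{\eta}_j}}^{-1}} e(\domres{\wt\eta_j})$ (up to the convention in Theorem~\ref{thm:cellbasis}; note $\domtab{\wt{\eta}_k}$ contributes trivially since $d_{\domtab{\wt{\eta}_k}}=1$) factors as the ordered product of these gadgets because a reduced expression for $d_{\subqtab{\wt{\eta}_k}{\wt{\eta}_j}}$ can be chosen to be the concatenation of reduced expressions for the individual $d_{\subqtab{\wt{\eta}_{r+1}}{\wt{\eta}_r}}$'s (one checks the lengths add up, using the wall-to-wall count from Lemma~\ref{lem:subqtab}). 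Property (i) is then immediate from the definition, since $x_r$ is built only from the local geometry near the $r$th fundamental-alcove wall, independent of the endpoints $j,k$. Property (ii) follows from the disjoint-band observation together with the commuting relations $\psi_a\psi_b=\psi_b\psi_a$ for $|a-b|>1$ and $\psi_a y_b = y_b\psi_a$ for $b\ne a,a+1$ in Definition~\ref{de:b1} — one just needs the bands to be separated by more than one index, which again is guaranteed by $e\ge 4$ and adjacency-freeness.

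For property (iii), the idea is that $x_r\overline{x_r}$ is the KLR diagram obtained by stacking the half-diamond on top of its mirror image, and this straightens out to the identity idempotent on the relevant residue sequence — this is essentially the computation that $U^{\wt{\lambda}}_j$ is (up to the loop-parameter scalar, which the half-diamond avoids by construction) a ``half'' of the diamond, so gluing it to its reverse gives no closed loops and no leftover dots, using repeatedly the KLR relations $\psi^2 e(\res i)=e(\res i)$ in the distinct-residue case together with the Reidemeister-III-type braid relation with $\alpha=0$. The residue sequence one lands on is $\domres{\wt\eta_k}$ for $x_r\overline{x_r}$ and $s_{f_{\wt\eta}+re}\domres{\wt\eta_j}$ for $\overline{x_r}x_r$, simply by tracking where the top and bottom of the diagram sit (the half-diamond's bottom reads $\domres{\wt\eta_j}$ and its top reads the residue sequence obtained by one transposition at $f_{\wt\eta}+re$, since the crossing $\psi_{f_{\wt\eta}+re}$ was removed). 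I expect the main obstacle to be the bookkeeping in this last straightening computation: one must verify that \emph{no} error terms of the form $-\delta_{ij}(\cdots)$ or $\pm(y-y)(\cdots)$ survive when collapsing $x_r\overline{x_r}$, which requires knowing the precise residue pattern across the band near the $r$th wall. This is exactly the point where adjacency-freeness of $\kappa$ is essential — it forces the residues bordering each wall to be non-adjacent, killing the dangerous $\delta$ and $y$-difference terms — and I would isolate this as a short sub-lemma about the residue sequence $\domres{\wt\eta_j}$ in a neighbourhood of position $f_{\wt\eta}+re$ before doing the collapse.
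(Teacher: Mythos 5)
Your proposal is correct and follows essentially the same route as the paper: decompose $d_{\subqtab{\wt{\eta}_k}{\wt{\eta}_j}}$ into commuting gadgets supported on disjoint bands around each wall position $f_{\wt{\eta}}+re$, write each gadget as a ``half-diamond'' $x_r$ times the single crossing $\psi_{f_{\wt{\eta}}+re}$, and deduce (i)--(iii) from the disjointness of supports together with the fact that every crossing inside $x_r$ has degree $0$, so $x_r\overline{x_r}$ collapses via $\psi^2 e(\res{i})=e(\res{i})$ with no $\delta$- or $y$-error terms. The residue sub-lemma you defer is exactly what the paper supplies through its explicit skew-tableau/layer analysis, using $2 \leq f_{\wt{\eta}} \leq e-2$ (a consequence of adjacency-freeness) to see the layers consist of degree-$0$ transpositions.
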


\begin{proof}
Let $d=d_{\subqtab{\wt{\eta}_k}{\wt{\eta}_j}}$. Recall that $d$ is the permutation which maps $\domtab{\wt{\eta}_k}$ to $\subqtab{\wt{\eta}_k}{\wt{\eta}_j}$. 

For $0 \leq l \leq m$, write $\wt{\eta}_l=(1^{\eta_{l,1}},1^{\eta_{l,2}})$ and set $r_l=2\min(\eta_{l,1},\eta_{l,2})$. From 
\eqref{eq:flambda}
it is clear that
\begin{equation*}
f_{\wt{\eta}_l}=f_{\wt{\eta}}+le=\begin{cases}
r_l+f_{\wt{\eta}}& \text{if $l$ is even,} \\
r_l+(e-f_{\wt{\eta}}) & \text{if $l$ is odd.}
\end{cases}
\end{equation*}
This means that
\begin{equation*}
r_l=\begin{cases}
le & \text{if $l$ is even,} \\
(l-1)e+2f_{\wt{\eta}} & \text{if $l$ is odd.}
\end{cases}
\end{equation*}
Thus the integers $1 \leq r \leq r_j$ lie in the same boxes in the tableaux $\domtab{\wt{\eta}_j}$, $\subqtab{\wt{\eta}_k}{\wt{\eta}_j}$, and $\domtab{\wt{\eta}_k}$ so we have $d(r)=r$. Similarly when $r_k<r \leq n$, $r$ is in the same box in both $\domtab{\wt{\eta}_k}$ and $\subqtab{\wt{\eta}_k}{\wt{\eta}_j}$ so $d(r)=r$ here as well.

For $j \leq l<k$, the boxes in $\subqtab{\wt{\eta}_k}{\wt{\eta}_j}$ with labels $r_l<r \leq r_{l+1}$ form the skew tableau
\begingroup
\allowdisplaybreaks
\begin{align*}
\begin{gathered}
\includegraphics[scale=0.5]{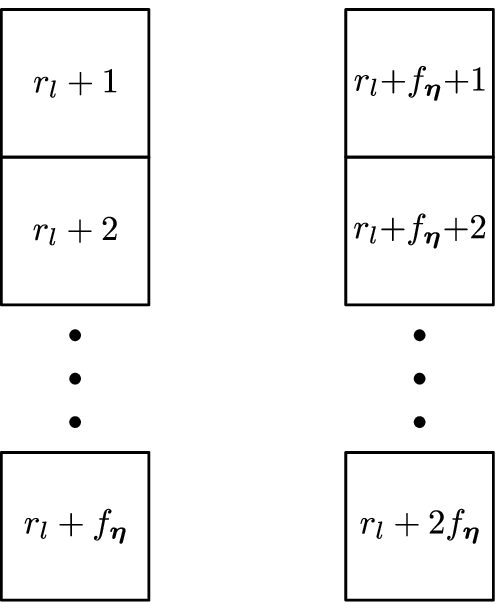}
\end{gathered} & & & \text{if $l$ is even,} \\
\begin{gathered}
\includegraphics[scale=0.5]{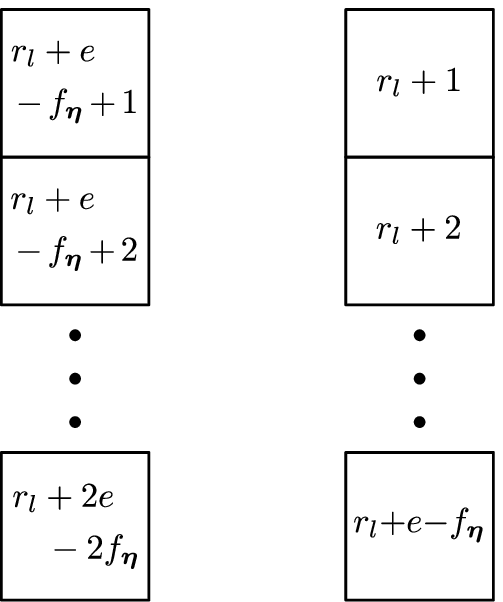}
\end{gathered} & & & \text{if $l$ is odd,}
\end{align*}
\endgroup
while the same boxes in $\domtab{\wt{\eta}_k}$ form the skew tableau
\begingroup
\allowdisplaybreaks
\begin{align*}
\begin{gathered}
\includegraphics[scale=0.5]{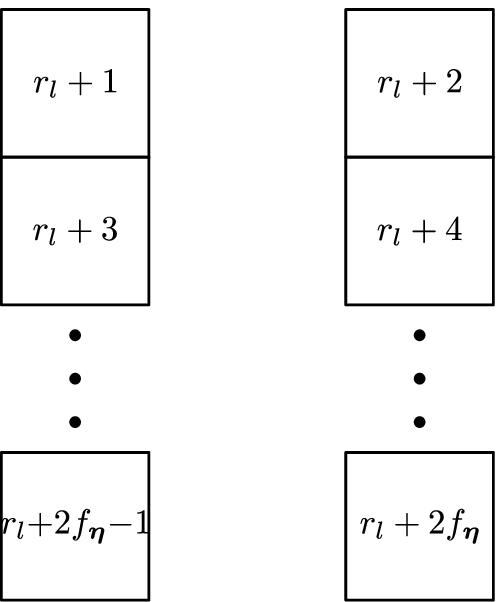}
\end{gathered} & & & \text{if $l$ is even,} \\
\begin{gathered}
\includegraphics[scale=0.5]{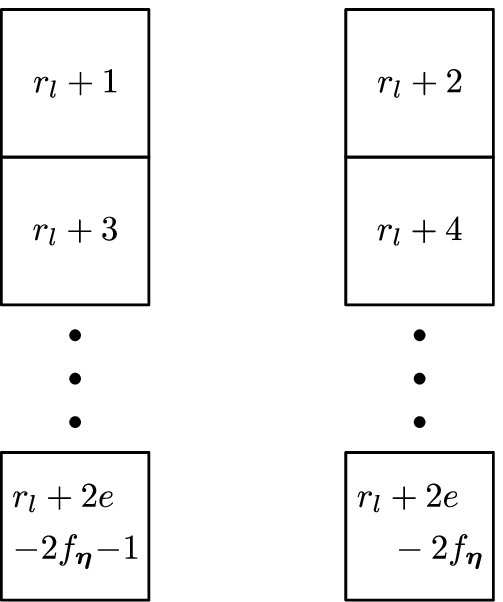}
\end{gathered} & & & \text{if $l$ is odd.}
\end{align*}
\endgroup
This of course means that $d$ restricted to $r_l<r \leq r_{l+1}$ is still a permutation $d_l$. In fact $d_l$ corresponds to a triangular portion of the lower half of a `diamond permutation':
\begin{equation*}
\includegraphics[scale=0.35]{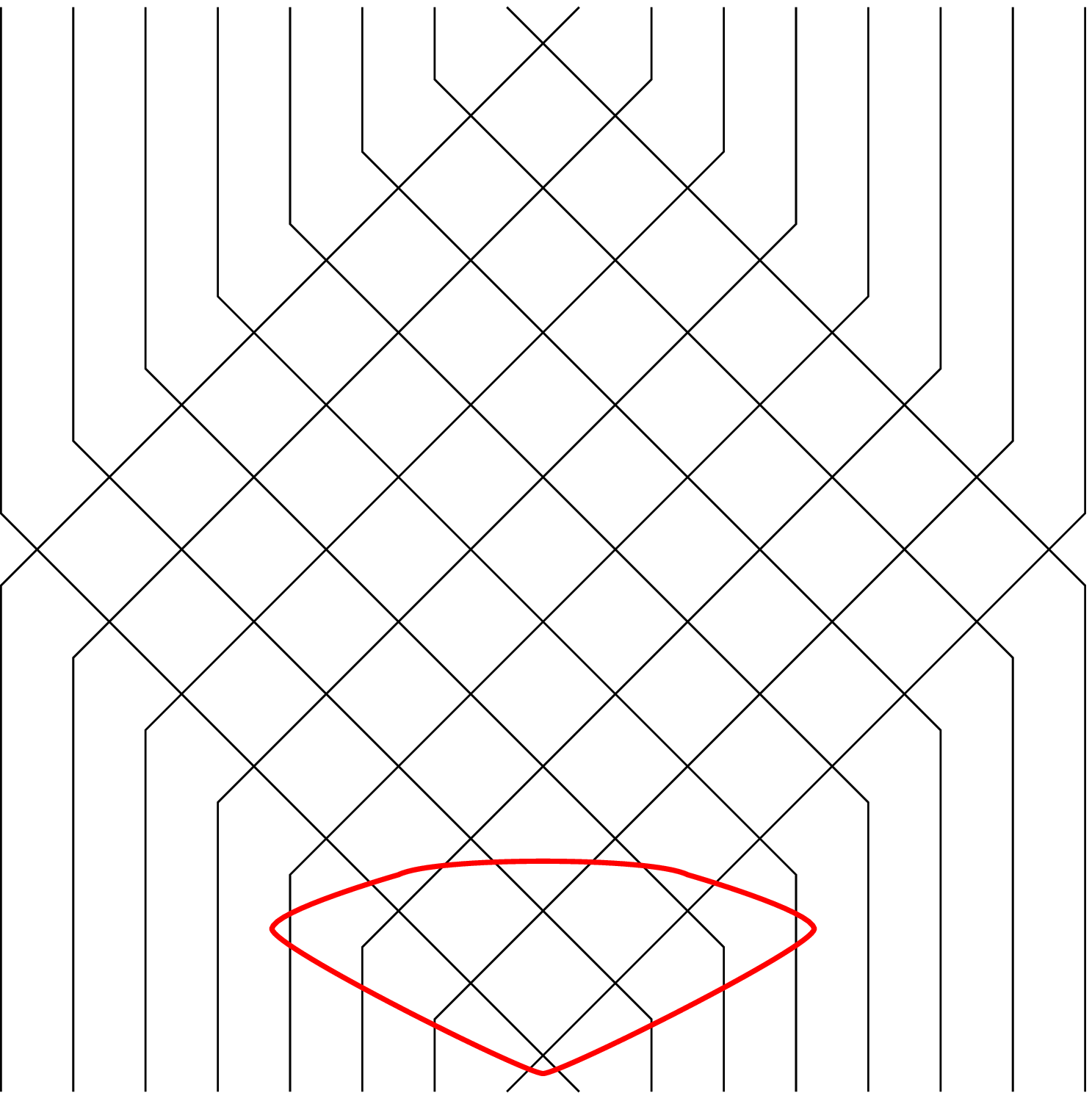}
\end{equation*}
The easiest way to see this is to apply the `layers' (each a product of several commuting transpositions) in turn to the skew tableaux above. For example, the first $(f_{\wt{\eta}}-1)$ layers permute the skew tableau with $f_{\wt{\eta}}$ rows as follows:
\begin{multline*}
\begin{gathered}
\includegraphics[scale=0.5]{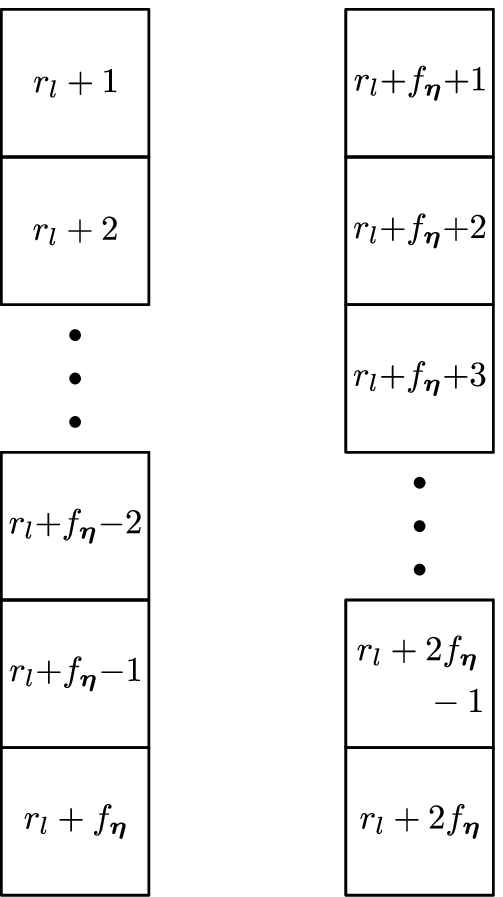}
\end{gathered} \quad \longmapsto \quad
\begin{gathered}
\includegraphics[scale=0.5]{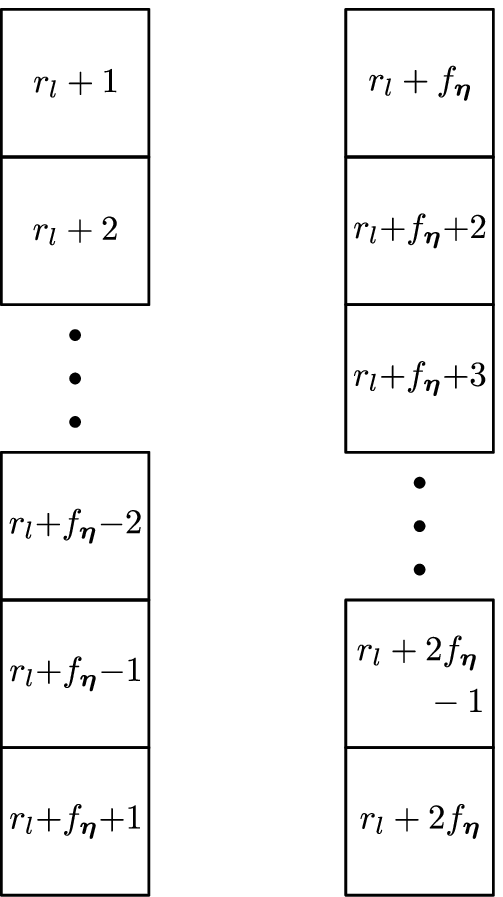}
\end{gathered} \quad \longmapsto \quad
\begin{gathered}
\includegraphics[scale=0.5]{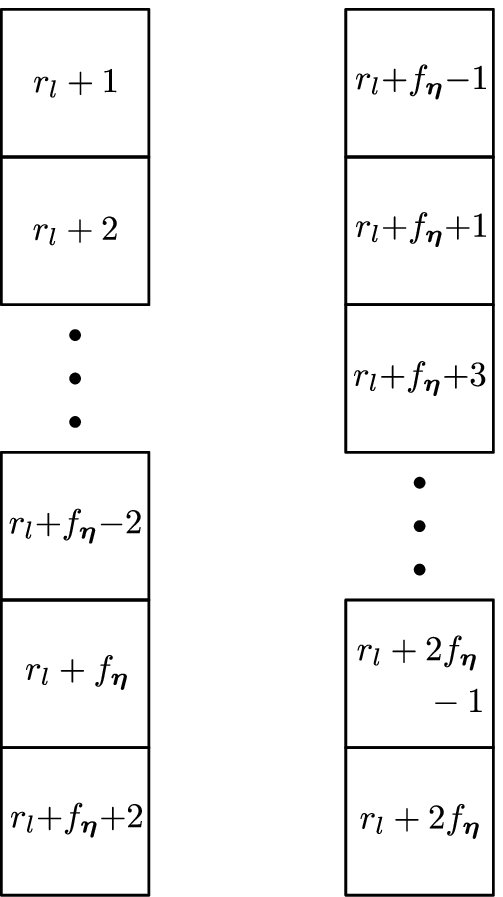}
\end{gathered} \\ \quad \longmapsto \quad
\begin{gathered}
\includegraphics[scale=0.5]{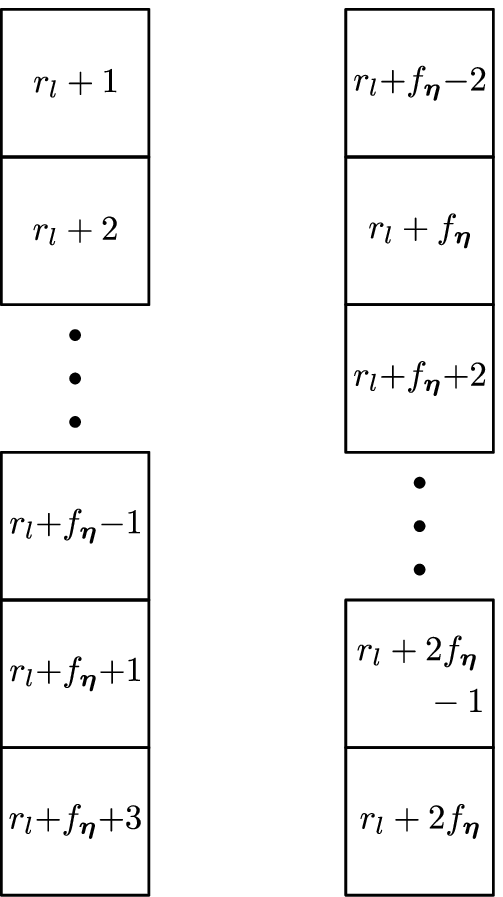}
\end{gathered} \quad \longmapsto \dotsb \longmapsto \quad
\begin{gathered}
\includegraphics[scale=0.5]{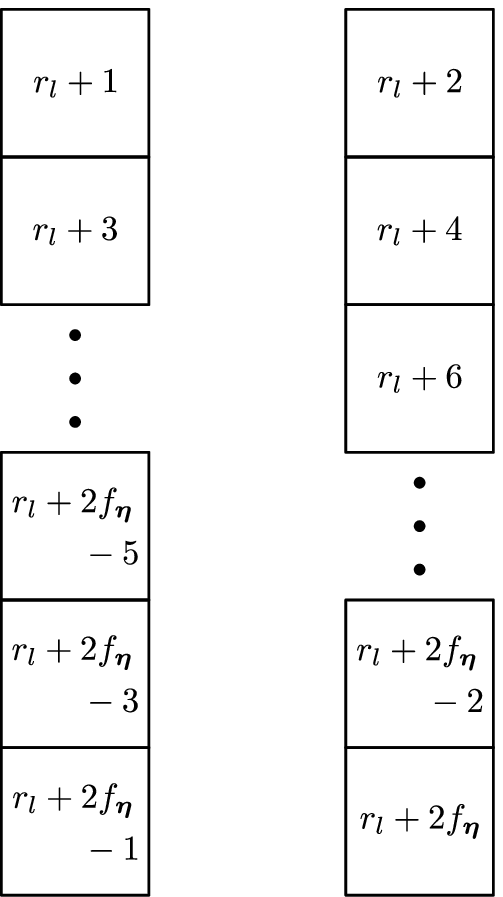}
\end{gathered}
\end{multline*}
The number of layers in the triangle is either $f_{\wt{\eta}}-1$ or $e-f_{\wt{\eta}}$ depending on parity. But $2 \leq f_{\wt{\eta}} \leq e-2$, so in both cases the corresponding diagram in the blob algebra factors as $x_l \psi_{f_{\wt{\eta}}+le}$ with $x_l$ generated by transpositions of degree $0$. Properties \ref{item:xrpropsfirst}--\ref{item:xrpropslast} follow immediately.
\end{proof}

\begin{eg}
Let $n=21$, $e=6$ and $\kappa=(0,3)$. The weight $\wt{\eta}=(1^{21},\emptyset)$ is singular because $21 \equiv 3-0 \pmod{6}$.
Observe that $\wt{\eta}_1=(1^3,1^{15})$ and that $\wt{\eta}_3=(1^9,1^{12})$. Then
\begin{equation*}
\psi_{\domtab{\wt{\eta}_3} \subqtab{\wt{\eta}_3}{\wt{\eta}_1}}=
\begin{gathered}
\includegraphics[scale=0.6]{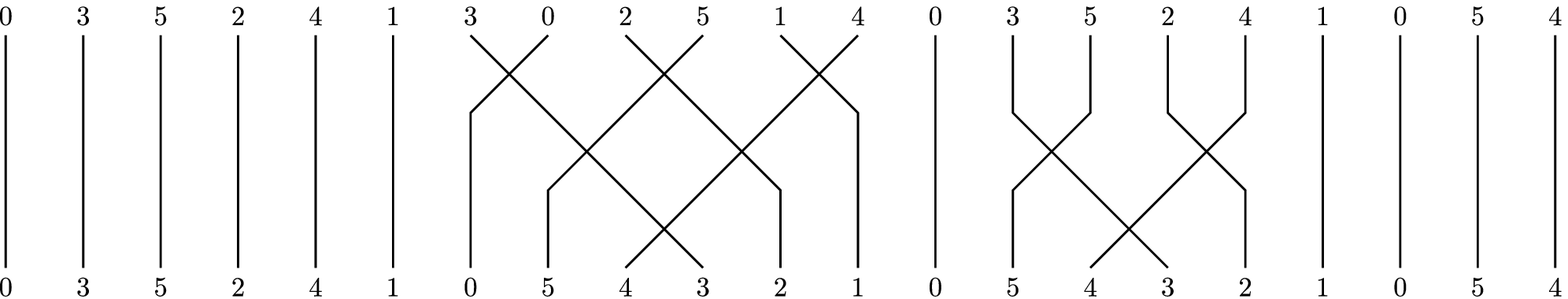} 
\end{gathered}
\text{.}
\end{equation*}
We also have
\begin{equation*}
x_1 x_2 = 
\begin{gathered}
\includegraphics[scale=0.6]{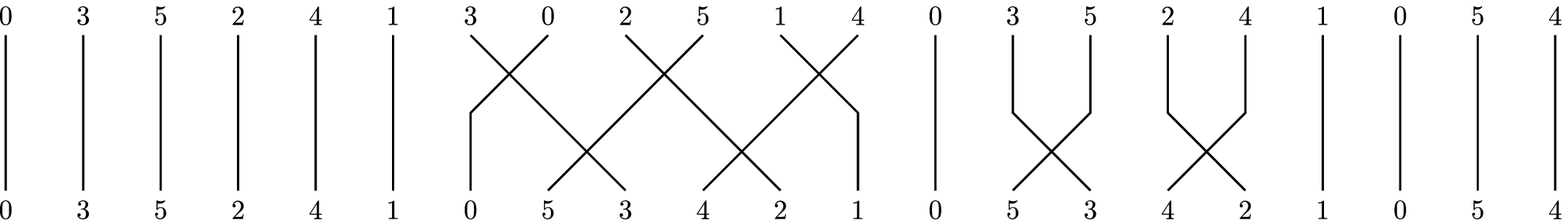}
\end{gathered}
\text{.}
\end{equation*}
\end{eg}

Some immediate consequences of Proposition~\ref{prop:psicrosses} include the following corollaries.

\begin{cor} \label{cor:transitivityofpsi}
For all integers $0 \leq j \leq k \leq l \leq m$ we have $\psi_{\domtab{\wt{\eta}_l} \subqtab{\wt{\eta}_l}{\wt{\eta}_k}} \psi_{\domtab{\wt{\eta}_k} \subqtab{\wt{\eta}_k}{\wt{\eta}_j}}=\psi_{\domtab{\wt{\eta}_l} \subqtab{\wt{\eta}_l}{\wt{\eta}_j}}$.
\end{cor}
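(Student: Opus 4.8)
The plan is to derive everything from the explicit factorization in Proposition~\ref{prop:psicrosses}. Writing $f=f_{\wt{\eta}}$ for brevity, that proposition gives
\begin{align*}
\psi_{\domtab{\wt{\eta}_l}\subqtab{\wt{\eta}_l}{\wt{\eta}_k}} &= x_k x_{k+1}\dotsm x_{l-1}\,\psi_{f+ke}\psi_{f+(k+1)e}\dotsm\psi_{f+(l-1)e}\,e(\domres{\wt{\eta}_k}), \\
\psi_{\domtab{\wt{\eta}_k}\subqtab{\wt{\eta}_k}{\wt{\eta}_j}} &= x_j x_{j+1}\dotsm x_{k-1}\,\psi_{f+je}\psi_{f+(j+1)e}\dotsm\psi_{f+(k-1)e}\,e(\domres{\wt{\eta}_j}),
\end{align*}
and the goal is to show that the product of the two left-hand sides equals $x_j\dotsm x_{l-1}\,\psi_{f+je}\dotsm\psi_{f+(l-1)e}\,e(\domres{\wt{\eta}_j})$, which is $\psi_{\domtab{\wt{\eta}_l}\subqtab{\wt{\eta}_l}{\wt{\eta}_j}}$ by Proposition~\ref{prop:psicrosses} applied once more. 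Here I rely on part~\ref{item:xrpropsfirst} of the proposition, which guarantees that a given $x_r$ denotes the \emph{same} element of $B_n^{\kappa}$ in all three expressions, so that the factors may be cancelled and recombined freely.

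First I would check that the idempotent $e(\domres{\wt{\eta}_k})$ sitting at the bottom of $\psi_{\domtab{\wt{\eta}_l}\subqtab{\wt{\eta}_l}{\wt{\eta}_k}}$ is absorbed on the left of $\psi_{\domtab{\wt{\eta}_k}\subqtab{\wt{\eta}_k}{\wt{\eta}_j}}$: since $d_{\domtab{\wt{\eta}_k}}=1$, the cellular basis element is $\psi_{\domtab{\wt{\eta}_k}\subqtab{\wt{\eta}_k}{\wt{\eta}_j}}=e(\domres{\wt{\eta}_k})\,\psi_{\expr{d}_{\subqtab{\wt{\eta}_k}{\wt{\eta}_j}}^{-1}}$, so $e(\domres{\wt{\eta}_k})\,\psi_{\domtab{\wt{\eta}_k}\subqtab{\wt{\eta}_k}{\wt{\eta}_j}}=\psi_{\domtab{\wt{\eta}_k}\subqtab{\wt{\eta}_k}{\wt{\eta}_j}}$. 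Hence the product is literally the concatenation
\begin{equation*}
x_k\dotsm x_{l-1}\,\psi_{f+ke}\dotsm\psi_{f+(l-1)e}\;x_j\dotsm x_{k-1}\,\psi_{f+je}\dotsm\psi_{f+(k-1)e}\,e(\domres{\wt{\eta}_j}).
\end{equation*}
Now part~(ii) of Proposition~\ref{prop:psicrosses} applies: every index in $\{j,\dotsc,k-1\}$ is distinct from every index in $\{k,\dotsc,l-1\}$, so each $x_r$ with $j\le r<k$ commutes past all the $\psi_{f+se}$ and all the $x_s$ with $k\le s<l$, and the block $x_j\dotsm x_{k-1}$ can be slid to the front. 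Finally, since $e>1$, any two generators $\psi_{f+se}$ and $\psi_{f+s'e}$ with $s\ne s'$ commute (their indices differ by $e|s-s'|>1$), so the string $\psi_{f+ke}\dotsm\psi_{f+(l-1)e}\,\psi_{f+je}\dotsm\psi_{f+(k-1)e}$ rearranges to $\psi_{f+je}\psi_{f+(j+1)e}\dotsm\psi_{f+(l-1)e}$; what remains is exactly the factorization of $\psi_{\domtab{\wt{\eta}_l}\subqtab{\wt{\eta}_l}{\wt{\eta}_j}}$.

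The computation itself is short, and the only step I expect to require care is the idempotent-matching observation above — confirming that the product of the two cellular basis elements is the naive concatenation of their factorized forms rather than something that acquires extra idempotent factors or vanishes. The empty-product boundary cases $j=k$ and $k=l$ need no separate treatment: with the convention that an empty product of $x_r$'s or of $\psi_{f+se}$'s stands for the relevant idempotent, the argument above specializes correctly.
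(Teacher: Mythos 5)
Your proof is correct and is exactly the argument the paper intends: the corollary is stated as an immediate consequence of Proposition~\ref{prop:psicrosses}, and your write-up simply makes explicit the idempotent absorption and the commutation of the $x_r$'s and the $\psi_{f_{\wt{\eta}}+se}$'s (properties (i)--(ii) plus the distant-crossing relation) that the paper leaves to the reader.
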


\begin{cor} \label{cor:psidoublecrosses}
For all integers $0 \leq j \leq k \leq m$ we have
\begin{equation*}
\psi_{\subqtab{\wt{\eta}_k}{\wt{\eta}_j} \subqtab{\wt{\eta}_k}{\wt{\eta}_j}} = \psi_{f_{\wt{\eta}}+je}^2 \psi_{f_{\wt{\eta}}+(j+1)e}^2 \dotsm \psi_{f_{\wt{\eta}}+(k-1)e}^2 e(\domres{\wt{\eta}_j}) \text{.}
\end{equation*}
\end{cor}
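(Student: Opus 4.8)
The plan is to obtain this from Proposition~\ref{prop:psicrosses} by combining the factorization given there with the cellular anti-involution. Write $f = f_{\wt{\eta}}$ and abbreviate $\tableau{t} = \subqtab{\wt{\eta}_k}{\wt{\eta}_j}$, so the element to be computed is $\psi_{\tableau{t}\tableau{t}}$. The opening observation is that $\psi_{\tableau{t}\tableau{t}} = \overline{\psi_{\domtab{\wt{\eta}_k}\tableau{t}}}\,\psi_{\domtab{\wt{\eta}_k}\tableau{t}}$: indeed $\overline{\psi_{\domtab{\wt{\eta}_k}\tableau{t}}} = \psi_{\tableau{t}\,\domtab{\wt{\eta}_k}}$ because the anti-involution of Theorem~\ref{thm:cellbasis} swaps the two indexing tableaux of a cellular basis element, and $\psi_{\tableau{t}\,\domtab{\wt{\eta}_k}}\,\psi_{\domtab{\wt{\eta}_k}\tableau{t}} = \psi_{\tableau{t}\tableau{t}}$ because the common middle tableau $\domtab{\wt{\eta}_k}$ is dominant: in $\psi_{\tableau{s}\tableau{u}} = \psi_{\expr{d}_{\tableau{s}}}\,e(\domres{\wt{\eta}_k})\,\psi_{\expr{d}_{\tableau{u}}^{-1}}$ the permutations attached to $\domtab{\wt{\eta}_k}$ are trivial, so the two adjacent copies of $e(\domres{\wt{\eta}_k})$ collapse to one.

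Next I would substitute the Proposition~\ref{prop:psicrosses} factorization $\psi_{\domtab{\wt{\eta}_k}\tableau{t}} = x_j x_{j+1}\dotsm x_{k-1}\,\psi_{f+je}\psi_{f+(j+1)e}\dotsm\psi_{f+(k-1)e}\,e(\domres{\wt{\eta}_j})$. By Proposition~\ref{prop:psicrosses}(ii) the $x_r$ commute with one another and with the $\psi_{f+se}$ for $s \neq r$, and the $\psi_{f+se}$ commute with one another (their indices differ by at least $e$); applying the anti-involution to these relations --- together with the locality of the defining relations (\S\ref{sec:locality}), since $x_r$ and $x_s$ involve $\psi$-generators with far-apart indices when $r \neq s$ --- the same commutations hold with any $x_r$ replaced by $\overline{x_r}$. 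Using this I would regroup $\psi_{\domtab{\wt{\eta}_k}\tableau{t}}$ as $y_j y_{j+1}\dotsm y_{k-1}\,e(\domres{\wt{\eta}_j})$, where $y_r := x_r\,\psi_{f+re}$, and note that the $y_r$ pairwise commute, as do the $\overline{y_r}$, and that $\overline{y_r}$ commutes with $y_s$ for $r \neq s$. Hence, applying the anti-involution and reordering,
\begin{equation*}
\psi_{\tableau{t}\tableau{t}} = e(\domres{\wt{\eta}_j})\,\overline{y_{k-1}}\dotsm\overline{y_j}\,y_j\dotsm y_{k-1}\,e(\domres{\wt{\eta}_j}) = e(\domres{\wt{\eta}_j})\,(\overline{y_j}\,y_j)(\overline{y_{j+1}}\,y_{j+1})\dotsm(\overline{y_{k-1}}\,y_{k-1})\,e(\domres{\wt{\eta}_j})\text{.}
\end{equation*}

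Finally I would evaluate a single block: $\overline{y_r}\,y_r = \psi_{f+re}\,(\overline{x_r}\,x_r)\,\psi_{f+re}$, and by Proposition~\ref{prop:psicrosses}(iii) the middle factor $\overline{x_r}\,x_r$ equals $e(s_{f+re}\domres{\wt{\eta}_j})$; one application of the sliding relation $\psi_r e(\res{i}) = e(s_r\res{i})\psi_r$ then gives $\overline{y_r}\,y_r = e(\domres{\wt{\eta}_j})\,\psi_{f+re}^2$. Since each $\psi_a^2$ commutes with every idempotent $e(\res{i})$ and the $\psi_{f+re}^2$ commute with one another, multiplying these blocks and absorbing the two outer copies of $e(\domres{\wt{\eta}_j})$ yields exactly $\psi_{f+je}^2\,\psi_{f+(j+1)e}^2\dotsm\psi_{f+(k-1)e}^2\,e(\domres{\wt{\eta}_j})$, as claimed. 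I expect the only real subtlety to be this last step: one must keep each $\psi_{f+re}$ attached to its $\overline{x_r}\,x_r$ while simplifying, because the idempotents $e(s_{f+re}\domres{\wt{\eta}_j})$ for different $r$ are pairwise distinct, so multiplying the $\overline{x_r}\,x_r$ in isolation would give $0$; it is precisely the conjugation by $\psi_{f+re}$ that restores the single idempotent $e(\domres{\wt{\eta}_j})$. The remaining ingredients (the factorization in the first paragraph, the extension of Proposition~\ref{prop:psicrosses}(ii) to the $\overline{x_r}$, and the commutation of each $\psi_a^2$ with the idempotents) are routine.
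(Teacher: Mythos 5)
Your argument is correct and is essentially the derivation the paper intends: the paper gives no separate proof, declaring the corollary an immediate consequence of Proposition~\ref{prop:psicrosses}, and your write-up (apply the anti-involution to the factorization $\psi_{\domtab{\wt{\eta}_k}\subqtab{\wt{\eta}_k}{\wt{\eta}_j}}=x_j\dotsm x_{k-1}\psi_{f_{\wt{\eta}}+je}\dotsm\psi_{f_{\wt{\eta}}+(k-1)e}e(\domres{\wt{\eta}_j})$, regroup using the commutations of \ref{item:xrpropsfirst}--\ref{item:xrpropslast} and locality, then cancel each $\overline{x_r}x_r$ via property \ref{item:xrpropslast}) is exactly how that implication goes. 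Your closing caveat about keeping each $\psi_{f_{\wt{\eta}}+re}$ attached to its $\overline{x_r}x_r$ is well taken, since the identities in \ref{item:xrpropslast} are to be read in the presence of the ambient idempotent coming from the right-hand factor $e(\domres{\wt{\eta}_j})$.
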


\subsection{Vanishing terms}

It will be important to know later that certain products vanish in $B_n^{\kappa}$. Somewhat surprisingly this can happen even when the total degree is small.

\begin{lem} \label{lem:deg1vanishing}
We have
\begin{equation*}
\psi_{\domtab{\wt{\eta}_1} \subqtab{\wt{\eta_1}}{\wt{\eta}}} \psi_{\subqtab{\wt{\eta_1}}{\wt{\eta}} \domtab{\wt{\eta}_1}}=\psi_{f_{\wt{\eta}}}^2 e(s_{f_{\wt{\eta}}}\domres{\wt{\eta}})=0 \text{.}
\end{equation*}
\end{lem}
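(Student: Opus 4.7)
The strategy is to reduce the lemma to a specific KLR vanishing, convert this to a $y$-difference via the quadratic $\psi$-relation, and then establish that vanishing by sliding dots.

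First, apply Proposition~\ref{prop:psicrosses} with $(j,k)=(0,1)$ to write $\psi_{\domtab{\wt{\eta}_1}\subqtab{\wt{\eta}_1}{\wt{\eta}}} = x_0 \psi_{f_{\wt{\eta}}} e(\domres{\wt{\eta}})$; taking the image under the anti-involution yields $\psi_{\subqtab{\wt{\eta}_1}{\wt{\eta}}\domtab{\wt{\eta}_1}} = e(\domres{\wt{\eta}})\psi_{f_{\wt{\eta}}}\overline{x_0}$. Multiplying and using the slide $e(\domres{\wt{\eta}})\psi_{f_{\wt{\eta}}} = \psi_{f_{\wt{\eta}}}e(s_{f_{\wt{\eta}}}\domres{\wt{\eta}})$ collects the product into the form $x_0 \psi_{f_{\wt{\eta}}}^2 e(s_{f_{\wt{\eta}}}\domres{\wt{\eta}}) \overline{x_0}$. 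Once the central factor $\psi_{f_{\wt{\eta}}}^2 e(s_{f_{\wt{\eta}}}\domres{\wt{\eta}})$ is shown to vanish, the sandwich by $x_0, \overline{x_0}$ is irrelevant; in the case $f_{\wt{\eta}} = 2$, moreover, $x_0 = 1$, so the first equality of the lemma statement is literally true.

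Next, since $\kappa_1 - \kappa_2 \equiv f_{\wt{\eta}} \pmod{e}$, the residues at positions $f_{\wt{\eta}}$ and $f_{\wt{\eta}}+1$ of $s_{f_{\wt{\eta}}}\domres{\wt{\eta}}$ are $\kappa_2$ and $\kappa_2 + 1$ respectively, so $i_{f_{\wt{\eta}}+1} = i_{f_{\wt{\eta}}} + 1$. Relation \eqref{eq:psisq} then gives
\[
\psi_{f_{\wt{\eta}}}^2\, e(s_{f_{\wt{\eta}}}\domres{\wt{\eta}}) = (y_{f_{\wt{\eta}}+1} - y_{f_{\wt{\eta}}})\, e(s_{f_{\wt{\eta}}}\domres{\wt{\eta}})\text{,}
\]
reducing the problem to the vanishing of this $y$-difference.

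Finally, I would show both $y_{f_{\wt{\eta}}}e(s_{f_{\wt{\eta}}}\domres{\wt{\eta}})$ and $y_{f_{\wt{\eta}}+1}e(s_{f_{\wt{\eta}}}\domres{\wt{\eta}})$ vanish by iteratively applying Lemma~\ref{lem:yjump} to slide each dot leftward through the strings, terminating at string~$1$ where the cyclotomic relation \eqref{eq:kinpres} annihilates it (since $i_1 = \kappa_1$). Each slide requires a vanishing ``swapped'' idempotent, supplied either by the blob relation \eqref{eq:blobdefiningreln} ($e(\res{j}) = 0$ when $j_2 = j_1 + 1$) applied to a suitable swapped residue sequence, or by the corollary to Theorem~\ref{thm:cellbasis} when the swapped sequence admits no standard tableau. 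The main obstacle is that direct slides at positions $f_{\wt{\eta}}-1$ and $f_{\wt{\eta}}$ are blocked: the residues there are $\kappa_2 + 2$ and $\kappa_2$, differing by $2 \pmod e$, so Lemma~\ref{lem:yjump} does not apply in one step. Overcoming this will require a more delicate sequence of slides (possibly via $\psi$-conjugation to temporarily rearrange residues) and may naturally split into the cases $f_{\wt{\eta}} = 2$ and $f_{\wt{\eta}} \geq 3$.
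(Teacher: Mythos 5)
Your reduction steps agree with the paper: the factorisation of Proposition~\ref{prop:psicrosses} turns the cellular product into $x_0\psi_{f_{\wt{\eta}}}^2 e(s_{f_{\wt{\eta}}}\domres{\wt{\eta}})\overline{x_0}$ (so that the two expressions in the lemma vanish together), and the identification of the residues at positions $f_{\wt{\eta}},f_{\wt{\eta}}+1$ of $s_{f_{\wt{\eta}}}\domres{\wt{\eta}}$ as $\kappa_2,\kappa_2+1$, hence $\psi_{f_{\wt{\eta}}}^2 e = (y_{f_{\wt{\eta}}+1}-y_{f_{\wt{\eta}}})e$, is correct and is exactly the paper's ``pulling apart the double transposition into a difference of dotted strings''. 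The problem is that everything after this point --- which is the actual content of the lemma --- is missing. You declare the sandwich by $x_0,\overline{x_0}$ ``irrelevant'' and then try to kill $y_{f_{\wt{\eta}}}e(s_{f_{\wt{\eta}}}\domres{\wt{\eta}})$ and $y_{f_{\wt{\eta}}+1}e(s_{f_{\wt{\eta}}}\domres{\wt{\eta}})$ \emph{separately} on the bare idempotent by leftward applications of Lemma~\ref{lem:yjump}; but, as you yourself observe, the dot is stuck already at positions $f_{\wt{\eta}}-1,f_{\wt{\eta}}$ (residues differing by $2$, and at position $f_{\wt{\eta}}$ the swapped idempotent is $e(\domres{\wt{\eta}})\neq 0$), and no relation in the presentation moves a dot past a vertical string of non-adjacent residue. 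The closing sentence (``a more delicate sequence of slides, possibly via $\psi$-conjugation'') is precisely where the proof has to happen, and nothing is supplied.

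The gap is not incidental: discarding $x_0$ and $\overline{x_0}$ throws away exactly the tool the paper uses. In the paper the double crossing is expanded \emph{inside} the full product diagram, whose triangles provide crossings between the dotted string and every string to its left; one dotted string is then pulled to the left edge through crossings of distinct, non-adjacent residues (degree-$0$ double crossings cancel), while the other dot is transported by a chain of jumps (Lemma~\ref{lem:yjump}, valid there because the relevant swapped idempotents vanish in the permuted sequences) together with slides along a string, until both dots sit on string $1$ and die by the cyclotomic relation \eqref{eq:kinpres}. Note also that the paper only ever proves the vanishing of these two \emph{full} diagrams, i.e.\ of the difference $( y_{f_{\wt{\eta}}+1}-y_{f_{\wt{\eta}}})e(s_{f_{\wt{\eta}}}\domres{\wt{\eta}})$ after conjugation; your plan requires the strictly stronger statements $y_{f_{\wt{\eta}}}e(s_{f_{\wt{\eta}}}\domres{\wt{\eta}})=0$ and $y_{f_{\wt{\eta}}+1}e(s_{f_{\wt{\eta}}}\domres{\wt{\eta}})=0$ individually, which you neither prove nor give any evidence for (and which, if true, would still need an argument of the same diagrammatic kind you have removed the means to make). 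As it stands the proposal establishes only the easy reformulation of the lemma, not the lemma itself.
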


\begin{proof}
From Proposition \ref{prop:psicrosses} it is clear that the first product above vanishes if and only if the second product vanishes. 
We expand the first product by pulling apart the double transposition of degree $2$ and rewriting as a difference of dotted strings. In the first term, the left string with its dot can be pulled all the way to the left, because the residues of all the strings that it passes through are distinct. In the second term, the dot on the right string can jump almost all the way to the left, slide down a string, and then make one final jump to the leftmost string. Dots on the left vanish in $B_n^{\kappa}$, so we are done. The diagrams below depict this process when $f_{\wt{\eta}}=4$.
\newlength{\llu}
\setlength{\llu}{.81in}
\begingroup
\allowdisplaybreaks
\begin{align*}
\begin{gathered}
  \includegraphics[width=1\llu]{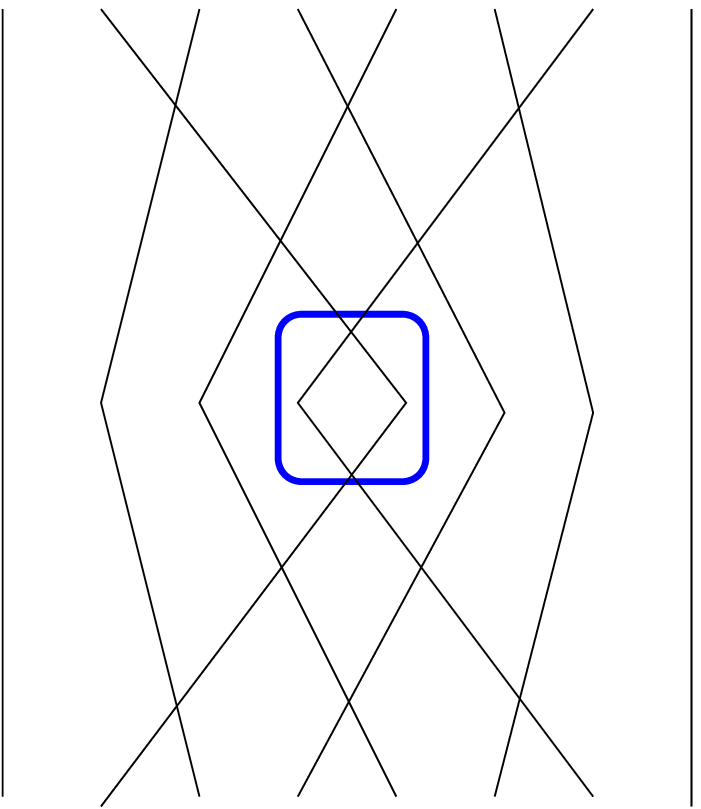}
\end{gathered}
\;\;\hspace{.1in} 
& = \;\;\hspace{.1in} 
\begin{gathered}
  \includegraphics[width=1\llu]{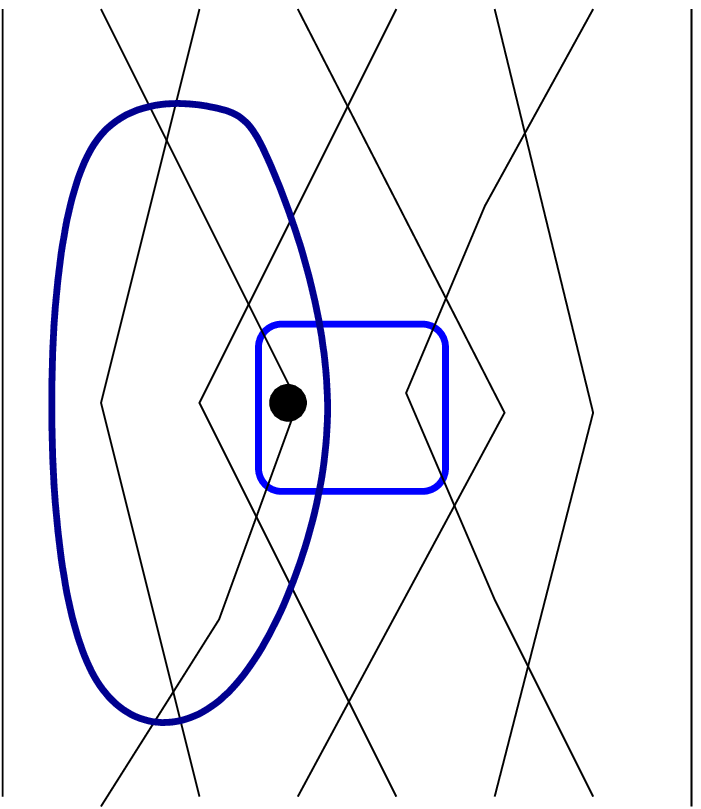}
\end{gathered}
\;\; - 
\begin{gathered}
  \includegraphics[width=1.1\llu]{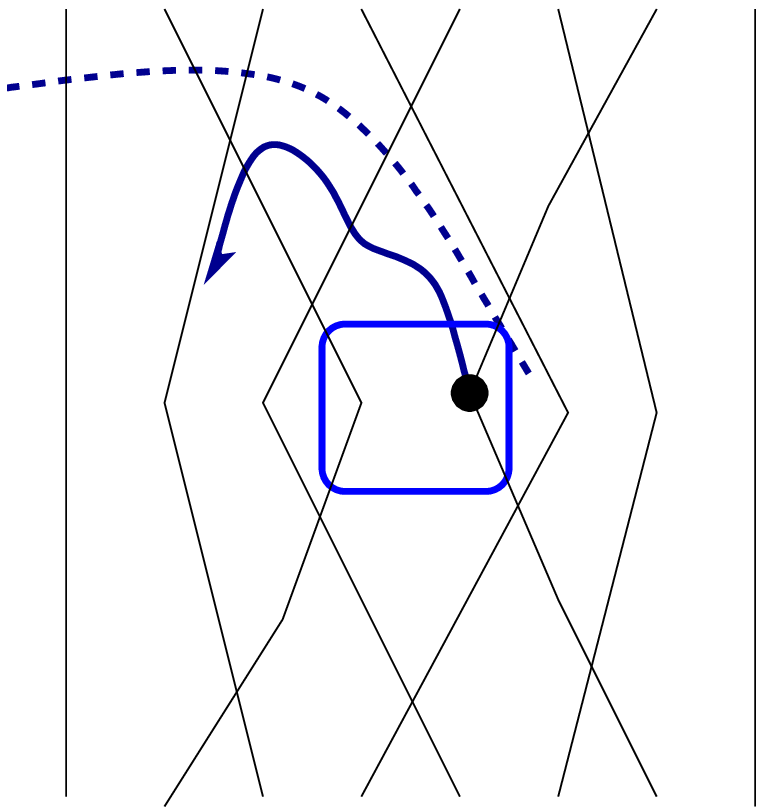}
\end{gathered}
\;\;\hspace{.1in} \\ 
\;\;\;\;\;\;\;\;\;\;\;\;\;\;\hspace{.1in} & =  \;\;\hspace{.1in} 
\begin{gathered}
  \includegraphics[width=1\llu]{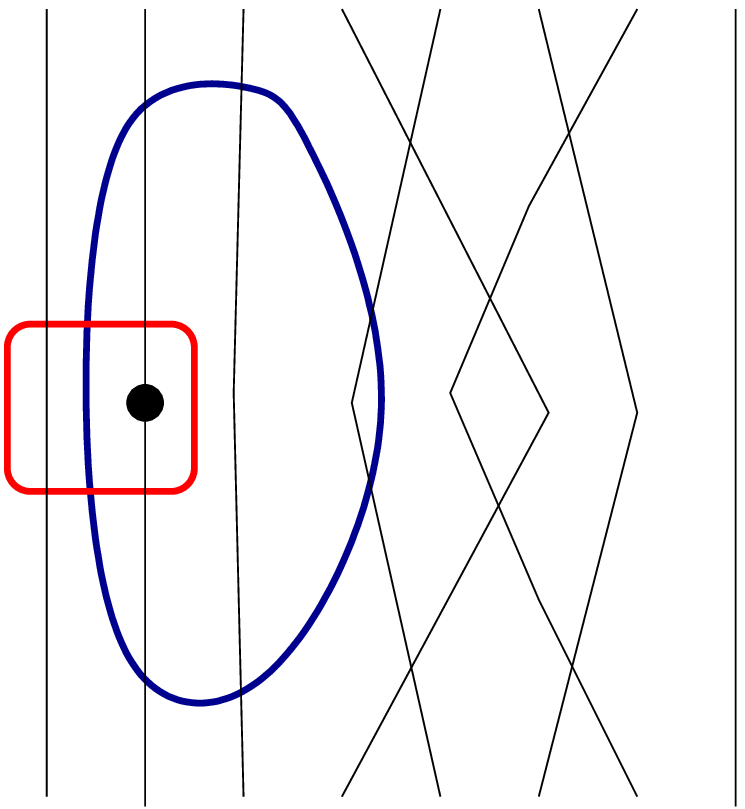}
\end{gathered}
\;\; - \;\; 
\begin{gathered}
  \includegraphics[width=1.0751\llu]{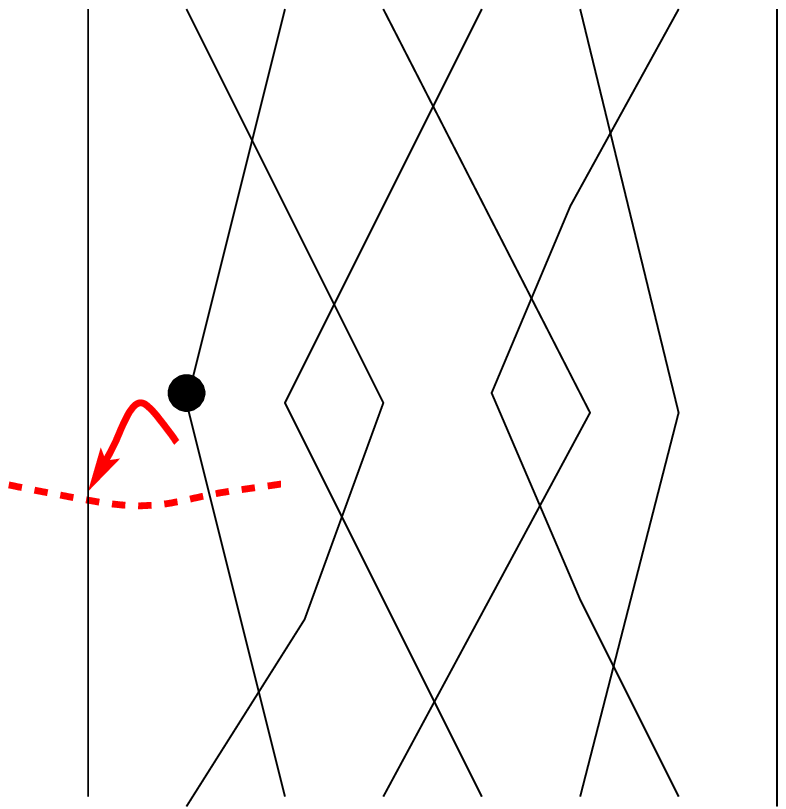}
\end{gathered}
\;\;\hspace{.1in}  \\ 
\;\;\;\;\;\;\;\;\;\;\;\;\;\; & = \;\;\hspace{.1in} 
\begin{gathered}
  \includegraphics[width=1.05\llu]{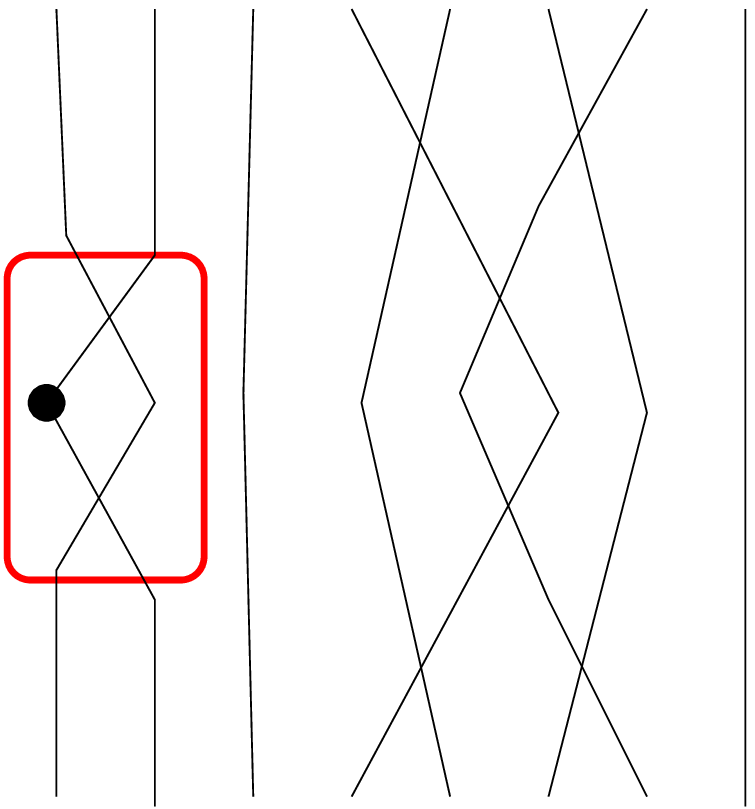}
\end{gathered}
\;\; - \;\; 
\begin{gathered}
  \includegraphics[width=1.0\llu]{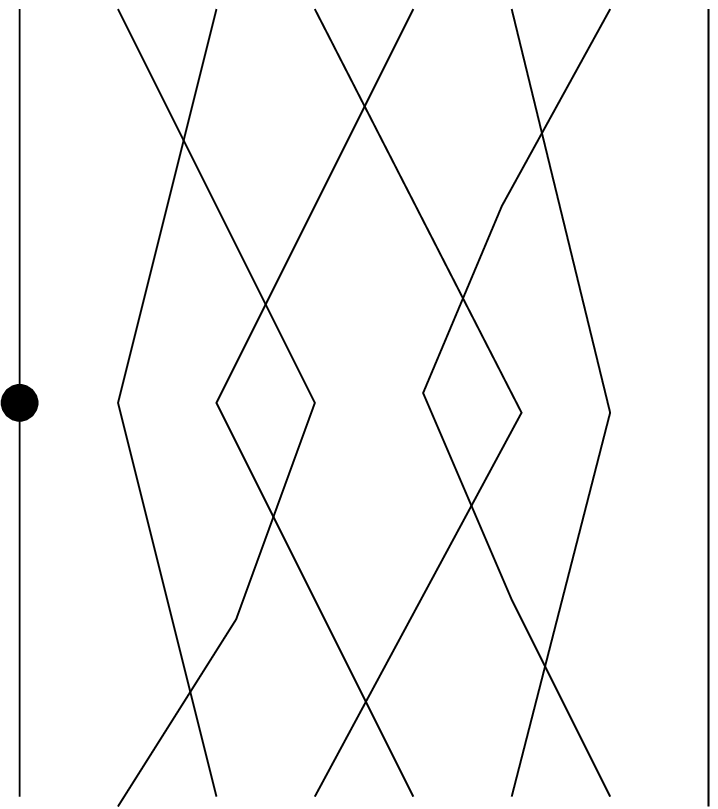}
\end{gathered} \\
\;\;\;\;\;\; & = \;\;\;\;\; 0-0 \text{.}
\end{align*}
\endgroup
\end{proof}


Another useful fact is that many cellular basis elements have a diamond as a factor, and thus vanish when multiplied by $\JW^{\wt{\eta}}$.

\begin{prop} \label{prop:diamondfactor}
Suppose $k<l \leq 2k$. Let $\tableau{t} \in \Std_{\wt{\eta}}(\wt{\eta}_l)$ with $\deg \tableau{t}=2k-l$. Then $\psi_{\tableau{t} \subqtab{\wt{\eta}_l}{\wt{\eta}}} \in U^{\wt{\eta}}_j B_n^{\kappa}$ for some $j \in \N$.
\end{prop}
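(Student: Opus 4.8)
The plan is to pass to the path model and read the diamond off combinatorially. First I would translate $\tableau{t}$ via Proposition~\ref{prop:tablink} into the corresponding path $p$. Paths in the linkage class of $\domtab{\wt{\eta}}$ change direction only at walls, and $\domtab{\wt{\eta}}$ leaves the fundamental alcove at once and runs straight to $\wt{\eta}$, so a count of steps shows that $p$ consists of an initial monotone run of $f_{\wt{\eta}}$ steps from $0$ to the first wall, followed by exactly $m$ wall-to-wall steps; in particular every wall visited by $p$ is reached at a string index congruent to $f_{\wt{\eta}}$ modulo $e$. I would also record the factorisation $\psi_{\tableau{t} \subqtab{\wt{\eta}_l}{\wt{\eta}}}=\psi_{\tableau{t} \domtab{\wt{\eta}_l}}\cdot\psi_{\domtab{\wt{\eta}_l} \subqtab{\wt{\eta}_l}{\wt{\eta}}}$ (immediate from the definition of the cellular basis, since $d_{\domtab{\wt{\eta}_l}}$ is the identity), which reduces the claim to showing that $\psi_{\tableau{t} \domtab{\wt{\eta}_l}}\in U^{\wt{\eta}}_j B_n^{\kappa}$ for some $j$.

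Next I would use the degree hypothesis to force a redundant detour. By Theorem~\ref{thm:degtab} the number $w(\tableau{t})$ of wall-to-wall steps of $p$ inside the fundamental alcove equals $\deg\tableau{t}=2k-l$ or $2k-l-1$, hence is strictly less than $l$ because $l>k$. If $p$ never turned around at a wall outside the fundamental alcove, then after its initial run it would perform its $w(\tableau{t})$ wall-to-wall steps inside the fundamental alcove and then run straight out to $\wt{\eta}_l$ using a further $m-l$ wall-to-wall steps (recall $\wt{\eta}_l$ lies $(m-l)e$ steps from the nearest fundamental wall), a total of only $f_{\wt{\eta}}+\bigl(w(\tableau{t})+m-l\bigr)e$ steps; but $w(\tableau{t})+m-l\le(2k-l)+(m-l)=m+2(k-l)<m$, contradicting the total of $n=f_{\wt{\eta}}+me$ steps. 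Hence $p$ turns around at some wall outside the fundamental alcove; since room is needed for the reversal, the outermost such turning point occurs at a string index $f_{\wt{\eta}}+je$ with $1\le j\le m-1$.

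Finally the diamond is extracted as in Proposition~\ref{prop:psicrosses}. Reading the permutation $d_{\tableau{t}}$ off the tableaux $\domtab{\wt{\eta}_l}$ and $\tableau{t}$, the outermost reversal of $p$ contributes a diamond permutation supported on the $2e$ strings centred at index $f_{\wt{\eta}}+je$, sitting at the top of the wiring diagram for $\psi_{\tableau{t} \domtab{\wt{\eta}_l}}$ where the residue sequence is $\domres{\wt{\eta}}$. Assembling this sub-permutation layer by layer, exactly as the triangular portions of a diamond permutation are combined in the proof of Proposition~\ref{prop:psicrosses}, identifies it with $U^{\wt{\eta}}_j$ and exhibits $\psi_{\tableau{t} \domtab{\wt{\eta}_l}}$, and hence $\psi_{\tableau{t} \subqtab{\wt{\eta}_l}{\wt{\eta}}}$, in $U^{\wt{\eta}}_j B_n^{\kappa}$. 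The main obstacle is precisely this last step: checking that the outermost detour really assembles into the diamond $U^{\wt{\eta}}_j$ --- supported on the correct strings, carrying the idempotent $e(\domres{\wt{\eta}})$, with all of its layers present --- rather than merely into a subdiagram resembling one. This is the analogue for arbitrary $\tableau{t}$ of the bookkeeping in Proposition~\ref{prop:psicrosses}, and it is where the rigidity of the singular cellular basis (together with the observation that every wall of $p$ is visited at a string index congruent to $f_{\wt{\eta}}$ modulo $e$) does the work.
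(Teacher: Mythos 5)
Your set-up (passing to paths, locating a turn-back at a wall forced by the degree bound $\deg\tableau{t}=2k-l$) matches the start of the paper's argument, and in the generic situation your plan does work: applying the block-swap permutation $u$ at the turn-back to $\tableau{t}$ gives a tableau $\tableau{t}'$ of the same residue with $\underline{d}_{\tableau{t}}=\underline{u}\,\underline{d}_{\tableau{t}'}$ reduced, so $\psi_{\tableau{t}\domtab{\wt{\eta}_l}}=U^{\wt{\eta}}_j\psi_{\tableau{t}'\domtab{\wt{\eta}_l}}$. But there is a genuine gap, and it sits exactly where you flag ``the main obstacle'': your reduction to showing $\psi_{\tableau{t}\domtab{\wt{\eta}_l}}\in U^{\wt{\eta}}_j B_n^{\kappa}$ throws away the right-hand factor $\psi_{\domtab{\wt{\eta}_l}\subqtab{\wt{\eta}_l}{\wt{\eta}}}$, and that factor cannot be discarded. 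There is a second case in which the $2e$ entries around the turn-back do \emph{not} assemble into a full diamond inside $d_{\tableau{t}}$: the detour of the path of $\tableau{t}$ relative to $\domtab{\wt{\eta}_l}$ only has room for a ``cut diamond'' $u'$ (roughly $2e-f_{\wt{\eta}}$ or $e+f_{\wt{\eta}}-1$ of the $2e-1$ layers), because reflecting the full $2e$ steps would leave the region bounded by $\tableau{t}$ and $\domtab{\wt{\eta}_l}$; equivalently $\ell(u^{-1}d_{\tableau{t}})\neq\ell(d_{\tableau{t}})-e^{2}$, so $U^{\wt{\eta}}_j$ is not a left factor of $\psi_{\tableau{t}\domtab{\wt{\eta}_l}}$ by any subexpression argument. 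In the paper this case is resolved only by reinstating the discarded factor: by Proposition~\ref{prop:psicrosses}, $\psi_{\domtab{\wt{\eta}_l}\subqtab{\wt{\eta}_l}{\wt{\eta}}}$ contains the complementary triangle $x_j\psi_{f_{\wt{\eta}}+je}e(\domres{\wt{\eta}})=\psi_{\underline{u''}}e(\domres{\wt{\eta}})$ with $\underline{u}=(\underline{u'})^{-1}\underline{u''}$, and one checks that $d_{\tableau{t}'}$ commutes with $u''$ so that the two pieces can be brought together to reconstitute $U^{\wt{\eta}}_j$ on the left of the whole product $\psi_{\tableau{t}\subqtab{\wt{\eta}_l}{\wt{\eta}}}$.

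Your appeal to the ``outermost'' turning point does not obviously rescue the argument either: the overlap case is governed by the position of the turn-back relative to the path of $\domtab{\wt{\eta}_l}$ (whose long straight run to $\wt{\eta}_l$ can pass through the detour region at the same step indices), not by whether the turn-back is the farthest one from the origin, and nothing in your counting rules out that the only turn-back available is of the overlapping kind. So the final paragraph of your proposal, which asserts that the rigidity of the cellular basis ``does the work'', is precisely the unproved step; to close it you need the two-case analysis and, in the cut-diamond case, the factorisation of $\psi_{\domtab{\wt{\eta}_l}\subqtab{\wt{\eta}_l}{\wt{\eta}}}$ from Proposition~\ref{prop:psicrosses} together with the commutation of $d_{\tableau{t}'}$ with $u''$.
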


\begin{proof}
Since $\deg \tableau{t}=2k-l<k<l$ it is clear that $\tableau{t} \neq \subqtab{\wt{\eta}_l}{\wt{\eta}}$. This means that the path corresponding to $\tableau{t}$ must diverge from that of $\subqtab{\wt{\eta}_l}{\wt{\eta}}$, by leaving the fundamental alcove early, before turning back at some wall after $(f_{\wt{\eta}}+je)$ steps for some $j<l$. 
The skew tableau corresponding to the $e$ steps before and after this turn-back point looks either like
\begin{equation*}
\includegraphics[scale=0.5]{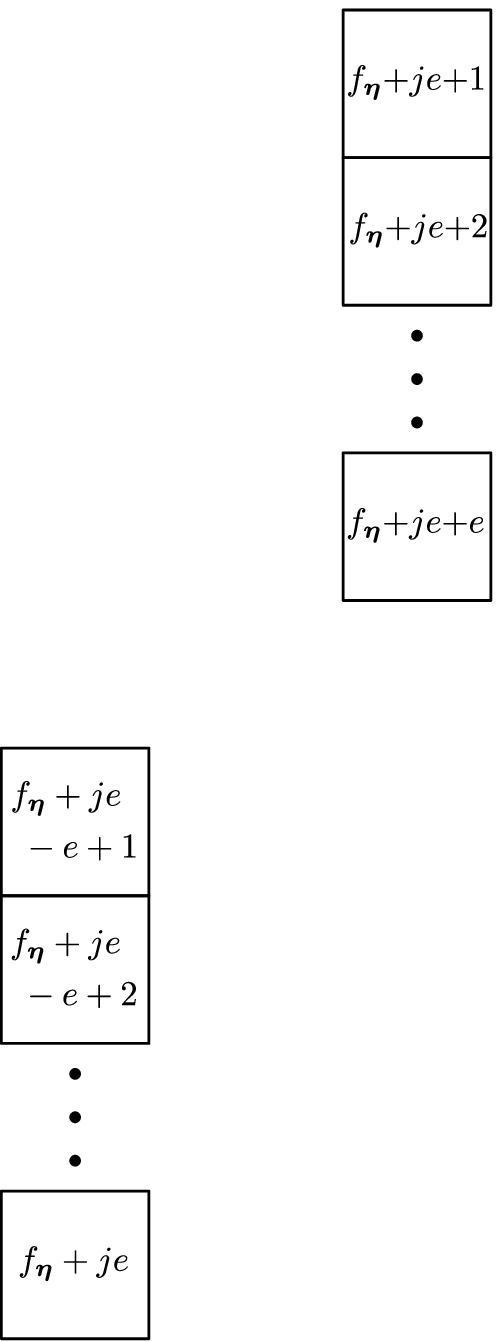}
\end{equation*}
or like
\begin{equation*}
\includegraphics[scale=0.5]{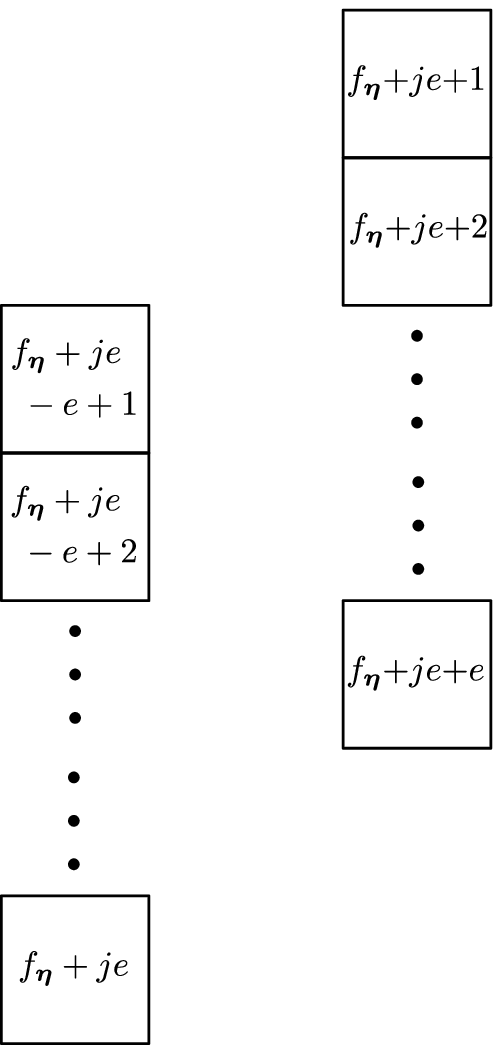}
\end{equation*}
or their mirror images.

Suppose we are in the first case, e.g.~with $\tableau{t}$ corresponding to the black path below:
\begin{center}
\includegraphics[scale=0.8]{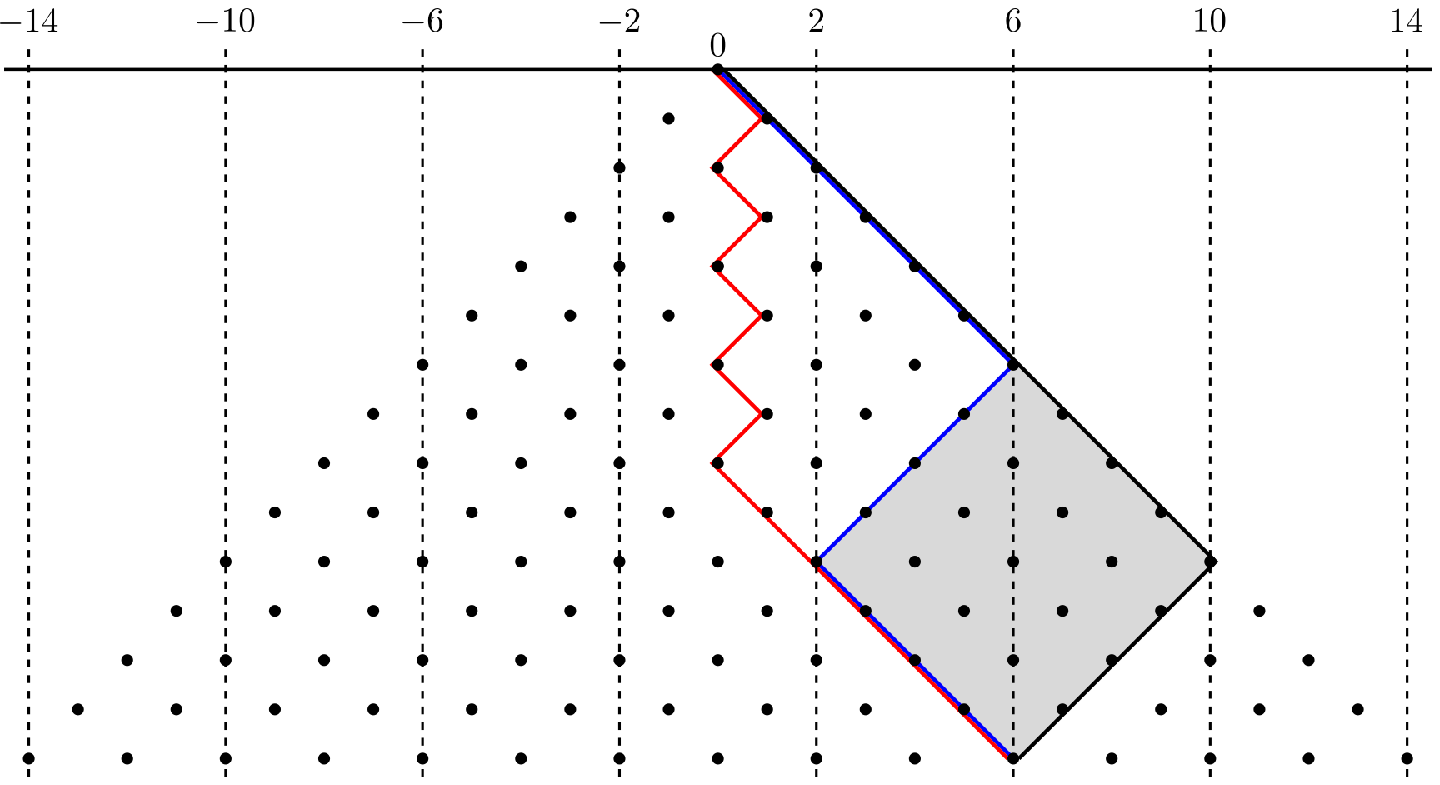}
\end{center}
Let $u \in S_n$ be the permutation which swaps the one-column partitions in the skew tableau above and fixes everything else.
As these columns contain two adjacent subsets of $e$ entries each, $u$ corresponds to a diamond permutation.
Write $\underline{u}$ for standard the reduced expression for $u$ coming from Definition~\ref{de:diamond}.
Applying $u$ to $\tableau{t}$ yields a new tableau $\tableau{t}'$ with the same residue (corresponding to the blue path above). 
In the path picture, it is clear that the region in grey bounded by $\tableau{t}'$ and $\tableau{t}$ is entirely contained within the region bounded by $\domtab{\wt{\eta}_l}$ (the red path) and $\tableau{t}$.
This means that $\underline{d}_{\tableau{t}}=\underline{u}\underline{d}_{\tableau{t}'}$ as reduced expressions (see e.g.~\cite[Algorithm~5.2]{libedinskyplaza}).
Thus 
\begin{equation*}
\psi_{\tableau{t} \subqtab{\wt{\eta}_l}{\wt{\eta}}}=\psi_{\underline{u}} \psi_{\tableau{t}' \subqtab{\wt{\eta}_l}{\wt{\eta}}}=U^{\wt{\eta}}_j \psi_{\tableau{t}' \subqtab{\wt{\eta}_l}{\wt{\eta}}} \in U^{\wt{\eta}}_j B_n^{\kappa}
\end{equation*}
which proves the result.


Now suppose we are in the second case, e.g.~with $\tableau{t}$ corresponding to the black path below:
\begin{center}
\includegraphics[scale=0.8]{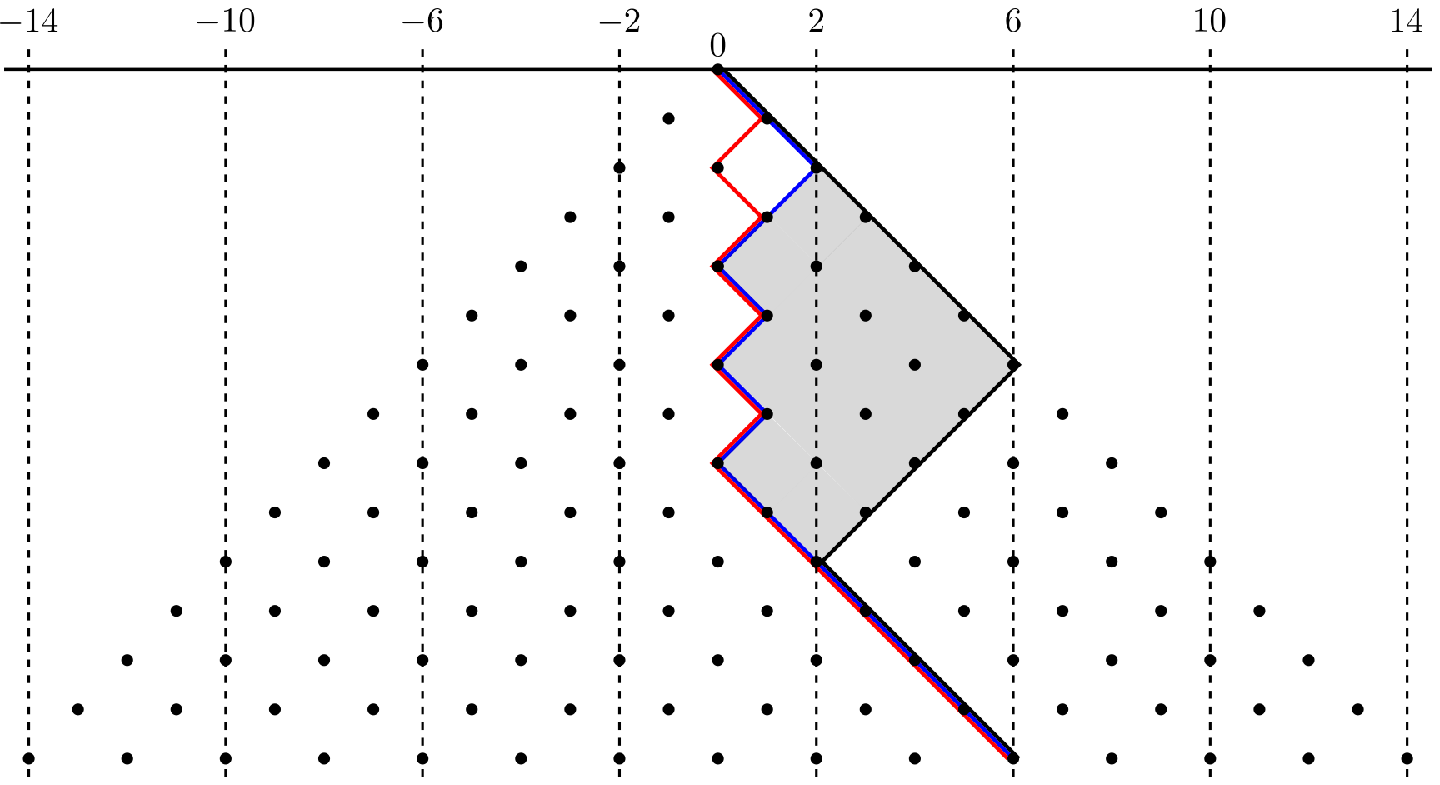}
\end{center}
Let $u'$ be the following permutation in blue
$$
\includegraphics[scale=0.5]{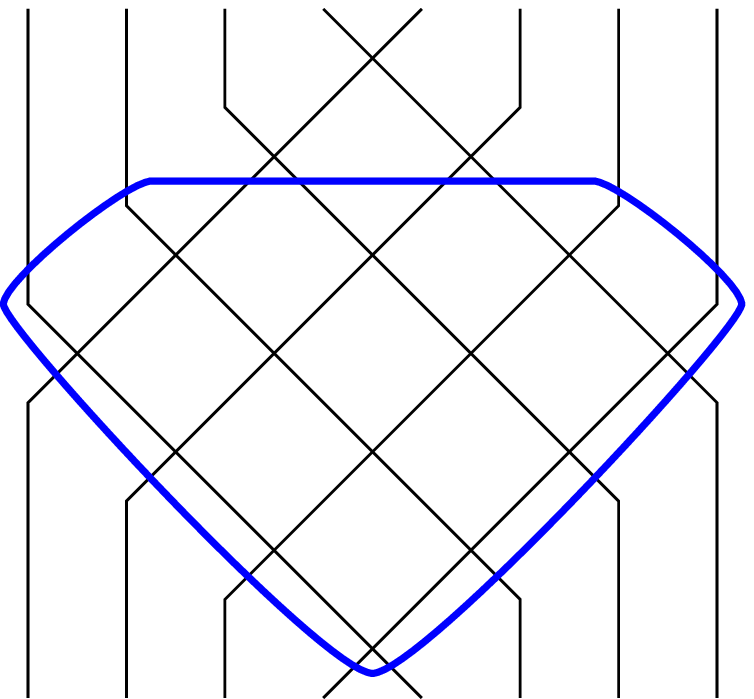}
$$
which we call a `cut diamond' permutation, corresponding to the first
\begin{equation*}
\begin{cases}
2e-f_{\wt{\eta}} & \text{if $j$ is even,} \\
e+f_{\wt{\eta}}-1 & \text{if $j$ is odd}
\end{cases}
\end{equation*}
layers of the diamond permutation centred at $f_{\wt{\eta}}+je$, and fix $\underline{u'}$ to be the corresponding reduced expression for $u'$.
Applying $u'$ to $\tableau{t}$ yields a new tableau $\tableau{t}'$ (corresponding to the blue path above), whose entries within the skew tableau are given by
\begin{equation*}
\begin{gathered}
\includegraphics[scale=0.5]{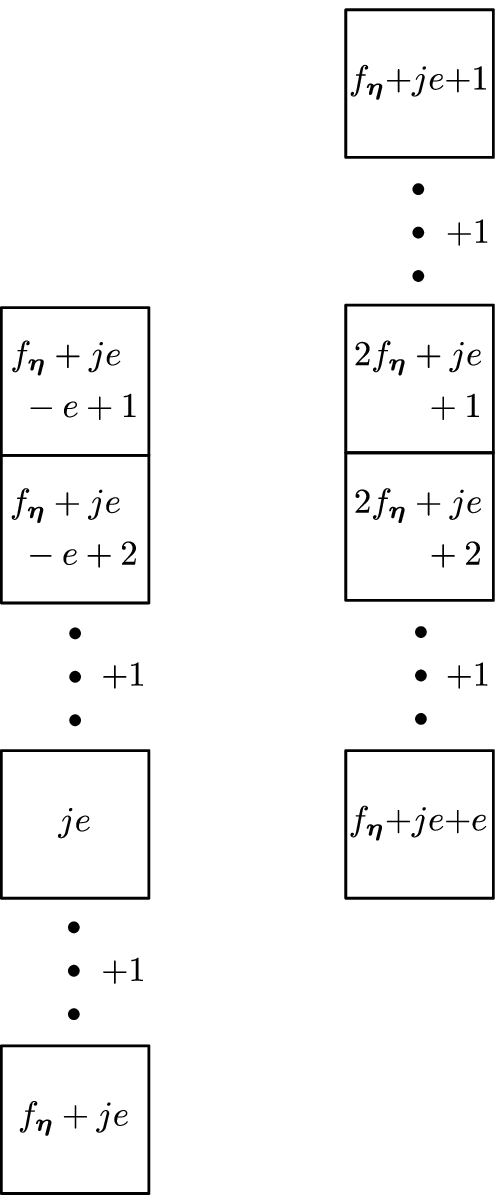}
\end{gathered} \quad \xmapsto{\substack{\text{first $e$}\\ \text{layers}}} \quad 
\begin{gathered}
\includegraphics[scale=0.5]{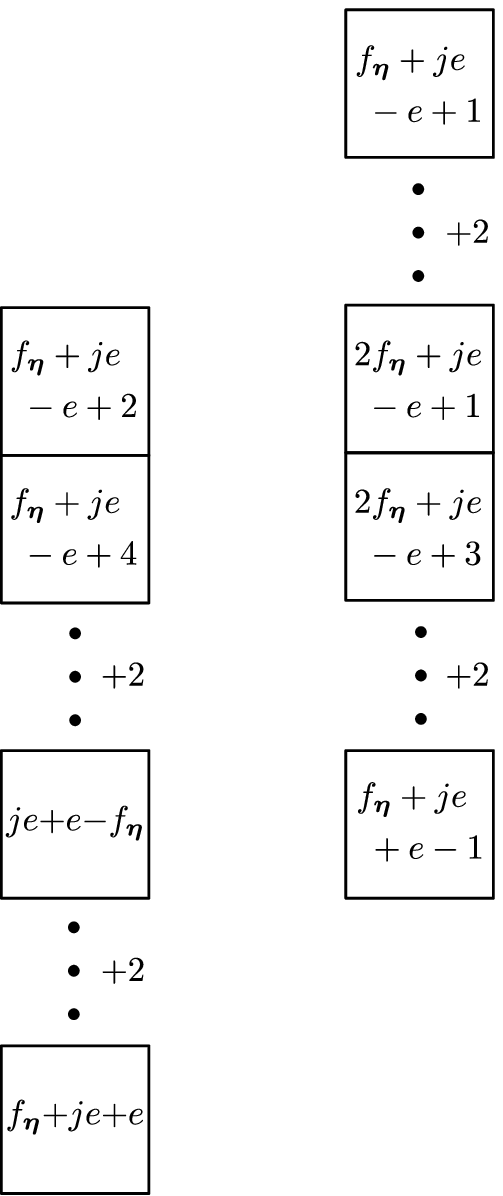}
\end{gathered} \quad \xmapsto{\substack{\text{remaining}\\ \text{layers}}} \quad
\begin{gathered}
\includegraphics[scale=0.5]{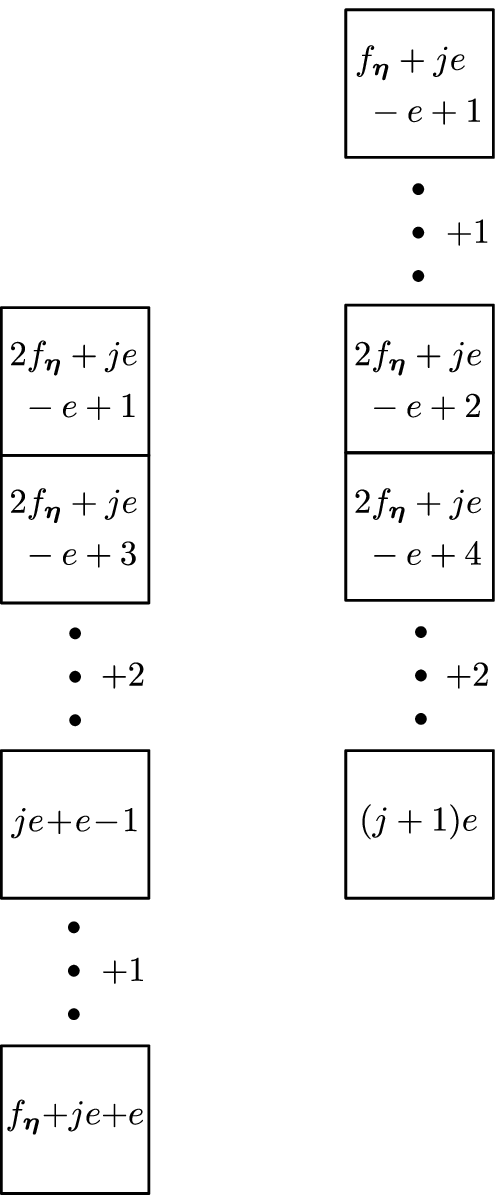}
\end{gathered}
\end{equation*}
where we have written the increments in the omitted boxes with $+1$ or $+2$.
Working in the path picture, we again note that the region in grey bounded by $\tableau{t}'$ and $\tableau{t}$ is entirely contained with the region bounded by $\domtab{\wt{\eta}_l}$ (the red path) and $\tableau{t}$.
As before this means that $\underline{d}_{\tableau{t}}=(\underline{u'})^{-1}\underline{d}_{\tableau{t}'}$ as reduced expressions.
From Proposition \ref{prop:psicrosses} we also know that $x_j \psi_{f_{\wt{\eta}}+je}e(\domres{\wt{\eta}})$ is a factor of $\psi_{\domtab{\wt{\eta}_l} \subqtab{\wt{\eta}_l}{\wt{\eta}}}$. Moreover, from the proof of this proposition, there is a reduced expression $\underline{u''}$ (which comes from the complement of the `cut diamond' in the KLR diagram above) for which $x_j \psi_{f_{\wt{\eta}+je}}e(\domres{\wt{\eta}})=\psi_{\underline{u''}}e(\domres{\wt{\eta}})$ and $\underline{u}=(\underline{u'})^{-1}\underline{u''}$. 
In fact, the construction of $\tableau{t}'$ ensures that $d_{\tableau{t}'}$ commutes with $u''$, because the support of $u''$ (i.e.~the elements not fixed by $u''$) is fixed by $d_{\tableau{t}'}$.
Combining everything together, we get
\begin{equation*}
\psi_{\tableau{t}\subqtab{\wt{\eta}_l}{\wt{\eta}}}=\psi_{(\underline{u'})^{-1}} \psi_{\tableau{t}' \domtab{\wt{\eta}_l}} \psi_{\domtab{\wt{\eta}_l} \subqtab{\wt{\eta}_l}{\wt{\eta}}} \in \psi_{(\underline{u'})^{-1}} \psi_{\underline{d}_{\tableau{t}'}} \psi_{\underline{u''}} e(\domres{\wt{\eta}})B_n^{\kappa}=\psi_{(\underline{u'})^{-1}} \psi_{\underline{u''}} e(\domres{\wt{\eta}})\psi_{\underline{d}_{\tableau{t}'}} B_n^{\kappa} \subseteq U^{\wt{\eta}}_j B_n^{\kappa} \text{.}
\end{equation*}
\end{proof}

The next result identifies possible candidates for generators of the socle of $P(\wt{\eta})$.

\begin{lem} \label{lem:possible-socle}
If $\soc P(\wt{\eta})$ contains a copy of $L(\wt{\eta})\langle 2k\rangle$ for some $k \geq 0$, then it must be the subspace
\begin{equation*}
\field \JW^{\wt{\eta}} \psi_{\subqtab{\wt{\eta}_k}{\wt{\eta}} \subqtab{\wt{\eta}_k}{\wt{\eta}}} \JW^{\wt{\eta}}
\end{equation*}
\end{lem}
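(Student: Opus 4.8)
The plan is to reduce the lemma to computing the degree-$2k$ subspace of $\JW^{\wt{\eta}} B_n^{\kappa} \JW^{\wt{\eta}}$, and then to evaluate that subspace using the graded cellular basis. First I would note that $\weyl(\wt{\eta}) = L(\wt{\eta})$ is one-dimensional, since the Young diagram $[\wt{\eta}]$ admits a unique standard tableau; it is generated by a vector of residue $\domres{\wt{\eta}}$ in degree $0$. Consequently any copy of $L(\wt{\eta})\langle 2k\rangle$ inside $\soc P(\wt{\eta})$ is a line $\field v$ with $v$ homogeneous of degree $2k$. Using $P(\wt{\eta}) \iso B_n^{\kappa}\JW^{\wt{\eta}}$, this line is the image of a homomorphism $P(\wt{\eta})\langle 2k\rangle \to P(\wt{\eta})$ obtained by composing an embedding $L(\wt{\eta})\langle 2k\rangle \hookrightarrow P(\wt{\eta})$ with the head surjection $P(\wt{\eta})\langle 2k\rangle \twoheadrightarrow L(\wt{\eta})\langle 2k\rangle$; such a homomorphism sends $\JW^{\wt{\eta}}$ to a degree-$2k$ element of $\JW^{\wt{\eta}} B_n^{\kappa} \JW^{\wt{\eta}}$ generating the line. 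So it is enough to show $(\JW^{\wt{\eta}} B_n^{\kappa} \JW^{\wt{\eta}})^{2k} = \field\,\JW^{\wt{\eta}}\psi_{\subqtab{\wt{\eta}_k}{\wt{\eta}}\subqtab{\wt{\eta}_k}{\wt{\eta}}}\JW^{\wt{\eta}}$, since the socle copy, being a nonzero line contained in this space, then coincides with it.

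Since $\JW^{\wt{\eta}} = e(\domres{\wt{\eta}})\JW^{\wt{\eta}} = \JW^{\wt{\eta}} e(\domres{\wt{\eta}})$, the graded cellular basis (Theorem~\ref{thm:cellbasis}) together with Proposition~\ref{prop:tablink} shows that $(\JW^{\wt{\eta}} B_n^{\kappa} \JW^{\wt{\eta}})^{2k}$ is spanned by the elements $\JW^{\wt{\eta}}\psi_{\tableau{s}\tableau{t}}\JW^{\wt{\eta}}$, where $\wt{\eta}_l$ ranges over the linkage class of $\wt{\eta}$ and $\tableau{s},\tableau{t} \in \Std_{\wt{\eta}}(\wt{\eta}_l)$ satisfy $\deg\tableau{s} + \deg\tableau{t} = 2k$. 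By Lemma~\ref{lem:subqtab} we have $\deg\tableau{s},\deg\tableau{t} \leq \deg\subqtab{\wt{\eta}_l}{\wt{\eta}} = l$, so $l \geq k$; and if $l = k$ then $\tableau{s} = \tableau{t} = \subqtab{\wt{\eta}_k}{\wt{\eta}}$ by the uniqueness in Lemma~\ref{lem:subqtab}, giving exactly the claimed generator. It therefore remains to show that all the terms with $l > k$ vanish.

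For this I would use the factorisation $\psi_{\tableau{s}\tableau{t}} = \overline{\psi_{\domtab{\wt{\eta}_l}\tableau{s}}}\,\psi_{\domtab{\wt{\eta}_l}\tableau{t}}$, which together with the facts that the anti-involution fixes $\JW^{\wt{\eta}}$ and each diamond $U^{\wt{\eta}}_j$ gives $\JW^{\wt{\eta}}\psi_{\tableau{s}\tableau{t}}\JW^{\wt{\eta}} = \overline{\big(\psi_{\domtab{\wt{\eta}_l}\tableau{s}}\JW^{\wt{\eta}}\big)}\,\big(\psi_{\domtab{\wt{\eta}_l}\tableau{t}}\JW^{\wt{\eta}}\big)$. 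When $l > k$, at least one of $\tableau{s},\tableau{t}$ has degree $< l$ and so is not the maximal tableau $\subqtab{\wt{\eta}_l}{\wt{\eta}}$; for such a tableau $\tableau{u}$, the proof of Proposition~\ref{prop:diamondfactor} shows $\psi_{\tableau{u}\domtab{\wt{\eta}_l}} \in U^{\wt{\eta}}_j B_n^{\kappa}$ for some $j$, whence $\psi_{\domtab{\wt{\eta}_l}\tableau{u}} = \overline{\psi_{\tableau{u}\domtab{\wt{\eta}_l}}} \in B_n^{\kappa}U^{\wt{\eta}}_j$ and so $\psi_{\domtab{\wt{\eta}_l}\tableau{u}}\JW^{\wt{\eta}} = 0$ by the defining property of the Jones--Wenzl projector. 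Hence every term with $l > k$ vanishes, which establishes the displayed identity and, with the first paragraph, completes the proof.

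The main obstacle is this last step. Proposition~\ref{prop:diamondfactor} is stated only for tableaux of the precise degree $2k-l$, whereas the argument above needs the diamond-factor conclusion for \emph{every} $\tableau{u} \in \Std_{\wt{\eta}}(\wt{\eta}_l)$ with $\deg\tableau{u} < l$ --- in particular when neither $\tableau{s}$ nor $\tableau{t}$ is the maximal tableau $\subqtab{\wt{\eta}_l}{\wt{\eta}}$. The proof technique of Proposition~\ref{prop:diamondfactor} (locate a premature excursion of the path of $\tableau{u}$ out of the fundamental alcove, and split off the associated, possibly cut, diamond permutation) goes through verbatim, but verifying that every non-maximal path of shape $\wt{\eta}_l$ and residue $\domres{\wt{\eta}}$ contains such an excursion --- by comparing its wall-to-wall step count with that of $\subqtab{\wt{\eta}_l}{\wt{\eta}}$ and using the reflection structure of the linkage class --- is the combinatorial heart of the matter. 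As a sanity check, graded BGG reciprocity together with Theorem~\ref{thm:grdecompblob-singular} give $[P(\wt{\eta}):L(\wt{\eta})]_v = \sum_{l \geq 0} v^{2l}$, so $(\JW^{\wt{\eta}} B_n^{\kappa}\JW^{\wt{\eta}})^{2k}$ is at most one-dimensional a priori; but pinning down its spanning vector still requires the diamond computation (and then a nonvanishing check).
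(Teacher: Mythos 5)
Your reduction to the degree-$2k$ part of $\JW^{\wt{\eta}}B_n^{\kappa}\JW^{\wt{\eta}}$, the cellular-basis expansion, and the use of Lemma~\ref{lem:subqtab} to force $l\geq k$ and to pin down the $l=k$ term all match the paper. The genuine gap is precisely the step you flag, and it is worse than a missing combinatorial verification: you need $\psi_{\domtab{\wt{\eta}_l}\tableau{u}}\JW^{\wt{\eta}}=0$ (equivalently, that $\psi_{\tableau{u}\domtab{\wt{\eta}_l}}$ lies in the left ideal generated by the diamonds) for \emph{every} non-maximal $\tableau{u}\in\Std_{\wt{\eta}}(\wt{\eta}_l)$, whereas Proposition~\ref{prop:diamondfactor} concerns $\psi_{\tableau{t}\subqtab{\wt{\eta}_l}{\wt{\eta}}}$, and its proof does \emph{not} go through verbatim with $\domtab{\wt{\eta}_l}$ in place of $\subqtab{\wt{\eta}_l}{\wt{\eta}}$. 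In the first (non-overlap) case of that proof one does get $\psi_{\tableau{u}\domtab{\wt{\eta}_l}}=U^{\wt{\eta}}_j\psi_{\tableau{u}'\domtab{\wt{\eta}_l}}$, but in the second (``cut diamond'') case the diamond is only completed by borrowing the factor $x_j\psi_{f_{\wt{\eta}}+je}$ coming from $\psi_{\domtab{\wt{\eta}_l}\subqtab{\wt{\eta}_l}{\wt{\eta}}}$ (via Proposition~\ref{prop:psicrosses}); that is, the argument uses in an essential way that the other half of the cellular element is the maximal-degree tableau, which is exactly what you have removed. So your vanishing claim for the $l>k$ terms is not established by the cited proposition or its proof.

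The claim you need is nevertheless true, but the natural justification is a filtration/degree argument rather than a diamond computation: by Corollary~\ref{cor:projsubquot} together with graded BGG reciprocity and Theorem~\ref{thm:grdecompblob-singular}, $P(\wt{\eta})=B_n^{\kappa}\JW^{\wt{\eta}}$ has a graded Weyl filtration with sections $\weyl(\wt{\eta}_j)\langle j\rangle$, and $\Std_{\wt{\eta}_l}(\wt{\eta}_j)$ is empty for $j<l$, so (applying the anti-involution) $\JW^{\wt{\eta}}B_n^{\kappa}e(\domres{\wt{\eta}_l})$ is concentrated in degrees $\geq l$, while $\JW^{\wt{\eta}}\psi_{\tableau{u}\domtab{\wt{\eta}_l}}$ has degree $\deg\tableau{u}<l$; hence it vanishes. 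This is in effect how the paper avoids your difficulty: it uses the Weyl filtration of $P(\wt{\eta})$ (Corollary~\ref{cor:projsubquot}) to see that the only degree-$2k$ elements of $e(\domres{\wt{\eta}})P(\wt{\eta})$ to worry about are $\psi_{\tableau{t}\subqtab{\wt{\eta}_l}{\wt{\eta}}}\JW^{\wt{\eta}}$ with $k<l\leq 2k$ and $\deg\tableau{t}=2k-l$, and only then applies Proposition~\ref{prop:diamondfactor} on the left, exactly in the degree range in which it is stated. With your third step replaced by such an argument (or by genuinely extending Proposition~\ref{prop:diamondfactor}, which would require a new idea in the cut-diamond case), your proposal closes up and becomes essentially the paper's proof.
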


\begin{proof}
$L(\wt{\eta})$ is $1$-dimensional, so it restricts to the unique $1$-dimensional irreducible representation of the Temperley--Lieb subalgebra. This means $\JW^{\wt{\eta}}$ acts on it as the identity, and thus any submodule isomorphic to $L(\wt{\eta})\langle 2k\rangle$ lies in the degree $2k$ part of $\JW^{\wt{\eta}}B_n^{\kappa} \JW^{\wt{\eta}}$. 
From Corollary~\ref{cor:projsubquot}, all cellular basis elements $\psi \neq \psi_{\subqtab{\wt{\eta}_k}{\wt{\eta}} \subqtab{\wt{\eta}_k}{\wt{\eta}}}$ with top and bottom residues $\domres{\wt{\eta}}$ and degree $2k$ which do not vanish in $P(\wt{\eta})=B_n^{\kappa}e(\domres{\wt{\eta}})\JW^{\wt{\eta}}$ have the form $\psi=\psi_{\tableau{t} \subqtab{\wt{\eta}_l}{\wt{\eta}}}$ for some integer $k<l\leq 2k$ with $\deg \tableau{t}=2k-l$.
By Proposition~\ref{prop:diamondfactor} such basis elements factor as $\psi=U^{\wt{\eta}}_j x$ for some $j \in \N$ and $x \in B_n^{\kappa}$. Since $\JW^{\wt{\eta}} U^{\wt{\eta}}_j=0$ the result follows.
\end{proof}

\subsection{Diamond simplification}

By Lemma~\ref{lem:possible-socle}, determining the socle of $P(\wt{\eta})$ will necessitate calculations involving $\JW^{\wt{\eta}}$. The next few lemmas give some methods for reducing the workload by eliminating diamonds.

\begin{lem} \label{lem:crossU1cross}
For all $k$ we have
\begin{equation*}
\psi_{f_{\wt{\eta}}+(k-1)e} U^{\wt{\eta}}_k \psi_{f_{\wt{\eta}}+(k-1)e}=\pm \psi_{f_{\wt{\eta}}+ke}^2 e(s_{f_{\wt{\eta}}+(k-1)e}\domres{\wt{\eta}}) \text{.}
\end{equation*}
\end{lem}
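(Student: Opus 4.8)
The plan is to unwind the definition of the diamond $U^{\wt{\eta}}_k = U^{\wt{\eta}_j}_{?}$ at position $f_{\wt{\eta}}+ke$ and push the extra crossings $\psi_{f_{\wt{\eta}}+(k-1)e}$ through it from both sides, collapsing the whole expression to a single double crossing. Recall from Definition~\ref{de:diamond} that $U^{\wt{\eta}}_k$ is a `diamond' built from the layers
\[
(\psi_{f})(\psi_{f-1}\psi_{f+1})\dotsm(\psi_{f-e+1}\dotsm\psi_{f+e-1})\dotsm(\psi_{f-1}\psi_{f+1})(\psi_{f})
\]
where $f=f_{\wt{\eta}}+ke$. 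So conjugating by $\psi_{f-e+1}=\psi_{f_{\wt{\eta}}+(k-1)e}$ adds one more crossing at the very outside bottom-left and top-left of this symmetric picture. First I would set up the relevant residue sequence: the string at position $f_{\wt{\eta}}+(k-1)e = f-e+1$ and the string just above it differ in residue by $\pm 1$ (this is where the sign $\alpha=\pm 1$ will come from in the braid relation), and since $\wt{\eta}$ is singular with a wall nearby, the relevant transposed idempotents can be read off.

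The key computational step is a diagrammatic simplification: the conjugated diamond $\psi_{f-e+1}\,U^{\wt{\eta}}_k\,\psi_{f-e+1}$ has an outermost $(e+1)\times(e+1)$ `fattened diamond' shape, and I would apply the braid relation
\[
\psi_r\psi_{r+1}\psi_r e(\res{i}) = (\psi_{r+1}\psi_r\psi_{r+1}\pm 1)e(\res{i})
\]
repeatedly from the corners inward, each application peeling off a layer and producing a $\pm 1$ Temperley--Lieb-style correction term. Crucially, almost all of these correction terms vanish: a correction term replaces a triple-crossing by three straight strings, which typically creates a shorter diamond $U^{\wt{\eta}}_k$ or $U^{\wt{\eta}}_{k-1}$ as a factor, or (after using \eqref{eq:psisq} and Lemma~\ref{lem:yjump}) produces a dot on the leftmost string which vanishes by the cyclotomic relation \eqref{eq:kinpres}/\eqref{eq:blobdefiningreln}. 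The only surviving correction collapses the entire fattened diamond down to the single double crossing $\psi_{f_{\wt{\eta}}+ke}^2 e(s_{f_{\wt{\eta}}+(k-1)e}\domres{\wt{\eta}})$, with an overall sign $\pm$ coming from the parity of $e$ and the direction of the residue steps, exactly as in the proof of Lemma~\ref{lem:deg1vanishing} (indeed, this lemma is the `$k=1$, base' version of the same manoeuvre, and the general case is obtained by shifting all indices by $(k-1)e$ and working inside the block where the fundamental-alcove wall has moved).

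The main obstacle I expect is bookkeeping: carefully tracking which of the many braid-relation correction terms vanish (via a diamond factor, via $\psi_r^2 e(\res{i})=0$ in the equal-residue case, or via a dot sliding to the leftmost string) and which single term survives, together with pinning down the overall sign $\pm$ as a function of $e$ and the parity of $k$. The cleanest way to organise this is probably to reduce to the case $k=1$ by the locality discussion of \S\ref{sec:locality} and the shift $f_{\wt{\eta}_{k-1}} = f_{\wt{\eta}}+(k-1)e$ (so that position $f_{\wt{\eta}}+(k-1)e$ becomes the wall), and then quote the diagrammatic computation already carried out in Lemma~\ref{lem:deg1vanishing} almost verbatim, only now keeping the surviving term rather than observing that it too vanishes. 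Alternatively one can argue directly: write $U^{\wt{\eta}}_k = \psi_{f-e+1}\cdots\,\widetilde{U}\,\cdots\psi_{f-e+1}$ peeling the outer layers, use $\psi_{f-e+1}^2 e(\res{i}) = \pm(y\text{-difference})e(\res{i})$ or $e(\res{i})$ depending on residues, and then the $y$-difference jumps to the left wall and dies while the $e(\res{i})$ case feeds into an induction on the size of the diamond. Either route is routine once the vanishing pattern is identified; no new idea beyond Lemma~\ref{lem:deg1vanishing} is needed.
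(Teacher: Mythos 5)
Your overall instinct --- push the extra crossing through the diamond layer by layer and collect signs --- is in the right direction, but the two mechanisms you actually lean on are not available here, and they are what your proposal uses to avoid the real computation. First, Lemma~\ref{lem:deg1vanishing} is \emph{not} the $k=1$ case of this statement and cannot be transported by the locality discussion of \S\ref{sec:locality}: it asserts the vanishing of a double crossing at the wall, it contains no diamond, and its proof runs entirely on dots escaping to the leftmost string and dying by the non-local cyclotomic relation \eqref{eq:kinpres}. That vanishing mechanism does not shift with the indices (the cyclotomic relation is exactly the relation that is not local), and if the ``same manoeuvre'' did apply verbatim at position $f_{\wt{\eta}}+ke$ you would be proving that the right-hand side is $0$ --- but $\pm\psi_{f_{\wt{\eta}}+ke}^2 e(s_{f_{\wt{\eta}}+(k-1)e}\domres{\wt{\eta}})$ is precisely the nonzero output that Theorem~\ref{thm:U_1k-nonvanishing} and Theorem~\ref{thm:endwallprojsimplesocle} later depend on. Second, your proposed reasons for the braid-relation correction terms to vanish are imported from the wrong context: ``contains a shorter diamond as a factor'' kills a term only after multiplication by $\JW^{\wt{\eta}}$ (as in Proposition~\ref{prop:diamondfactor} and Lemma~\ref{lem:possible-socle}), and there is no Jones--Wenzl projector anywhere in this lemma; likewise there is no argument that a dot created in the middle of a diamond sitting $ke$ steps from the wall can migrate to string $1$. (There is also an off-by-one: the conjugating crossing is $\psi_{f-e}$ with $f=f_{\wt{\eta}}+ke$, i.e.\ it involves a string strictly outside the diamond, not the diamond's leftmost crossing $\psi_{f-e+1}$.)

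The paper's proof is a purely local calculation with none of these ingredients: one applies \cite[Lemma 5.16]{libedinskyplaza} once per layer across the diamond, each application contributing a factor $-1$ (hence the overall sign $(-1)^e$); after these moves the surviving crossings are all of degree $0$ except the degree-$1$ crossings at the top and bottom, the degree-$0$ crossings square to the idempotent and cancel, and the two degree-$1$ crossings assemble into $\psi_{f_{\wt{\eta}}+ke}^2$ on the stated idempotent. No cyclotomic relation, no dots (so no use of Lemma~\ref{lem:yjump}), and no $\JW^{\wt{\eta}}$ enter. To repair your write-up you would need to replace the appeals to Lemma~\ref{lem:deg1vanishing} and to diamond-factor vanishing by the actual local residue analysis inside the diamond (where the term-by-term collapse comes from the equal-residue relation in \eqref{eq:psisq}, packaged in the cited Libedinsky--Plaza lemma), which is the substantive content of the proof.
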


\begin{proof}
Apply \cite[Lemma 5.16]{libedinskyplaza} several times across the diamond. The remaining transpositions are all of degree $0$ except for the degree $1$ transpositions at the top and bottom. The degree $0$ transpositions cancel out and the result follows. The diagrams below depict what happens when $e=6$.
\begin{align*}
\begin{gathered}
\includegraphics[scale=0.25]{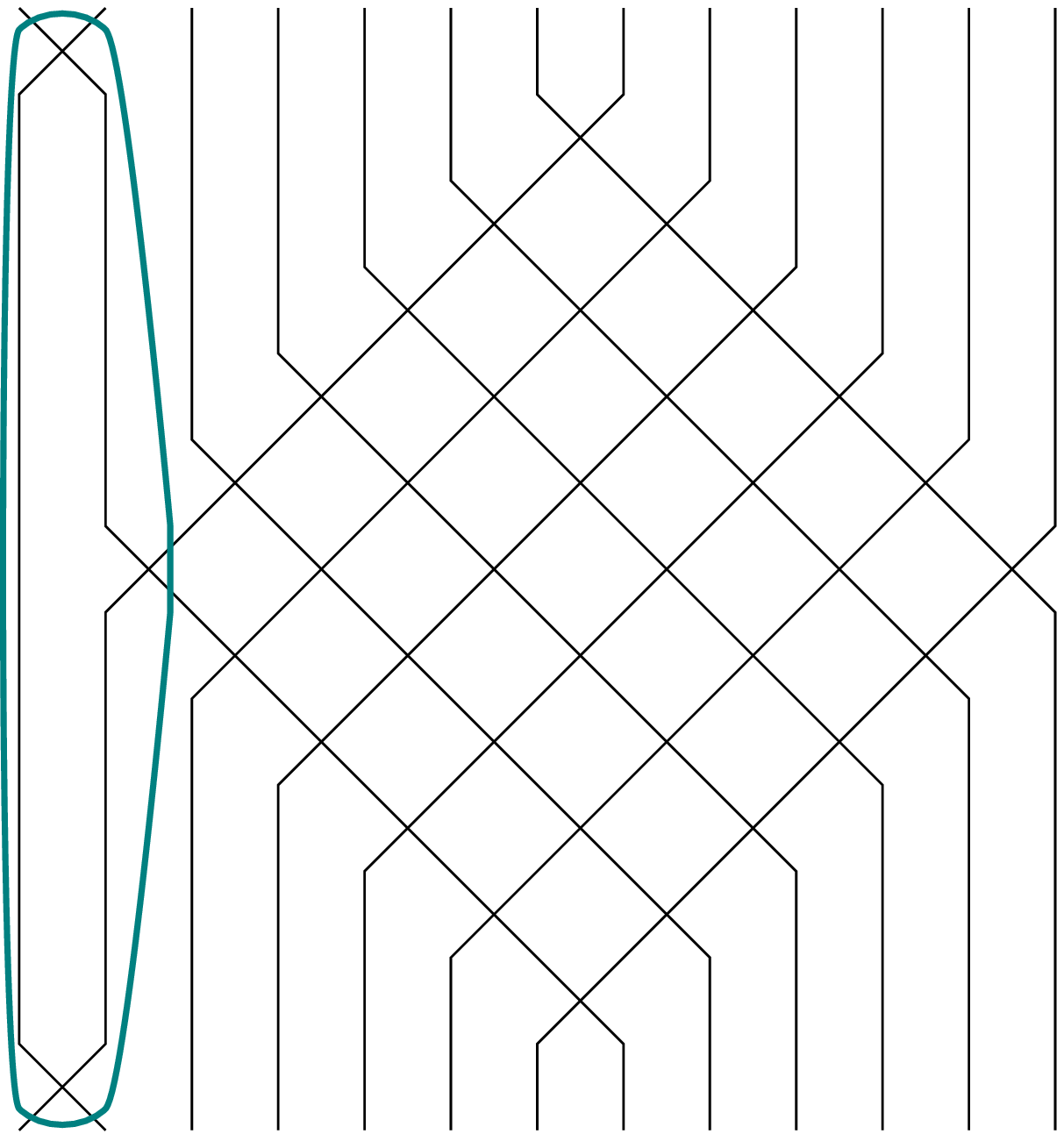}
\end{gathered}& =-\begin{gathered}
\includegraphics[scale=0.25]{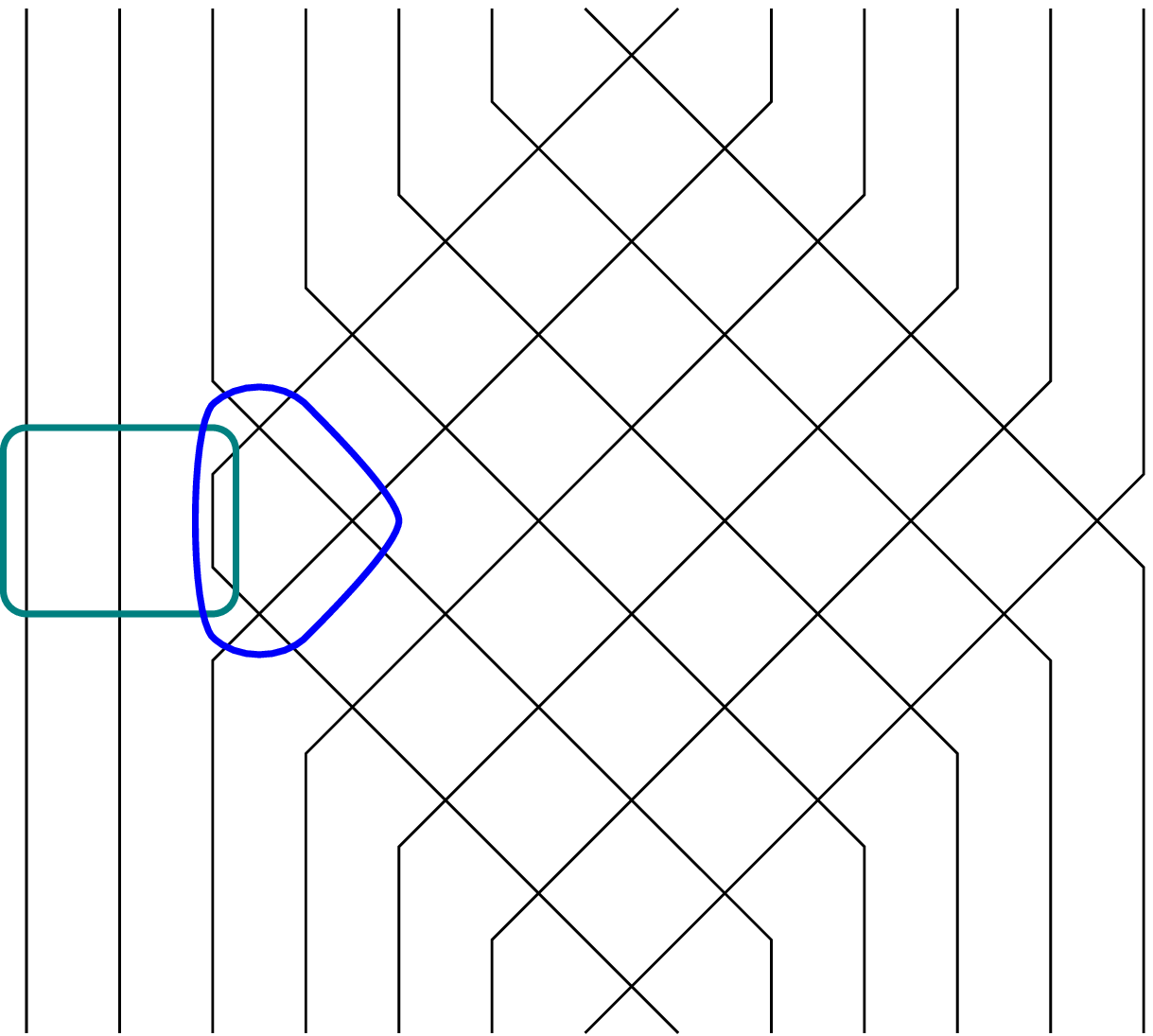}
\end{gathered}=\begin{gathered}
\includegraphics[scale=0.25]{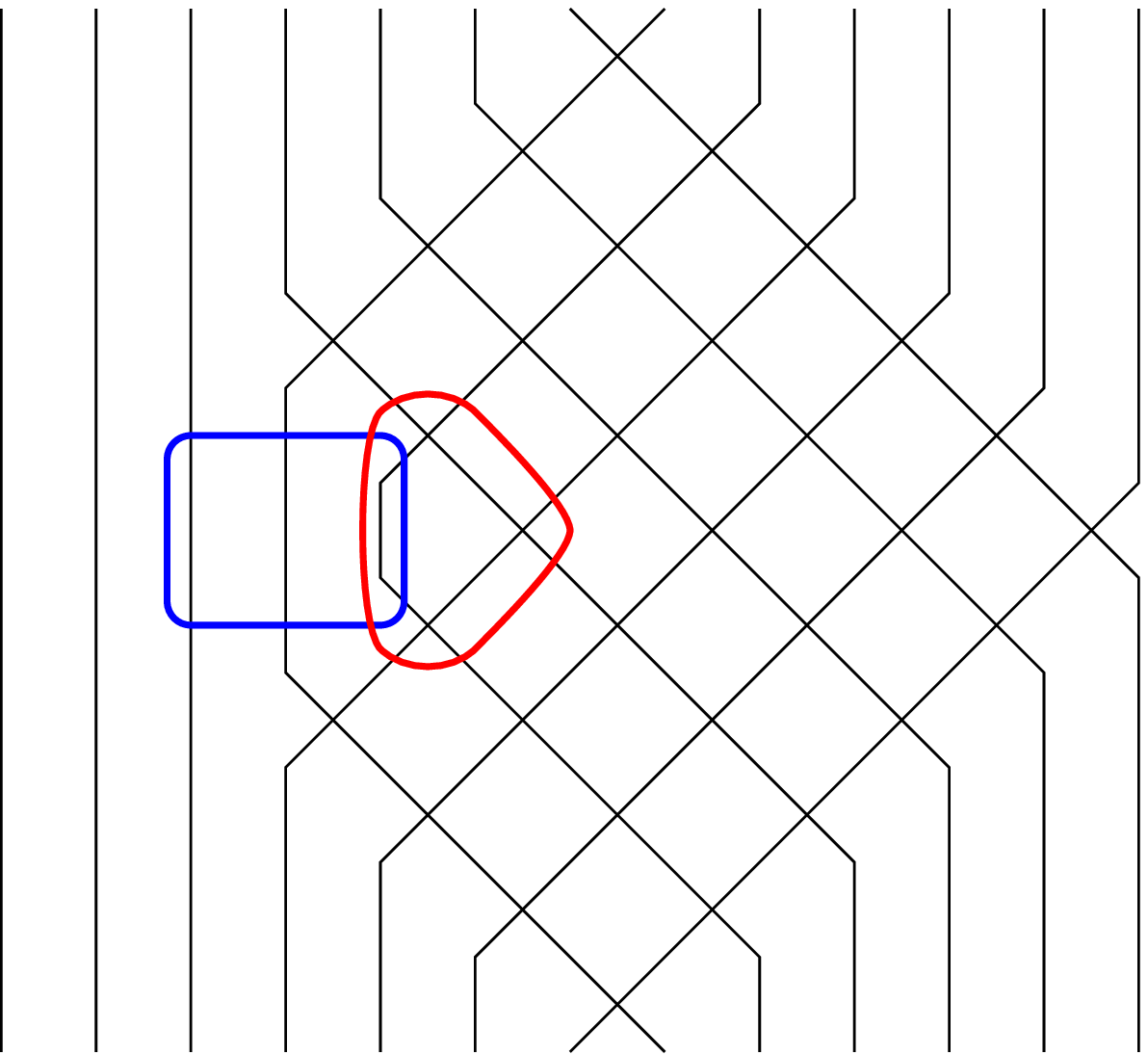}
\end{gathered} \\
& =-\begin{gathered}
\includegraphics[scale=0.25]{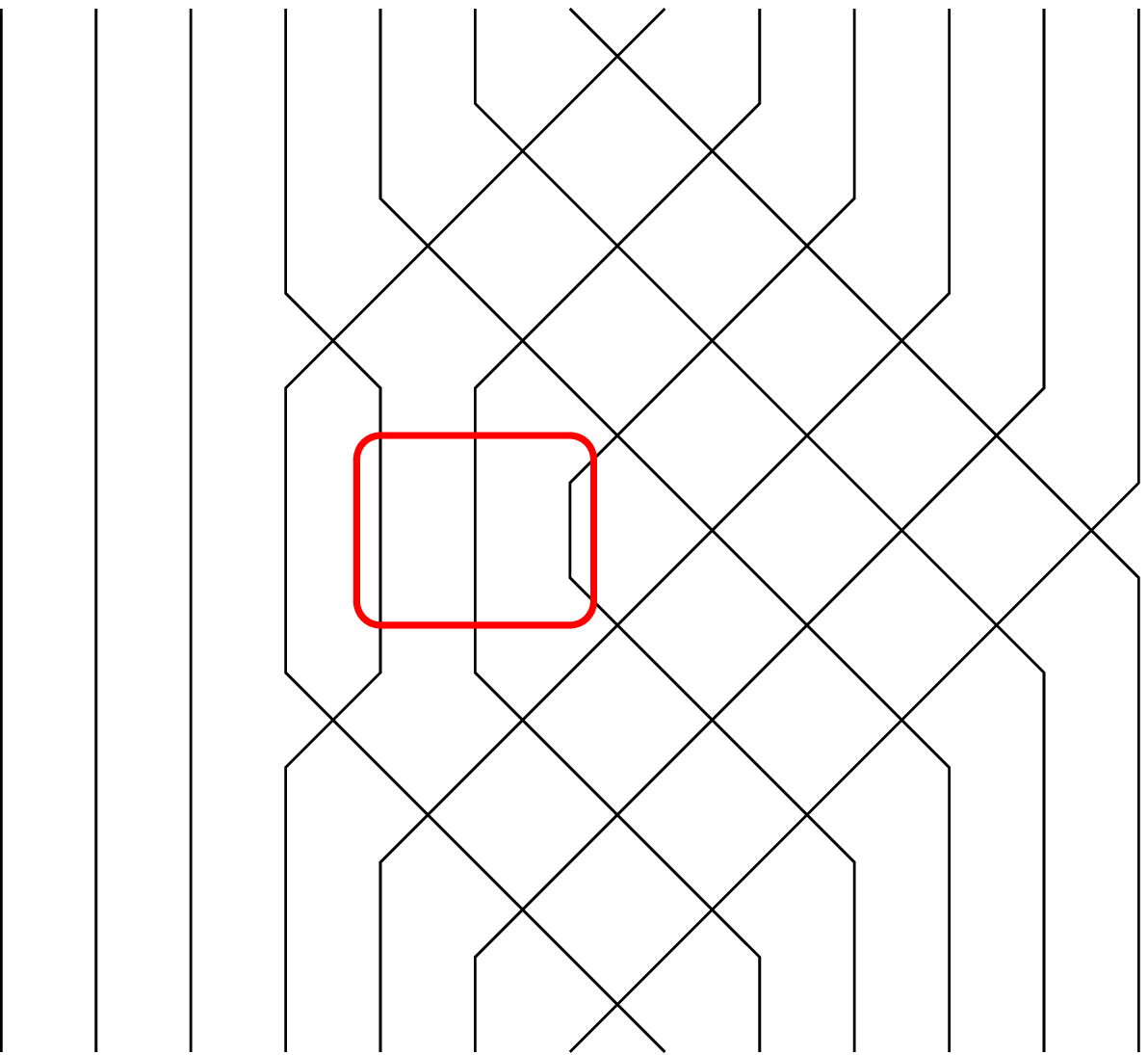}
\end{gathered}=\dotsb \\
\dotsb & =(-1)^e \begin{gathered}
\includegraphics[scale=0.25]{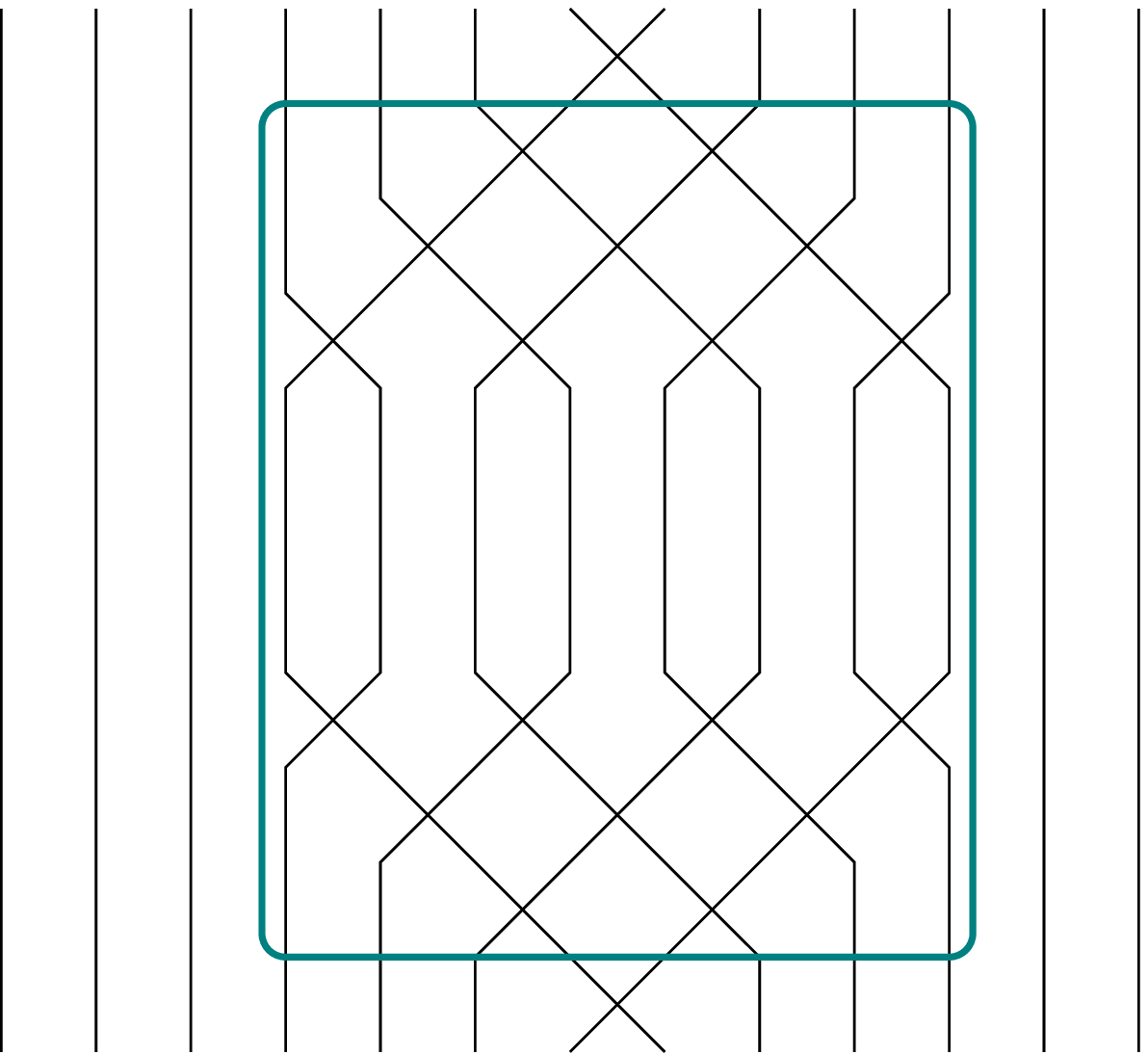}
\end{gathered}=(-1)^e\begin{gathered}
\includegraphics[scale=0.25]{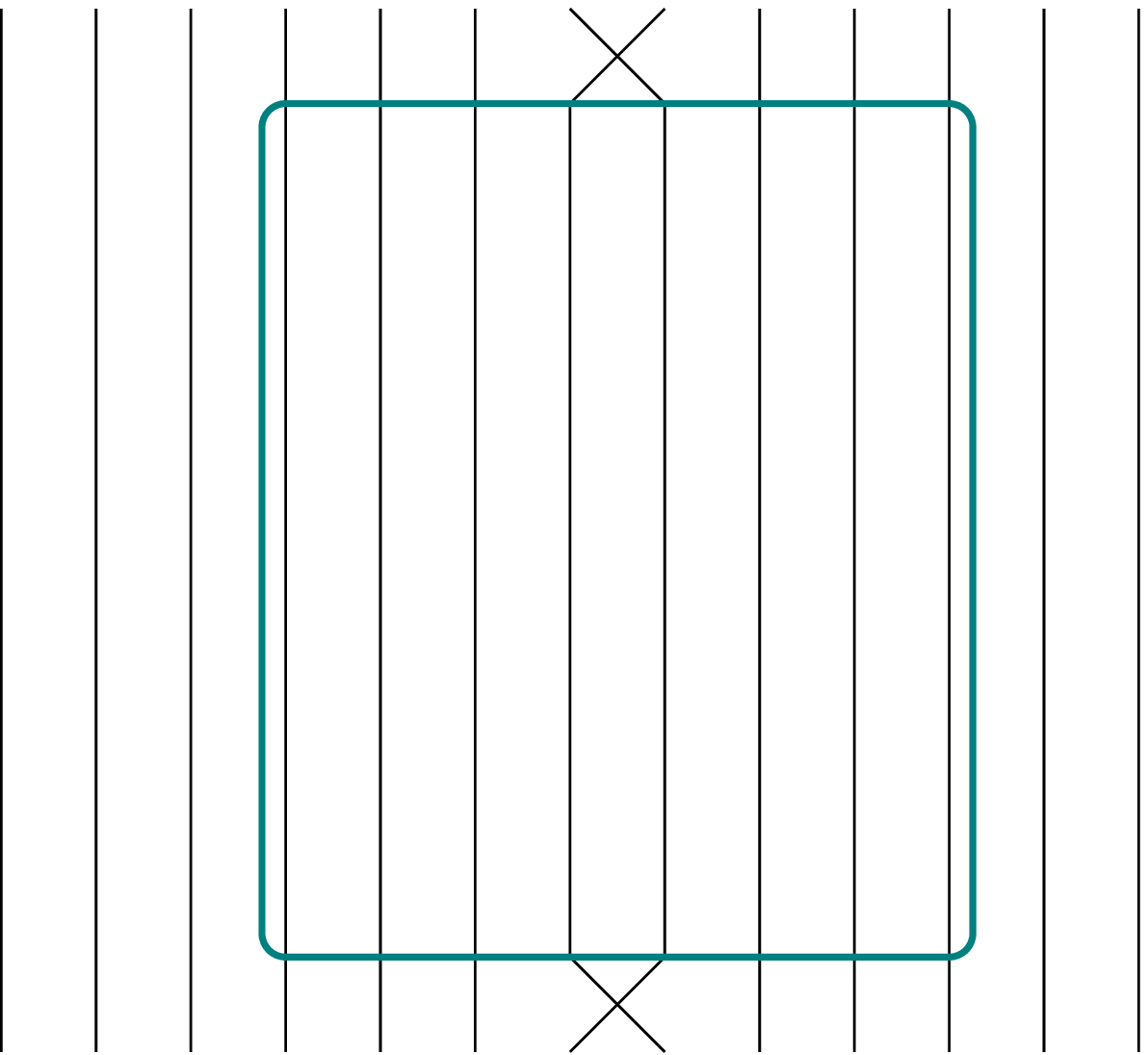}
\end{gathered}
\end{align*}
\end{proof}

\begin{lem} \label{lem:Uoffcenterdoublecross}
For all $k$ we have
\begin{equation*}
U^{\wt{\eta}}_k U^{\wt{\eta}}_{k-1} \psi_{f_{\wt{\eta}+ke}}^2 = \pm U^{\wt{\eta}}_k \psi_{f_{\wt{\eta}+(k-1)e}}^2 \text{.}
\end{equation*}
\end{lem}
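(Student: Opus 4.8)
The plan is to treat this as one more diamond-simplification, in the spirit of Lemma~\ref{lem:crossU1cross}. Throughout write $c=f_{\wt{\eta}}+(k-1)e$ and $d=f_{\wt{\eta}}+ke=c+e$. First note that both sides carry an implicit idempotent $e(\domres{\wt{\eta}})$ on the right, since $U^{\wt{\eta}}_{k-1}=U^{\wt{\eta}}_{k-1}e(\domres{\wt{\eta}})$ and $\psi_r e(\res{j})=e(s_r\res{j})\psi_r$; and because $\domres{\wt{\eta}}$ is strictly decreasing, $\eqref{eq:psisq}$ gives $\psi_d^2 e(\domres{\wt{\eta}})=(y_d-y_{d+1})e(\domres{\wt{\eta}})$ and $\psi_c^2 e(\domres{\wt{\eta}})=(y_c-y_{c+1})e(\domres{\wt{\eta}})$. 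So the content of the lemma is that the diamond $U^{\wt{\eta}}_{k-1}$, sitting just to the left of a double crossing at position $d$ (the centre of $U^{\wt{\eta}}_k$), may be absorbed into $U^{\wt{\eta}}_k$ provided the double crossing is relocated to position $c$ (the centre of $U^{\wt{\eta}}_{k-1}$). Since $U^{\wt{\eta}}_k$ and $U^{\wt{\eta}}_{k-1}$ overlap only in the band of columns between their centres and $e\geq 4$ (equivalently $2\le f_{\wt{\eta}}\le e-2$, as used in Proposition~\ref{prop:psicrosses}), this band is wide enough that every crossing in play away from columns $c$ and $d$ has degree $0$.

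I would argue diagrammatically, as in the proof of Lemma~\ref{lem:crossU1cross}: repeatedly apply \cite[Lemma~5.16]{libedinskyplaza} to slide crossings of $U^{\wt{\eta}}_k$ across the overlap region, using the Temperley--Lieb relations $U^{\wt{\eta}}_k U^{\wt{\eta}}_{k-1}U^{\wt{\eta}}_k=U^{\wt{\eta}}_k$ and $(U^{\wt{\eta}}_{k-1})^2=2(-1)^{e-1}U^{\wt{\eta}}_{k-1}$ to collapse the resulting word, until all that survives of the left-hand side is the diagram of $U^{\wt{\eta}}_k$ with its central double crossing moved from $d$ to $c$, i.e.\ $\pm U^{\wt{\eta}}_k\psi_c^2$. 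A slightly more algebraic route exploits self-duality of the diamond: $U^{\wt{\eta}}_{k-1}$ is a palindrome in the $\psi$'s, so $\overline{U^{\wt{\eta}}_{k-1}}=U^{\wt{\eta}}_{k-1}$ and one may peel off its outermost crossing, $U^{\wt{\eta}}_{k-1}=\psi_c\,Q=\overline{Q}\,\psi_c$ for a word $Q$ supported strictly to the left of $d$; substituting Lemma~\ref{lem:crossU1cross} in the form $\psi_d^2 e(s_c\domres{\wt{\eta}})=\pm\,\psi_c U^{\wt{\eta}}_k\psi_c$ — whose emitted idempotent $e(s_c\domres{\wt{\eta}})$ is exactly the one matching the leftover $\psi_c$ — rebuilds a copy of $U^{\wt{\eta}}_{k-1}$, and the Temperley--Lieb relation again collapses $U^{\wt{\eta}}_k U^{\wt{\eta}}_{k-1}U^{\wt{\eta}}_k$ to $U^{\wt{\eta}}_k$, leaving $\pm U^{\wt{\eta}}_k\psi_c^2 e(\domres{\wt{\eta}})$.

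The main obstacle is bookkeeping rather than ideas. One must propagate the residue sequence through every local move — here it helps that $\domres{\wt{\eta}}$ is strictly decreasing, so at each crossing not lying on an alcove wall the two neighbouring residues differ by exactly $1$ and $\eqref{eq:psisq}$ applies in its $(y_{r+1}-y_r)$ form — and one must check that the idempotent $e(s_c\domres{\wt{\eta}})$ produced by Lemma~\ref{lem:crossU1cross} is compatible with the surrounding factors, so the peeled-off $\psi_c$ genuinely reassembles into $U^{\wt{\eta}}_{k-1}$. Since the statement is only claimed up to $\pm$, the accumulated signs (a $(-1)^e$-type factor from Lemma~\ref{lem:crossU1cross}, the loop value $2(-1)^{e-1}$ from the Temperley--Lieb subalgebra, and the sign from collapsing $U^{\wt{\eta}}_k U^{\wt{\eta}}_{k-1}U^{\wt{\eta}}_k$) need not be pinned down.
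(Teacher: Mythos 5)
Your second (algebraic) route is correct, and it is genuinely different from the paper's argument. The paper does not deduce the lemma from Lemma~\ref{lem:crossU1cross} itself; it proves the mirror-image diagrammatic identity (flanking the diamond $U^{\wt{\eta}}_{k-1}$ by crossings at $f_{\wt{\eta}}+ke$, which produces $\pm$ a double crossing at $f_{\wt{\eta}}+(k-1)e$) by exactly the same sliding argument via \cite[Lemma~5.16]{libedinskyplaza}, and the lemma is then immediate after peeling the bottom $\psi_{f_{\wt{\eta}}+ke}$ off $U^{\wt{\eta}}_k$. You instead peel the outer $\psi_c$ off $U^{\wt{\eta}}_{k-1}$ (writing $c=f_{\wt{\eta}}+(k-1)e$, $d=f_{\wt{\eta}}+ke$), commute it past $\psi_d^2$ (legitimate since $|c-d|=e\geq 4$), convert the idempotent via $\psi_c e(\domres{\wt{\eta}})=e(s_c\domres{\wt{\eta}})\psi_c$ so that Lemma~\ref{lem:crossU1cross} applies verbatim, reassemble $U^{\wt{\eta}}_{k-1}$, and finish with $U^{\wt{\eta}}_k U^{\wt{\eta}}_{k-1} U^{\wt{\eta}}_k=U^{\wt{\eta}}_k$; this gives $U^{\wt{\eta}}_{k-1}\psi_d^2=\pm U^{\wt{\eta}}_{k-1}U^{\wt{\eta}}_k\psi_c^2$ and hence the claim. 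What your route buys is that no new KLR diagram computation is needed at all -- everything is reduced to the already-proved Lemma~\ref{lem:crossU1cross} plus the Temperley--Lieb presentation; what the paper's route buys is a purely local diagrammatic identity (no appeal to the braid-type relation) obtained by recycling the proof, not the statement, of Lemma~\ref{lem:crossU1cross}. Your first, diagrammatic sketch is the one closer in spirit to the paper, but as written it is too vague to stand alone (the invoked relations $(U^{\wt{\eta}}_{k-1})^2=2(-1)^{e-1}U^{\wt{\eta}}_{k-1}$ and the ``collapse'' step play no visible role), so it is the algebraic route that carries your proof. Two immaterial slips: for $\domres{\wt{\eta}}$ one always has $i_{r+1}=i_r-1$, so \eqref{eq:psisq} applies in its $(y_r-y_{r+1})$ form, not the $(y_{r+1}-y_r)$ form mentioned in your last paragraph (your first paragraph has it right), and since the statement is only up to sign this does not affect anything.
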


\begin{proof}
This follows immediately from a variant of Lemma \ref{lem:crossU1cross}, which is proved in the same way.
$$
\begin{gathered}
\includegraphics[scale=0.5]{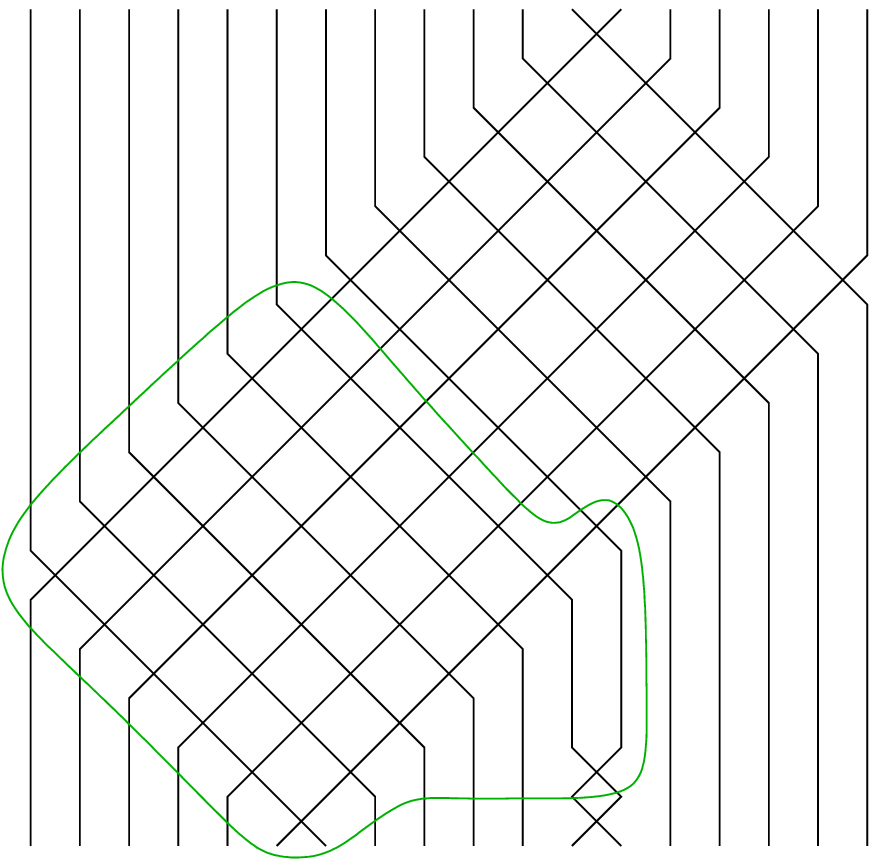}
\end{gathered}
= (-1)^e
\begin{gathered}
\includegraphics[scale=0.5]{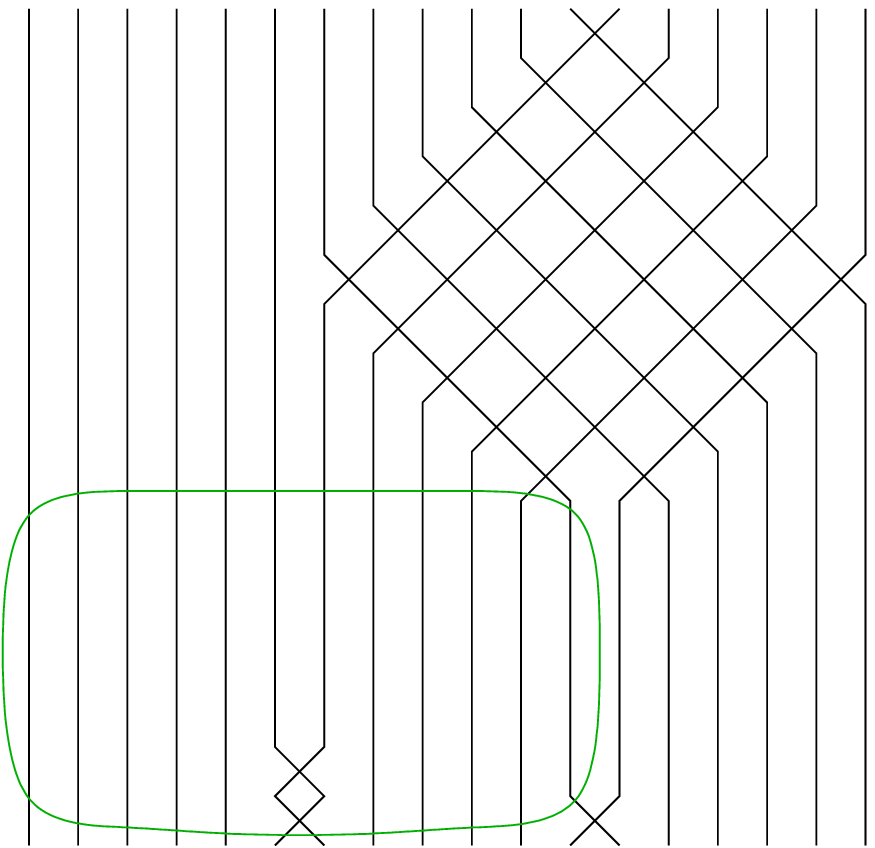}
\end{gathered}
$$
\end{proof}

\begin{lem} \label{lem:Ucenterdoublecross}
For all $1<k<m$ we have 
\begin{equation*}
U^{\wt{\eta}}_{k-1} \psi_{\subqtab{\wt{\eta}_k}{\wt{\eta}}\subqtab{\wt{\eta}_k}{\wt{\eta}}}=0 \text{.}
\end{equation*}
\end{lem}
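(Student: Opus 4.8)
The plan is to combine the factorization from Corollary~\ref{cor:psidoublecrosses} with the diamond-simplification lemmas just proved, reducing the claim to the degree-$1$ vanishing statement of Lemma~\ref{lem:deg1vanishing}. First I would use Corollary~\ref{cor:psidoublecrosses} to write
\begin{equation*}
\psi_{\subqtab{\wt{\eta}_k}{\wt{\eta}}\subqtab{\wt{\eta}_k}{\wt{\eta}}} = \psi_{f_{\wt{\eta}}}^2 \psi_{f_{\wt{\eta}}+e}^2 \dotsm \psi_{f_{\wt{\eta}}+(k-1)e}^2 e(\domres{\wt{\eta}}) \text{.}
\end{equation*}
Since $U^{\wt{\eta}}_{k-1}$ only involves $\psi$-generators with indices in the range $f_{\wt{\eta}}+(k-2)e \pm (e-1)$, the leftmost factors $\psi_{f_{\wt{\eta}}}^2 \dotsm \psi_{f_{\wt{\eta}}+(k-3)e}^2$ commute with $U^{\wt{\eta}}_{k-1}$ (using the commutation relations, and being careful about idempotents), so it suffices to show $U^{\wt{\eta}}_{k-1} \psi_{f_{\wt{\eta}}+(k-2)e}^2 \psi_{f_{\wt{\eta}}+(k-1)e}^2 e(\domres{\wt{\eta}_j}) = 0$ for the appropriate residue.

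Next I would peel off the $\psi_{f_{\wt{\eta}}+(k-1)e}^2$ factor using Lemma~\ref{lem:Uoffcenterdoublecross}, which tells us $U^{\wt{\eta}}_{k-1} \psi_{f_{\wt{\eta}}+(k-1)e}^2$ can be rewritten; more precisely I would multiply on the left by $U^{\wt{\eta}}_{k-2}$ first (noting $U^{\wt{\eta}}_{k-1} = U^{\wt{\eta}}_{k-1}U^{\wt{\eta}}_{k-2}U^{\wt{\eta}}_{k-1} \cdot (\text{something})$ is not quite right, so instead) apply Lemma~\ref{lem:Uoffcenterdoublecross} with $k$ replaced by $k-1$ to move the double-crossing inward: $U^{\wt{\eta}}_{k-1} U^{\wt{\eta}}_{k-2} \psi_{f_{\wt{\eta}}+(k-1)e}^2 = \pm U^{\wt{\eta}}_{k-1} \psi_{f_{\wt{\eta}}+(k-2)e}^2$. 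The cleaner route is: the element $U^{\wt{\eta}}_{k-1}$ contains the transposition $\psi_{f_{\wt{\eta}}+(k-2)e}$ at its top and bottom, so by Lemma~\ref{lem:crossU1cross} (applied with $k$ shifted) a conjugate of $U^{\wt{\eta}}_{k-1}$ by that transposition already looks like $\psi_{f_{\wt{\eta}}+(k-1)e}^2$ up to sign; peeling this off and feeding in the extra $\psi_{f_{\wt{\eta}}+(k-1)e}^2$ produces $\psi_{f_{\wt{\eta}}+(k-1)e}^4 = 0$ by \eqref{eq:psisq} (two consecutive equal-residue-difference crossings giving a degree-$2$ element whose square vanishes), or alternatively reduces to a single diamond hitting a $\psi^2$ that becomes the degree-$1$ product of Lemma~\ref{lem:deg1vanishing} after translation. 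Translation-invariance (locality, \S\ref{sec:locality}) lets me shift Lemma~\ref{lem:deg1vanishing} from the wall at $f_{\wt{\eta}}$ to the wall at $f_{\wt{\eta}}+(k-1)e$; the hypothesis $1<k<m$ guarantees there is genuinely a wall on both sides, so the relevant idempotent $e(s_{f_{\wt{\eta}}+(k-1)e}\domres{\wt{\eta}_{\bullet}})$ is of the form needed and the argument of Lemma~\ref{lem:deg1vanishing} applies verbatim.

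The main obstacle I anticipate is bookkeeping of idempotents and residue sequences: Lemmas~\ref{lem:crossU1cross}, \ref{lem:Uoffcenterdoublecross} are stated with specific idempotents attached, and to chain them together I need the intermediate residue sequences to match up exactly. Getting the commutation of the outer $\psi_{f_{\wt{\eta}}+re}^2$ factors past $U^{\wt{\eta}}_{k-1}$ right also requires tracking which idempotent each piece carries. I expect the cleanest writeup will do the whole computation diagrammatically (as in the cited lemmas), where the diamond literally absorbs the nearby double-crossings layer by layer via \cite[Lemma 5.16]{libedinskyplaza} until what remains is the degree-$1$ configuration of Lemma~\ref{lem:deg1vanishing}, which is then zero.
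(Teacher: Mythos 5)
There is a genuine gap: the vanishing asserted by this lemma is a non-local, left-boundary phenomenon, and your reduction discards exactly the part of the diagram that produces it. (A small indexing point first: $U^{\wt{\eta}}_{k-1}$ is centred at $f_{\wt{\eta}}+(k-1)e$, not $f_{\wt{\eta}}+(k-2)e$, so it sits directly over the rightmost double crossing.) After commuting the far-left factors away, you claim it suffices to show that the local piece $U^{\wt{\eta}}_{k-1}\,\psi_{f_{\wt{\eta}}+(k-2)e}^2\,\psi_{f_{\wt{\eta}}+(k-1)e}^2\,e(\cdot)$ vanishes. Nothing forces such a local product to be zero: by \eqref{eq:psisq} it is a product of dot differences under a degree-$0$ diamond, and the neighbouring lemmas (Lemma~\ref{lem:crossU1cross}, Lemma~\ref{lem:Uoffcenterdoublecross}) show that products of precisely this shape are typically \emph{nonzero} double crossings up to sign. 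The vanishing you want to import, Lemma~\ref{lem:deg1vanishing}, is proved by migrating dots onto the \emph{leftmost} string, where they die by the cyclotomic relation \eqref{eq:kinpres}; that relation is one of the two non-local relations singled out in \S\ref{sec:locality}, so it cannot be translated to an interior wall at $f_{\wt{\eta}}+(k-1)e$, and the hypothesis $1<k<m$ does not change this. Your fallback computations also fail: Lemma~\ref{lem:crossU1cross} requires a crossing on \emph{both} sides of the diamond, which is unavailable here since all the double crossings sit below $U^{\wt{\eta}}_{k-1}$; and $\psi_r^4 e(\res{i})=(y_{r+1}-y_r)^2 e(\res{i})$ is not killed by \eqref{eq:psisq} when the adjacent residues differ by $1$.

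For comparison, the paper's proof keeps the entire product: it expands only the rightmost double crossing $\psi_{f_{\wt{\eta}}+(k-1)e}^2$ as a difference of dotted strings, then migrates those dots leftwards past the diamond (dot jumps via Lemma~\ref{lem:yjump} together with slides along the diamond's border) and then through every intervening double crossing $\psi_{f_{\wt{\eta}}+je}^2$, $j=k-2,\dotsc,1$, until the dots sit over the leftmost double crossing at position $f_{\wt{\eta}}$. There the dot difference recombines into a double crossing of the form $\psi_{f_{\wt{\eta}}}^2 e(s_{f_{\wt{\eta}}}\domres{\wt{\eta}})$, which vanishes by Lemma~\ref{lem:deg1vanishing} because it abuts the left boundary. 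The global migration down to the leftmost wall is thus the essential content of the argument, not bookkeeping that can be commuted away; any correct write-up must retain all $k$ double crossings until the boundary relation is reached.
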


\begin{proof}
Use Proposition \ref{prop:psicrosses} to rewrite $\psi_{\subqtab{\wt{\eta}_k}{\wt{\eta}}\subqtab{\wt{\eta}_k}{\wt{\eta}}}$ as a product of double transpositions. Expand the rightmost double transposition as a difference of dotted strings. First we show that these dots can `migrate' leftwards until they lie on top of the next pair of transpositions. In the first term, the dot on the left string can jump until it is on the right string above this double transposition. In the second term, the dot on the right string can slide along the southwest border of the diamond, jump left one string and slide until it is in place on the left string above the double transposition.
\begin{align*}
\begin{gathered}
\includegraphics[scale=0.5]{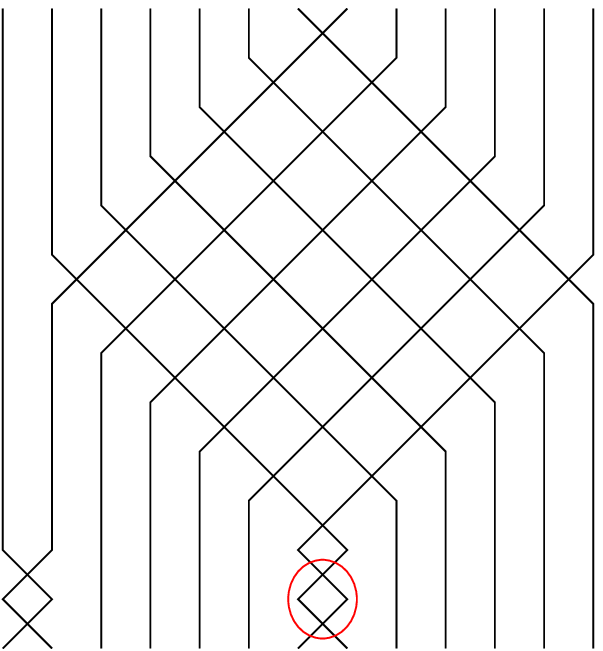}
\end{gathered}
&=
\begin{gathered}
\includegraphics[scale=0.5]{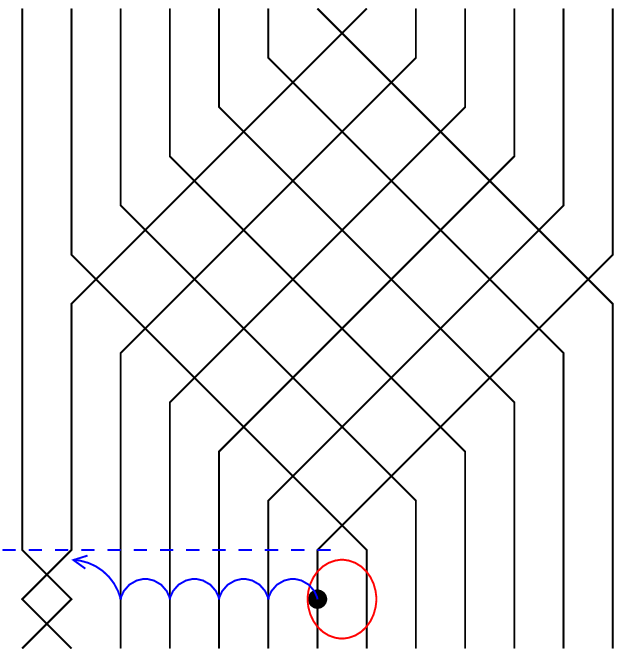}
\end{gathered}
-
\begin{gathered}
\includegraphics[scale=0.5]{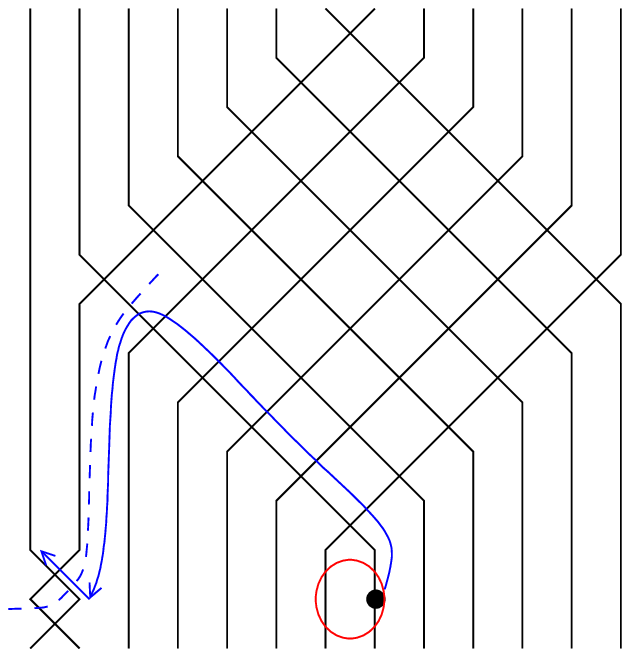}
\end{gathered}\\
&=
\begin{gathered}
\includegraphics[scale=0.5]{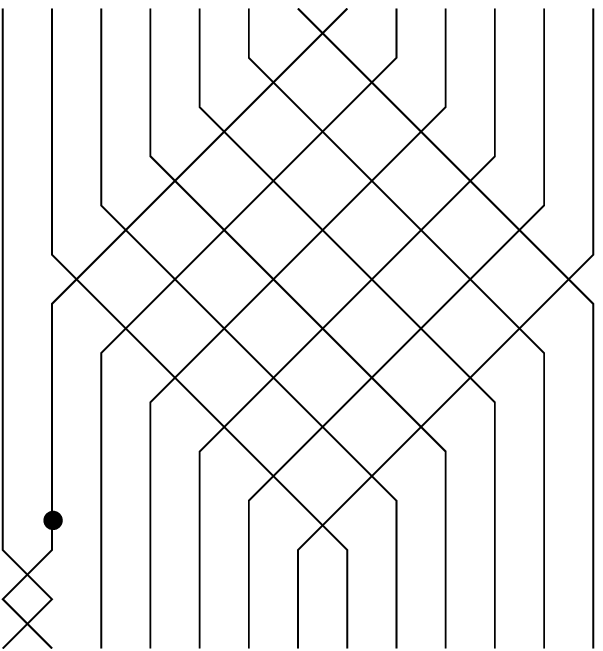}
\end{gathered}
-
\begin{gathered}
\includegraphics[scale=0.5]{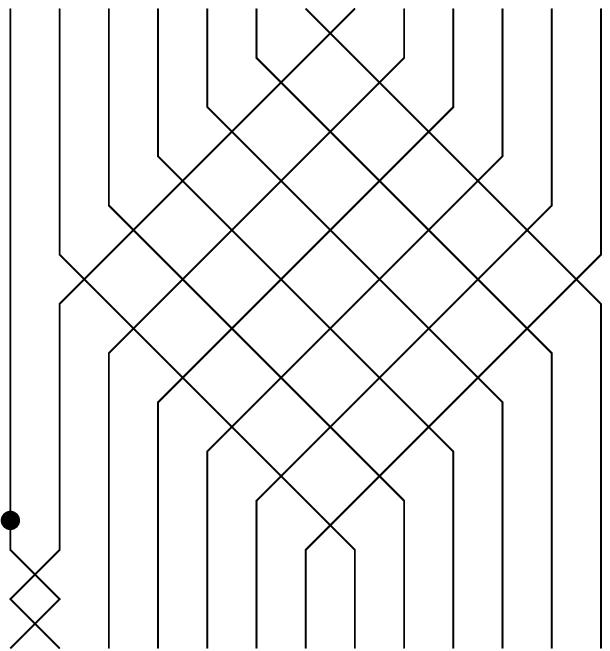}
\end{gathered}
\end{align*}

Next, we show we can continue this migration process leftwards without the diamond. As before, the dot on the left string above the double transposition can jump several strings leftwards until it is on the right string above the next double transposition. For the dot on the right string, we replace the both pairs of transpositions with pairs of maximally sized triangles, as seen in the proof of Proposition \ref{prop:psicrosses}. This dot then slides southwest along its string, jumps one string, and slides northwest until it is in the correct position.
\begin{align*}
\begin{gathered}
\includegraphics[scale=0.45]{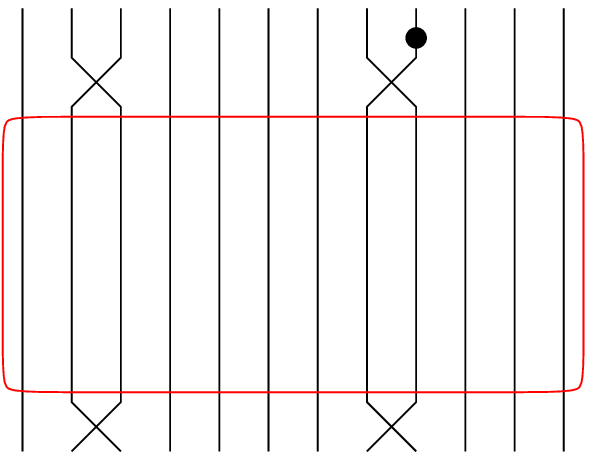}
\end{gathered}
-
\begin{gathered}
\includegraphics[scale=0.45]{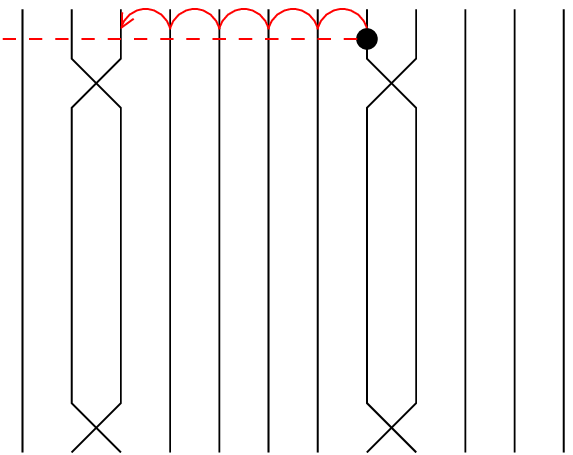}
\end{gathered} 
&=
\begin{gathered}
\includegraphics[scale=0.45]{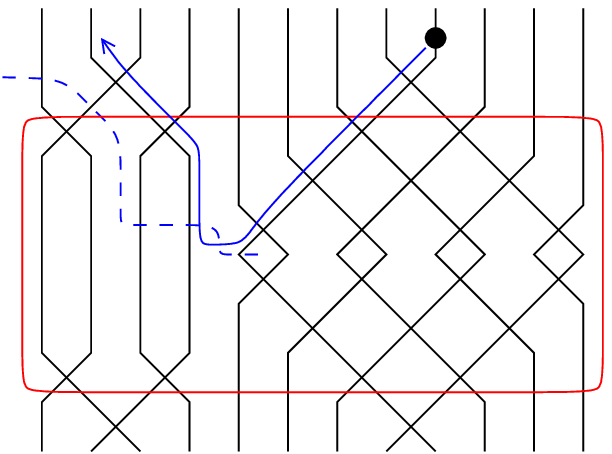}
\end{gathered}
- 
\begin{gathered}
\includegraphics[scale=0.45]{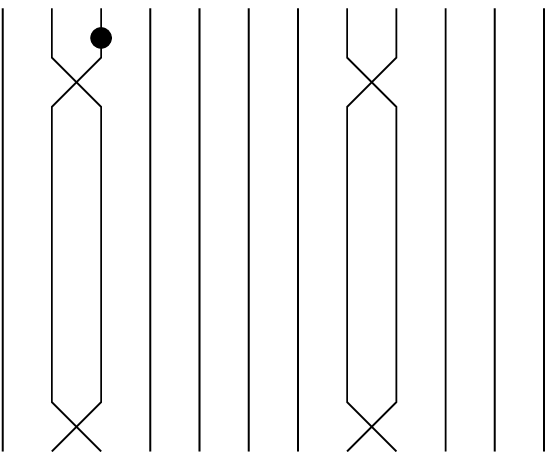}
\end{gathered}\\
&=
\begin{gathered}
\includegraphics[scale=0.45]{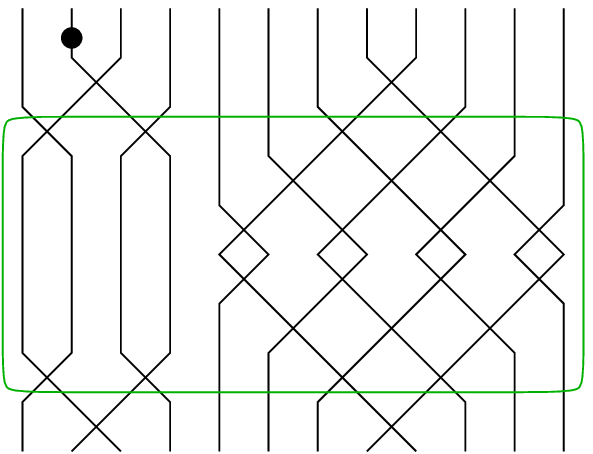}
\end{gathered}
- 
\begin{gathered}
\includegraphics[scale=0.45]{fig/Uoffdb2D.eps}
\end{gathered}\\
&=
\begin{gathered}
\includegraphics[scale=0.45]{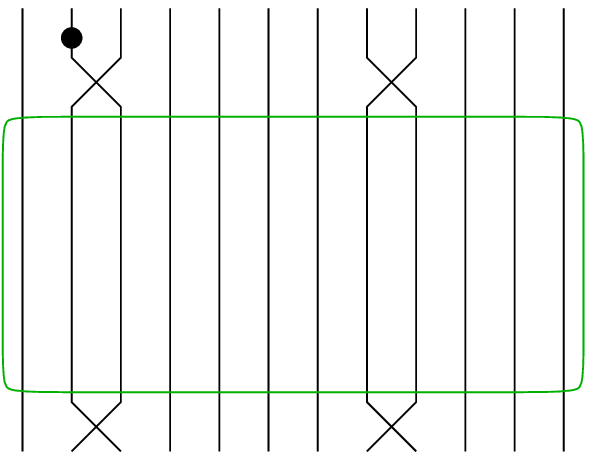}
\end{gathered}
- 
\begin{gathered}
\includegraphics[scale=0.45]{fig/Uoffdb2D.eps}
\end{gathered}
\end{align*}
Note that in both of the figures above we are only drawing a portion of the complete diagram.

Finally we end up with a difference of dotted strings for the leftmost double transposition. But we can replace this difference with another double transposition. Applying Lemma \ref{lem:deg1vanishing} gives the result.
\end{proof}

\subsection{Socle calculation}

We pool together our previous results into one grand calculation to identify the socle of $P(\wt{\eta})$. The heart of the argument is to show that certain products of $\JW^{\wt{\eta}}$ with cellular basis elements do not vanish in $B_n^{\kappa}$. This is potentially extremely difficult, as the number of summands when $\JW^{\wt{\eta}}$ is written in the standard monomial basis grows very quickly. Thankfully many of these monomials end up vanishing in the product. For $r \leq s$ write $U^{\wt{\eta}}_{r,s}=U^{\wt{\eta}}_r U^{\wt{\eta}}_{r+1} \dotsm U^{\wt{\eta}}_s$. First, we identify a non-vanishing monomial in the product.

\begin{thm} \label{thm:U_1k-nonvanishing}
Let $r \leq s$. If
\begin{equation*}
\psi_{\domtab{\wt{\eta}_1} \subqtab{\wt{\eta}_1}{\wt{\eta}}} U^{\wt{\eta}}_{r,s} \psi_{\subqtab{\wt{\eta}_k}{\wt{\eta}} \subqtab{\wt{\eta}_k}{\wt{\eta}}} \JW^{\wt{\eta}} \neq 0
\end{equation*}
then $(r,s)=(1,k)$. In this case, we have
\begin{equation*}
\psi_{\domtab{\wt{\eta}_1} \subqtab{\wt{\eta}_1}{\wt{\eta}}} U^{\wt{\eta}}_{1,k} \psi_{\subqtab{\wt{\eta}_k}{\wt{\eta}} \subqtab{\wt{\eta}_k}{\wt{\eta}}} \JW^{\wt{\eta}}= \pm \psi_{\subqtab{\wt{\eta}_{k+1}}{\wt{\eta}_1} \subqtab{\wt{\eta}_{k+1}}{\wt{\eta}}}  \JW^{\wt{\eta}} \text{.}
\end{equation*}
\end{thm}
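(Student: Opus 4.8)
The plan is to reduce everything, via the factorizations in Proposition~\ref{prop:psicrosses} and Corollary~\ref{cor:psidoublecrosses}, to a manipulation of single crossings $\psi_{f_{\wt{\eta}}+je}$, double crossings $\psi_{f_{\wt{\eta}}+je}^2$ and diamonds $U^{\wt{\eta}}_j$, and then to exploit $U^{\wt{\eta}}_j\JW^{\wt{\eta}}=0$. First I would rewrite $\psi_{\domtab{\wt{\eta}_1}\subqtab{\wt{\eta}_1}{\wt{\eta}}}=x_0\psi_{f_{\wt{\eta}}}e(\domres{\wt{\eta}})$ and $\psi_{\subqtab{\wt{\eta}_k}{\wt{\eta}}\subqtab{\wt{\eta}_k}{\wt{\eta}}}=\psi_{f_{\wt{\eta}}}^2\psi_{f_{\wt{\eta}}+e}^2\dotsm\psi_{f_{\wt{\eta}}+(k-1)e}^2\,e(\domres{\wt{\eta}})$. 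The key geometric observations are that each $\psi_{f_{\wt{\eta}}+je}^2$ acts (by \eqref{eq:psisq}) as the residue-preserving element $\pm(y_{f_{\wt{\eta}}+je+1}-y_{f_{\wt{\eta}}+je})$ and so commutes with $U^{\wt{\eta}}_l$ whenever $|j-l|\geq 2$, and that $\psi_{f_{\wt{\eta}}}$ commutes with $U^{\wt{\eta}}_l$ for $l\geq 2$. Two vanishings are then immediate: if $s\geq k+1$, the rightmost diamond $U^{\wt{\eta}}_s$ commutes past every double-crossing factor of $\psi_{\subqtab{\wt{\eta}_k}{\wt{\eta}}\subqtab{\wt{\eta}_k}{\wt{\eta}}}$ to meet $\JW^{\wt{\eta}}$, killing the product; and if $s=k-1$, then $U^{\wt{\eta}}_{k-1}$ is the rightmost factor of $U^{\wt{\eta}}_{r,s}$, already adjacent to $\psi_{\subqtab{\wt{\eta}_k}{\wt{\eta}}\subqtab{\wt{\eta}_k}{\wt{\eta}}}$, so the product vanishes directly by Lemma~\ref{lem:Ucenterdoublecross}.

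For the nonvanishing case $(r,s)=(1,k)$, commuting double crossings past diamonds rewrites $U^{\wt{\eta}}_1\dotsm U^{\wt{\eta}}_k\cdot\psi_{f_{\wt{\eta}}}^2\dotsm\psi_{f_{\wt{\eta}}+(k-1)e}^2=(U^{\wt{\eta}}_1\psi_{f_{\wt{\eta}}}^2)(U^{\wt{\eta}}_2\psi_{f_{\wt{\eta}}+e}^2)\dotsm(U^{\wt{\eta}}_k\psi_{f_{\wt{\eta}}+(k-1)e}^2)$. Now telescope from the left. Lemma~\ref{lem:crossU1cross} (in the form $\psi_{f_{\wt{\eta}}}U^{\wt{\eta}}_1\psi_{f_{\wt{\eta}}}=\pm\psi_{f_{\wt{\eta}}+e}^2e(s_{f_{\wt{\eta}}}\domres{\wt{\eta}})$) gives $\psi_{f_{\wt{\eta}}}\cdot U^{\wt{\eta}}_1\psi_{f_{\wt{\eta}}}^2=\pm\psi_{f_{\wt{\eta}}}\psi_{f_{\wt{\eta}}+e}^2e(\domres{\wt{\eta}})$, with the spare crossing $\psi_{f_{\wt{\eta}}}$ staying on the left next to $x_0$; then for $j=2,\dots,k$ the same lemma yields $\psi_{f_{\wt{\eta}}+(j-1)e}^2\,U^{\wt{\eta}}_j\,\psi_{f_{\wt{\eta}}+(j-1)e}^2=\pm\psi_{f_{\wt{\eta}}+je}^2\psi_{f_{\wt{\eta}}+(j-1)e}^2e(\domres{\wt{\eta}})$, raising the index of the surviving double crossing by one while the used-up ones commute out of the way. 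The upshot is
\[
\psi_{\domtab{\wt{\eta}_1}\subqtab{\wt{\eta}_1}{\wt{\eta}}}\,U^{\wt{\eta}}_{1,k}\,\psi_{\subqtab{\wt{\eta}_k}{\wt{\eta}}\subqtab{\wt{\eta}_k}{\wt{\eta}}}=\pm x_0\psi_{f_{\wt{\eta}}}\,\psi_{f_{\wt{\eta}}+e}^2\psi_{f_{\wt{\eta}}+2e}^2\dotsm\psi_{f_{\wt{\eta}}+ke}^2\,e(\domres{\wt{\eta}}).
\]
Finally I would identify the right-hand side with $\pm\psi_{\subqtab{\wt{\eta}_{k+1}}{\wt{\eta}_1}\subqtab{\wt{\eta}_{k+1}}{\wt{\eta}}}$ by writing the latter as $\overline{\psi_{\domtab{\wt{\eta}_{k+1}}\subqtab{\wt{\eta}_{k+1}}{\wt{\eta}_1}}}\,\psi_{\domtab{\wt{\eta}_{k+1}}\subqtab{\wt{\eta}_{k+1}}{\wt{\eta}}}$, substituting Proposition~\ref{prop:psicrosses} for both factors, pulling $x_0$ to the far left, and collapsing the resulting $\overline{x_r}x_r$ using property~\ref{item:xrpropslast}; multiplying by $\JW^{\wt{\eta}}$ then gives the claimed formula, and in particular shows the product is nonzero (it is a cellular basis element times $\JW^{\wt{\eta}}$). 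Alternatively, both the nonvanishing and the identification can be obtained at one stroke in the style of Lemma~\ref{lem:possible-socle}: the explicit product above has top residue $\domres{\wt{\eta}_1}$, bottom residue $\domres{\wt{\eta}}$ and degree $2k+1$, and one checks via Theorems~\ref{thm:degtab} and \ref{thm:grdecompblob-singular} and Proposition~\ref{prop:diamondfactor} that this graded isotypic piece of $P(\wt{\eta})$ is one-dimensional, spanned by $\psi_{\subqtab{\wt{\eta}_{k+1}}{\wt{\eta}_1}\subqtab{\wt{\eta}_{k+1}}{\wt{\eta}}}\JW^{\wt{\eta}}$, so it only remains to observe the left-hand side is nonzero.

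The remaining cases --- $s\leq k-2$, and $s=k$ with $r\geq 2$ --- I would dispatch by induction on $k$. If $s\leq k-2$ the top double crossing $\psi_{f_{\wt{\eta}}+(k-1)e}^2$ commutes with all of $U^{\wt{\eta}}_{r,s}$ (and with $\psi_{\domtab{\wt{\eta}_1}\subqtab{\wt{\eta}_1}{\wt{\eta}}}$), peeling off to leave $\psi_{\domtab{\wt{\eta}_1}\subqtab{\wt{\eta}_1}{\wt{\eta}}}\,U^{\wt{\eta}}_{r,s}\,\psi_{\subqtab{\wt{\eta}_{k-1}}{\wt{\eta}}\subqtab{\wt{\eta}_{k-1}}{\wt{\eta}}}\JW^{\wt{\eta}}$, with $s\leq(k-1)-1$, to which the inductive hypothesis applies (the base case $s=k-1$ being handled above). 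If $s=k$ and $r\geq 2$, after peeling off the double crossings commuting with $U^{\wt{\eta}}_{r,k}$ one applies Lemma~\ref{lem:Uoffcenterdoublecross} to $U^{\wt{\eta}}_r\psi_{f_{\wt{\eta}}+(r-1)e}^2$, trading it for $U^{\wt{\eta}}_rU^{\wt{\eta}}_{r-1}\psi_{f_{\wt{\eta}}+re}^2$ and thereby effectively decreasing $r$, again reaching a configuration covered by induction (the would-be endpoint $r=1$, $s=k$ being exactly the nonvanishing case, which does not occur here); as in the proof of Lemma~\ref{lem:Ucenterdoublecross}, Lemma~\ref{lem:deg1vanishing} is invoked to kill the degenerate terms that appear along the way.

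The hardest part, and the main source of risk, is the telescoping computation in the $(1,k)$ case: keeping straight which double crossings commute with which diamonds, carrying the idempotents correctly through Lemma~\ref{lem:crossU1cross}, tracking signs, and then matching the final expression to the cellular basis element $\psi_{\subqtab{\wt{\eta}_{k+1}}{\wt{\eta}_1}\subqtab{\wt{\eta}_{k+1}}{\wt{\eta}}}$. For this reason I would favour the dimension-counting variant (the Lemma~\ref{lem:possible-socle} route), which reduces the entire $(1,k)$ case to the single assertion that the explicit product is nonzero and sidesteps the sign and basis-matching bookkeeping. A secondary point requiring care is ensuring that the induction in the last paragraph is genuinely strictly decreasing in each leftover case.
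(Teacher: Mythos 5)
Your core computation for $(r,s)=(1,k)$ is correct and is a mild variant of the paper's argument: you telescope upwards from $U^{\wt{\eta}}_1$ using only Lemma~\ref{lem:crossU1cross}, whereas the paper works downwards from $U^{\wt{\eta}}_k$ using Lemma~\ref{lem:Uoffcenterdoublecross} together with the Temperley--Lieb relation; both land on $\pm\psi_{f_{\wt{\eta}}+e}^2\dotsm\psi_{f_{\wt{\eta}}+ke}^2\,\psi_{\domtab{\wt{\eta}_1}\subqtab{\wt{\eta}_1}{\wt{\eta}}}$, which is then identified with $\pm\psi_{\subqtab{\wt{\eta}_{k+1}}{\wt{\eta}_1}\subqtab{\wt{\eta}_{k+1}}{\wt{\eta}}}$ (you via Proposition~\ref{prop:psicrosses} and the properties of the $x_r$, the paper via Corollaries~\ref{cor:transitivityofpsi} and~\ref{cor:psidoublecrosses}); the cases $s\geq k+1$ and $s=k-1$ also match the paper. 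But two of your remaining cases have genuine gaps. First, for $s\leq k-2$ you peel off $\psi_{f_{\wt{\eta}}+(k-1)e}^2$ by commuting it past all of $U^{\wt{\eta}}_{r,s}$; this is false when $s=k-2$, because $U^{\wt{\eta}}_{k-2}$ contains the crossing $\psi_{f_{\wt{\eta}}+(k-1)e-1}$ and hence shares the string $f_{\wt{\eta}}+(k-1)e$ with the double crossing, so far-commutation does not apply (and the elements do not commute in general), and your induction always terminates in exactly this step. No induction is needed: since the double crossings commute with one another, $\psi_{\subqtab{\wt{\eta}_k}{\wt{\eta}}\subqtab{\wt{\eta}_k}{\wt{\eta}}}=\psi_{\subqtab{\wt{\eta}_{s+1}}{\wt{\eta}}\subqtab{\wt{\eta}_{s+1}}{\wt{\eta}}}\,\psi_{f_{\wt{\eta}}+(s+1)e}^2\dotsm\psi_{f_{\wt{\eta}}+(k-1)e}^2$, and Lemma~\ref{lem:Ucenterdoublecross} kills $U^{\wt{\eta}}_s\psi_{\subqtab{\wt{\eta}_{s+1}}{\wt{\eta}}\subqtab{\wt{\eta}_{s+1}}{\wt{\eta}}}$ outright; this is how the paper disposes of all $s\leq k-1$ at once. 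Second, your treatment of $s=k$, $r\geq 2$ by trading $U^{\wt{\eta}}_r\psi^2_{f_{\wt{\eta}}+(r-1)e}$ for $\pm U^{\wt{\eta}}_rU^{\wt{\eta}}_{r-1}\psi^2_{f_{\wt{\eta}}+re}$ is too vague to verify: after one trade the diamond monomial is no longer of the form $U^{\wt{\eta}}_{r',s'}$, and it is unclear what known-zero configuration the iteration reaches. The vanishing is immediate from your own commutation observation: precisely because $r\geq 2$, $\psi_{f_{\wt{\eta}}}^2$ commutes past every $U^{\wt{\eta}}_l$ with $l\geq 2$, and then $\psi_{\domtab{\wt{\eta}_1}\subqtab{\wt{\eta}_1}{\wt{\eta}}}\psi_{f_{\wt{\eta}}}^2=x_0\psi_{f_{\wt{\eta}}}^2e(s_{f_{\wt{\eta}}}\domres{\wt{\eta}})\psi_{f_{\wt{\eta}}}=0$ by Lemma~\ref{lem:deg1vanishing} (this is the paper's case $1<r\leq s$, done there by commuting the diamonds the other way using Proposition~\ref{prop:psicrosses}).

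The alternative you say you would favour --- one-dimensionality of the relevant graded piece plus ``it only remains to observe the left-hand side is nonzero'' --- cannot replace the direct computation. There is no independent way to see that the left-hand side is nonzero: in the paper that nonvanishing is a \emph{consequence} of this identity, obtained only in Theorem~\ref{thm:endwallprojsimplesocle} from $\psi_{\subqtab{\wt{\eta}_{k+1}}{\wt{\eta}_1}\subqtab{\wt{\eta}_{k+1}}{\wt{\eta}}}\JW^{\wt{\eta}}\notin B_n^{\kappa,\domgreater\wt{\eta}_{k+1}}$, after the remaining monomials of $\JW^{\wt{\eta}}$ have been pushed into $B_n^{\kappa,\domgreater\wt{\eta}_{k+1}}$ by Theorem~\ref{thm:othermonomials}; using it here would be circular. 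Moreover a dimension count would only give the identity up to an unspecified nonzero scalar, not $\pm 1$. So the telescoping computation you describe first is not optional bookkeeping; it is the proof, and, with the two case fixes above, it is correct.
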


\begin{proof}
When $r>1$, we have
\begin{equation}
\psi_{\domtab{\wt{\eta}_1} \subqtab{\wt{\eta}_1}{\wt{\eta}}} U^{\wt{\eta}}_r=U^{\wt{\eta}_1}_{r-1} \psi_{\domtab{\wt{\eta}_1} \subqtab{\wt{\eta}_1}{\wt{\eta}}} \label{eq:Udeg1commute}
\end{equation}
by Proposition \ref{prop:psicrosses}. Similarly when $r>k$, we have
\begin{equation}
U^{\wt{\eta}}_r  \psi_{\subqtab{\wt{\eta}_k}{\wt{\eta}} \subqtab{\wt{\eta}_k}{\wt{\eta}}}=\psi_{\subqtab{\wt{\eta}_k}{\wt{\eta}} \subqtab{\wt{\eta}_k}{\wt{\eta}}} U^{\wt{\eta}}_r \text{.} \label{eq:Upsicommute}
\end{equation}

When $1<r \leq s$ we have
\begin{align*}
\psi_{\domtab{\wt{\eta}_1} \subqtab{\wt{\eta}_1}{\wt{\eta}}} U^{\wt{\eta}}_{r,s} \psi_{\subqtab{\wt{\eta}_k}{\wt{\eta}} \subqtab{\wt{\eta}_k}{\wt{\eta}}} & =U^{\wt{\eta}_1}_{r-1,s-1} \psi_{\domtab{\wt{\eta}_1} \subqtab{\wt{\eta}_1}{\wt{\eta}}} \psi_{\subqtab{\wt{\eta}_k}{\wt{\eta}} \subqtab{\wt{\eta}_k}{\wt{\eta}}} \\
& =U^{\wt{\eta}_1}_{r-1,s-1} \psi_{\domtab{\wt{\eta}_1} \subqtab{\wt{\eta}_1}{\wt{\eta}}} \psi_{f_{\wt{\eta}}+e}^2 \psi_{f_{\wt{\eta}}+2e}^2 \dotsm \psi_{f_{\wt{\eta}}+(k-1)e}^2 \\
& =U^{\wt{\eta}_1}_{r-1,s-1} \psi_{\domtab{\wt{\eta}_1} \subqtab{\wt{\eta}_1}{\wt{\eta}}} \psi_{\subqtab{\wt{\eta}_1}{\wt{\eta}} \subqtab{\wt{\eta}_1}{\wt{\eta}}} \psi_{f_{\wt{\eta}}+2e}^2 \dotsm \psi_{f_{\wt{\eta}}+(k-1)e}^2 \\
& =0
\end{align*}
using \eqref{eq:Udeg1commute}, Corollary \ref{cor:psidoublecrosses}, and Lemma \ref{lem:deg1vanishing}. Similarly when $r \leq s \leq k-1$ this expression vanishes by Corollary \ref{cor:psidoublecrosses} and Lemma \ref{lem:Ucenterdoublecross}. Finally
\begin{equation*}
U^{\wt{\eta}}_{r,s} \psi_{\subqtab{\wt{\eta}_k}{\wt{\eta}} \subqtab{\wt{\eta}_k}{\wt{\eta}}} \JW^{\wt{\eta}}=U^{\wt{\eta}}_{r,s-1} \psi_{\subqtab{\wt{\eta}_k}{\wt{\eta}} \subqtab{\wt{\eta}_k}{\wt{\eta}}} U^{\wt{\eta}}_{s} \JW^{\wt{\eta}}=0
\end{equation*}
if $s>k$ by \eqref{eq:Upsicommute} and the defining property of $\JW^{\wt{\eta}}$. Putting this all together, if
\begin{equation*}
\psi_{\domtab{\wt{\eta}_1} \subqtab{\wt{\eta}_1}{\wt{\eta}}} U^{\wt{\eta}}_{r,s} \psi_{\subqtab{\wt{\eta}_k}{\wt{\eta}} \subqtab{\wt{\eta}_k}{\wt{\eta}}} \JW^{\wt{\eta}} \neq 0
\end{equation*}
then $r=1$ and $s=k$.

Using Corollary \ref{cor:psidoublecrosses} and Lemma \ref{lem:Uoffcenterdoublecross}, we observe that
\begin{align*}
\psi_{\domtab{\wt{\eta}_1} \subqtab{\wt{\eta}_1}{\wt{\eta}}} U^{\wt{\eta}}_{1,k} \psi_{\subqtab{\wt{\eta}_k}{\wt{\eta}} \subqtab{\wt{\eta}_k}{\wt{\eta}}} \JW^{\wt{\eta}} & =\psi_{\domtab{\wt{\eta}_1} \subqtab{\wt{\eta}_1}{\wt{\eta}}} U^{\wt{\eta}}_{1,k-2} U^{\wt{\eta}}_{k-1} U^{\wt{\eta}}_k \psi_{f_{\wt{\eta}}+(k-1)e}^2 \psi_{f_{\wt{\eta}}+(k-2)e}^2 \dotsm \psi_{f_{\wt{\eta}}+e}^2 \JW^{\wt{\eta}} \\
& = \pm \psi_{\domtab{\wt{\eta}_1} \subqtab{\wt{\eta}_1}{\wt{\eta}}} U^{\wt{\eta}}_{1,k-2} U^{\wt{\eta}}_{k-1} U^{\wt{\eta}}_k U^{\wt{\eta}}_{k-1} \psi_{f_{\wt{\eta}}+ke}^2 \psi_{f_{\wt{\eta}}+(k-2)e}^2 \psi_{f_{\wt{\eta}}+(k-3)e}^2 \dotsm \psi_{f_{\wt{\eta}}+e}^2 \JW^{\wt{\eta}} \\
& = \pm \psi_{\domtab{\wt{\eta}_1} \subqtab{\wt{\eta}_1}{\wt{\eta}}} U^{\wt{\eta}}_{1,k-1} \psi_{f_{\wt{\eta}}+(k-2)e}^2 \psi_{f_{\wt{\eta}}+(k-3)e}^2 \dotsm \psi_{f_{\wt{\eta}}+e}^2 \psi_{f_{\wt{\eta}}+ke}^2 \JW^{\wt{\eta}} \text{.}
\end{align*}
Apply this several times to obtain
\begin{equation*}
\psi_{\domtab{\wt{\eta}_1} \subqtab{\wt{\eta}_1}{\wt{\eta}}} U^{\wt{\eta}}_{1,k} \psi_{\subqtab{\wt{\eta}_k}{\wt{\eta}} \subqtab{\wt{\eta}_k}{\wt{\eta}}} \JW^{\wt{\eta}}= \pm \psi_{\domtab{\wt{\eta}_1} \subqtab{\wt{\eta}_1}{\wt{\eta}}} U^{\wt{\eta}}_1 \psi_{f_{\wt{\eta}}}^2 \psi_{f_{\wt{\eta}}+2e}^2 \psi_{f_{\wt{\eta}}+3e}^2 \dotsm \psi_{f_{\wt{\eta}}+ke}^2 \JW^{\wt{\eta}} \text{.}
\end{equation*}
Then by Lemma \ref{lem:crossU1cross} and Corollaries \ref{cor:transitivityofpsi} and \ref{cor:psidoublecrosses} this is equal to 
\begin{align*}
\pm \psi_{\domtab{\wt{\eta}_1} \subqtab{\wt{\eta}_1}{\wt{\eta}}} \psi_{f_{\wt{\eta}}+e}^2 \psi_{f_{\wt{\eta}}+2e}^2 \psi_{f_{\wt{\eta}}+3e}^2 \dotsm \psi_{f_{\wt{\eta}}+ke}^2 \JW^{\wt{\eta}}
& = \pm \psi_{f_{\wt{\eta}}+e}^2 \psi_{f_{\wt{\eta}}+2e}^2 \psi_{f_{\wt{\eta}}+3e}^2 \dotsm \psi_{f_{\wt{\eta}}+ke}^2 \psi_{\domtab{\wt{\eta}_1} \subqtab{\wt{\eta}_1}{\wt{\eta}}} \JW^{\wt{\eta}} \\
& = \pm \psi_{\subqtab{\wt{\eta}_{k+1}}{\wt{\eta}_1} \domtab{\wt{\eta}_1}} \psi_{\domtab{\wt{\eta}_1} \subqtab{\wt{\eta}_{k+1}}{\wt{\eta}_1}} \psi_{\domtab{\wt{\eta}_1} \subqtab{\wt{\eta}_1}{\wt{\eta}}} \JW^{\wt{\eta}} \\
& = \pm \psi_{\subqtab{\wt{\eta}_{k+1}}{\wt{\eta}_1} \subqtab{\wt{\eta}_{k+1}}{\wt{\eta}}}  \JW^{\wt{\eta}} \text{.}
\end{align*}
\end{proof}

Next, we show that other monomials wind up in an ideal of $B_n^{\kappa}$.

\begin{thm} \label{thm:othermonomials}
Let $U$ be a monomial in the generators of the Temperley--Lieb subalgebra. If $U \neq U^{\wt{\eta}}_{1,k}$ then 
\begin{equation*}
\psi_{\domtab{\wt{\eta}_1} \subqtab{\wt{\eta}_1}{\wt{\eta}}} U \psi_{\subqtab{\wt{\eta}_k}{\wt{\eta}} \subqtab{\wt{\eta}_k}{\wt{\eta}}} \JW^{\wt{\eta}} \in B_n^{\kappa, \domgreater \wt{\eta}_{k+1}} \text{.}
\end{equation*}
\end{thm}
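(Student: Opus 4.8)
The plan is to leverage the classification of non-vanishing products already obtained in Theorem~\ref{thm:U_1k-nonvanishing}. That result shows that for a \emph{connected} monomial of the form $U^{\wt{\eta}}_{r,s}$ with $U \neq U^{\wt{\eta}}_{1,k}$, the product $\psi_{\domtab{\wt{\eta}_1} \subqtab{\wt{\eta}_1}{\wt{\eta}}} U \psi_{\subqtab{\wt{\eta}_k}{\wt{\eta}} \subqtab{\wt{\eta}_k}{\wt{\eta}}} \JW^{\wt{\eta}}$ is actually zero, which certainly lies in $B_n^{\kappa, \domgreater \wt{\eta}_{k+1}}$. The work here is therefore to handle a \emph{general} monomial $U$ in the Temperley--Lieb generators, which after applying the relations from the Temperley--Lieb presentation (Theorem on the diamond subalgebra) we may assume is reduced, i.e.\ a product of the form $U = U^{\wt{\eta}}_{a_1,b_1} U^{\wt{\eta}}_{a_2,b_2} \dotsm$ built from descending/ascending runs. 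First I would reduce, using the commutation relations $U^{\wt{\eta}}_i U^{\wt{\eta}}_j = U^{\wt{\eta}}_j U^{\wt{\eta}}_i$ for $|i-j|>1$ and $U^{\wt{\eta}}_i U^{\wt{\eta}}_j U^{\wt{\eta}}_i = U^{\wt{\eta}}_i$, to a normal form; then the key observation is that the only monomial whose `left boundary' reaches index $1$ and whose `right boundary' reaches index $k$ in the right way is $U^{\wt{\eta}}_{1,k}$ itself.

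The concrete strategy: push diamonds to the left using \eqref{eq:Udeg1commute}-type relations from Proposition~\ref{prop:psicrosses} (so $\psi_{\domtab{\wt{\eta}_1} \subqtab{\wt{\eta}_1}{\wt{\eta}}} U^{\wt{\eta}}_r = U^{\wt{\eta}_1}_{r-1} \psi_{\domtab{\wt{\eta}_1} \subqtab{\wt{\eta}_1}{\wt{\eta}}}$ for $r>1$), and push diamonds to the right past $\psi_{\subqtab{\wt{\eta}_k}{\wt{\eta}} \subqtab{\wt{\eta}_k}{\wt{\eta}}}$ using \eqref{eq:Upsicommute} for $r>k$ and Corollary~\ref{cor:psidoublecrosses}. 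If $U$ contains any generator $U^{\wt{\eta}}_r$ with $r>k$ that can be commuted to the extreme right, it gets absorbed by $\JW^{\wt{\eta}}$ and the product vanishes. If $U$ involves only generators with index $\leq k-1$, then after rewriting $\psi_{\subqtab{\wt{\eta}_k}{\wt{\eta}} \subqtab{\wt{\eta}_k}{\wt{\eta}}}$ as a product of squared transpositions (Corollary~\ref{cor:psidoublecrosses}) we can apply Lemma~\ref{lem:Ucenterdoublecross} or Lemma~\ref{lem:deg1vanishing} to kill the product. The remaining case is when $U$ is `close to' $U^{\wt{\eta}}_{1,k}$ but not equal: here I expect the output is a cellular basis element $\psi_{\subqtab{\wt{\eta}_{l}}{\wt{\eta}_1} \subqtab{\wt{\eta}_l}{\wt{\eta}}}\JW^{\wt{\eta}}$ (or a similar expression) for some $l > k+1$, and since $\wt{\eta}_l \domgreater \wt{\eta}_{k+1}$ when $l > k+1$, such an element lies in $B_n^{\kappa, \domgreater \wt{\eta}_{k+1}}$ by definition. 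The degree bookkeeping forces $l \geq k+1$, and equality $l = k+1$ happens exactly for $U = U^{\wt{\eta}}_{1,k}$.

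I would organize the argument by induction on the number of distinct Temperley--Lieb generators appearing in $U$, peeling off the leftmost run $U^{\wt{\eta}}_{a,b}$ and commuting it through $\psi_{\domtab{\wt{\eta}_1} \subqtab{\wt{\eta}_1}{\wt{\eta}}}$ (which shifts it down by one and possibly truncates it if $a=1$), then applying the inductive hypothesis to the remainder acting on $\psi_{\subqtab{\wt{\eta}_k}{\wt{\eta}} \subqtab{\wt{\eta}_k}{\wt{\eta}}} \JW^{\wt{\eta}}$. The degree constraint is the crucial accounting tool: the total degree of $\psi_{\domtab{\wt{\eta}_1} \subqtab{\wt{\eta}_1}{\wt{\eta}}}$ is $1$, that of $\psi_{\subqtab{\wt{\eta}_k}{\wt{\eta}} \subqtab{\wt{\eta}_k}{\wt{\eta}}}$ is $2k$, each diamond $U^{\wt{\eta}}_r$ has degree $0$, and $\JW^{\wt{\eta}}$ has degree $0$, so the total degree of the product is $2k+1$; comparing with the degree of a cellular basis element $\psi_{\subqtab{\wt{\eta}_{l+1}}{\wt{\eta}_1}\subqtab{\wt{\eta}_{l+1}}{\wt{\eta}}}\JW^{\wt{\eta}}$ via Lemma~\ref{lem:subqtab} pins down the shape of any non-vanishing output.

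The main obstacle I anticipate is the combinatorial bookkeeping for an arbitrary (not necessarily `staircase') monomial $U$: the Temperley--Lieb relations can create and destroy generators in complicated ways, so getting a clean normal form and then tracking exactly how far left and right the non-commuting parts reach --- and verifying that anything other than $U^{\wt{\eta}}_{1,k}$ either hits $\JW^{\wt{\eta}}$, hits a vanishing double-transposition configuration, or produces an element indexed by a strictly more dominant weight --- will require a careful case analysis. In particular, one must rule out `hybrid' monomials that partially overlap the range $[1,k]$ but in the wrong pattern; the key that makes this tractable is that Proposition~\ref{prop:diamondfactor} and Lemmas~\ref{lem:crossU1cross}--\ref{lem:Ucenterdoublecross} together cover all the ways a diamond can interact with the relevant cellular basis elements, leaving only finitely many genuinely different configurations to check.
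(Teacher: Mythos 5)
Your overall route coincides with the paper's: put $U$ into the standard Temperley--Lieb normal form $U^{\wt{\eta}}_{r_1,s_1}\dotsm U^{\wt{\eta}}_{r_p,s_p}$ with $r_1>\dotsb>r_p$ and $s_1>\dotsb>s_p$, commute all but the last run past $\psi_{\domtab{\wt{\eta}_1}\subqtab{\wt{\eta}_1}{\wt{\eta}}}$ via \eqref{eq:Udeg1commute}, use Theorem~\ref{thm:U_1k-nonvanishing} to force the last run to be exactly $U^{\wt{\eta}}_{1,k}$, and absorb into $\JW^{\wt{\eta}}$ any generator of index $>k$ that commutes to the far right. The problem is that the one case which actually produces a non-zero element --- and which is the whole point of the theorem, since its conclusion is membership in $B_n^{\kappa,\domgreater\wt{\eta}_{k+1}}$ rather than vanishing --- is exactly the case your plan leaves vague. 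After the reductions, the surviving monomials are those whose penultimate run ends at $s_{p-1}=k+1$ (the paper rules out $s_{p-1}>k+1$ by another application of Theorem~\ref{thm:U_1k-nonvanishing} together with \eqref{eq:Upsicommute}); here the extra generator $U^{\wt{\eta}}_{k+1}$ is adjacent to $U^{\wt{\eta}}_{1,k}$, so it can neither be commuted to the right and killed by $\JW^{\wt{\eta}}$ nor eliminated, and the product reduces to $\pm U^{\wt{\eta}_1}_k\,\psi_{\subqtab{\wt{\eta}_{k+1}}{\wt{\eta}_1}\subqtab{\wt{\eta}_{k+1}}{\wt{\eta}}}\JW^{\wt{\eta}}$, i.e.\ a degree-zero diamond times the very element obtained in the $U=U^{\wt{\eta}}_{1,k}$ case.

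At this point your proposed mechanism --- ``degree bookkeeping forces $l>k+1$'' --- does not work: since diamonds have degree $0$, this element has the same degree $2k+1$ as $\psi_{\subqtab{\wt{\eta}_{k+1}}{\wt{\eta}_1}\subqtab{\wt{\eta}_{k+1}}{\wt{\eta}}}\JW^{\wt{\eta}}$ itself, so the grading cannot exclude a component supported at $\wt{\eta}_{k+1}$ (and if such a component survived, the socle argument in Theorem~\ref{thm:endwallprojsimplesocle} would collapse). The missing ingredient, which the paper supplies, is the factorization from the proof of Proposition~\ref{prop:psicrosses}: $U^{\wt{\eta}_1}_k=z_{k+1}x_{k+1}\psi_{f_{\wt{\eta}}+(k+1)e}$ for some $z_{k+1}\in B_n^{\kappa}$. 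The extra crossing $\psi_{f_{\wt{\eta}}+(k+1)e}$, combined with Corollaries~\ref{cor:transitivityofpsi} and~\ref{cor:psidoublecrosses}, shows the product lies in $B_n^{\kappa}\psi_{\domtab{\wt{\eta}_{k+2}}\subqtab{\wt{\eta}_{k+2}}{\wt{\eta}}}\JW^{\wt{\eta}}\subseteq B_n^{\kappa,\domgreater\wt{\eta}_{k+1}}$. Without this step (or an equivalent explicit argument), your ``remaining case'' is an assertion of exactly what needs to be proved; the rest of your outline is sound and matches the paper.
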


\begin{proof}
Every monomial in the generators of the Temperley--Lieb subalgebra can be written in the form $U^{\wt{\eta}}_{r_1,s_1} U^{\wt{\eta}}_{r_2,s_2} \dotsm U^{\wt{\eta}}_{r_p,s_p}$ for some strictly decreasing sequences $r_1 > r_2 > \dotsb > r_p$ and $s_1 > s_2 > \dotsb > s_p$ of some length $p \geq 0$ with $r_j \leq s_j$ for all $j$. Suppose $U \neq U_{1,k}$ is a monomial of this form such that 
\begin{equation*}
\psi_{\domtab{\wt{\eta}_1} \subqtab{\wt{\eta}_1}{\wt{\eta}}} U \psi_{\subqtab{\wt{\eta}_k}{\wt{\eta}} \subqtab{\wt{\eta}_k}{\wt{\eta}}} \JW^{\wt{\eta}} \neq 0 \text{.}
\end{equation*}
First of all we must have $p \geq 1$ by Lemma \ref{lem:deg1vanishing}. 
%
Since $r_j>r_p \geq 1$ and $s_j>s_p\geq r_p \geq 1$ for all $1 \leq j<p$, we can apply \eqref{eq:Udeg1commute} to the expression above:
\begin{multline*}
\psi_{\domtab{\wt{\eta}_1} \subqtab{\wt{\eta}_1}{\wt{\eta}}} U^{\wt{\eta}}_{r_1,s_1} U^{\wt{\eta}}_{r_2,s_2}\dotsm U^{\wt{\eta}}_{r_{p-1},s_{p-1}} U^{\wt{\eta}}_{r_p,s_p} \psi_{\subqtab{\wt{\eta}_k}{\wt{\eta}} \subqtab{\wt{\eta}_k}{\wt{\eta}}} \JW^{\wt{\eta}}= \\
=U^{\wt{\eta}_1}_{r_1-1,s_1-1} U^{\wt{\eta}_1}_{r_2-1,s_2-1} \dotsm U^{\wt{\eta}_1}_{r_{p-1}-1,s_{p-1}-1}\psi_{\domtab{\wt{\eta}_1} \subqtab{\wt{\eta}_1}{\wt{\eta}}} U^{\wt{\eta}}_{r_p,s_p} \psi_{\subqtab{\wt{\eta}_k}{\wt{\eta}} \subqtab{\wt{\eta}_k}{\wt{\eta}}} \JW^{\wt{\eta}} \text{.}
\end{multline*}
Theorem~\ref{thm:U_1k-nonvanishing} then implies that $r_p=1$ and $s_p=k$. Assuming $U \neq U_{1,k}$, we must have $p>1$. 

Now suppose $s_{p-1}>k+1$. Applying Theorem~\ref{thm:U_1k-nonvanishing} again as well as \eqref{eq:Udeg1commute} and \eqref{eq:Upsicommute}, we observe that
\begin{align*}
U^{\wt{\eta}_1}_{s_{p-1}-1}\psi_{\domtab{\wt{\eta}_1} \subqtab{\wt{\eta}_1}{\wt{\eta}}} U^{\wt{\eta}}_{1,k} \psi_{\subqtab{\wt{\eta}_k}{\wt{\eta}} \subqtab{\wt{\eta}_k}{\wt{\eta}}} \JW^{\wt{\eta}} & =\pm U^{\wt{\eta}_1}_{s_{p-1}-1} \psi_{\subqtab{\wt{\eta}_{k+1}}{\wt{\eta}_1} \subqtab{\wt{\eta}_{k+1}}{\wt{\eta}}}  \JW^{\wt{\eta}} \\
& =\pm \psi_{\subqtab{\wt{\eta}_{k+1}}{\wt{\eta}_1} \subqtab{\wt{\eta}_{k+1}}{\wt{\eta}}} U^{\wt{\eta}}_{s_{p-1}}  \JW^{\wt{\eta}} \\
& =0 \text{.}
\end{align*} 
This is a factor of the previous expression, so it follows that $s_{p-1}=k+1$. 
Thus it is enough to show that
\begin{equation*}
\psi_{\domtab{\wt{\eta}_1} \subqtab{\wt{\eta}_1}{\wt{\eta}}} U^{\wt{\eta}}_{k+1} U^{\wt{\eta}}_{1,k} \psi_{\subqtab{\wt{\eta}_k}{\wt{\eta}} \subqtab{\wt{\eta}_k}{\wt{\eta}}} \JW^{\wt{\eta}}= \pm U^{\wt{\eta}_1}_k \psi_{\subqtab{\wt{\eta}_{k+1}}{\wt{\eta}_1} \subqtab{\wt{\eta}_{k+1}}{\wt{\eta}}}  \JW^{\wt{\eta}} \in B_n^{\kappa, \domgreater \wt{\eta}_{k+1}} \text{.}
\end{equation*}
Using Corollaries \ref{cor:transitivityofpsi} and \ref{cor:psidoublecrosses} this is equal to
\begin{equation*}
\pm U^{\wt{\eta}_1}_k \psi_{f_{\wt{\eta}}+ke}^2 \psi_{f_{\wt{\eta}}+(k-1)e}^2 \dotsm \psi_{f_{\wt{\eta}}+e}  \psi_{\domtab{\wt{\eta}} \subqtab{\wt{\eta}_1}{\wt{\eta}}}  \JW^{\wt{\eta}}
\end{equation*}
In the proof of Proposition \ref{prop:psicrosses} we showed that $U^{\wt{\eta}_1}_k=z_{k+1} x_{k+1}\psi_{f_{\wt{\eta}}+(k+1)e}$ for some $z_{k+1} \in B_n^{\kappa}$. Thus we obtain
\begin{equation*}
\pm U^{\wt{\eta}_1}_k \psi_{f_{\wt{\eta}}+ke}^2 \psi_{f_{\wt{\eta}}+(k-1)e}^2 \dotsm \psi_{f_{\wt{\eta}}+e}  \psi_{\domtab{\wt{\eta}} \subqtab{\wt{\eta}_1}{\wt{\eta}}}  \JW^{\wt{\eta}}\in B_n^{\kappa} \psi_{\domtab{\wt{\eta}_{k+2}} \subqtab{\wt{\eta}_{k+2}}{\wt{\eta}}} \JW^{\wt{\eta}} \leq B_n^{\kappa, \domgreater \wt{\eta}_{k+1}} \text{.}
\end{equation*}
\end{proof}

Finally we are in a position to calculate the socle.

\begin{thm} \label{thm:endwallprojsimplesocle}
We have $\soc P(\wt{\eta})=L(\wt{\eta})\langle 2m\rangle$. 
\end{thm}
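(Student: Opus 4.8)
The plan is to combine Lemma~\ref{lem:possible-socle} with the factorizations established in Theorems~\ref{thm:U_1k-nonvanishing} and \ref{thm:othermonomials}. Write
\[
v_k := \JW^{\wt\eta}\,\psi_{\subqtab{\wt\eta_k}{\wt\eta}\subqtab{\wt\eta_k}{\wt\eta}}\,\JW^{\wt\eta}\in B_n^{\kappa}\JW^{\wt\eta}\cong P(\wt\eta)\qquad(0\le k\le m),
\]
noting that for $k>m$ the tableau $\subqtab{\wt\eta_k}{\wt\eta}$ does not exist since $\wt\eta_k$ then lies outside the linkage class. The first step is to reduce the whole problem to deciding which $v_k$ lie in the socle. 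Any simple submodule of $P(\wt\eta)$ is annihilated by the radical of $B_n^{\kappa}$, hence by its strictly positively graded part; so it is generated by an element $w$ with $w$ killed by all positive degree elements, and then $B_n^{\kappa}w$ is concentrated in a single degree $d$ and is simple over the degree-$0$ subalgebra. Since $P(\wt\eta)$ has a graded Weyl filtration with subquotients $\weyl(\wt\eta_k)\langle k\rangle$ ($k=0,\dots,m$), its degree-$0$ part is one-dimensional, spanned by $\JW^{\wt\eta}$; as $B_n^{\kappa}\JW^{\wt\eta}=P(\wt\eta)$ is not simple when $m\ge 1$, there is no socle element in degree $0$, and for $d>0$ the module $L(\wt\eta)$ is the only constituent of the linkage class concentrated in a single degree. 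Thus $\soc P(\wt\eta)$ consists only of copies of $L(\wt\eta)$, and by Lemma~\ref{lem:possible-socle} we get $\soc P(\wt\eta)\subseteq\bigoplus_{k=0}^{m}\field v_k$.

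The second (and main) step is to rule out $v_k$ for $0\le k<m$. Put $g:=\psi_{\domtab{\wt\eta_1}\subqtab{\wt\eta_1}{\wt\eta}}$, which has bottom residue $\domres{\wt\eta}$ and top residue $\domres{\wt\eta_1}\ne\domres{\wt\eta}$, and consider $g\,v_k=g\,\JW^{\wt\eta}\psi_{\subqtab{\wt\eta_k}{\wt\eta}\subqtab{\wt\eta_k}{\wt\eta}}\JW^{\wt\eta}$. Expanding the left-hand $\JW^{\wt\eta}$ as $e(\domres{\wt\eta})$ plus a linear combination of monomials in the diamonds $U^{\wt\eta}_j$, Theorem~\ref{thm:othermonomials} sends every monomial term (and, via Lemma~\ref{lem:deg1vanishing}, the identity term) into $B_n^{\kappa,\domgreater\wt\eta_{k+1}}\JW^{\wt\eta}$, while by Theorem~\ref{thm:U_1k-nonvanishing} the contribution of $U^{\wt\eta}_{1,k}$ equals $\pm c_k\,\psi_{\subqtab{\wt\eta_{k+1}}{\wt\eta_1}\subqtab{\wt\eta_{k+1}}{\wt\eta}}\JW^{\wt\eta}$, where $c_k$ is the coefficient of $U^{\wt\eta}_{1,k}$ in $\JW^{\wt\eta}$. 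That coefficient is nonzero because the Temperley--Lieb subalgebra is semisimple, so its Jones--Wenzl idempotent has full support in the diagram basis. Passing to $B_n^{\kappa}/B_n^{\kappa,\domgreater\wt\eta_{k+1}}$ and using that $\JW^{\wt\eta}$ fixes the (one-dimensional, by Lemma~\ref{lem:subqtab}) top-degree residue-$\domres{\wt\eta}$ vector of $\weyl(\wt\eta_{k+1})$ — this forces $\psi_{\subqtab{\wt\eta_{k+1}}{\wt\eta_1}\subqtab{\wt\eta_{k+1}}{\wt\eta}}\JW^{\wt\eta}\equiv\psi_{\subqtab{\wt\eta_{k+1}}{\wt\eta_1}\subqtab{\wt\eta_{k+1}}{\wt\eta}}$ there — one sees that $g\,v_k$ reduces modulo $B_n^{\kappa,\domgreater\wt\eta_{k+1}}$ to $\pm c_k$ times a cellular basis element, hence $g\,v_k\ne 0$ in $P(\wt\eta)$. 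On the other hand, if $v_k$ were in the socle then $B_n^{\kappa}v_k\cong L(\wt\eta)\langle 2k\rangle$, which is supported entirely on the idempotent $e(\domres{\wt\eta})$, so $g\,v_k\in e(\domres{\wt\eta_1})\,L(\wt\eta)\langle 2k\rangle=0$, a contradiction. Therefore $v_k\notin\soc P(\wt\eta)$ for $k<m$.

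Combining the two steps finishes the argument: $\soc P(\wt\eta)$ is a nonzero semisimple submodule contained in $\bigoplus_{k=0}^{m}\field v_k$ none of whose summands $\field v_k$ with $k<m$ meets the socle, so $\soc P(\wt\eta)=\field v_m$, which by Lemma~\ref{lem:possible-socle} is a copy of $L(\wt\eta)\langle 2m\rangle$ (and, as a by-product, $v_m\ne 0$). The hard part is the computation identifying $g\,v_k$, but that is precisely what Theorems~\ref{thm:U_1k-nonvanishing} and \ref{thm:othermonomials} were set up to supply; the residual points are the nonvanishing of $c_k$ and the verification that the $B_n^{\kappa,\domgreater\wt\eta_{k+1}}$-error term cannot cancel the surviving cellular basis element in $P(\wt\eta)$.
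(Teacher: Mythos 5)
Your second step --- multiplying the candidate socle generator $v_k=\JW^{\wt{\eta}}\psi_{\subqtab{\wt{\eta}_k}{\wt{\eta}}\subqtab{\wt{\eta}_k}{\wt{\eta}}}\JW^{\wt{\eta}}$ by $\psi_{\domtab{\wt{\eta}_1}\subqtab{\wt{\eta}_1}{\wt{\eta}}}$, expanding $\JW^{\wt{\eta}}$ into Temperley--Lieb monomials, keeping the $U^{\wt{\eta}}_{1,k}$ term via Theorem~\ref{thm:U_1k-nonvanishing}, discarding the rest via Theorem~\ref{thm:othermonomials}, and concluding nonvanishing because the surviving term lies outside $B_n^{\kappa,\domgreater\wt{\eta}_{k+1}}$ --- is exactly the argument in the paper, and it is fine (the non-vanishing of the coefficient of $U^{\wt{\eta}}_{1,k}$ in $\JW^{\wt{\eta}}$ is a Frenkel--Khovanov fact rather than a consequence of semisimplicity alone, but that is cosmetic).

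The genuine gap is in your first step, the reduction to $\soc P(\wt{\eta})\subseteq\bigoplus_k\field v_k$. Your argument rests on $B_n^{\kappa}$ being non-negatively graded: you need the strictly positive part to lie in the radical, you need a simple submodule killed by it to be concentrated in a single degree, and you assert that the degree-$0$ part of $P(\wt{\eta})$ is one-dimensional. But only the idempotent truncation $e(\domres{\wt{\eta}})B_n^{\kappa}e(\domres{\wt{\eta}})$ is positively graded (via Theorem~\ref{thm:degtab}, which applies to tableaux with dominant residue sequences); the full algebra and the Weyl modules have components in negative degrees, so the degree-$0$ part of $P(\wt{\eta})$ is one-dimensional only after truncating by $e(\domres{\wt{\eta}})$, and the ``single degree'' deduction collapses. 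Your remaining claim, that for $d>0$ only $L(\wt{\eta})$ among the simples of the block is concentrated in a single degree, is asserted without proof and is not available from the results you cite (it would require knowing graded dimensions of the other $L(\wt{\eta}_j)$). Finally, Lemma~\ref{lem:possible-socle} only constrains copies of $L(\wt{\eta})$ in even degrees $2k$ with $0\le k\le m$, so you must also rule out copies of $L(\wt{\eta})$ in odd degrees (and degrees above $2m$) before the containment follows. The paper's route avoids all of this: since $P(\wt{\eta})$ is $\weyl$-filtered and every Weyl module in this block has socle $L(\wt{\eta})$, the socle is a sum of shifted copies of $L(\wt{\eta})$, and the graded decomposition numbers of Theorem~\ref{thm:grdecompblob-singular} together with BGG reciprocity give $[P(\wt{\eta}):L(\wt{\eta})]_v=\sum_{k=0}^{m}v^{2k}$, whence only the degrees $2k$, $0\le k\le m$, can occur and each at most once. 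Replacing your first step by this argument makes the proof complete and identical in substance to the paper's.
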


\begin{proof}
Since $P(\wt{\eta})$ has a Weyl filtration and the socle of every Weyl module is $L(\wt{\eta})$, it is clear that $\soc P(\wt{\eta})$ is the direct sum of copies of $L(\wt{\eta})$. The graded decomposition numbers for singular weights (from Theorem~\ref{thm:grdecompblob-singular}) indicate that the socle can contain at most one copy of $L(\wt{\eta})\langle 2k\rangle$ for each integer $0 \leq k \leq m$ and no copies of $L(\wt{\eta})$ in odd degree. The submodule $L(\wt{\eta}) \leq \weyl(\wt{\eta}_m) \leq P(\wt{\eta})$ gives one copy of $L(\wt{\eta})$ of degree $2m$ in the socle. By Lemma \ref{lem:possible-socle}, if $\soc P(\wt{\eta})$ does contain a copy of $L(\wt{\eta})\langle 2k\rangle$ for some $k<m$, then it must be spanned by
\begin{equation*}
\JW^{\wt{\eta}} \psi_{\subqtab{\wt{\eta}_k}{\wt{\eta}} \subqtab{\wt{\eta}_k}{\wt{\eta}}} \JW^{\wt{\eta}} \text{.}
\end{equation*}
We will prove that this vector does not generate a copy of $L(\wt{\eta})$ in the socle by showing that
\begin{equation*}
\psi_{\domtab{\wt{\eta}_1} \subqtab{\wt{\eta}_1}{\wt{\eta}}} \JW^{\wt{\eta}} \psi_{\subqtab{\wt{\eta}_k}{\wt{\eta}} \subqtab{\wt{\eta}_k}{\wt{\eta}}} \JW^{\wt{\eta}} \neq 0 \text{.} \label{eq:smallsocle}
\end{equation*}

Write $\JW^{\wt{\eta}}$ as a sum of monomials. It is known that the coefficient of $U^{\wt{\eta}}_{1,k}$ in $\JW^{\wt{\eta}}$ is non-zero (see e.g.~\cite[Proposition 3.10]{frenkelkhovanov}), so we may write
\begin{equation*}
\JW^{\wt{\eta}}=cU^{\wt{\eta}}_{1,k}+\sum_{\text{monomials $U \neq U^{\wt{\eta}}_{1,k}$}} c_U U
\end{equation*}
where $c,c_U \in \field$ and $c \neq 0$. Then using Theorems \ref{thm:U_1k-nonvanishing} and \ref{thm:othermonomials} we obtain
\begin{align*}
\psi_{\domtab{\wt{\eta}_1} \subqtab{\wt{\eta}_1}{\wt{\eta}}} \JW^{\wt{\eta}} \psi_{\subqtab{\wt{\eta}_k}{\wt{\eta}} \subqtab{\wt{\eta}_k}{\wt{\eta}}} \JW^{\wt{\eta}}& =\psi_{\domtab{\wt{\eta}_1} \subqtab{\wt{\eta}_1}{\wt{\eta}}} \left(cU^{\wt{\eta}}_{1,k}+\sum_{\text{monomials $U \neq U^{\wt{\eta}}_{1,k}$}} c_U U\right) \psi_{\subqtab{\wt{\eta}_k}{\wt{\eta}} \subqtab{\wt{\eta}_k}{\wt{\eta}}} \JW^{\wt{\eta}} \\
& \in \psi_{\subqtab{\wt{\eta}_{k+1}}{\wt{\eta}_1} \subqtab{\wt{\eta}_{k+1}}{\wt{\eta}}}  \JW^{\wt{\eta}} + B_n^{\kappa,\domgreater \wt{\eta}_{k+1}} \text{.}
\end{align*}
As $\psi_{\subqtab{\wt{\eta}_{k+1}}{\wt{\eta}_1} \subqtab{\wt{\eta}_{k+1}}{\wt{\eta}}}  \JW^{\wt{\eta}} \notin B_n^{\kappa,\domgreater \wt{\eta}_{k+1}}$ we are done.
\end{proof}

Applying the globalisation functor, we see that $G\weyl(\wt{\eta})=\weyl(1^{n+1},1)$ and $GP(\wt{\eta})=P(1^{n+1},1)$. Using adjunction we see that
\begin{equation*}
\Hom_{B_{n+2}^{\kappa}}(G\weyl(\wt{\eta}),GP(\wt{\eta})) \iso \Hom_{B_n^{\kappa}}(\weyl(\wt{\eta}),FGP(\wt{\eta}))=\Hom_{B_n^{\kappa}}(L(\wt{\eta}),P(\wt{\eta}))
\end{equation*}
so $P(1^{n+1},1)$ also has a simple socle. Repeated globalisation in this manner allows us to drop our assumption on $n$ and extend our result to all singular weights. For a singular weight $\wt{\lambda} \in \Lambda(n)$, write $\wt{\lambda}_{\rm min},\wt{\lambda}_{\rm max} \in \Lambda(n)$ for the unique minimal and maximal weights respectively in the same linkage class.

\begin{cor} \label{cor:singprojsimplesocle}
Let $n$ be arbitrary, and let $\wt{\lambda} \in \Lambda(n)$ be a singular weight. 
Then we have
\begin{equation*}
\soc P(\wt{\lambda})=L(\wt{\lambda}_{\rm min})\langle 2\deg \subqtab{\wt{\lambda}_{\rm max}}{\wt{\lambda}} + \deg \subqtab{\wt{\lambda}}{\wt{\lambda}_{\rm min}} \rangle \text{.}
\end{equation*}
\end{cor}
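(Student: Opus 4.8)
The plan is to bootstrap from Theorem~\ref{thm:endwallprojsimplesocle} by induction on $n$, using the globalisation functor $G$ both to remove the standing hypothesis $n\equiv\kappa_1-\kappa_2\pmod e$ and to pass from $\wt{\lambda}_{\rm min}$ to the other weights of the block. First I would record the structural fact that $\soc P(\wt{\lambda})$ is a direct sum of grade-shifts of $L(\wt{\lambda}_{\rm min})$: since $P(\wt{\lambda})$ has a Weyl filtration and, by the decomposition numbers of Theorems~\ref{thm:grdecompblob} and~\ref{thm:grdecompblob-singular}, the socle of every Weyl module in a singular block is $L(\wt{\lambda}_{\rm min})$, any simple submodule of $P(\wt{\lambda})$ embeds into some Weyl subquotient and hence is a shift of $L(\wt{\lambda}_{\rm min})$. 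It therefore suffices to compute $\dim_v\Hom_{B_n^{\kappa}}(L(\wt{\lambda}_{\rm min}),P(\wt{\lambda}))$ and to show it equals $v^{d}$ with $d=2\deg\subqtab{\wt{\lambda}_{\rm max}}{\wt{\lambda}}+\deg\subqtab{\wt{\lambda}}{\wt{\lambda}_{\rm min}}$.

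The base case is $\wt{\lambda}=(1^n,\emptyset)$ or $(\emptyset,1^n)$, where $\wt{\lambda}=\wt{\lambda}_{\rm min}$: the first is exactly Theorem~\ref{thm:endwallprojsimplesocle} (with $d$ collapsing to $2m$ via Lemma~\ref{lem:subqtab}), and the second is symmetric to it. For the inductive step, write $\wt{\lambda}=(1^{\lambda_1},1^{\lambda_2})$ with $\lambda_1,\lambda_2\geq 1$, so $\wt{\lambda}=G\wt{\mu}$ with $\wt{\mu}=(1^{\lambda_1-1},1^{\lambda_2-1})\in\Lambda(n-2)$ and $P(\wt{\lambda})=GP(\wt{\mu})$. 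As long as the block of $\wt{\lambda}$ still has $G\wt{\mu}_{\rm min}$ as its minimal weight, $\weyl(\wt{\lambda}_{\rm min})=G\weyl(\wt{\mu}_{\rm min})=GL(\wt{\mu}_{\rm min})$ is simple, and the adjunction $\Hom_{B_n^{\kappa}}(G-,-)\iso\Hom_{B_{n-2}^{\kappa}}(-,F-)$ together with $FG\iso\mathrm{id}$ on Weyl-filtered modules gives
\begin{equation*}
\Hom_{B_n^{\kappa}}(L(\wt{\lambda}_{\rm min}),P(\wt{\lambda}))\iso\Hom_{B_{n-2}^{\kappa}}(L(\wt{\mu}_{\rm min}),FGP(\wt{\mu}))=\Hom_{B_{n-2}^{\kappa}}(L(\wt{\mu}_{\rm min}),P(\wt{\mu})) \text{,}
\end{equation*}
which by induction has graded dimension $v^{d_{\wt{\mu}}}$. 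Combined with the structural fact this yields $\soc P(\wt{\lambda})=L(\wt{\lambda}_{\rm min})\langle d_{\wt{\mu}}\rangle$, and $d_{\wt{\mu}}=d$ because globalisation preserves classical weights and hence the whole alcove geometry: $\wt{\lambda}_{\rm max}=G\wt{\mu}_{\rm max}$, $\wt{\lambda}_{\rm min}=G\wt{\mu}_{\rm min}$, and $\deg\subqtab{G\wt{\nu}}{G\wt{\nu}'}=\deg\subqtab{\wt{\nu}}{\wt{\nu}'}$ by the path description behind Lemma~\ref{lem:subqtab}.

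The delicate point — and where I expect the main obstacle — is the remaining case, when passing from $\Lambda(n-2)$ to $\Lambda(n)$ enlarges the block so that a new weight $(1^n,\emptyset)$ or $(\emptyset,1^n)$ on the end wall becomes $\wt{\lambda}_{\rm min}$; then $\wt{\lambda}_{\rm min}$ is not in the image of $G$ and the adjunction above no longer applies. To handle this I would instead write $\wt{\lambda}=G^{\lambda_2}((1^{\lambda_1-\lambda_2},\emptyset))$ (say $\lambda_1>\lambda_2$), so $P(\wt{\lambda})=G^{\lambda_2}P((1^{\lambda_1-\lambda_2},\emptyset))$ with $(1^{\lambda_1-\lambda_2},\emptyset)$ on the end wall of $\Lambda(\lambda_1-\lambda_2)$; applying $G^{\lambda_2}$ to the short exact sequence $0\to\weyl(\wt{\lambda}_{\rm max}^{(p)})\langle k_0\rangle\to P((1^{\lambda_1-\lambda_2},\emptyset))\to(\text{Weyl-filtered})\to 0$ coming from the distinguished bottom term of a Weyl filtration (on which $G$ is exact) exhibits $\weyl(\wt{\lambda}_{\rm max})\langle k_0\rangle$ as a submodule of $P(\wt{\lambda})$, and hence a copy of $L(\wt{\lambda}_{\rm min})$ in $\soc P(\wt{\lambda})$ in the claimed degree; the bookkeeping of $k_0$ and the socle degree of $\weyl(\wt{\lambda}_{\rm max})$ is done with Corollary~\ref{cor:projsubquot} and the singular decomposition numbers. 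The hard part is the reverse inequality: ruling out any further copies of $L(\wt{\lambda}_{\rm min})$ in $\soc P(\wt{\lambda})$ when $\wt{\lambda}$ is not the minimal weight and the block's minimal weight sits on the end wall — this should follow by a downward induction on the Weyl filtration of $P(\wt{\lambda})$ (the block being totally ordered by $\domleq$, as distinct linked weights have distinct $|\lambda_1-\lambda_2|$ since $\lambda_1-\lambda_2\not\equiv 0\pmod e$), but all of its genuinely new content is supplied by Theorem~\ref{thm:endwallprojsimplesocle}.
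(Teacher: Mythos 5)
Your main induction is essentially the paper's own argument: the paper globalises the end-wall projective and uses the adjunction $\Hom_{B_{n+2}^{\kappa}}(G\weyl(\wt{\eta}),GP(\wt{\eta}))\iso\Hom_{B_n^{\kappa}}(\weyl(\wt{\eta}),FGP(\wt{\eta}))=\Hom_{B_n^{\kappa}}(L(\wt{\eta}),P(\wt{\eta}))$ and then invokes ``repeated globalisation in this manner''; your generic step (applying the adjunction to $\weyl(\wt{\lambda}_{\rm min})=G\weyl(\wt{\mu}_{\rm min})$ whenever the minimal weight of the block stays in the image of $G$, with $FG\iso\mathrm{id}$ and the degree bookkeeping done via the stability of the alcove data) is exactly this, and is if anything stated more carefully than in the paper. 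One small caveat: the opening claim that the socle of every Weyl module in a singular block is $L(\wt{\lambda}_{\rm min})$ does not follow from the decomposition numbers of Theorems~\ref{thm:grdecompblob} and~\ref{thm:grdecompblob-singular} alone (multiplicities do not determine socles); one needs the known submodule structure of the singular Weyl modules, though the paper itself asserts this with no more justification.

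The genuine gap is the case you yourself flag as delicate, and your proposed treatment does not close it. When the block acquires a new minimal weight $(1^n,\emptyset)$ or $(\emptyset,1^n)$ on the end wall, exhibiting one copy of $L(\wt{\lambda}_{\rm min})$ in $\soc P(\wt{\lambda})$ in the predicted degree (via the bottom Weyl layer $\weyl(\wt{\lambda}_{\rm max})$) is the easy half; the entire content of the corollary for a non-minimal $\wt{\lambda}$ in such a block is that \emph{no further} copies of $L(\wt{\lambda}_{\rm min})$ occur in the socle, i.e.\ that the copies of $L(\wt{\lambda}_{\rm min})$ sitting in the socles of the higher Weyl layers of $P(\wt{\lambda})$ do not split into submodules. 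A ``downward induction on the Weyl filtration'' does not produce this: graded multiplicities cannot decide it (for $P(\wt{\eta})$ itself this is precisely why the Jones--Wenzl computations of \S\ref{sec:singprojmod} were needed, rather than a character argument), and Theorem~\ref{thm:endwallprojsimplesocle} concerns only the projective cover of the minimal weight, so it does not formally transfer: $L(\wt{\eta})=\weyl(\wt{\eta})$ is not in the image of $G$, so no adjunction computes $\Hom_{B_n^{\kappa}}(L(\wt{\eta}),GP(\wt{\mu}))$, and already for a block with three weights your sketch gives no mechanism ruling out a second socle copy of $L(\wt{\lambda}_{\rm min})$ coming from the top Weyl layer of $P(\wt{\lambda})$. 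To be fair, this is exactly the point at which the paper's own proof is tersest, so you have located the crux correctly; but as written the proposal does not prove the statement in that case, and some additional argument (for instance one exhibiting $P(\wt{\lambda})$ inside the projective-injective module $P(\wt{\eta})$, or an ind/res argument through the wall in the style of Theorem~\ref{thm:allprojsocle} that avoids circularity) is required.
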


\section{Main results}
\label{sec:mainresults}

\subsection{Regular projective modules}
\label{ss:regprojmod}

We introduce some useful weight terminology. Let $\wt{\lambda} \in \Lambda(n)$. If the linkage class of $\wt{\lambda}$ has a unique $\wt{\lambda}' \in \Lambda(n)$ which is incomparable to $\wt{\lambda}$ then we say that $\wt{\lambda}$ is \defnemph{paired}. Otherwise we call $\wt{\lambda}$ \defnemph{unpaired}. For example, every singular weight is unpaired because singular linkage classes are totally ordered. On the other hand the poset structure of a regular linkage class means that the only regular unpaired weights are either maximal (i.e.~are contained in the fundamental alcove) or possibly minimal.

\begin{lem}
Let $\wt{\lambda}=(1^{\lambda_1},1^{\lambda_2}) \in \Lambda(n)$ be a regular weight. Then $\wt{\lambda}$ is unpaired if and only if $\len(w_{\wt{\lambda}})=0$ or $|\lambda_1-\lambda_2| < 2\len(w_{\wt{\lambda}})e-n$.
\end{lem}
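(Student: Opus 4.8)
The plan is to translate ``unpaired'' into a condition on the alcove geometry, using the poset structure of a regular linkage class recalled above. A regular linkage class is a $\W$-orbit on $\Z$; since $\W$ has type $\tilde{A}_1$, it meets the fundamental alcove in a single weight and meets each of the two alcoves at any distance $l\ge 1$ from the fundamental alcove in a single weight. As two weights of a regular linkage class are incomparable under $\domleq$ exactly when they lie at the same distance from the fundamental alcove, $\wt\lambda$ has at most one weight incomparable to it in its linkage class: namely the orbit point $\wt\lambda^{\dagger}$ in the alcove at distance $\len(w_{\wt\lambda})$ on the opposite side of the fundamental alcove from $\wt\lambda$, provided $\len(w_{\wt\lambda})\ge 1$ and $\wt\lambda^{\dagger}\in\Lambda(n)$. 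Hence $\wt\lambda$ is unpaired precisely when $\len(w_{\wt\lambda})=0$ or $\wt\lambda^{\dagger}\notin\Lambda(n)$.

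The next step is to compute the classical weight of $\wt\lambda^{\dagger}$. Put $\mu=\lambda_1-\lambda_2$; one may assume $\mu>0$, the case $\mu<0$ being symmetric and $\mu=0$ forcing $\len(w_{\wt\lambda})=0$. Let $w_{+}$ be the wall of the fundamental alcove with $w_{+}>0$ (so $w_{+}\equiv\kappa_1-\kappa_2 \pmod e$ and $0<w_{+}<e$ by adjacency-freeness), so that the walls are the integers $w_{+}+ke$ and the fundamental alcove is $(w_{+}-e,\,w_{+})$. Writing $l=\len(w_{\wt\lambda})$, the weight $\wt\lambda$ lies in the alcove $(w_{+}+(l-1)e,\,w_{+}+le)$, so its mirror alcove at distance $l$ on the other side is $(w_{+}-(l+1)e,\,w_{+}-le)$. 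Now $\mu-2le$ is a translate of $\mu$ by a multiple of $2e$, hence lies in the $\W$-orbit of $\mu$, and it lies in exactly this mirror alcove; therefore $\wt\lambda^{\dagger}$ has classical weight $\mu-2le$. Since $\mu<w_{+}+le<(l+1)e\le 2le$ for $l\ge 1$, this is negative, so $|\mu-2le|=2le-\mu$, and $\wt\lambda^{\dagger}\in\Lambda(n)$ if and only if $2le-\mu\le n$. Combining, $\wt\lambda$ is unpaired if and only if $\len(w_{\wt\lambda})=0$ or $\mu<2\len(w_{\wt\lambda})e-n$, which is the claim.

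The essentially routine parts are the identification of the mirror weight as $\mu-2le$ and the final inequality manipulation. What the proof really rests on is the structure of regular linkage classes --- that incomparability is equivalent to being at the same distance from the fundamental alcove, and that a distance-$l$ alcove on one side of the fundamental alcove meets $\Lambda(n)$ whenever the one on the other side does, with the exception only of the largest occurring distance --- which is exactly the poset input recorded above (cf.\ \cite{martin-woodcock}); so I do not expect a genuine obstacle here. The one point to handle with care is fixing the sign conventions for $w_{+}$ relative to $\kappa_1-\kappa_2$ and the direction of increasing distance, so that the mirror alcove is correctly located.
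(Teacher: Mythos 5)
Your argument is correct and is essentially the paper's own proof: the paper likewise identifies the unique candidate incomparable weight as the image of $\wt{\lambda}_{\rm fund}$ under the other element $w_{\wt{\lambda}}'$ of the same length as $w_{\wt{\lambda}}$, whose classical weight is $(\lambda_1-\lambda_2)-2\len(w_{\wt{\lambda}})e$ because $\len(w_{\wt{\lambda}}'w_{\wt{\lambda}}^{-1})=2\len(w_{\wt{\lambda}})$, and concludes that $\wt{\lambda}$ is unpaired exactly when $\len(w_{\wt{\lambda}})=0$ or this classical weight falls below $-n$. Your explicit location of the mirror alcove and the inequality bookkeeping just spell out the paper's one-line length computation, so the two arguments coincide in substance.
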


\begin{proof}
Suppose that $\wt{\lambda}$ is not contained in the fundamental alcove and that $\lambda_1>\lambda_2$. Let $w_{\wt{\lambda}}'$ be the unique element of $\W$ such that $\len(w_{\wt{\lambda}}')=\len(w_{\wt{\lambda}})$ but $w_{\wt{\lambda}}' \neq w_{\wt{\lambda}}$. Since $\len(w_{\wt{\lambda}}'w_{\wt{\lambda}}^{-1})=2\len(w_{\wt{\lambda}})$, the unique incomparable \emph{classical} weight in the global linkage class of $(\lambda_1-\lambda_2)$ is $(\lambda_1-\lambda_2)-2\len(w_{\wt{\lambda}})e$, which does not correspond to a weight in $\Lambda(n)$ if it is less than $-n$. The case where $\lambda_1<\lambda_2$ is similar.
\end{proof}

Generalising our singular terminology, for an arbitrary weight $\wt{\lambda} \in \Lambda(n)$ write $\wt{\lambda}_{\rm min} \in \Lambda(n)$ for \emph{some} minimal weight in the linkage class of $\wt{\lambda}$ and $\wt{\lambda}_{\rm max} \in \Lambda(n)$ for the unique maximal weight in the same linkage class. For $\wt{\lambda}$ regular it is evident that $\wt{\lambda}_{\rm max}=\wt{\lambda}_{\rm fund}$. We now can extend Corollary~\ref{cor:singprojsimplesocle} to all weights.

\begin{thm} \label{thm:allprojsocle}
Let $\wt{\lambda} \in \Lambda(n)$. We have
\begin{equation*}
\soc P(\wt{\lambda})=\begin{cases}
(L(\wt{\lambda}_{\rm min}) \oplus L(\wt{\lambda}_{\rm min}'))\langle 2\deg \subqtab{\wt{\lambda}_{\rm max}}{\wt{\lambda}} + \deg \subqtab{\wt{\lambda}}{\wt{\lambda}_{\rm min}}\rangle & \text{if $\wt{\lambda}_{\rm min}$ is paired,} \\
L(\wt{\lambda}_{\rm min})\langle 2\deg \subqtab{\wt{\lambda}_{\rm max}}{\wt{\lambda}} + \deg \subqtab{\wt{\lambda}}{\wt{\lambda}_{\rm min}}\rangle & \text{if $\wt{\lambda}_{\rm min}$ is unpaired.}
\end{cases}
\end{equation*}
\end{thm}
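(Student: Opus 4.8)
Since every singular weight is unpaired, the singular case of the theorem is precisely Corollary~\ref{cor:singprojsimplesocle}, so I may assume $\wt{\lambda}$ is regular. Then $\wt{\lambda}_{\rm max}=\wt{\lambda}_{\rm fund}$, and by Lemma~\ref{lem:subqtab} the grade shift in the statement equals $\len(w_{\wt{\lambda}})+\len(w_{\wt{\lambda}_{\rm min}})$ (note $\len(w_{\wt{\lambda}_{\rm min}})=\len(w_{\wt{\lambda}_{\rm min}'})$ in the paired case, so the two candidate socle summands would sit in the same degree). The plan is to bound $\soc P(\wt{\lambda})$ below and above separately.

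For the lower bound I would use the Weyl filtration of $P(\wt{\lambda})$. Its Weyl factors are the $\weyl(\wt{\mu})$ with $\wt{\mu}$ in the linkage class and $\wt{\lambda}\domleq\wt{\mu}$; the factor $\weyl(\wt{\lambda}_{\rm fund})$ of maximal highest weight occurs at the bottom, and by (graded) Brauer--Humphreys reciprocity together with Theorem~\ref{thm:grdecompblob} it occurs with grade shift $\len(w_{\wt{\lambda}})$, i.e.\ $\weyl(\wt{\lambda}_{\rm fund})\langle\len(w_{\wt{\lambda}})\rangle\hookrightarrow P(\wt{\lambda})$. Next I would identify $\soc\weyl(\wt{\lambda}_{\rm fund})$: by Theorem~\ref{thm:grdecompblob} each $L(\wt{\mu})$ in the linkage class occurs in $\weyl(\wt{\lambda}_{\rm fund})$ exactly once, in degree $\len(w_{\wt{\mu}})$, and combining this with the known Loewy structure of the blob Weyl modules \cite{martin-woodcock} (and the self-duality $\dweyl(\wt{\lambda}_{\rm fund})^{\ast}\iso\weyl(\wt{\lambda}_{\rm fund})$) shows that $\soc\weyl(\wt{\lambda}_{\rm fund})$ is $L(\wt{\lambda}_{\rm min})\langle\len(w_{\wt{\lambda}_{\rm min}})\rangle$, plus $L(\wt{\lambda}_{\rm min}')\langle\len(w_{\wt{\lambda}_{\rm min}'})\rangle$ whenever a second minimal weight lies in $\Lambda(n)$. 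Hence $\soc P(\wt{\lambda})\supseteq\soc\weyl(\wt{\lambda}_{\rm fund})\langle\len(w_{\wt{\lambda}})\rangle$, which yields the claimed simple(s) in the claimed degree and, in particular, accounts for the two summands in the paired case.

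For the upper bound I would reduce to the singular case via the tower of recollement. Dualising, $\soc P(\wt{\lambda})\iso(\head I(\wt{\lambda}))^{\ast}$; using the $\dweyl$-filtration of $I(\wt{\lambda})$ together with the Weyl module structure just quoted (so that the head of each $\dweyl$-section lies at minimal weights), the composition factors of $\soc P(\wt{\lambda})$ are seen to lie at minimal weights, and only their multiplicities and degrees remain. Induction and restriction are exact, biadjoint up to grade shift, and compatible with the duality, hence preserve projective and injective modules. Starting from a singular weight $\wt{\sigma}$, whose projective has simple socle by Corollary~\ref{cor:singprojsimplesocle} and therefore is a submodule of the single indecomposable injective $I(\wt{\sigma}_{\rm min})\langle c\rangle$, a suitable iterated composite of $\ind$ and $\restr$ carries $P(\wt{\sigma})$ to a direct sum of indecomposable projectives having $P(\wt{\lambda})\langle\ast\rangle$ as a summand, and carries $I(\wt{\sigma}_{\rm min})\langle c\rangle$ to a direct sum of indecomposable injectives whose socles can be read off. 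Exactness of the functor then embeds $P(\wt{\lambda})\langle\ast\rangle$ into this sum of injectives, bounding $\soc P(\wt{\lambda})$ above by the socles of those injective summands; intersecting with the ``minimal weights only'' constraint and matching grade shifts via Lemma~\ref{lem:subqtab} and Theorem~\ref{thm:grdecompblob} forces equality. Finally one removes any standing hypothesis on $n$ by applying the globalisation functor exactly as in the paragraph preceding Corollary~\ref{cor:singprojsimplesocle}.

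The main obstacle will be the combinatorial bookkeeping in the upper bound: choosing $\wt{\sigma}$ and the chain of box additions/removals so that $P(\wt{\lambda})$ really appears as a summand of an iterated $\ind/\restr$ of $P(\wt{\sigma})$, tracking the resulting grade shifts, and pinning down exactly which indecomposable injectives (with which shifts) occur in the iterated restriction of $I(\wt{\sigma}_{\rm min})$, so that no spurious minimal-weight simple inflates the bound past the lower bound. A secondary technical point is to justify that $\ind$ and $\restr$ preserve projectives and injectives in this graded setting (which is not literally the cyclotomic KLR setting, because of relation~\eqref{eq:blobdefiningreln}), and to invoke the precise structure of $\weyl(\wt{\lambda}_{\rm fund})$.
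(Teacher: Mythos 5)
Your overall strategy --- reduce to the singular case via the tower of recollement --- is the same as the paper's, your grade-shift bookkeeping via Lemma~\ref{lem:subqtab} is correct, and your lower bound (pushing a copy of $\weyl(\wt{\lambda}_{\rm fund})$ to the bottom of the Weyl filtration of $P(\wt{\lambda})$ and using the known socle of Weyl modules) is sound. The genuine gap is the upper bound, and it is exactly the step you defer to ``combinatorial bookkeeping''. Embedding $P(\wt{\lambda})$ into an iterated $\ind/\restr$ image of $I(\wt{\sigma}_{\rm min})$ only bounds $\soc P(\wt{\lambda})$ by the socle of whatever direct sum of shifted indecomposable injectives that image is, and nothing in your argument pins that sum down: the multiplicity of $I(\wt{\tau})$ in $\pr(\restr I(\wt{\nu}))$ equals $[\soc \pr(\ind L(\wt{\tau})):L(\wt{\nu})]$, so reading off the socles after $k$ restrictions requires branching data for socles of induced \emph{simple} modules, which you neither prove nor cite. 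If even one extra shifted copy of $I(\wt{\lambda}_{\rm min})$ or $I(\wt{\lambda}_{\rm min}')$ survives, the bound is not tight and multiplicity one does not follow; matching degrees cannot rescue this, and the ``minimal weights only'' constraint removes only summands at non-minimal weights. (Your appeal to biadjointness of $\ind$ and $\restr$ and preservation of injectives is also asserted rather than justified, though this is secondary: the paper avoids graded lifts of these functors entirely by proving the ungraded statement first and recovering the shift from Theorems~\ref{thm:grdecompblob} and~\ref{thm:grdecompblob-singular}.)

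The paper closes this by a sharper mechanism you should adopt. Since $\wt{\lambda}_{\rm min}$ (and $\wt{\lambda}_{\rm min}'$ in the paired case) is minimal in its class, $\weyl(\wt{\lambda}_{\rm min})$ is simple, so the socle multiplicities are exactly $\dim \Hom_{B_n^{\kappa}}(\weyl(\wt{\lambda}_{\rm min}),P(\wt{\lambda}))$ and $\dim \Hom_{B_n^{\kappa}}(\weyl(\wt{\lambda}_{\rm min}'),P(\wt{\lambda}))$. These are computed one box at a time coming down from the wall: with $\wt{\mu}=(1^{\lambda_1},1^{\lambda_2+k})$ singular and $\wt{\lambda}^{(i)}=(1^{\lambda_1},1^{\lambda_2+k-i})$, one has $\restr P(\wt{\mu})\iso F(\ind P(\wt{\mu}))\iso P(\wt{\lambda}^{(1)})$ and $\pr_{\wt{\lambda}^{(i+1)}}(\restr P(\wt{\lambda}^{(i)}))\iso P(\wt{\lambda}^{(i+1)})$, while $\pr$ applied to the induction of a minimal-weight Weyl module is again a single minimal-weight Weyl module (this only uses the Weyl-filtration branching rule, not socles of induced simples). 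Adjunction of $\ind$ and $\restr$ then transfers ``$\dim\Hom=1$'' from Corollary~\ref{cor:singprojsimplesocle} at the wall down to $\wt{\lambda}$ step by step, giving both existence and uniqueness of each socle constituent simultaneously, with no need to decompose restricted injectives or to establish your separate lower bound.
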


\begin{proof}
We prove the ungraded result first. Note that for any $\wt{\mu} \domgreater \wt{\lambda}$ in the same linkage class, 
the ungraded socle of $\weyl(\wt{\mu})$ is
\begin{equation*}
\begin{cases}
L(\wt{\lambda}_{\rm min}) \oplus L(\wt{\lambda}_{\rm min}') & \text{if $\wt{\lambda}_{\rm min}$ is paired,} \\
L(\wt{\lambda}_{\rm min}) & \text{if $\wt{\lambda}_{\rm min}$ is unpaired.}
\end{cases}
\end{equation*}
As $P(\wt{\lambda})$ is filtered by Weyl modules its socle may only contain copies of these simple modules.

Write $\wt{\lambda}=(1^{\lambda_1},1^{\lambda_2})$ and without loss of generality suppose $\lambda_1>\lambda_2$. If $\wt{\lambda}$ lies in the fundamental alcove, then $P(\wt{\lambda})=\weyl(\wt{\lambda})$ and the result follows by \cite[Theorem~9.4]{martin-woodcock}, so we will assume that $\wt{\lambda}$ does not lie in the fundamental alcove. Take $k \in \N$ minimal such that $\wt{\mu}=(1^{\lambda_1},1^{\lambda_2+k}) \in \Lambda(n+k)$ lies on a wall and let $\wt{\lambda}^{(1)}=(1^{\lambda_1},1^{\lambda_2+k-1}) \in \Lambda(n+k-1)$. There is a minimal weight $\wt{\lambda}^{(1)}_{\rm min} \in \Lambda(n+k-1)$ in the linkage class of $\wt{\lambda}^{(1)}$ whose classical weight is only $1$ away from $\wt{\mu}_{\rm min}$. We observe that 
\begin{align*}
\pr_{\wt{\mu}}(\ind \weyl(\wt{\lambda}^{(1)}_{\rm min}))& =\weyl(\wt{\mu}_{\rm min}) \text{,} \\ 
\pr_{\wt{\mu}}(\ind \weyl((\wt{\lambda}^{(1)}_{\rm min})')) & =\weyl((\wt{\mu}_{\rm min})_1) \text{,}
\end{align*}
and
\begin{equation*}
\restr P(\wt{\mu})\iso F(\ind P(\wt{\mu}))=FP(1^{\lambda_1+1},1^{\lambda_2+k})=P(\wt{\lambda}^{(1)})
\end{equation*}
using the tower of recollement structure on $B_n^{\kappa}$. Thus
\begin{align*}
\dim \Hom_{B_{n+k-1}^{\kappa}}(\weyl(\wt{\lambda}^{(1)}_{\rm min}),P(\wt{\lambda}^{(1)}))& =\dim \Hom{B_{n+k}^{\kappa}}(\weyl(\wt{\mu}_{\rm min}),P(\wt{\mu}))=1 \text{,} \\
\dim \Hom_{B_{n+k-1}^{\kappa}}(\weyl((\wt{\lambda}^{(1)}_{\rm min})'),P(\wt{\lambda}^{(1)}))& =\dim \Hom_{B_{n+k}^{\kappa}}(\weyl((\wt{\mu}_{\rm min})_1),P(\wt{\mu}))=1
\end{align*}
by Corollary~\ref{cor:singprojsimplesocle}. This establishes the result for $\wt{\lambda}^{(1)}$.

If $k=1$, then we are done as $\wt{\lambda}=\wt{\lambda}^{(1)}$. Otherwise let $\wt{\lambda}^{(2)}=(1^{\lambda_1},1^{\lambda_2+k-2}) \in \Lambda(n+k-2)$. Again, there is at least one minimal weight $\wt{\lambda}^{(2)}_{\rm min}$ in the linkage class of $\wt{\lambda}^{(2)}$ whose classical weight is $1$ away from $\wt{\lambda}^{(1)}_{\rm min}$ or $(\wt{\lambda}^{(1)}_{\rm min})'$, and if this weight is also paired then there is another minimal weight $(\wt{\lambda}^{(2)}_{\rm min})'$. It is clear that
$\pr_{\wt{\lambda}^{(1)}}(\ind \weyl(\wt{\lambda}^{(2)}_{\rm min}))$ (and $\pr_{\wt{\lambda}^{(1)}}(\ind \weyl((\wt{\lambda}^{(2)}_{\rm min})'))$ if it exists) is a minimal weight Weyl module. We also have
\begin{align*}
\pr_{\wt{\lambda}^{(2)}}(\restr P(\wt{\lambda}^{(1)}))& \iso \pr_{\wt{\lambda}^{(2)}}(F(\ind P(\wt{\lambda}^{(1)}))) \\
& \iso F(\pr_{\wt{\lambda}^{(2)}}(\ind P(\wt{\lambda}^{(1)}))) \\
& =F(P(1^{\lambda_1+1},1^{\lambda_1+k-1})) \\
& =P(\wt{\lambda}^{(2)}) \text{.}
\end{align*}
Thus $\dim \Hom_{B_{n+k-2}^{\kappa}}(\weyl(\wt{\lambda}^{(2)}_{\rm min}),P(\wt{\lambda}^{(2)}))=1$ (and similarly for $(\wt{\lambda}^{(2)}_{\rm min})'$ if it exists) and the result holds for $\wt{\lambda}^{(2)}$. Continuing in this fashion, we obtain the ungraded result for $\wt{\lambda}^{(k)}=\wt{\lambda}$. The correct grade shift is apparent from the graded decomposition numbers of $B_n^{\kappa}$ (Theorem~\ref{thm:grdecompblob}).
\end{proof}

\subsection{Tilting modules}

We are finally in a position to present the main results of this paper.

\begin{thm} \label{thm:fundtilt}
Let $\wt{\lambda} \in \Lambda(n)$ be a maximal weight.
\begin{enumerate}[label={\upshape (\roman*)}]
\item If $\wt{\lambda}_{\rm min}$ is unpaired, then
  $T(\wt{\lambda})=P(\wt{\lambda}_{\rm min})\langle -\deg \subqtab{\wt{\lambda}}{\wt{\lambda}_{\rm min}} \rangle$.
\item If $\wt{\lambda}_{\rm min}$ is paired, then $T(\wt{\lambda})$ is
  the unique non-split extension
\begin{equation*}
0 \to P(\wt{\lambda}_{\rm min})\langle -\deg \subqtab{\wt{\lambda}}{\wt{\lambda}_{\rm min}} \rangle \to  T(\wt{\lambda}) \to
\weyl(\wt{\lambda}_{\rm min}')\langle -\deg \subqtab{\wt{\lambda}}{\wt{\lambda}_{\rm min}} \rangle \to  0 \text{.}
\end{equation*}
\end{enumerate}
\end{thm}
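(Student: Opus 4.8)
The strategy is to construct $T(\wt{\lambda})$ (where $\wt{\lambda}=\wt{\lambda}_{\rm max}$) from the projective indecomposable $P(\wt{\lambda}_{\rm min})$, using the socle computation of Theorem~\ref{thm:allprojsocle} together with standard tilting theory for quasi-hereditary algebras. Put $d=\deg\subqtab{\wt{\lambda}}{\wt{\lambda}_{\rm min}}$. Since $\wt{\lambda}_{\rm min}$ is minimal in its linkage class, the only composition factor of $\weyl(\wt{\lambda}_{\rm min})$ is $L(\wt{\lambda}_{\rm min})$, so $\weyl(\wt{\lambda}_{\rm min})=L(\wt{\lambda}_{\rm min})$ (and likewise $\weyl(\wt{\lambda}_{\rm min}')=L(\wt{\lambda}_{\rm min}')$ in case~(ii)); moreover, by Brauer--Humphreys reciprocity together with Theorem~\ref{thm:grdecompblob} (or~\ref{thm:grdecompblob-singular}), the Weyl filtration of $P(\wt{\lambda}_{\rm min})$ contains $\weyl(\wt{\lambda})$ exactly once, with graded multiplicity $v^{d}$ (realised, via Corollary~\ref{cor:projsubquot}, by the submodule $B_n^{\kappa}\psi_{\domtab{\wt{\lambda}}\subqtab{\wt{\lambda}}{\wt{\lambda}_{\rm min}}}\JW^{\wt{\lambda}_{\rm min}}\langle d\rangle$), contains no $\weyl(\wt{\mu})$ with $\wt{\mu}\domgreater\wt{\lambda}$, and --- in case~(ii) --- omits exactly one Weyl module of the linkage class, namely $\weyl(\wt{\lambda}_{\rm min}')$ (because $w_{\wt{\lambda}_{\rm min}'}\not\leq w_{\wt{\lambda}_{\rm min}}$ in the Bruhat order on $\W$).

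\emph{Part (i).} Here $\soc P(\wt{\lambda}_{\rm min})=L(\wt{\lambda}_{\rm min})\langle 2d\rangle$ is simple by Theorem~\ref{thm:allprojsocle}, so $P(\wt{\lambda}_{\rm min})$ embeds in its injective hull $I(\wt{\lambda}_{\rm min})\langle 2d\rangle$; but $I(\wt{\lambda}_{\rm min})$ is the dual of $P(\wt{\lambda}_{\rm min})$, hence has the same dimension, so this embedding is an isomorphism and $P(\wt{\lambda}_{\rm min})$ is injective. Being projective it has a Weyl filtration, and being injective it has a dual-Weyl filtration, so it is a tilting module. It is indecomposable, and the highest weight occurring in its Weyl filtration is $\wt{\lambda}$, with multiplicity one, so $P(\wt{\lambda}_{\rm min})\cong T(\wt{\lambda})\langle k\rangle$ for a unique $k$; comparing the graded multiplicity of $\weyl(\wt{\lambda})$ against the normalisation $(T(\wt{\lambda}):\weyl(\wt{\lambda}))_v=1$ forces $k=d$, whence $T(\wt{\lambda})=P(\wt{\lambda}_{\rm min})\langle -d\rangle$.

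\emph{Part (ii).} Write $P=P(\wt{\lambda}_{\rm min})\langle -d\rangle$ and $L'=\weyl(\wt{\lambda}_{\rm min}')\langle -d\rangle=L(\wt{\lambda}_{\rm min}')\langle -d\rangle$. The crucial input is that $\Ext^1_{B_n^{\kappa}}(\weyl(\wt{\mu}),P(\wt{\lambda}_{\rm min}))$ is one-dimensional if $\wt{\mu}=\wt{\lambda}_{\rm min}'$ and zero otherwise. Granting this, $\Ext^1_{B_n^{\kappa}}(\weyl(\wt{\lambda}_{\rm min}'),P)$ is one-dimensional, so there is a unique non-split extension $0\to P\to T\to L'\to 0$, which we take as our candidate. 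Concatenating filtrations makes $T$ Weyl-filtered, with $\weyl(\wt{\lambda})$ occurring once as a submodule and every other Weyl factor of strictly smaller highest weight. To see that $T$ has a dual-Weyl filtration it suffices to show $\Ext^1_{B_n^{\kappa}}(\weyl(\wt{\mu}),T)=0$ for all $\wt{\mu}$: applying $\Hom_{B_n^{\kappa}}(\weyl(\wt{\mu}),-)$ to $0\to P\to T\to L'\to 0$ and using $\Ext^1_{B_n^{\kappa}}(\weyl(\wt{\mu}),L')=\Ext^1_{B_n^{\kappa}}(\weyl(\wt{\mu}),\dweyl(\wt{\lambda}_{\rm min}')\langle -d\rangle)=0$ identifies $\Ext^1_{B_n^{\kappa}}(\weyl(\wt{\mu}),T)$ with the cokernel of the connecting map $\Hom_{B_n^{\kappa}}(\weyl(\wt{\mu}),L')\to\Ext^1_{B_n^{\kappa}}(\weyl(\wt{\mu}),P)$; for $\wt{\mu}\neq\wt{\lambda}_{\rm min}'$ the target vanishes, while for $\wt{\mu}=\wt{\lambda}_{\rm min}'$ the source is one-dimensional (as $\head\weyl(\wt{\lambda}_{\rm min}')=L(\wt{\lambda}_{\rm min}')$) and the connecting map sends the canonical surjection to the non-zero class of $T$, which spans the one-dimensional target. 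Hence $T$ is a tilting module.

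It remains to show $T$ is indecomposable of highest weight $\wt{\lambda}$. Non-splitness forces $\soc T=\soc P$ (a simple submodule meeting $P$ trivially would map isomorphically onto $L'$ and split the sequence), so $\soc T=(L(\wt{\lambda}_{\rm min})\oplus L(\wt{\lambda}_{\rm min}'))\langle d\rangle$ by Theorem~\ref{thm:allprojsocle}. If $T=T_1\oplus T_2$ with $T_2\neq 0$, then since $(T:\weyl(\wt{\lambda}))_v=1$ one summand, say $T_1$, contains $\weyl(\wt{\lambda})$ as a submodule; but $\soc\weyl(\wt{\lambda})=L(\wt{\lambda}_{\rm min})\oplus L(\wt{\lambda}_{\rm min}')$ by the proof of Theorem~\ref{thm:allprojsocle}, so $\soc T_1$ already exhausts $\soc T$, forcing $\soc T_2=0$ and $T_2=0$; hence $T$ is indecomposable. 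Therefore $T\cong T(\wt{\lambda})\langle k\rangle$, and comparing $(T:\weyl(\wt{\lambda}))_v=1$ with the normalisation gives $k=0$, so $T=T(\wt{\lambda})$, as claimed. The main obstacle is the $\Ext^1$ claim in case~(ii) --- that the only obstruction to $P(\wt{\lambda}_{\rm min})$ being dual-Weyl filtered is a single class at $\wt{\lambda}_{\rm min}'$. I expect to deduce it from the explicit radical structure of $P(\wt{\lambda}_{\rm min})$, pinned down by Theorem~\ref{thm:allprojsocle} and the known structure of the Weyl modules, together with the alcove-geometric computation of $\Ext^1$-groups between Weyl modules, propagated along the Weyl filtration of $P(\wt{\lambda}_{\rm min})$ by long exact sequences.
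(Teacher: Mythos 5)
Part (i) of your proposal is correct and is essentially the paper's own argument (simple socle, embed $P(\wt{\lambda}_{\rm min})$ into its injective hull, compare characters, fix the shift by the graded decomposition numbers). The problem is part (ii): everything you build rests on the ``crucial input'' that $\Ext^1_{B_n^{\kappa}}(\weyl(\wt{\mu}),P(\wt{\lambda}_{\rm min}))$ is one-dimensional for $\wt{\mu}=\wt{\lambda}_{\rm min}'$ and zero for every other $\wt{\mu}$, and you never prove it --- you only say you expect to deduce it from the radical structure of $P(\wt{\lambda}_{\rm min})$ and from $\Ext^1$-groups between Weyl modules, propagated along the Weyl filtration. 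That deferred claim \emph{is} the theorem: the paper's proof of (ii) is essentially one long computation of exactly this $\Ext$-group, carried out by induction on $n$ through the tower of recollement. Concretely, the paper applies $\Hom_{B_n^{\kappa}}(-,P(\wt{\lambda}_{\rm min}))$ to $0 \to \Omega\weyl(\wt{\lambda}_{\rm min}') \to P(\wt{\lambda}_{\rm min}') \to \weyl(\wt{\lambda}_{\rm min}') \to 0$, identifies $F^r(\Omega\weyl(\wt{\lambda}_{\rm min}'))$ and $F^r P(\wt{\lambda}_{\rm min})$ with the smaller indecomposable tilting module $T(1^{\lambda_1-r},1^{\lambda_2-r})$ using the inductive hypothesis, the localisation isomorphisms \cite[Proposition~A3.13]{donkbk}, and Theorem~\ref{thm:allprojsocle}, and then pins the dimension to $1$ via the fact that $\dim \End$ of a tilting module equals the number of its Weyl factors \cite[Proposition~A2.2(ii)]{donkbk}. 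None of the ingredients you cite suffice as stated: Theorem~\ref{thm:allprojsocle} determines only the \emph{socle} of $P(\wt{\lambda}_{\rm min})$, not its radical filtration, and $\Ext^1$-groups between Weyl modules of $B_n^{\kappa}$ are computed nowhere in the paper and do not follow from the decomposition numbers alone, so your ``propagate by long exact sequences'' step has nothing concrete to propagate.

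A second, smaller point: you also assert vanishing of $\Ext^1_{B_n^{\kappa}}(\weyl(\wt{\mu}),P(\wt{\lambda}_{\rm min}))$ for all $\wt{\mu}\neq\wt{\lambda}_{\rm min}'$, including $\wt{\mu}=\wt{\lambda}_{\rm min}$ and all higher weights in the linkage class; the paper does not get these for free either --- it obtains $(T(\wt{\lambda}):\weyl(\wt{\mu}))=(P(\wt{\lambda}_{\rm min}):\weyl(\wt{\mu}))=1$ for $\wt{\mu}\neq\wt{\lambda}_{\rm min},\wt{\lambda}_{\rm min}'$ from stability of tilting multiplicities across $n$ (the induction), and treats $\wt{\mu}=\wt{\lambda}_{\rm min}$ by a parallel $\Ext$ computation, all mediated by \cite[Lemma~A4.1]{donkbk}. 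Granting the $\Ext$ claim, your assembly of the candidate extension into $T(\wt{\lambda})$ (the good-filtration criterion via the connecting homomorphism, and the socle-based indecomposability argument) is sound and in places more self-contained than the paper's appeal to Donkin; but as it stands the proof of (ii) has a genuine gap precisely where the real work lies.
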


\begin{proof}
As in the previous theorem we prove the ungraded form of the result first. 
For the first claim, if $\wt{\lambda}_{\rm min}$ is unpaired then $\soc
P(\wt{\lambda}_{\rm min})=L(\wt{\lambda}_{\rm min})$ by
Theorem~\ref{thm:allprojsocle}. Thus $P(\wt{\lambda}_{\rm min})$
embeds inside $I(\wt{\lambda}_{\rm min})$. But both modules have the
same character, so we must in fact have $P(\wt{\lambda}_{\rm
  min})=I(\wt{\lambda}_{\rm min})$ is self-dual and therefore is an
indecomposable tilting module. By weight considerations it must be
a grade shift of $T(\wt{\lambda})$, which we reverse using the 
singular graded decomposition numbers.

For the second claim, we induct on $n$. 
Assume that the indecomposable tilting module 
in $B_m^{\kappa}$ with the same classical weight
has the structure above for all
$m<n$. By stability of tilting multiplicities this implies that in
$B_n^{\kappa}$ we have 
\begin{equation*}
(T(\wt{\lambda}):\weyl(\wt{\mu}))=1=(P(\wt{\lambda}_{\rm min}):\weyl(\wt{\mu}))
\end{equation*}
whenever $\wt{\mu} \neq \wt{\lambda}_{\rm min},\wt{\lambda}_{\rm
  min}'$. By \cite[Lemma~A4.1]{donkbk} and its proof
$P(\wt{\lambda}_{\rm min})$ embeds inside $T(\wt{\lambda})$ and 
\begin{align*}
(T(\wt{\lambda}):\weyl(\wt{\lambda}_{\rm min}'))& =\dim
\Ext^1_{B_n^{\kappa}}(\weyl(\wt{\lambda}_{\rm min}'),P(\wt{\lambda}_{\rm min}))
\text{,} \\
(T(\wt{\lambda}):\weyl(\wt{\lambda}_{\rm min}))& =\dim
\Ext^1_{B_n^{\kappa}}(\weyl(\wt{\lambda}_{\rm min}),P(\wt{\lambda}_{\rm min}))+1
\text{.}
\end{align*}
We will calculate the dimension of the first $\Ext$-group; the second
calculation is similar.

Let $\Omega \weyl(\wt{\lambda}_{\rm min}')$ be the kernel of the natural map 
$P(\wt{\lambda}_{\rm min}') \rightarrow \weyl(\wt{\lambda}_{\rm min}')$.
We have a short exact sequence
\begin{equation*}
0 \to  \Omega \weyl(\wt{\lambda}_{\rm min}') \to
P(\wt{\lambda}_{\rm min}') \to  \weyl(\wt{\lambda}_{\rm min}') \to 0
\end{equation*}
which induces a long exact sequence
\newcommand{\lmin}{\wt{\lambda}_{\mathrm{min}}}
\newcommand{\ldashmin}{\wt{\lambda}_{\mathrm{min}}'}
$$
\begin{tikzpicture}[descr/.style={fill=white,inner sep=1.5pt}]
        \matrix (m) [
            matrix of math nodes,
            row sep=1.5em,
            column sep=1em,
            text height=1.5ex, text depth=0.25ex
        ]
        { 0 & \Hom_{B_n^{\kappa}}(\weyl(\ldashmin), P(\lmin)) &
              \Hom_{B_n^{\kappa}}(P(\ldashmin), P(\lmin)) & \Hom_{B_n^{\kappa}}(\Omega\weyl(\ldashmin), P(\lmin))  \\
          &   & \Ext^1_{B_n^{\kappa}}(\weyl(\ldashmin), P(\lmin)) &
            \Ext^1_{B_n^{\kappa}}(P(\ldashmin), P(\lmin))\  \smash{=0.} \\
        };

        \path[overlay,->]
        (m-1-1) edge (m-1-2)
        (m-1-2) edge (m-1-3)
        (m-1-3) edge (m-1-4)
        (m-1-4) edge[out=355,in=175] (m-2-3)
        (m-2-3) edge (m-2-4);
\end{tikzpicture}
$$
The first term has dimension $1$ by Theorem \ref{thm:allprojsocle},
while the second term has dimension $[P(\wt{\lambda}_{\rm
  min}):L(\wt{\lambda}_{\rm min}')]$. For the third term, we apply
\cite[Proposition~A3.13]{donkbk} several times to obtain
\begin{equation*}
\Hom_{B_n^{\kappa}}(\Omega \weyl(\wt{\lambda}_{\rm min}'),P(\wt{\lambda}_{\rm min}))
\iso \Hom_{B_{n-2r}^{\kappa}}(F^r(\Omega \weyl(\wt{\lambda}_{\rm min}')),F^r
P(\wt{\lambda}_{\rm min}))
\end{equation*}
where $r \in \N$ is minimal such that $F^r L(\wt{\lambda}_{\rm
  min})=F^r L(\wt{\lambda}_{\rm min}')=0$. 

Localising does not change the $\weyl$-multiplicities in $\Omega
\weyl(\wt{\lambda}_{\rm min}')$ because it has a $\weyl$-filtration
with subquotients labelled by weights larger than $\wt{\lambda}_{\rm
  min}'$. This means that $F^r(\Omega \weyl(\wt{\lambda}_{\rm min}'))$
has the same $\weyl$-multiplicities as
$T(1^{\lambda_1-r},1^{\lambda_2-r})$ by induction.  Let
$\wt{\mu}=(1^{\mu_1},1^{\mu_2}) \in \Lambda(n)$ be a weight dominating
$\wt{\lambda}_{\rm min}$ and $\wt{\lambda}_{\rm min}'$ but no other
weights, and define $\wt{\mu}' \neq \wt{\mu}$ similarly if such a
weight exists. Applying \cite[Proposition~A3.13]{donkbk} again we get
\begin{align*}
\dim \Hom_{B_{n-2r}^{\kappa}}(\weyl(1^{\wt{\mu}_1-r},1^{\mu_2-r}),F^r(\Omega \weyl(\wt{\lambda}_{\rm min}')))& =\dim \Hom_{B_{n-2r}^{\kappa}}(F^r \weyl(\wt{\mu}),F^r(\Omega \weyl(\wt{\lambda}_{\rm min}')))  \\
& =\dim \Hom_{B_n^{\kappa}}(\weyl(\wt{\mu}),\Omega \weyl(\wt{\lambda}_{\rm min}')) \\
& =1
\end{align*}
and similarly for $\wt{\mu}'$, so $\soc F^r(\Omega \weyl(\wt{\lambda}_{\rm min}'))=\soc T(1^{\lambda_1-r},1^{\lambda_2-r})$. Another application of \cite[Lemma~A4.1]{donkbk} establishes that $F^r(\Omega \weyl(\wt{\lambda}_{\rm min}'))=T(1^{\lambda_1-r},1^{\lambda_2-r})$.

On the other hand, from the short exact sequence
\begin{equation*}
0 \to \Omega \weyl(\wt{\lambda}_{\rm min}) \to
P(\wt{\lambda}_{\rm min}) \to \weyl(\wt{\lambda}_{\rm min})
\to 0
\end{equation*}
it is clear that $F^r P(\wt{\lambda}_{\rm min})=F^r \Omega \weyl(\wt{\lambda}_{\rm min})$. As before $F^r \Omega \weyl(\wt{\lambda}_{\rm min})=T(1^{\lambda_1-r},1^{\lambda_2-r})$. Thus
\begin{equation*}
\dim \Hom_{B_n^{\kappa}}(\Omega \weyl(\wt{\lambda}_{\rm min}'),P(\wt{\lambda}_{\rm min}))=\dim \End T(1^{\lambda_1-r},1^{\lambda_2-r}) \text{,}
\end{equation*}
which by \cite[Proposition~A2.2(ii)]{donkbk} equals the number of Weyl subquotients in $T(1^{\lambda_1-r},1^{\lambda_2-r})$. But by induction this is just $1$ less than the number of Weyl subquotients in $P(\wt{\lambda}_{\rm min})$, which is exactly $[P(\wt{\lambda}_{\rm min}):L(\wt{\lambda}_{\rm min}')]$. Thus the relevant $\Ext$-group is $1$-dimensional and the ungraded result follows. The correct grade shift follows from the regular graded decomposition numbers.
\end{proof}


To write the other tilting modules, it is useful to introduce some notation due to Donkin. For $\wt{\lambda} \in \Lambda(n)$ and $M$ a $B_n^{\kappa}$-module, write $O_{\domleq \wt{\lambda}}(M)$ for the maximal submodule of $M$ whose composition factors are all of the form $L(\wt{\mu})$ for some $\wt{\mu} \domleq \wt{\lambda}$. Dually we write $O^{\domleq \wt{\lambda}}(M)$ for the minimal submodule of $M$ such that $M/O^{\domleq \wt{\lambda}}(M)$ has composition factors of the form $L(\wt{\mu})$ for some $\wt{\mu} \domleq \wt{\lambda}$,

\begin{thm} \label{thm:resttilt}
Let $\wt{\lambda} \in \Lambda(n)$ be any weight. 
Then $T(\wt{\lambda})=O_{\domleq \wt{\lambda}}(T(\wt{\lambda}_{\rm max}))\langle -\deg \subqtab{\wt{\lambda}_{\rm max}}{\wt{\lambda}} \rangle$.
\end{thm}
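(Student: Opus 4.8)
The plan is to prove the ungraded isomorphism $T(\wt{\lambda})\iso O_{\domleq \wt{\lambda}}(T(\wt{\lambda}_{\rm max}))$ and then recover the grade shift exactly as at the ends of the proofs of Theorems~\ref{thm:allprojsocle} and~\ref{thm:fundtilt}: the copy of $\weyl(\wt{\lambda})$ sitting inside $O_{\domleq \wt{\lambda}}(T(\wt{\lambda}_{\rm max}))$ (coming from Corollary~\ref{cor:projsubquot} and Lemma~\ref{lem:subqtab}) occurs in degree $\deg \subqtab{\wt{\lambda}_{\rm max}}{\wt{\lambda}}$, whereas our normalisation demands $(T(\wt{\lambda}):\weyl(\wt{\lambda}))_v=1$, which forces the shift $\langle -\deg \subqtab{\wt{\lambda}_{\rm max}}{\wt{\lambda}}\rangle$. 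The case $\wt{\lambda}=\wt{\lambda}_{\rm max}$ is immediate: then $\subqtab{\wt{\lambda}_{\rm max}}{\wt{\lambda}}=\domtab{\wt{\lambda}_{\rm max}}$ has degree $0$ and every weight in the linkage class is $\domleq \wt{\lambda}_{\rm max}$, so the truncation changes nothing. Assume from now on that $\wt{\lambda}\domless \wt{\lambda}_{\rm max}$, write $T=T(\wt{\lambda}_{\rm max})$ and $N=O_{\domleq \wt{\lambda}}(T)$, and let $D$ denote the duality functor.

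First I would show that $N$ has a dual Weyl filtration with $(N:\dweyl(\wt{\nu}))=(T:\dweyl(\wt{\nu}))$ for $\wt{\nu}\domleq \wt{\lambda}$ and $0$ otherwise. Since $\Ext^1_{B_n^{\kappa}}(\dweyl(\wt{\mu}),\dweyl(\wt{\nu}))=0$ unless $\wt{\mu}\domgreater \wt{\nu}$, any dual Weyl filtration of the tilting module $T$ can be reordered so that all sections $\dweyl(\wt{\nu})$ with $\wt{\nu}\domleq \wt{\lambda}$ appear at the bottom; the resulting submodule $N'$ has all composition factors of the form $L(\wt{\mu})$ with $\wt{\mu}\domleq \wt{\lambda}$, so $N'\subseteq N$. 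Conversely $T/N'$ is dual Weyl filtered with sections $\dweyl(\wt{\nu})$, $\wt{\nu}\not\domleq \wt{\lambda}$, and a short induction up this filtration shows that $\soc(T/N')$ contains no $L(\wt{\mu})$ with $\wt{\mu}\domleq \wt{\lambda}$; hence any submodule of $T$ whose composition factors are all $\domleq \wt{\lambda}$ maps to zero in $T/N'$, i.e. lies inside $N'$, and $N=N'$. (This is the standard truncation principle for highest weight categories, cf.~\cite{donkbk}.)

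Next I would prove that $N$ is a self-dual tilting module of highest weight $\wt{\lambda}$, which is the heart of the argument. Dualising the inclusion $N\hookrightarrow T$ and using $DT\iso T$ gives a surjection $T\twoheadrightarrow DN$, with $DN$ Weyl filtered by precisely the $\weyl(\wt{\nu})$ with $\wt{\nu}\domleq \wt{\lambda}$. The claim is that the composite $N\hookrightarrow T\twoheadrightarrow DN$ is an isomorphism, equivalently that $N\cap O^{\domleq \wt{\lambda}}(T)=0$ and $N+O^{\domleq \wt{\lambda}}(T)=T$. For this I would invoke the explicit description of $T$ from Theorem~\ref{thm:fundtilt} — namely $T=P(\wt{\lambda}_{\rm min})$ up to shift when $\wt{\lambda}_{\rm min}$ is unpaired, and the non-split extension $0\to P(\wt{\lambda}_{\rm min})\to T\to \weyl(\wt{\lambda}_{\rm min}')\to 0$ when it is paired — together with the socles of projectives from Theorems~\ref{thm:allprojsocle} and~\ref{thm:fundtilt}, to read off the submodule lattice near the relevant composition factors and check in each case that the factors $L(\wt{\mu})$ with $\wt{\mu}\not\domleq \wt{\lambda}$ all lie in $O^{\domleq \wt{\lambda}}(T)$ and sit strictly above $N$. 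Once $N\iso DN$ is established, $N$ has both a Weyl and a dual Weyl filtration, so it is tilting; since $(N:\weyl(\wt{\nu}))=(T:\weyl(\wt{\nu}))$ is supported on $\{\wt{\nu}\domleq \wt{\lambda}\}$ with $\wt{\lambda}$ maximal in that set, the highest weight of $N$ is $\wt{\lambda}$, and $(N:\weyl(\wt{\lambda}))=(T:\weyl(\wt{\lambda}))=1$ forces $N$ to be indecomposable. Hence $N\iso T(\wt{\lambda})$ (ungraded), and the grade shift follows as above.

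I expect the self-duality step of the third paragraph to be the main obstacle. The truncation argument of the second paragraph only produces a dual Weyl filtration of $N$ for free, and it is \emph{not} formal that $N$ then also has a Weyl filtration: this genuinely uses that $T$ is the specific module of Theorem~\ref{thm:fundtilt}, not merely an arbitrary module with good and Weyl filtrations. I anticipate the most delicate bookkeeping to be in the paired case, keeping track of the incomparable partner weight $\wt{\lambda}_{\rm min}'$ and of which composition factors of $T$ land in which of the canonical submodules $N$ and $O^{\domleq \wt{\lambda}}(T)$.
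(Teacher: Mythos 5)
Your paragraph 3 --- the step you yourself flag as the heart of the argument --- is based on a false claim, and it cannot be rescued by case-checking. Writing $T=T(\wt{\lambda}_{\rm max})$ and $N=O_{\domleq\wt{\lambda}}(T)$, the composite $N\hookrightarrow T\twoheadrightarrow DN\iso T/O^{\domleq\wt{\lambda}}(T)$ is \emph{never} an isomorphism when $\wt{\lambda}\domless\wt{\lambda}_{\rm max}$. Indeed, as you note, it being an isomorphism is equivalent to $N\cap O^{\domleq\wt{\lambda}}(T)=0$ and $N+O^{\domleq\wt{\lambda}}(T)=T$, i.e.\ to an internal direct sum decomposition $T=N\oplus O^{\domleq\wt{\lambda}}(T)$ with both summands nonzero --- contradicting the indecomposability of $T(\wt{\lambda}_{\rm max})$. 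More concretely, the kernel $N\cap O^{\domleq\wt{\lambda}}(T)$ is nonzero: $T$ has a Weyl filtration with $\weyl(\wt{\lambda}_{\rm max})$ as a submodule, and since every nonzero quotient of a Weyl module has simple head $L(\wt{\lambda}_{\rm max})\not\domleq\wt{\lambda}$, any submodule $K$ with $T/K$ having factors $\domleq\wt{\lambda}$ must contain all of $\weyl(\wt{\lambda}_{\rm max})$; hence $\weyl(\wt{\lambda}_{\rm max})\subseteq O^{\domleq\wt{\lambda}}(T)$, while its socle (consisting of minimal-weight simples, by the structure of Weyl modules used in Theorem~\ref{thm:allprojsocle}) lies in $\soc T\subseteq N$. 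So the factors $L(\wt{\mu})$ with $\wt{\mu}\not\domleq\wt{\lambda}$ do lie in $O^{\domleq\wt{\lambda}}(T)$, but that submodule reaches all the way down to the socle of $T$ and is not ``strictly above'' $N$; self-duality of $N$ cannot be read off from this composite.

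The paper gets the Weyl filtration of $N$ by a different (and essentially formal) route, which you should adopt: by \cite[Lemma~A4.5]{donkbk}, $N$ is the indecomposable tilting module of highest weight $\wt{\lambda}$ for the truncated algebra $B_n^{\kappa}(\domleq\wt{\lambda})=B_n^{\kappa}/O^{\domleq\wt{\lambda}}(B_n^{\kappa})$, so $\Ext^1_{B_n^{\kappa}(\domleq\wt{\lambda})}(N,\dweyl(\wt{\mu}))=0$ for $\wt{\mu}\domleq\wt{\lambda}$, and by \cite[Proposition~A3.3]{donkbk} these $\Ext$-groups agree with those over $B_n^{\kappa}$; since all composition factors of $N$ are $\domleq\wt{\lambda}$, this vanishing gives a $\weyl$-filtration of $N$ over $B_n^{\kappa}$, so $N$ is tilting. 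Note also that your final inference ``$(N:\weyl(\wt{\lambda}))=1$ forces $N$ indecomposable'' is not valid on its own: a sum $T(\wt{\lambda})\oplus T(\wt{\nu})$ with $\wt{\nu}\domless\wt{\lambda}$ has the same top multiplicity. The paper instead deduces indecomposability from the socle bound of Theorem~\ref{thm:allprojsocle} (the socle of $N\subseteq T(\wt{\lambda}_{\rm max})$ is too small to accommodate two tilting summands), and then fixes the grade shift from the graded decomposition numbers, as you propose.
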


\begin{proof}
First of all, it is clear that $O_{\domleq \wt{\lambda}}(T(\wt{\lambda}_{\rm max}))$ has a $\dweyl$-filtration. By \cite[Lemma~A4.5]{donkbk} $O_{\domleq \wt{\lambda}}(T(\wt{\lambda}_{\rm max}))$ is the indecomposable tilting module of highest weight $\wt{\lambda}$ in the algebra $B_n^{\kappa}(\domleq \wt{\lambda})=B_n^{\kappa}/O^{\domleq \wt{\lambda}}(B_n^{\kappa})$. Using \cite[Proposition~A3.3]{donkbk} we have
\begin{equation*}
\Ext^1_{B_n^{\kappa}}(O_{\domleq \wt{\lambda}}(T(\wt{\lambda}_{\rm max})),\dweyl(\wt{\mu}))=\Ext^1_{B_n^{\kappa}(\domleq \wt{\lambda})}(O_{\domleq \wt{\lambda}}(T(\wt{\lambda}_{\rm max})),\dweyl(\wt{\mu}))=0
\end{equation*}
for any $\wt{\mu} \domleq \wt{\lambda}$. This means that $O_{\domleq \wt{\lambda}}(T(\wt{\lambda}_{\rm max}))$ has a $\weyl$-filtration too, and thus must be a tilting module for $B_n^{\kappa}$. But the socle of $O_{\domleq \wt{\lambda}}(T(\wt{\lambda}_{\rm max}))$ is as small as possible by Theorem \ref{thm:allprojsocle}, so it must also be indecomposable, and thus $O_{\domleq \wt{\lambda}}(T(\wt{\lambda}_{\rm max}))$ is a grade shift of $T(\wt{\lambda})$, and we surmise the correct grade shift from knowledge of the graded decomposition numbers.
\end{proof}

\subsection{Tilting characters}

For $x,y \in \W$, define the (Laurent) polynomial $h^{x,y}$ by
\begin{equation*}
h^{x,y}(v)=\begin{cases}
v^{\len(x)-\len(y)} & \text{if $y \leq x$,} \\
0 & \text{otherwise.}
\end{cases}
\end{equation*}
Our use of a superscript is intentional. We mean to emphasise the fact that these are the \emph{inverse} Kazhdan--Lusztig polynomials associated to $\W$ (in the notation of \cite{soergel-KL}), which happen to coincide with ordinary Kazhdan--Lusztig polynomials in type $\tilde{A}_1$. The graded Weyl multiplicities of the regular indecomposable tilting modules are as follows.

\begin{cor} \label{cor:grtiltchar}
Let $\wt{\lambda},\wt{\mu}$ be regular weights lying in the same linkage class. Then we have 
\begin{equation*}
(T(\wt{\mu}): \weyl(\wt{\lambda}))_v=\overline{h^{w_{\wt{\lambda}},w_{\wt{\mu}}}} \text{.}
\end{equation*}
\end{cor}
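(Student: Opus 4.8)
The plan is to derive the graded Weyl multiplicities of the regular indecomposable tilting modules directly from Theorem~\ref{thm:resttilt}, which expresses every $T(\wt{\mu})$ as a grade-shifted $O_{\domleq \wt{\mu}}$-truncation of $T(\wt{\mu}_{\rm max})$, together with Theorem~\ref{thm:fundtilt}, which describes $T(\wt{\mu}_{\rm max})$ in terms of a projective module $P(\wt{\mu}_{\rm min})$ (possibly extended by a Weyl module). The key observation is that $\weyl$-multiplicities are additive on short exact sequences of $\weyl$-filtered modules, and that the truncation functor $O_{\domleq \wt{\mu}}(-)$ simply discards the Weyl subquotients labelled by weights \emph{not} $\domleq \wt{\mu}$, so the computation reduces to knowing $(P(\wt{\mu}_{\rm min}) : \weyl(\wt{\lambda}))_v$ and then restricting the sum to $\wt{\lambda} \domleq \wt{\mu}$ and applying the correct grade shifts.

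First I would invoke graded Brauer--Humphreys reciprocity, which gives $(P(\wt{\mu}_{\rm min}) : \weyl(\wt{\lambda}))_v = [\weyl(\wt{\lambda}) : L(\wt{\mu}_{\rm min})]_v$, and then apply Theorem~\ref{thm:grdecompblob} to rewrite the right-hand side as $h_{w_{\wt{\lambda}}, w_{\wt{\mu}_{\rm min}}}(v)$. Since $\wt{\mu}_{\rm min}$ is a minimal weight in a regular linkage class, $w_{\wt{\mu}_{\rm min}}$ has maximal length among elements realising weights in that class, so $h_{w_{\wt{\lambda}}, w_{\wt{\mu}_{\rm min}}}(v) = v^{\len(w_{\wt{\mu}_{\rm min}}) - \len(w_{\wt{\lambda}})}$ for every $\wt{\lambda}$ in the linkage class (the condition $w_{\wt{\lambda}} \leq w_{\wt{\mu}_{\rm min}}$ being automatic). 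Next I would combine this with Theorem~\ref{thm:fundtilt}: in the unpaired case $T(\wt{\mu}_{\rm max}) = P(\wt{\mu}_{\rm min})\langle -\deg \subqtab{\wt{\mu}_{\rm max}}{\wt{\mu}_{\rm min}}\rangle$, while in the paired case $T(\wt{\mu}_{\rm max})$ has one extra Weyl subquotient $\weyl(\wt{\mu}_{\rm min}')$ in the appropriate degree. In both cases I would verify that the resulting multiplicities $(T(\wt{\mu}_{\rm max}) : \weyl(\wt{\lambda}))_v$ are exactly $v^{a}h_{w_{\wt{\lambda}}, w_{\wt{\mu}_{\rm max}}}(v)$-style expressions; since $\wt{\mu}_{\rm max} = \wt{\mu}_{\rm fund}$ has $\len(w_{\wt{\mu}_{\rm max}}) = 0$, the exponents collapse nicely and the normalisation $(T(\wt{\mu}_{\rm max}) : \weyl(\wt{\mu}_{\rm max}))_v = 1$ pins down the grade shift.

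Then I would pass from $\wt{\mu}_{\rm max}$ to a general regular $\wt{\mu}$ using Theorem~\ref{thm:resttilt}: $(T(\wt{\mu}) : \weyl(\wt{\lambda}))_v = v^{-\deg \subqtab{\wt{\mu}_{\rm max}}{\wt{\mu}}}(T(\wt{\mu}_{\rm max}) : \weyl(\wt{\lambda}))_v$ whenever $\wt{\lambda} \domleq \wt{\mu}$, and $0$ otherwise, because $O_{\domleq \wt{\mu}}(-)$ kills precisely the Weyl subquotients with label not $\domleq \wt{\mu}$ (here one uses that a $\weyl$-filtered module's truncation by $O_{\domleq \wt{\mu}}$ retains exactly the $\weyl(\wt{\lambda})$ with $\wt{\lambda} \domleq \wt{\mu}$, which follows from the quasi-hereditary ordering). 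Combining the grade shift $\deg \subqtab{\wt{\mu}_{\rm max}}{\wt{\mu}} = \len(w_{\wt{\mu}_{\rm max}}) - \len(w_{\wt{\mu}}) = -\len(w_{\wt{\mu}})$ (by Lemma~\ref{lem:subqtab} applied with $\wt{\lambda}=\wt{\mu}_{\rm max}$) with the exponent $\len(w_{\wt{\mu}_{\rm min}}) - \len(w_{\wt{\lambda}})$ from the previous step, and using that the condition $\wt{\lambda} \domleq \wt{\mu}$ together with regularity translates into $w_{\wt{\mu}} \leq w_{\wt{\lambda}}$ in the Bruhat order on $\W$, I would arrive at $(T(\wt{\mu}) : \weyl(\wt{\lambda}))_v = v^{\len(w_{\wt{\lambda}}) - \len(w_{\wt{\mu}})} = h^{w_{\wt{\lambda}}, w_{\wt{\mu}}}(v)$ when $w_{\wt{\mu}} \leq w_{\wt{\lambda}}$ and $0$ otherwise --- but reading off the stated form requires bar-conjugation, since the displayed answer is $\overline{h^{w_{\wt{\lambda}},w_{\wt{\mu}}}}$; this sign-of-exponent bookkeeping (tracking whether the grading convention for $T$ makes the Weyl multiplicities polynomials in $v$ or in $v^{-1}$) is the main place care is needed.

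The main obstacle I expect is precisely this grade-shift and Bruhat-order bookkeeping: one must (a) correctly identify which of $w_{\wt{\lambda}}, w_{\wt{\mu}}$ sits on top in the Bruhat order given the chosen labelling $w_{\wt{\lambda}}(\wt{\lambda}_{\rm max}) = \wt{\lambda}$ and the $\domleq$-convention $|\lambda_1-\lambda_2| > |\mu_1-\mu_2| \Leftrightarrow \wt{\lambda}\domless\wt{\mu}$ (so dominant = small length = close to the fundamental alcove); (b) check that the normalisation $(T(\wt{\mu}):\weyl(\wt{\mu}))_v = 1$ is consistent with $h^{w_{\wt{\mu}},w_{\wt{\mu}}}(v) = v^0 = 1$; and (c) confirm the bar versus no-bar discrepancy is genuine and accounted for by the self-duality of $T(\wt{\mu})$ and the convention that duality reverses grade shift. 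Once these conventions are nailed down, each individual step is a short deduction from results already established in the excerpt, so the corollary should follow without any further diagrammatic computation.
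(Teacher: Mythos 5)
Your route is the paper's own: Corollary~\ref{cor:grtiltchar} carries no separate argument there and is meant to be read off by combining Theorems~\ref{thm:fundtilt} and \ref{thm:resttilt} with graded reciprocity (equivalently Corollary~\ref{cor:projsubquot}) and the graded decomposition numbers of Theorem~\ref{thm:grdecompblob}, exactly as you propose. However, two pieces of your bookkeeping are wrong as written, and one of them changes a multiplicity.

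First, the parenthetical claim that $w_{\wt{\lambda}} \leq w_{\wt{\mu}_{\rm min}}$ holds automatically for every $\wt{\lambda}$ in the linkage class fails in the paired case for $\wt{\lambda}=\wt{\mu}_{\rm min}'$: there $\len(w_{\wt{\mu}_{\rm min}'})=\len(w_{\wt{\mu}_{\rm min}})$ and the two elements are Bruhat-incomparable, so $h_{w_{\wt{\mu}_{\rm min}'},w_{\wt{\mu}_{\rm min}}}=0$ and hence $(P(\wt{\mu}_{\rm min}):\weyl(\wt{\mu}_{\rm min}'))_v=0$. Taken literally, your claim makes $P(\wt{\mu}_{\rm min})$ contribute one copy of $\weyl(\wt{\mu}_{\rm min}')$ and the extension in Theorem~\ref{thm:fundtilt}(ii) a second, giving total multiplicity $2$ where the corollary requires $1$. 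Indeed, the vanishing of this Weyl multiplicity in the projective is precisely the reason the paired case needs the extra $\weyl(\wt{\mu}_{\rm min}')$ at all; once this is used correctly, both cases give the uniform answer $(T(\wt{\mu}_{\rm max}):\weyl(\wt{\lambda}))_v=v^{-\len(w_{\wt{\lambda}})}$ for every $\wt{\lambda}$ in the class.

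Second, you have the two arguments of Lemma~\ref{lem:subqtab} the wrong way round: the superscript of $\subqtab{\wt{\mu}_{\rm max}}{\wt{\mu}}$ is the shape (the dominant weight) and the subscript indexes the residue, so $\deg \subqtab{\wt{\mu}_{\rm max}}{\wt{\mu}}=\len(w_{\wt{\mu}})-\len(w_{\wt{\mu}_{\rm max}})=\len(w_{\wt{\mu}})\geq 0$; tableau degrees are never negative. The robust way to finish is not to chase the sign of the shift through Theorem~\ref{thm:resttilt} but to use the normalisation $(T(\wt{\mu}):\weyl(\wt{\mu}))_v=1$ fixed in \S\ref{sec:cellularity}: truncation by $O_{\domleq \wt{\mu}}$ retains exactly the factors $\weyl(\wt{\lambda})$ with $\wt{\lambda}\domleq\wt{\mu}$, each with multiplicity $v^{-\len(w_{\wt{\lambda}})}$, and discards the rest (which matches the vanishing of $h^{w_{\wt{\lambda}},w_{\wt{\mu}}}$ when $w_{\wt{\mu}}\leq w_{\wt{\lambda}}$ fails); renormalising then multiplies all multiplicities by $v^{\len(w_{\wt{\mu}})}$, yielding $v^{\len(w_{\wt{\mu}})-\len(w_{\wt{\lambda}})}=\overline{h^{w_{\wt{\lambda}},w_{\wt{\mu}}}}$ as required.
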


There is also a singular version.

\begin{cor}
Let $\wt{\lambda}$ be a singular weight. Then we have
\begin{equation*}
(T(\wt{\lambda}_k) : \weyl(\wt{\lambda}))_v=v^{-k} \text{.}
\end{equation*}
\end{cor}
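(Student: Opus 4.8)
The plan is to read the multiplicity off the explicit construction of $T(\wt{\lambda}_k)$, exactly as Corollary~\ref{cor:grtiltchar} is deduced in the regular case, but feeding in the \emph{singular} graded decomposition numbers of Theorem~\ref{thm:grdecompblob-singular} in place of Theorem~\ref{thm:grdecompblob}. First I would record the features of the singular situation: since $\wt{\lambda}$ is singular its linkage class is totally ordered, so every weight in it is unpaired and there are unique minimal and maximal weights $\wt{\lambda}_{\rm min}\domless\dots\domless\wt{\lambda}_{\rm max}$; moreover $\wt{\lambda}=\wt{\lambda}_0=\wt{\lambda}_{\rm min}$, so $\weyl(\wt{\lambda})=L(\wt{\lambda})=\dweyl(\wt{\lambda})$.

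Next I would apply the structure theorems. By Theorem~\ref{thm:fundtilt}(i) (the unpaired case), $T(\wt{\lambda}_{\rm max})=P(\wt{\lambda}_{\rm min})\langle-\deg\subqtab{\wt{\lambda}_{\rm max}}{\wt{\lambda}_{\rm min}}\rangle$, and by Corollary~\ref{cor:singprojsimplesocle} its socle is simple, equal to a known grade shift of $L(\wt{\lambda}_{\rm min})$. By Theorem~\ref{thm:resttilt}, $T(\wt{\lambda}_k)$ is a grade shift of the submodule $O_{\domleq\wt{\lambda}_k}(T(\wt{\lambda}_{\rm max}))$; since $\wt{\lambda}_{\rm min}\domleq\wt{\lambda}_k$ this submodule still contains $\soc T(\wt{\lambda}_{\rm max})$, so $T(\wt{\lambda}_k)$ again has simple socle, and hence, being self-dual, simple head, namely a grade shift of $L(\wt{\lambda}_{\rm min})$. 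Because $\dweyl(\wt{\lambda}_{\rm min})=L(\wt{\lambda}_{\rm min})$, the Weyl multiplicity $(T(\wt{\lambda}_k):\weyl(\wt{\lambda}))_v$ is computed by $\dim_v\Hom_{B_n^{\kappa}}(L(\wt{\lambda}_{\rm min}),T(\wt{\lambda}_k))$, i.e.\ it is determined by the grade at which $L(\wt{\lambda}_{\rm min})$ sits in the socle of $T(\wt{\lambda}_k)$, which the two displays above pin down.

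The only real work is then bookkeeping: the grade shift coming out of this computation is $\deg\subqtab{\wt{\lambda}_{\rm max}}{\wt{\lambda}_{\rm min}}-\deg\subqtab{\wt{\lambda}_{\rm max}}{\wt{\lambda}_k}$, and I would verify that this equals $k$, so that, after accounting for the grade-reversing duality and the normalisation $(T(\wt{\lambda}_0):\weyl(\wt{\lambda}_0))_v=1$, the answer comes out as $v^{-k}$ and not $v^{k}$. By Lemma~\ref{lem:subqtab} together with Theorem~\ref{thm:grdecompblob-singular}, $\deg\subqtab{\wt{\mu}}{\wt{\nu}}$ is characterised for singular $\wt{\nu}\domleq\wt{\mu}$ by $[\weyl(\wt{\mu}):L(\wt{\nu})]_v=v^{\deg\subqtab{\wt{\mu}}{\wt{\nu}}}$, so the required identity $\deg\subqtab{\wt{\lambda}_{\rm max}}{\wt{\lambda}_{\rm min}}=\deg\subqtab{\wt{\lambda}_{\rm max}}{\wt{\lambda}_k}+\deg\subqtab{\wt{\lambda}_k}{\wt{\lambda}_{\rm min}}=\deg\subqtab{\wt{\lambda}_{\rm max}}{\wt{\lambda}_k}+k$ reduces to the additivity of the singular decomposition-multiplicity exponents along the chain $\wt{\lambda}_{\rm min}\domleq\wt{\lambda}_k\domleq\wt{\lambda}_{\rm max}$, which follows from Theorem~\ref{thm:grdecompblob-singular} applied with the various base points (equivalently, from how the labels of a singular linkage class transform under change of base point). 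I expect this last verification — keeping the several grade shifts and the direction of the bar $v\leftrightarrow v^{-1}$ mutually consistent — to be the only delicate point; everything else is a direct substitution into theorems already proved.

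Alternatively, one can bypass the explicit structure altogether and argue by graded tilting reciprocity: for a graded quasi-hereditary algebra with a simple-preserving duality (such as $B_n^{\kappa}$) one has $(T(\wt{\mu}):\weyl(\wt{\nu}))_v=\overline{[\weyl(\wt{\mu}):L(\wt{\nu})]_v}$, and feeding in Theorem~\ref{thm:grdecompblob-singular} with $\wt{\mu}=\wt{\lambda}_k$ and $\wt{\nu}=\wt{\lambda}$ gives $\overline{v^{k}}=v^{-k}$ immediately.
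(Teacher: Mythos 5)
Your overall route -- reading the multiplicity off Theorems~\ref{thm:fundtilt} and \ref{thm:resttilt} together with the singular graded decomposition data, exactly as Corollary~\ref{cor:grtiltchar} is read off in the regular case -- is the intended one (the paper states this corollary without separate proof). But your first reduction contains a genuine error: it is \emph{not} true that a singular weight satisfies $\wt{\lambda}=\wt{\lambda}_{\rm min}$, nor that $\weyl(\wt{\lambda})=L(\wt{\lambda})$. A singular linkage class is a totally ordered chain of several wall weights; in the notation of \S\ref{sec:singprojmod} the weights $\wt{\eta}_0,\dotsc,\wt{\eta}_m$ are all singular, and $\wt{\eta}_j$ with $0<j<m$ is neither minimal nor maximal, so $\weyl(\wt{\eta}_j)$ has several composition factors. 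The corollary is meant for \emph{every} singular $\wt{\lambda}$ (that is what makes it the full singular analogue of Corollary~\ref{cor:grtiltchar}), and for non-minimal $\wt{\lambda}$ your computation of $\dim_v\Hom_{B_n^{\kappa}}(L(\wt{\lambda}_{\rm min}),T(\wt{\lambda}_k))$ via the socle only determines $(T(\wt{\lambda}_k):\dweyl(\wt{\lambda}_{\rm min}))_v$, i.e.\ the corollary in the special case $\wt{\lambda}=\wt{\lambda}_{\rm min}$. To obtain the general case you must push the full graded Weyl multiplicities $(P(\wt{\lambda}_{\rm min}):\weyl(\wt{\mu}))_v$, coming from graded Brauer--Humphreys reciprocity and Theorem~\ref{thm:grdecompblob-singular} taken with base point $\wt{\lambda}_{\rm min}$, through Theorems~\ref{thm:fundtilt} and \ref{thm:resttilt}; and here there is a second point your bookkeeping glosses over: the truncation $O_{\domleq\wt{\lambda}_k}$ preserves the graded $\dweyl$-multiplicities of $T(\wt{\lambda}_{\rm max})$, \emph{not} its $\weyl$-multiplicities (compare $(T(\wt{\lambda}_{\rm max}):\weyl(\wt{\lambda}_k))_v$ with the normalisation $(T(\wt{\lambda}_k):\weyl(\wt{\lambda}_k))_v=1$), so one has to pass through the $\dweyl$-multiplicities and the graded self-duality of the normalised indecomposable tilting modules before converting back to $\weyl$-multiplicities.

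Your ``alternative'' argument is not a repair but a circularity: the identity $(T(\wt{\mu}):\weyl(\wt{\nu}))_v=\overline{[\weyl(\wt{\mu}):L(\wt{\nu})]_v}$ is not a formal consequence of having a grading and a simple-preserving duality on a quasi-hereditary algebra. Tilting multiplicities are decomposition numbers of the \emph{Ringel dual}, and identifying them with decomposition numbers of $B_n^{\kappa}$ itself -- equivalently, identifying inverse Kazhdan--Lusztig polynomials with Kazhdan--Lusztig polynomials, a coincidence special to type $\tilde{A}_1$ -- is precisely the content of the results being proved; in higher levels the conjectured answer is $\overline{h^{w_{\wt{\lambda}},w_{\wt{\mu}}}}$ rather than $\overline{h_{w_{\wt{\mu}},w_{\wt{\lambda}}}}$, which shows no such general reciprocity can hold. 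So the one-line deduction assumes the conclusion.
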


We conclude with a few remarks on possible extensions of this result.

\begin{rem} \label{rem:conclusion} \hfill
\begin{enumerate}
\item The blob algebra is the quotient of a level $2$ cyclotomic Hecke algebra. The \defnemph{generalised blob algebras} are analogous quasi-hereditary quotients of level $l$ cyclotomic Hecke algebras for integers $l>2$. These algebras have a very similar KLR presentation \cite{libedinskyplaza}. Moreover, the representation theory of the level $l$ generalised blob algebra is governed by the combinatorics of one-column $l$-multipartitions, with a linkage principle coming from the affine Weyl group of type $\tilde{A}_{l-1}$. As a result nearly all of the notation generalises to the level $l$ case easily. We conjecture that for two regular one-column $l$-multipartitions $\wt{\lambda},\wt{\mu}$, we have
\begin{equation*}
(T(\wt{\mu}) : \weyl(\wt{\lambda}))_v=\overline{h^{w_{\wt{\lambda}},w_{\wt{\mu}}}}
\end{equation*}
in the level $l$ generalised blob algebra over a field $\field$ of characteristic $0$, where $h^{x,y}$ is the inverse Kazhdan--Lusztig polynomial of type $\tilde{A}_{l-1}$.

\item Over a field $\field$ of characteristic $p>0$, the graded decomposition numbers of the blob algebra coincide with the \defnemph{$p$-Kazhdan--Lusztig polynomials} $\prescript{p}{}{h}_{y,x}$ of type $\tilde{A}_1$ \cite[Theorem~5.26]{libedinskyplaza} (see also \cite{coxgrahammartin}). We hypothesise that the graded Weyl multiplicities of the indecomposable tilting modules of the level $l$ generalised blob algebra should be given by a $p$-analogue $\prescript{p}{}{h}^{x,y}$ of inverse Kazhdan--Lusztig polynomials of type $\tilde{A}_{l-1}$. As far as we are aware, no such analogue has been constructed before. In the spherical $l=2$ case, it is reasonable to guess that the graded Weyl multiplicities of indecomposable tilting modules for $\mathrm{TL}_n(1)$ (the $n$-strand Temperley--Lieb algebra with parameter $1$ over $\field$) give a $p$-analogue $\overline{\prescript{p}{}{m}^{x,y}}$ of the inverse spherical Kazhdan--Lusztig polynomials, truncated after weight $n$. Equivalently, using the Ringel duality of $\mathrm{TL}_n(1)$ and (a quotient of) the hyperalgebra of $\mathrm{SL}_2$, we should have
\begin{equation*}
[\weyl_{\mathrm{SL}_2}(x \cdot_p 0) : L_{\mathrm{SL}_2}(y \cdot_p 0)]=\prescript{p}{}{m}^{x,y}(1) \text{,}
\end{equation*}
where $\cdot_p$ denotes the $p$-dilated dot action.
This can be extended to higher levels in the spherical case by replacing $\mathrm{SL}_2$ with $\mathrm{SL}_l$.

\item In general, $p$-Kazhdan--Lusztig polynomials are defined via Soergel bimodules over a field of characteristic $p$. The relationship between $p$-Kazhdan--Lusztig polynomials in type $\tilde{A}_1$ and graded decomposition numbers of the blob algebra is the combinatorial shadow of the `Categorical Blob vs Soergel conjecture' \cite[\S 1.8]{libedinskyplaza}. This conjecture posits an equivalence between a `blob category' (whose $\Hom$-spaces are certain idempotent truncations of the level $l$ generalised blob algebra) and the category of Soergel bimodules in type $\tilde{A}_{l-1}$. Such an equivalence, combined with our tilting character conjecture above, would imply that the inverse ($p$-)Kazhdan--Lusztig polynomials of type $\tilde{A}_{l-1}$ appear in the corresponding category of Soergel bimodules. Yet Soergel bimodules make sense for all types, so this would lead to a categorification (resp.~construction) of inverse ($p$-)Kazhdan--Lusztig polynomials in all types. The classical relationship between Kazhdan--Lusztig polynomials and inverse Kazhdan--Lusztig polynomials could then be reinterpreted as saying something about a form of `Ringel duality' for Soergel bimodules.
\end{enumerate}
\end{rem}

\end{document}